\documentclass[11pt]{article}

\usepackage[a4paper,margin=1in]{geometry}

\usepackage{amsmath,amssymb,amsthm,mathtools,bbm}
\usepackage{enumitem}
\usepackage{hyperref}
\usepackage{cite}
\usepackage{xcolor}
\usepackage{mdframed}
\usepackage{booktabs}
\usepackage{longtable}
\usepackage{array}
\usepackage{microtype}
\usepackage{tikz}
\usetikzlibrary{arrows.meta, positioning, calc, matrix, fit, decorations.pathreplacing}
\usepackage{subfiles,subcaption}

\usepackage{graphicx}
\usepackage[percent]{overpic}
\usepackage{subcaption}
\newcommand{\R}{\mathbb{R}}
\newcommand{\E}{\mathbb{E}}
\newcommand{\Pp}{\mathbb{P}}
\newcommand{\1}{\mathbbm{1}}
\newcommand{\abs}[1]{\left|#1\right|}

\newcommand{\tauv}{\tau_{\mathrm v}}
\newcommand{\tauc}{\tau_{\mathrm c}}
\newcommand{\Vth}{V_{\mathrm{th}}}
\newcommand{\Vr}{V_{\mathrm r}}
\newcommand{\Ith}{I_{\mathrm{th}}}

\newcommand{\essinf}{\operatorname*{ess\,inf}}
\newtheorem{theorem}{Theorem}[section]
\newtheorem{lemma}[theorem]{Lemma}
\newtheorem{proposition}[theorem]{Proposition}
\newtheorem{corollary}[theorem]{Corollary}

\theoremstyle{definition}
\newtheorem{definition}[theorem]{Definition}
\newtheorem{assumption}[theorem]{Assumption}

\theoremstyle{remark}
\newtheorem{remark}[theorem]{Remark}

\newtheorem*{theorem*}{Theorem}

\newmdenv[
linewidth=0.8pt,
roundcorner=4pt,
innerleftmargin=8pt,
innerrightmargin=8pt,
innertopmargin=6pt,
innerbottommargin=6pt,
skipabove=\baselineskip,
skipbelow=\baselineskip
]{resultbox}

\newmdenv[
linewidth=0.6pt,
roundcorner=4pt,
innerleftmargin=8pt,
innerrightmargin=8pt,
innertopmargin=6pt,
innerbottommargin=6pt,
skipabove=0.75\baselineskip,
skipbelow=0.75\baselineskip,
backgroundcolor=gray!7
]{intuitionbox}

\usepackage{booktabs}
\title{Numerical analysis for leaky-integrate-fire networks under Euler--Maruyama}
\author{Xu'an Dou, Frank Chen, Kevin K Lin, Zhuo-Cheng Xiao$^*$}
\date{\today \\$^*$ zx555@nyu.edu}

\begin{document}
\maketitle
\begin{abstract}
Leaky integrate-and-fire (LIF) networks are standard reduced models for spike-based neural dynamics and a natural substrate for neuromorphic computation. We study time-driven Euler--Maruyama simulation of current-based LIF networks with exponentially decaying synapses and instantaneous resets. Because diffusion acts through the synaptic current rather than directly through the voltage, numerical error is concentrated at threshold events: it is governed by spike-time perturbations and by grid-induced spike-count mismatch.

For layered feedforward networks, under the stated density, rate, regularity, and one-step boundary-layer assumptions, we derive finite-horizon strong and weak error bounds. On matched spike histories up to the observation horizon, the strong analysis combines a conditional single-spike hitting-time comparison with direct averaging of the resulting synaptic-impact kernel against the boundary flux of crossing speeds. This yields matched-trajectory mean-square strong error of order $h$ up to polylogarithmic factors. Horizon spike-count mismatch is then controlled separately by local rate, dense-spike, and spike-time-tail bounds. As for weak errors, an averaged backward-semigroup argument---conditional on a backward transmission problem and on one-step rate/strip/factorial-moment controls for the numerical marginals---gives weak order $1$ for smooth spike-map-compatible observables, with constants explicit in rate and weight bounds. Furthermore, with the derivation of a Lyapunov exponent formula coupling the stationary threshold flux to the reset saltation factor, we outline deterministic and noisy recurrent extensions, including loop-truncated matched strong bounds governed by synaptic cycles and recurrent averaged weak bounds governed by stepwise spike moments. These results separate single-trial spike-train fidelity from observable-level accuracy and clarify which notion of numerical accuracy is relevant for mechanistic spiking models and for spike-based computation.
\end{abstract}

\tableofcontents

\section{Introduction}\label{sec:intro}

\subsection{Motivation: why spiking resets change numerical analysis}

Leaky integrate-and-fire (LIF) networks are among the simplest spiking models that still retain threshold crossing, reset, and synaptic filtering \cite{GerstnerEtAl2014,BretteReview2007}. They therefore play a useful role in two communities. In computational neuroscience, they are standard reduced models for fluctuation-driven spiking and network dynamics \cite{GerstnerEtAl2014,BretteReview2007}. In neuromorphic engineering and spike-based AI, the same event-driven structure makes them natural building blocks for energy-efficient temporal processing and online computation \cite{Maass1997,Mead1990,IndiveriEtAl2011,IndiveriLiu2015,Roy2019,Schuman2022}.

For such systems, numerical accuracy is not a single notion. When the scientific target is a single-trial spike train, a synchrony pattern, a spike-timing-dependent plasticity update, or a causal spike ordering, pathwise accuracy is essential. When the target is a smooth readout, a firing-rate statistic, or an averaged loss functional, weak accuracy is the more relevant criterion \cite{BretteGuigon2003,NeftciMostafaZenke2019,Eshraghian2023}. A useful numerical theory for spiking networks should therefore distinguish strong and weak error rather than treat them as interchangeable.

The biggest challenge is that LIF dynamics is hybrid, which distinguishes them from standard numerical analysis for stochastic differential equations (SDE). Between spikes, the state evolves smoothly. At a spike, the voltage resets discontinuously and synaptic currents jump. In the current-based model studied here, diffusion enters through the synaptic current but not directly through the voltage. Each spike is therefore a threshold hit for a deterministically advected voltage with random crossing speed $A=I-\Ith$. When $A$ is small, a small state perturbation can produce a large spike-time shift. The main numerical difficulty is thus concentrated at threshold events rather than in the bulk subthreshold evolution. Standard SDE convergence theory, usually formulated for globally Lipschitz coefficients or for
jumps at prescribed times, does not directly capture missed spikes, delayed spikes, or reset-induced propagation of timing error; see the simulation literature
\cite{HanselMatoMeunier1998,ShelleyTao2001,MorrisonDiesmann2007,Hanuschkin2010}.

The paper studies time-driven Euler--Maruyama simulation for current-based LIF networks with exponentially decaying synapses.  We first analyze layered feedforward networks, where the propagation of spike-time perturbations can be tracked explicitly across depth, and then discuss deterministic and stochastic recurrent extensions.  In the feedforward strong analysis, the key object is a matched-trajectory spike-time impact functional: we first compare one exact and one numerical spike through a local hitting-time problem, then average the resulting impact kernel against the boundary flux of crossing speeds.  Terminal spike-count mismatch is treated separately.  For weak error, the key object is instead an averaged one-step defect along the numerical chain: transmission at threshold removes the leading jump defect, and the remaining boundary-strip contribution is small only after averaging over the step-start law.  Our goal is to clarify which parts of the classical Euler--Maruyama picture survive in the presence of threshold/reset events and which parts must be replaced by event-based estimates.

\subsection{Related work}
\paragraph{Spiking models in neuroscience and neuromorphic computation.}
Integrate-and-fire models have long served as reduced descriptions of neural excitability,
fluctuation-driven spiking, and synaptic integration \cite{GerstnerEtAl2014}.
They also sit at the core of neuromorphic engineering, from Mead's original event-based vision
\cite{Mead1990} to silicon neuron circuits, large spiking chips, and modern spike-based AI
\cite{IndiveriEtAl2011,Merolla2014,Davies2018,Roy2019,Schuman2022}.
This breadth of use makes numerical fidelity a shared issue: the same simulator may be used to test
neural hypotheses, benchmark hardware mappings, or train and analyze spike-based models
\cite{NeftciMostafaZenke2019,Eshraghian2023}.

\paragraph{Simulation algorithms and numerical pitfalls.}
Time-driven simulation of integrate-and-fire networks is classical, and the practical numerical issues are well known.
Early analyses and practical remedies include \cite{HanselMatoMeunier1998,ShelleyTao2001}, as well as
spike-time interpolation and exact subthreshold integration schemes
\cite{MorrisonDiesmann2007,Hanuschkin2010}.
A broad survey is given in \cite{BretteReview2007}.
Most of this literature is algorithmic or software-oriented. By contrast, rigorous finite-horizon strong and weak error estimates for trajectory simulation of finite spiking networks remain limited.

\paragraph{Mean-field and Fokker--Planck discretizations.}
A complementary line of work studies mean-field and Fokker--Planck descriptions of noisy LIF models and develops structure-preserving schemes at the PDE level \cite{HuangLiuXuZhou2021}. Those results are important for distributional evolution and firing-rate statistics, but they do not directly address pathwise spike-train fidelity for finite networks.

\paragraph{Exit times and boundary effects.} At the single-neuron level, the spike time is a first-passage time.  For diffusion-driven exits, Euler monitoring on a grid typically yields a $1/4$ strong order and requires boundary corrections for weak order one \cite{Gobet2000,BouchardMenozzi2009,BouchardGeissGobet2017}.  Our setting is different: diffusion acts on the current rather than directly on the voltage, and the singular quantities are the crossing speed at threshold and the associated local spike-time sensitivity. This leads naturally to a conditional hitting-time analysis, direct boundary-flux averaging of the one-spike impact kernel, and a separate estimate for horizon spike-count mismatch, rather than to the standard nondegenerate exit-time theory.  For weak error, the boundary contribution is handled through an averaged strip estimate along the numerical chain rather than through a pointwise operator expansion at the boundary.  The resulting feedforward matched-trajectory strong bound has mean-square order $h$ up to polylogarithmic factors.

\paragraph{Dynamical stability and Lyapunov exponents in integrate-and-fire models.}
The paper also interacts with work on dynamical stability and reliability in spiking systems. For one-dimensional integrate-and-fire models, the reset contributes a saltation factor to infinitesimal perturbations \cite{Brette2004}. For common-input reliability and spike-based temporal computation, Lyapunov exponents therefore provide a natural description of long-time amplification or contraction \cite{BretteGuigon2003,TeramaeTanaka2004}. We use this viewpoint to connect spike-time sensitivity to numerical error growth.

\subsection{Contributions and interpretation}
From a neuroscience viewpoint, the strong results concern single-trial spike-train fidelity,
whereas the weak results concern averaged or smooth observables such as firing-rate readouts.
From a neuromorphic-computation viewpoint, the same distinction separates event-stream accuracy from
correctness of network-level functionals.
With this interpretation in mind, the main messages are as follows (proved in
Sections~\ref{sec:model}--\ref{sec:recurrent_weak}):
\begin{enumerate}[label=(\roman*),leftmargin=2.2em]
\item \textbf{Feedforward strong error through matched spike-time analysis.}
For layered feedforward current-based LIF networks, we first work on spike histories whose exact and numerical spike counts match up to the observation horizon. A conditional one-spike hitting-time comparison, combined with direct averaging of the resulting impact kernel against the threshold flux, yields a matched-trajectory recursion for the expected strong error and mean-square order $h$ up to polylogarithmic factors. Terminal spike-count mismatch is estimated separately by local rate, dense-spike, and spike-time-tail bounds.

\item \textbf{Feedforward weak order one for smooth observables.}
For spike-map-compatible observables, and under the backward-regularity plus averaged one-step rate/strip/factorial-moment assumptions stated in Section~\ref{sec:weak}, we derive a weak order-$1$ bound by combining a backward semigroup argument with an averaged boundary-strip estimate for spike-map decisions along the numerical chain.

\item \textbf{A stability viewpoint via saltation and flux.}
For a single current-based LIF neuron driven by a stationary input, we derive a Lyapunov-exponent formula that combines continuous-time contraction with the reset saltation factor. In feedforward networks this leads to a max-type propagation rule for layerwise exponents and clarifies how dynamical stability interacts with long-time error amplification.

\item \textbf{Deterministic and recurrent extensions.}
We formulate deterministic recurrent and noisy recurrent extensions that identify hybrid Lyapunov growth, recurrent gain, rate moments, effective causal depth, and boundary densities as the natural quantities governing long-time amplification. In the noisy recurrent setting, the matched strong analysis again keeps crossing speeds inside a boundary-flux averaged impact term. These extensions show what must be controlled in a genuinely recurrent theory, even when the strongest recurrent bounds require additional assumptions.
\end{enumerate}

\subsection{Notation}\label{subsec:notation}
Table~\ref{tab:notation} collects the main notation used throughout the manuscript.
Each symbol is defined at its first occurrence in the text; the table is intended as a quick reference.

\small
\setlength{\LTpre}{0pt}
\setlength{\LTpost}{0pt}
\begin{longtable}{>{\raggedright\arraybackslash}p{0.22\textwidth} >{\raggedright\arraybackslash}p{0.73\textwidth}}
\caption{Notation used throughout the manuscript.}\label{tab:notation}\\
\toprule
Symbol & Meaning \\
\midrule
\endfirsthead
\toprule
Symbol & Meaning \\
\midrule
\endhead
\midrule
\multicolumn{2}{r}{\emph{(continued on next page)}}\\
\endfoot
\bottomrule
\endlastfoot
$L$ & Feedforward depth (number of layers).\\
$n_\ell$ & Number of neurons in layer $\ell$; $N=\sum_{\ell=1}^L n_\ell$.\\
$\ell$ & Layer index ($1\le \ell\le L$); $i,j$ denote neuron indices within a layer.\\
$\mathcal{J}$ & Index set of neurons: $\mathcal{J}=\{(\ell,j):1\le \ell\le L,\ 1\le j\le n_\ell\}$.\\
$\tauv,\tauc$ & Voltage and synaptic-current time constants.\\
$\Vth,\Vr$ & Voltage threshold and reset level ($\Vr<\Vth$).\\
$\Ith$ & Threshold current: $\Ith=(\Vth-\Vr)/\tauv$.\\
$v_{\ell,j}(t)$ & Voltage of neuron $(\ell,j)$ at time $t$.\\
$I_{\ell,j}(t)$ & Synaptic current of neuron $(\ell,j)$ at time $t$.\\
$b_{\ell,j}(t)$ & Deterministic input (drift) in the current equation.\\
$\sigma_{\ell,j}$ & Brownian noise amplitude in the current equation.\\
$B_{\ell,j}(t)$ & Independent standard Brownian motions.\\
$W^{\ell-1,\ell}_{ji}$ & Feedforward weight from presynaptic neuron $(\ell-1,i)$ to postsynaptic neuron $(\ell,j)$.\\
$s^k_{\ell,j}$ & $k$th exact spike time of neuron $(\ell,j)$ (upward threshold hit).\\
$N_{\ell,j}(t)$ & Exact spike count up to time $t$: $N_{\ell,j}(t)=\sum_k\mathbf 1_{\{s^k_{\ell,j}\le t\}}$.\\
$A^k_{\ell,j}$ & Threshold crossing speed at $s^k_{\ell,j}$: $A^k_{\ell,j}=I_{\ell,j}((s^k_{\ell,j})^-)-\Ith$.\\
$h$ & Time step; $t_m=mh$ are grid times.\\
$\Delta B_{\ell,j,m}$ & Brownian increment $B_{\ell,j}(t_{m+1})-B_{\ell,j}(t_m)\sim\mathcal N(0,h)$.\\
$v^h_{\ell,j}(t_m),\,I^h_{\ell,j}(t_m)$ & Numerical approximations at grid times (Euler / Euler--Maruyama).\\
$\hat s^k_{\ell,j}$ & $k$th numerical spike time (grid-based detection).\\
$N^h_{\ell,j}(t)$ & Numerical spike count up to time $t$.\\
$\Delta N^h_{\ell,j}(m)$ & Numerical spikes of neuron $(\ell,j)$ in $[t_m,t_{m+1})$ (typically $0$ or $1$).\\
$\varepsilon^k_{\ell,j}$ & Spike-time error (STE): $\varepsilon^k_{\ell,j}=\hat s^k_{\ell,j}-s^k_{\ell,j}$.\\
$\alpha_0$ & Fixed upper cutoff in the boundary-depletion assumption near zero crossing speed.\\
$B^{\mathrm{mis}}_{\ell,j}(h,T)$ & Horizon mismatch event: $N^h_{\ell,j}(T)\neq N_{\ell,j}(T)$.\\
$\mathcal G^{\mathrm{match}}(h,T)$ & Network-level matched event:
$\bigcap_{\ell=1}^L\bigcap_{j=1}^{n_\ell}\{N^h_{\ell,j}(T)=N_{\ell,j}(T)\}$.\\
$\mathcal M_{\ell,j}(T)$ & Matched-trajectory STE impact for neuron $(\ell,j)$:
$\E[\sum_{k:s^k_{\ell,j}\le T}\psi(\abs{\varepsilon^k_{\ell,j}})\mathbf 1_{\mathcal G^{\mathrm{match}}(h,T)}]$.\\
$\mathcal M_\ell(T)$ & Layerwise matched-trajectory STE impact envelope:
$\max_{1\le j\le n_\ell}\mathcal M_{\ell,j}(T)$.\\
$R_\ell(h,T)$ & Layerwise local-gap scale entering the conditional single-spike impact kernel.\\
$C^{\mathrm{loc}}_{\ell,j}(T)$ & Subthreshold EM strong-error coefficient for neuron $(\ell,j)$ on $[0,T]$.\\
$C_{\max}$ & A (model-dependent) uniform bound used to control the strong error on the mismatch set.\\
$\Lambda_{\ell,j}(T)$ & Spike-load bound used in feedforward/recurrent strong propagation estimates.\\
$\rho_{\max,\ell,j}(T,\alpha_0)$ & Uniform bound on the joint density at threshold on $I\in[\Ith,\Ith+\alpha_0]$; used in boundary-flux and boundary-layer estimates.\\
$\varrho^V_{\ell,j}(v,t)$ & Voltage marginal density of $v_{\ell,j}(t)$.\\
$\rho^V_{\max,\ell,j}(T)$ & Uniform bound on $\varrho^V_{\ell,j}$ in a unit strip below $\Vth$ (used for false positives and weak error).\\
$\mu_{\ell,j}(t),\,\sigma_{\mathrm{eff},\ell,j}(t)$ & Effective drift and diffusion in an OU surrogate for $I_{\ell,j}(t)$.\\[2pt]
$\mathsf{D}$ & Interior state space for weak error: $\mathsf{D}=\{x=(v,I):v_{\ell,j}<\Vth\ \forall(\ell,j)\}$.\\
$\Sigma_p$ & Threshold surface for neuron $p$: $\Sigma_p=\{x\in\overline{\mathsf{D}}:v_p=\Vth\}$.\\
$\mathcal{R}_p$ & Spike map (reset of $v_p$ and synaptic jump(s) in postsynaptic currents).\\
$X(t)$ & Exact Markov process on $\mathsf{D}$ with resets/jumps; $P_{s,t}$ is its semigroup.\\
$Q_h$ & One-step EM Markov operator (grid spike-map composition).\\
$u(t,x)$ & Backward value function $u(t,x)=(P_{t,T}\Phi)(x)$ for weak error analysis.\\
$R_\ell(T)$ & Averaged exact one-step layer rate bound used in weak error: if $\Delta \widetilde N_\ell^{(m)}$ denotes layer-$\ell$ spikes of the exact step started from $X_m^h$, then $\E[\Delta \widetilde N_\ell^{(m)}]\le R_\ell(T)h$.\\
$R_{\mathrm{ff},2}(T)$ & Feedforward one-step second factorial-moment bound: $\E[\Delta \widetilde N_{\mathrm{tot}}^{(m)}(\Delta \widetilde N_{\mathrm{tot}}^{(m)}-1)]\le R_{\mathrm{ff},2}(T)h^2$.\\
$C_{\eta,p}(T)$ & One-step boundary-layer remainder constant in the averaged feedforward weak theorem: $\E[\eta_{p,m}^2]\le C_{\eta,p}(T)h^3$.\\
$\|W^{\ell,\ell+1}\|_1$ & Column-sum norm of a feedforward weight matrix (used in weak-error constants).\\[2pt]
$\lambda$ & Lyapunov exponent for common-noise synchronization/decoupling.\\
$\rho_\infty(\Vth,I)$ & Stationary boundary density (Fokker--Planck flux at threshold) used in $\lambda$.\\
$S(I)$ & Saltation (reset) factor at a spike with current $I$ (hybrid variational jump).\\[2pt]
$N_{\mathrm r}$ & Number of neurons in a recurrent network.\\
$W$ & Recurrent weight matrix; $\|W\|_1$ is its column-sum norm.\\
$L^\ast$ & Length of the shortest directed synaptic cycle (take $L^\ast=\infty$ if the graph is acyclic).\\
$K_{\mathrm{eff}}(T)$ & Effective causal depth up to time $T$ (maximal length of a time-ordered synaptic influence chain on the matched set).\\
$H(T,h)$ & Recurrent matched-impact gain matrix with entries defined in \eqref{eq:H_rec_def}.\\
$R_{\mathrm{net}}(T)$ & Averaged recurrent exact one-step rate bound: $\E[\Delta \widetilde N_{\mathrm{tot}}^{(m)}]\le R_{\mathrm{net}}(T)h$.\\
$R_{\mathrm{net},2}(T)$ & Averaged recurrent one-step second factorial-moment bound:
$\E[\Delta \widetilde N_{\mathrm{tot}}^{(m)}(\Delta \widetilde N_{\mathrm{tot}}^{(m)}-1)]\le R_{\mathrm{net},2}(T)h^2$.\\
\end{longtable}
\normalsize

\subsection{Organization}
Section~\ref{sec:model} introduces the model and EM discretization and summarizes the main results.
Section~\ref{sec:strong} contains the STE-based strong analysis, first on matched trajectories through
boundary-flux averaging and then for horizon spike-count mismatch.
Section~\ref{sec:weak} provides the averaged weak order-$1$ theorem with explicit constants.
Section~\ref{sec:stability} derives Lyapunov exponents and their feedforward propagation, and
discusses implications for long-time strong error.
Sections~\ref{sec:recurrent_strong}--\ref{sec:recurrent_weak} extend strong/weak bounds to recurrent
networks under deterministic transversality and/or rate \& density controls.
The manuscript ends with a brief conclusion.

\section{Feedforward current-based LIF network and EM scheme}\label{sec:model}

\subsection{Architecture, state variables, and spike trains}
Fix a depth $L\ge 1$. Layer $\ell$ contains $n_\ell$ neurons.
For neuron $(\ell,j)$ let $v_{\ell,j}(t)$ be voltage and $I_{\ell,j}(t)$ be synaptic current.
Spike times $\{s^k_{\ell,j}\}_{k\ge 1}$ are successive threshold hits (we use the left limit because $v$ is reset at spikes):
\[
s^1_{\ell,j}:=\inf\{t>0:\ v_{\ell,j}(t^-)=\Vth\},
\quad
s^{k+1}_{\ell,j}:=\inf\{t>s^k_{\ell,j}:\ v_{\ell,j}(t^-)=\Vth\}.
\]
The counting process is $N_{\ell,j}(t)=\sum_{k\ge 1}\1_{\{s^k_{\ell,j}\le t\}}$.


\begin{figure}[t]
\centering

\begin{minipage}{0.96\textwidth}
\centering
\textbf{(A)}\\[0.35em]
\resizebox{0.96\linewidth}{!}{%
\begin{tikzpicture}[x=1.25cm,y=0.95cm,>=Latex]
  \foreach \x/\lab in {0/{Layer 1},2.8/{Layer 2},5.6/{Layer 3},8.4/{Layer $L$}}{
    \node[font=\small] at (\x,2.5) {\lab};
  }
  \foreach \x in {0,2.8,5.6,8.4}{
    \foreach \y in {1.5,0.5,-0.5,-1.5}{
      \filldraw[fill=blue!8,draw=blue!45!black] (\x,\y) circle (0.18);
    }
  }
  \node at (5.6,0) {$\vdots$};
  \foreach \ya in {1.5,0.5,-0.5,-1.5}{
    \foreach \yb in {1.5,0.5,-0.5,-1.5}{
      \draw[gray!45,thin,->] (0.18,\ya) -- (2.62,\yb);
      \draw[gray!45,thin,->] (2.98,\ya) -- (5.42,\yb);
      \draw[gray!45,thin,->] (5.78,\ya) -- (8.22,\yb);
    }
  }
  \draw[blue!80!black,very thick,->] (0.18,1.5) -- (2.62,0.5);
  \draw[blue!80!black,very thick,->] (2.98,0.5) -- (5.42,-0.5);
  \draw[blue!80!black,very thick,->] (5.78,-0.5) -- (8.22,-0.5);
  \node[align=center,font=\small] at (0,-2.35) {$n_1$ neurons\\noisy current};
  \node[align=center,font=\small] at (2.8,-2.35) {$n_2$ neurons\\noisy current};
  \node[align=center,font=\small] at (8.4,-2.35) {$n_L$ neurons\\readout};
\end{tikzpicture}%
}
\end{minipage}

\vspace{0.9em}

\begin{minipage}[t]{0.58\textwidth}
\centering
\textbf{(B)}\\[0.35em]
\includegraphics[width=\linewidth]{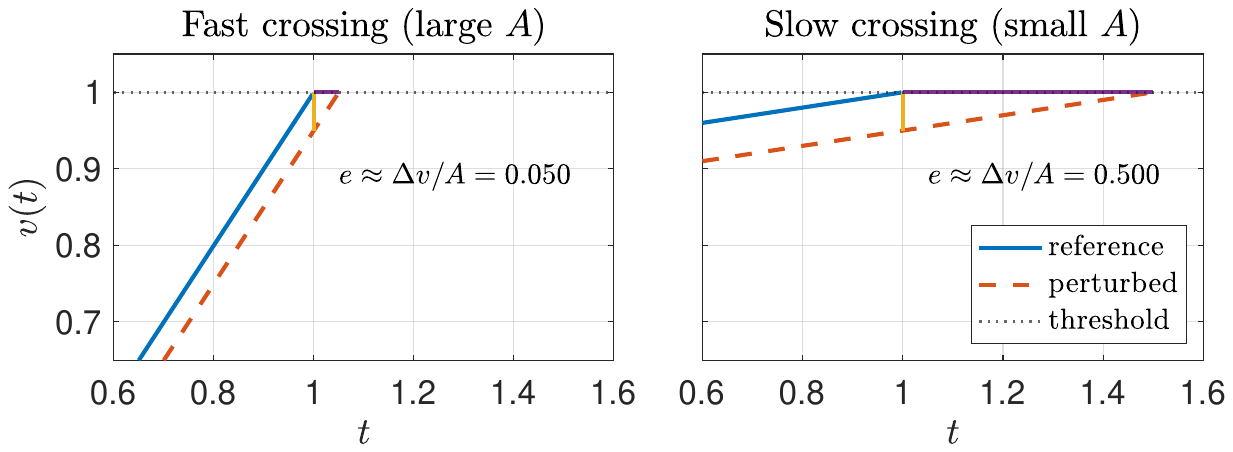}
\end{minipage}\hfill
\begin{minipage}[t]{0.38\textwidth}
\centering
\textbf{(C)}\\[0.35em]
\begin{tikzpicture}[x=1cm,y=1cm,every node/.style={font=\footnotesize},>=Latex]
  \node[draw,rounded corners=3pt,fill=blue!6,
        text width=4.8cm,align=left,minimum height=0.95cm] (good) at (0,1.05)
  {$\mathcal G^{\mathrm{match}}$: spike counts match up to $T$ \\ direct boundary-flux impact bound};

  \node[draw,rounded corners=3pt,fill=red!6,
        text width=4.8cm,align=left,minimum height=0.95cm] (bad) at (0,-0.75)
  {$B^{\mathrm{mis}}$: horizon spike-count mismatch \\ treated separately by pruning};

  \draw[thick,->] (0,0.45) -- (0,-0.18);


\end{tikzpicture}
\end{minipage}

\caption{A feedforward network architecture and strong error proof strategy. \textbf{(A)} Feedforward network architecture with noisy currents in every layer, feedforward weights $W^{\ell-1,\ell}$, and exponentially decaying synaptic kernel. \textbf{(B)} Illustration of fast and slow threshold crossings. Blue: true solution (reference); red: numerical solution (perturbed); yellow: pathwise error ($\Delta v$); purple: spike time error ($e_m$). \textbf{(C)} Section~\ref{sec:strong} separates the feedforward strong analysis into a matched-trajectory term, handled directly through boundary-flux averaging of the STE impact kernel, and a separate horizon-mismatch term.}
\label{fig:composite-overview}
\end{figure}

\subsection{Continuous-time dynamics with reset}
Fix $\tauv,\tauc>0$ and reset level $\Vr<\Vth$.
Between spikes of neuron $(\ell,j)$:
\begin{align}
dv_{\ell,j}(t) &=
\Bigl(-\tfrac{1}{\tauv}(v_{\ell,j}(t)-\Vr)+I_{\ell,j}(t)\Bigr)\,dt,
\qquad v_{\ell,j}(t)<\Vth,
\label{eq:dv}\\
dI_{\ell,j}(t) &=
-\tfrac{1}{\tauc}\bigl(I_{\ell,j}(t)-b_{\ell,j}(t)\bigr)\,dt
+\tfrac{\sigma_{\ell,j}}{\tauc}\,dB_{\ell,j}(t)
+\sum_{i=1}^{n_{\ell-1}} W^{\ell-1,\ell}_{ji}\,dN_{\ell-1,i}(t),
\qquad \ell\ge 2,
\label{eq:dI}
\end{align}
and for $\ell=1$ the sum over $i$ is replaced by external input only (equivalently take
$W^{0,1}\equiv 0$).
Here $\{B_{\ell,j}\}$ are independent standard Brownian motions, and $b_{\ell,j}(t)$ are bounded
deterministic inputs.

When $v_{\ell,j}$ hits $\Vth$ from below at time $s^k_{\ell,j}$, it resets instantly:
\begin{equation}\label{eq:reset}
v_{\ell,j}\bigl((s^k_{\ell,j})^-\bigr)=\Vth \quad\Rightarrow\quad
v_{\ell,j}\bigl((s^k_{\ell,j})^+\bigr)=\Vr,
\qquad
I_{\ell,j} \text{ is continuous at } s^k_{\ell,j}.
\end{equation}

\begin{remark}[Threshold current and crossing speed]\label{rem:Ith_A}
Define the threshold current
\[
\Ith := \frac{\Vth-\Vr}{\tauv}.
\]
At a spike time $s=s^k_{\ell,j}$, the instantaneous upward crossing speed equals
\begin{equation}\label{eq:A_def}
A^k_{\ell,j} := \dot v_{\ell,j}(s^-)= -\tfrac{1}{\tauv}(\Vth-\Vr)+I_{\ell,j}(s^-)=I_{\ell,j}(s^-)-\Ith.
\end{equation}
Necessarily $A^k_{\ell,j}>0$ at a genuine upward crossing, but it can be arbitrarily small in
fluctuation-driven regimes.
\end{remark}

\subsection{Euler--Maruyama scheme with grid-based threshold/reset}
Fix $h>0$ and grid $t_m:=mh$.
Let $\Delta B_{\ell,j,m}=B_{\ell,j}(t_{m+1})-B_{\ell,j}(t_m)\sim \mathcal{N}(0,h)$.

\paragraph{Current update.}
\begin{equation}\label{eq:EM_I}
I^h_{\ell,j}(t_{m+1})
=
I^h_{\ell,j}(t_m)
-\frac{h}{\tauc}\bigl(I^h_{\ell,j}(t_m)-b_{\ell,j}(t_m)\bigr)
+\frac{\sigma_{\ell,j}}{\tauc}\Delta B_{\ell,j,m}
+\sum_{i=1}^{n_{\ell-1}} W^{\ell-1,\ell}_{ji}\,\Delta N^h_{\ell-1,i}(m),
\end{equation}
where $\Delta N^h_{\ell-1,i}(m)\in\{0,1\}$ indicates whether the numerical presynaptic neuron
$(\ell-1,i)$ fired in $[t_m,t_{m+1})$.

\paragraph{Voltage update and spike detection.}
\begin{equation}\label{eq:EM_v}
v^h_{\ell,j}(t_{m+1})
=
v^h_{\ell,j}(t_m)
+h\Bigl(-\tfrac{1}{\tauv}(v^h_{\ell,j}(t_m)-\Vr)+I^h_{\ell,j}(t_m)\Bigr).
\end{equation}
If $v^h_{\ell,j}(t_{m+1})\ge \Vth$, we register a numerical spike at $\hat s^k_{\ell,j}:=t_{m+1}$
and reset $v^h_{\ell,j}(t_{m+1})\leftarrow \Vr$.
Let $N_{\ell,j}^h(t)$ be the corresponding numerical spike count.

\paragraph{Continuous-time interpolation.}
When we write $(v^h_{\ell,j}(t),I^h_{\ell,j}(t))$ for $t$ between grid points, we mean the standard
continuous-time EM interpolation on each step (coefficients frozen at $t_m$ and driven by the same
Brownian path); this is the version used in sup-norm bounds and in the spike-time sensitivity
estimate below.

\subsection{Spike-time error (STE)}\label{subsec:STE_def}
When exact and numerical spike sequences can be matched in order up to time $T$, define
\begin{equation}\label{eq:STE}
\varepsilon^k_{\ell,j}:=\hat s^k_{\ell,j}-s^k_{\ell,j},\qquad s^k_{\ell,j}\le T.
\end{equation}

\subsection{Main results}\label{subsec:main_results}
For readability, we summarize the main results here; full assumptions, constants,
and proofs appear in Sections~\ref{sec:strong}--\ref{sec:recurrent_weak}.

\begin{resultbox}
\begin{theorem*}[Matched-trajectory strong bound plus a separate mismatch-probability estimate]
Fix $T>0$ and define the network-level matched event
\[
\mathcal G^{\mathrm{match}}(h,T)
:=
\bigcap_{\ell=1}^L\bigcap_{j=1}^{n_\ell}
\{N^h_{\ell,j}(T)=N_{\ell,j}(T)\}.
\]
Let $X_L(T)$ be a bounded network output functional satisfying \eqref{eq:output_direct_lip} on
matched spike histories.  Under the spike-load assumptions of
Section~\ref{sec:strong},
\[
\E\Bigl[
\abs{X_L^h(T)-X_L(T)}^2\,\mathbf 1_{\mathcal G^{\mathrm{match}}(h,T)}
\Bigr]
\le
C_X\,n_L\,K_L(T,W)\,h\,(1+\abs{\log h})^{L}.
\]
Moreover, Lemma~\ref{lem:mis_prob} yields
\[
\Pp\bigl(B^{\mathrm{mis}}_{\ell,j}(h,T)\bigr)
\lesssim
r^{\max}_{\ell,j}(T)\sqrt{h|\log h|}
+
R^{(2)}_{\ell,j}(T)\,h|\log h|
+
C^{\mathrm{err}}_{\ell,j}(T)\,h^{c_0}.
\]
Thus the feedforward strong analysis consists of a matched-trajectory bound plus a separate
terminal mismatch term; small crossing speeds are absorbed into the matched term through the
boundary-flux integral rather than being declared a bad set.
\end{theorem*}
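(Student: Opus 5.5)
The plan is to prove the two displays separately, since by design they concern disjoint mechanisms: the matched-trajectory bound on $\mathcal G^{\mathrm{match}}(h,T)$ and the mismatch-probability estimate of Lemma~\ref{lem:mis_prob}.

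\emph{Matched-trajectory bound.} On $\mathcal G^{\mathrm{match}}(h,T)$ exact and numerical spikes can be paired in order. Then \eqref{eq:output_direct_lip} bounds $\abs{X^h_L(T)-X_L(T)}^2$ by two contributions. The first is a subthreshold EM term, of size $C^{\mathrm{loc}}_{L,j}(T)h$ per output neuron; for the linear current/voltage dynamics this is standard. The second is a sum over matched spikes of an impact kernel $\psi(\abs{\varepsilon^k_{L,j}})$. This sum is exactly $\mathcal M_{L,j}(T)$, and summing over $j$ produces the factor $n_L$.

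I would then build a layer recursion for $\mathcal M_\ell(T)$ in three steps.
\begin{enumerate}[label=(\alph*)]
\item Fix a single spike with crossing speed $A$. Condition on the presynaptic histories and apply the local hitting-time comparison. The voltage gap at the step preceding the spike consists of the accumulated EM error plus the jump displacements inherited from layer $\ell-1$ timing errors. Dividing this gap by $A$ gives $\abs{\varepsilon}\lesssim h+(\text{gap})/A$, capped at the inter-spike scale. The cap uses the local-gap scale $R_\ell(h,T)$.
\item Average this kernel against the threshold flux. The expected number of spikes with speed in $[a,a+da]$ is bounded by $\rho_{\max,\ell,j}(T,\alpha_0)\,a\,da$. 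The flux weight $a$ therefore cancels the $1/a$ singularity, and the truncated integral $\int_0^{\alpha_0}\min(1,\text{gap}/a)\,a\,da$ contributes a $\log$ of the gap ratio. Large speeds $a>\alpha_0$ are handled by the spike-load bound $\Lambda_{\ell,j}(T)$.
\item Combine the two steps to obtain a recursion of the form $\mathcal M_\ell\le C_\ell\bigl(h+\|W^{\ell-1,\ell}\|\,\mathcal M_{\ell-1}\bigr)(1+\abs{\log h})$. Layer $1$ has no presynaptic input, so $\mathcal M_1\lesssim h(1+\abs{\log h})$. Unrolling the recursion over $L$ layers gives $K_L(T,W)\,h(1+\abs{\log h})^L$.
\end{enumerate}
Because $\psi$ is chosen so that the squared output error is dominated by $\psi$, the mean-square bound follows directly.

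\emph{Mismatch probability.} The event $N^h_{\ell,j}(T)\ne N_{\ell,j}(T)$ requires at least one of three things to happen.
\begin{itemize}
\item An exact spike has $\abs{\varepsilon}$ exceeding $\delta=\sqrt{h\abs{\log h}}$, or lies within $\delta$ of $T$. The first case is controlled by the spike-time-tail bound, which gives the $C^{\mathrm{err}}h^{c_0}$ term. The second costs at most $r^{\max}_{\ell,j}\delta$ by the local rate bound.
\item Two exact spikes fall within a window of size $\delta$. A dense-spike, second-factorial-moment bound makes this $R^{(2)}h\abs{\log h}$.
\item A false positive or missed crossing occurs, where the numerical voltage touches $\Vth$ without an exact crossing. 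The numerical and exact voltages are close, uniformly up to $\delta$ off a tail event, so this requires the voltage to lie in a strip of width $\delta$ below $\Vth$. The voltage density bound $\rho^V_{\max}$ controls this, and it folds into the first term.
\end{itemize}
A union bound over these cases gives the stated inequality.

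\emph{Main obstacle.} The hardest step is (a)–(b): justifying the conditional hitting-time comparison uniformly while inherited timing errors move jump times across grid points. These jumps can create intermediate voltage gaps that are not small. I would first localize on an event where all upstream errors are below $\delta$, and push its complement into the mismatch/tail terms. The log-per-layer loss arises from exactly this truncation, and keeping it from compounding worse than $(1+\abs{\log h})^L$ needs care.
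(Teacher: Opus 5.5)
Your architecture is the paper's: a one-spike hitting-time comparison giving $\abs{\varepsilon}\lesssim h+D/A$, a cap, averaging against the threshold flux $a\,\rho_{\ell,j}(\Vth,\Ith+a,t)$, and a layer recursion through a local-gap scale. The mismatch part is, separately, the union bound of Lemma~\ref{lem:mis_decomp} with $\delta=\kappa\sqrt{h\log(1/h)}$.

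\textbf{The gap is in step (b).} That is the step that produces the rate, and it fails as written. You average the first-power kernel $\min\{1,g/a\}$, and
\[
\int_0^{\alpha_0}\min\{1,g/a\}\,a\,da=g\alpha_0-\tfrac12 g^2 .
\]
This has no logarithm and is of order $g$. Since the gap scale in the recursion is $R_\ell\sim\sqrt h$ up to logarithms ($R_1^2=C^{\mathrm{loc}}_1h$), you get $\sqrt h$, not $h(1+\abs{\log h})$.

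The paper instead averages $\psi(\abs{\varepsilon})$, with $\psi\le1$ and $\psi(x)\le x^2/\tauc^2$ \eqref{eq:psi_basic}. That is the squared kernel $\Gamma_\ell(a)=C\min\{1,R_\ell^2/a^2\}$. Splitting at $a=R_\ell$ and $a=\alpha_0$ gives $\tfrac12R_\ell^2+R_\ell^2\int_{R_\ell}^{\alpha_0}\rho\,a^{-1}\,da$ plus an $O(R_\ell^2)$ large-speed tail. So the logarithm comes from a boundary density that does not vanish at zero speed ($\gamma_\ell=0$), and it disappears under depletion $\gamma_\ell>0$ (Theorem~\ref{thm:direct_flux_moment}).

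Squaring is also what makes step (c) close. The inherited gap is bounded using three ingredients: the $L^1_t$ misalignment identity $\int_0^T\abs{K(t;s)-K(t;\hat s)}\,dt=2\tauc\sqrt{\psi(\abs{\varepsilon})}$ (Lemma~\ref{lem:kernel_shift}), Lemma~\ref{lem:v_from_I}, and Cauchy--Schwarz with the pathwise count $\mathcal N_{\ell-1}$. This gives $R_\ell^2=C^{\mathrm{loc}}_\ell h+8\tauc^2n_{\ell-1}\mathcal N_{\ell-1}\|W^{\ell-1,\ell}\|_\infty^2\mathcal M_{\ell-1}$. Tracking $\E\sum_k\abs{\varepsilon^k}$ instead would lose the order in the second moment of the downstream gap. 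The logarithm then compounds once per layer for a different reason than you give in your last paragraph: $R_\ell^2\ge C^{\mathrm{loc}}_\ell h$ gives $\Xi_{\gamma_\ell}(R_\ell)\lesssim1+\abs{\log h}$ at each layer, while $\mathcal M_{\ell-1}$ already carries the earlier powers. It does not come from a truncation.

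\textbf{Three smaller corrections.}
\begin{enumerate}
\item Hypothesis \eqref{eq:output_direct_lip} contains only the spike sum, so the matched bound is immediately $C_Xn_L\mathcal M_L(T)$ (Theorem~\ref{thm:direct_strong_output}). Subthreshold EM error enters only through $C^{\mathrm{loc}}_\ell h$ inside $R_\ell^2$.
\item The obstacle you single out, jumps crossing grid points inside the catch-up window, is excluded in the paper by hypothesis. The paper uses the isolated-window Assumption~\ref{ass:excursion_window} and assumes the layerwise conditional gap bound at deterministic scale $R_\ell(h,T)$ in Theorem~\ref{thm:direct_strong_recursion}. If you instead localize on an event $E$, you must pay for $E^c$ inside the matched expectation, since the stated matched bound has no tail term. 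For example, $\psi\le1$ and \eqref{eq:pathwise_count_bound} give a cost $C_Xn_L\mathcal N_L(T)\Pp(E^c)$, which requires $\Pp(E^c)=O(h)$.
\item Your false-positive bullet in the mismatch part does not work. A strip bound on the voltage density controls $\Pp(\Vth-\delta<v(t)<\Vth)$ at each fixed $t$. It does not control the probability that the path enters the strip somewhere in $[0,T]$; a union over grid times costs a factor $T/h$. The paper has no such term. Lemma~\ref{lem:mis_decomp} charges the whole mismatch event to $B^{\partial}\cup B^{(2)}\cup B^{\mathrm{err}}$, and the three stated terms follow from the rate, dense-spike, and spike-time-tail bounds of Assumption~\ref{ass:mismatch_rate}. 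Drop that bullet.
\end{enumerate}
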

\end{resultbox}

\begin{resultbox}
\begin{theorem*}[Weak error (averaged order one)]
Let $T=Mh$ and $\Phi\in C_b^4(\mathsf{D})$ be spike-map compatible at the terminal time $T$.
Under the backward transmission assumption and the averaged one-step rate/strip/factorial-moment
package of Section~\ref{sec:weak},
\[
\abs{\E[\Phi(X(T))]-\E[\Phi(X^h(T))]}
\ \le\
T\,h\,C_{\mathrm{weak}}(T,L,W),
\]
with an explicit constant $C_{\mathrm{weak}}(T,L,W)$ given in \eqref{eq:Cweak_def}.
In particular, the weak proof is averaged along the numerical chain rather than a pointwise operator
estimate near threshold.  Under uniform rate and weight bounds, $C_{\mathrm{weak}}$ grows at most
linearly in the depth $L$.
\end{theorem*}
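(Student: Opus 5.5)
The plan is the standard telescoping (Talay--Tubaro type) argument for the backward value function $u(t,x)=(P_{t,T}\Phi)(x)$, carried out along the numerical chain $X_m^h:=X^h(t_m)$. Since $X^h(T)=X_M^h$ and $u(T,\cdot)=\Phi$, we write
\[
\E[\Phi(X^h(T))]-\E[\Phi(X(T))]
=\sum_{m=0}^{M-1}\E\Bigl[(Q_h u(t_{m+1},\cdot))(X_m^h)-(P_{t_m,t_{m+1}}u(t_{m+1},\cdot))(X_m^h)\Bigr].
\]
This uses the semigroup identity $P_{t_m,t_{m+1}}u(t_{m+1},\cdot)=u(t_m,\cdot)$ and the Markov property of the EM chain. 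It then suffices to bound each averaged one-step defect $\delta_m$ by $C_{\mathrm{weak}}h^2$; summing $M=T/h$ terms gives $T h\,C_{\mathrm{weak}}$. The backward transmission assumption is used to guarantee that $u(t,\cdot)$ is $C^4$ in the interior $\mathsf{D}$ with bounds uniform in $t$, and that $u$ is compatible with the spike maps. Compatibility means $u(t,x)=u(t,\mathcal R_p x)$ for $x\in\Sigma_p$, inherited from the spike-map compatibility of $\Phi$ at time $T$. This is what removes the leading $O(1)$ jump defect when the exact and numerical steps differ in whether, or when within the step, a reset occurs.

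Each step is split according to the number of spikes. On the event that neither the exact step nor the EM step started from $X_m^h$ produces a spike, both are smooth diffusions on $\mathsf{D}$. A second-order Itô--Taylor expansion of $u$ along the EM step and along the exact flow (frozen versus non-frozen coefficients) then gives the classical $O(h^2)$ local weak defect, with constants from $\|u\|_{C^4}$ and moment bounds on $X_m^h$.

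On the event of exactly one spike of neuron $p$ (exact or numerical), we compare the two compositions. The exact step is flow, then $\mathcal R_p$ at time $t_m+\tau$, then flow. The numerical step applies the EM update, then $\mathcal R_p$ at the grid time. By transmission, $u\circ\mathcal R_p-u$ vanishes on $\Sigma_p$, so the discrepancy is controlled by the distance to $\Sigma_p$ times $\|\nabla u\|$, together with the within-step timing shift. Both are $O(h)$ in magnitude but supported on a boundary strip of width $O(h)$. The contribution is therefore $\E[\eta_{p,m}\mathbf 1_{\text{strip}}]$. We bound it by Cauchy--Schwarz, using $\E[\eta_{p,m}^2]\le C_{\eta,p}h^3$ and the averaged rate bound $\Pp(\text{spike in step})\le R_\ell(T)h$, which gives $\sqrt{h^3\cdot h}=h^2$. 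Events with two or more spikes in one step, for example a layer-$\ell$ spike followed by a layer-$(\ell+1)$ spike it induces, have probability at most $R_{\mathrm{ff},2}(T)h^2$ by the factorial-moment assumption. On them the defect is simply bounded by $2\|u\|_\infty\le 2\|\Phi\|_\infty$.

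To get the explicit constant, we sum over neurons $p$ in layer $\ell$. The jump in the postsynaptic currents induced by $\mathcal R_p$ contributes a factor $\|W^{\ell,\ell+1}\|_1\|\partial_I u\|$, so each layer gives a term $R_\ell(T)\bigl(1+\|W^{\ell,\ell+1}\|_1\bigr)$ times the derivative bounds of $u$. This yields $C_{\mathrm{weak}}=C_0+\sum_\ell R_\ell(1+\|W^{\ell,\ell+1}\|_1)C_u+\sum_p\sqrt{C_{\eta,p}R_{\ell(p)}}+2\|\Phi\|_\infty R_{\mathrm{ff},2}$. Under uniform rate and weight bounds this is visibly linear in $L$.

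The main obstacle is the single-spike strip term. The point where the exact spike occurs inside the step and the grid detection differ must be handled without a pointwise expansion of $u$ at $\Sigma_p$, where $u$ is only piecewise smooth in time along the reset. I would first try to write the exact one-step transition as the integral over the hitting time $\tau$ of $P(\text{flow})\circ\mathcal R_p\circ(\text{flow})$. Then I would Taylor-expand around the grid reset, using transmission to kill the zeroth-order term, and leave exactly $\eta_{p,m}$ as the remainder to which the assumed $h^3$ second-moment bound applies. The care needed is in making this remainder measurable with respect to the step-start law, so that the averaged, rather than pointwise, assumption suffices.
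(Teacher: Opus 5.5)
Your skeleton matches the paper's: telescoping along the numerical chain, splitting each step into no-spike, single-spike and multi-spike parts, using transmission to kill the zeroth-order jump defect, and bounding the multi-spike event by $R_{\mathrm{ff},2}(T)h^2$. Your heuristic ``$O(h)$ magnitude on a strip of width $O(h)$'' is also the right picture. The gap is in how you make the single-spike decision term rigorous.

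Transmission gives $|g_p(t_{m+1},X_m^h)|\le M_v(T)\,d_p$ with $d_p:=\Vth-V^h_{p,m}$. On $D_{p,m}$ the assumption package only gives $0<d_p\le h\,a_p^+(X_m^h)+\eta_{p,m}$, where $\eta_{p,m}$ is an extra strip \emph{width}, not a Taylor remainder of $u$. So the defect is bounded by $M_v(T)\,\E[(h a_p^+ +\eta_{p,m})\mathbf 1_{D_{p,m}}]$. Replacing this by $\E[\eta_{p,m}\mathbf 1_{\mathrm{strip}}]$ drops the leading piece $h\,\E[a_p^+\mathbf 1_{D_{p,m}}]$, and your tools do not make that piece $O(h^2)$:
\begin{itemize}
\item $a_p^+$ is unbounded, because the current is Gaussian-driven, and only $\E[(a_p^+)^2]\le A_{2,p}(T)$ is available.
\item Cauchy--Schwarz with $\Pp(D_{p,m})=O(h)$ then yields $h\sqrt{A_{2,p}(T)}\,O(h^{1/2})=O(h^{3/2})$ per step, hence only $O(h^{1/2})$ after summing $T/h$ steps.
\item Even $\Pp(D_{p,m})\le R_\ell(T)h$ does not follow from the rate bound. $D_{p,m}$ contains grid spikes with no exact spike, and the exact-rate assumption says nothing about those.
\end{itemize}

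The missing ingredient is the numerical strip-density bound $\rho^V_{\max,p}(T)$ of Assumption~\ref{ass:weak_step_pkg}(iii), which your argument never uses. In Lemma~\ref{lem:weak_missed} the paper integrates the distance $d_p$ itself against the bounded density of $V^h_{p,m}$ just below threshold. This makes the contribution \emph{quadratic} in the strip width: with $w=h a_p^++\eta_{p,m}$,
\[
\E[d_p\mathbf 1_{\{0<d_p\le w\}}]\le\tfrac12\rho^V_{\max,p}(T)\,\E[w^2],
\qquad
\E[w^2]\le 2A_{2,p}(T)h^2+2C_{\eta,p}(T)h^3 .
\]
This gives $O(h^2)$ per step without needing any bound on $\Pp(D_{p,m})$. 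That is precisely what ``averaged rather than pointwise'' means, and it is why \eqref{eq:Cweak_def} contains the term $M_v(T)\sum_p\rho^V_{\max,p}(T)\tilde A_{2,p}$. Your constant has $\sqrt{C_{\eta,p}R_{\ell(p)}}$ in its place, so it is not the stated one and does not account for the $h a_p^+$ part of the strip.

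A smaller point: transmission of $u(t,\cdot)$ for $t<T$ is assumed in Assumption~\ref{ass:weak_backward_reg}(iv), not inherited from $\Phi$. Spike-map compatibility of $\Phi$ is used only so that the last step, where $u(T,\cdot)=\Phi$, can be handled like the others.
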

\end{resultbox}

\begin{resultbox}
\begin{theorem*}[Lyapunov exponents (saltation/flux and feedforward propagation)]
In a stationary regime with boundary flux
$J_\infty(I)=(I-\Ith)\rho_\infty(\Vth,I)\1_{\{I>\Ith\}}$,
the top Lyapunov exponent of a single neuron under common-noise coupling is
\[
\lambda
=
-\frac{1}{\tauv}
+
\int_{\R} J_\infty(I)\,\log\!\Bigl(\frac{I}{I-\Ith}\Bigr)\,dI,
\]
and the logarithmic singularity at $I\downarrow \Ith$ is integrable.
For feedforward networks, conditional on matched spike trains, layerwise exponents satisfy the
max-rule
$\lambda_\ell=\max\{\lambda_\ell^{\mathrm{diag}},\lambda_{\ell-1}\}$ for $\ell\ge2$.
\end{theorem*}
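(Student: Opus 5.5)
We prove the Lyapunov formula by linearizing the hybrid flow along a trajectory and applying an ergodic theorem to the log-growth of the voltage perturbation. Take two copies of a single neuron driven by the same noise path (common-noise coupling), and consider an infinitesimal perturbation $(\delta v,\delta I)$.

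\textbf{Continuous part.} Between spikes the current equation is linear with additive noise, so $\delta I(t)=e^{-t/\tauc}\delta I(0)$ decays. The voltage perturbation solves $\dot{\delta v}=-\delta v/\tauv+\delta I$. Once the transient current perturbation has died out, which it does at rate $1/\tauc$, the voltage component contracts at rate $1/\tauv$. We treat the top exponent as the one carried by $\delta v$, assuming $\tauc<\tauv$ or otherwise taking the exponent governed by the $\delta v$ direction, and drop $\delta I$.

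\textbf{Saltation at spikes.} At a spike with current $I$, a perturbation $\delta v^-$ shifts the hitting time by $\delta s=-\delta v^-/A$, where $A=I-\Ith$ is the crossing speed from Remark~\ref{rem:Ith_A}. After reset the voltage moves with post-reset slope $\dot v(s^+)=-(\Vr-\Vr)/\tauv+I=I$. The post-reset perturbation is therefore $\delta v^+=-\dot v(s^+)\,\delta s=(I/(I-\Ith))\,\delta v^-$, so the saltation factor is $S(I)=I/(I-\Ith)$.

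\textbf{Ergodic averaging.} Over $[0,t]$ we get
\[
\log|\delta v(t)|=\log|\delta v(0)|-t/\tauv+\sum_{s^k\le t}\log S(I(s^k)).
\]
Divide by $t$ and use the ergodic theorem for the stationary Markov process $(v,I)$ with reset. The empirical spike measure $\frac1t\sum_k\delta_{I(s^k)}$ converges to the stationary flux measure $J_\infty(I)\,dI$, the Rice-type formula for the rate of upward threshold crossings with speed $I-\Ith$. This yields $\lambda=-1/\tauv+\int J_\infty\log\bigl(I/(I-\Ith)\bigr)\,dI$.

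\textbf{Integrability near threshold.} Near $I\downarrow\Ith$ we have $J_\infty(I)\le (I-\Ith)\rho_{\max}$. Since $(I-\Ith)|\log(I-\Ith)|$ is bounded and integrable, the integrand is controlled there, using the density bound $\rho_{\max}$ on $[\Ith,\Ith+\alpha_0]$. At large $I$, $\log S\to0$ while the Gaussian tails of $I$ control the flux. This integrability step is the main obstacle, because it is what justifies the ergodic limit: $\log S$ is unbounded, so we must check $\E[\log S]$ per spike is finite before the ergodic theorem applies.

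\textbf{Feedforward max-rule.} Conditioning on matched spike trains, the tangent dynamics is block lower-triangular across layers. Upstream perturbations enter layer $\ell$ only through the jumps $W\,\delta s_{\ell-1}$, a forcing term generated by layer $\ell-1$'s perturbation. The diagonal block contributes growth $e^{\lambda_\ell^{\mathrm{diag}}t}$, computed as above with layer-$\ell$ flux. A Duhamel/variation-of-constants estimate bounds the layer-$\ell$ perturbation by
\[
\int_0^t e^{\lambda_\ell^{\mathrm{diag}}(t-s)}e^{\lambda_{\ell-1}s}\,ds,
\]
whose exponential rate is $\max\{\lambda_\ell^{\mathrm{diag}},\lambda_{\ell-1}\}$. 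For the matching lower bound, pick perturbations lying in the diagonal block or generically excited from upstream. Induction on $\ell$ completes the argument.
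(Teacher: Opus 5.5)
Your single-neuron argument is essentially the paper's. The voltage tangent cocycle splits into the contraction $e^{-t/\tauv}$ between spikes and the saltation factor $S(I)=I/(I-\Ith)$ at spikes. The spike sum is averaged against the Palm/flux measure $J_\infty(I)\,dI$. The singularity at $I\downarrow\Ith$ is absorbed by the factor $a=I-\Ith$ in the flux, since $a\log(1/a)$ is integrable. Your detour through $\delta I$, and the caveat $\tauc<\tauv$, are unnecessary. The statement concerns the voltage-direction exponent, and a single neuron's current does not depend on $v$. So both common-noise copies share the same current path, and the paper simply conditions on $I(\cdot)$ and works with a one-dimensional IF cocycle in $v$.

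The gap is in the feedforward max rule, specifically the lower bound $\lambda_\ell\ge\lambda_{\ell-1}$. The bound $\lambda_\ell\ge\lambda_\ell^{\mathrm{diag}}$ from perturbations supported in layer $\ell$ is fine. But $\lambda_\ell\ge\lambda_{\ell-1}$ does not follow from block lower-triangularity, and ``generically excited from upstream'' is exactly what needs proof; without a nondegeneracy hypothesis it is false. Here is a counterexample:
\begin{itemize}
\item Layer-1 neurons are uncoupled, so $\lambda_1$ is the largest of their single-neuron exponents.
\item Suppose the neuron attaining $\lambda_1$ has no outgoing synapse to layer $2$, while another layer-1 neuron does, so layer $2$ still receives nonzero input.
\item Then layer $2$ is forced only by slower perturbations, and $\lambda_2$ can be strictly below $\max\{\lambda_2^{\mathrm{diag}},\lambda_1\}$.
\end{itemize}
With mixed-sign E/I weights, kicks from different presynaptic neurons can also cancel. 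The paper does not derive this step. Theorem~\ref{thm:lambda_feedforward_max} is explicitly conditional on hypothesis (v), the lower forcing estimate $\sup_{s\le t}\|\delta X_\ell(s)\|\ge c_\ell(1+t)^{-q_\ell}\sup_{s\le t}\|\delta X_{\ell-1}(s)\|$ whenever $\delta X_\ell(0)=0$.

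Your upper bound also takes for granted what the paper assumes as hypothesis (iv). The Duhamel estimate has two unproved ingredients:
\begin{itemize}
\item It needs a two-time bound on the layer-$\ell$ frozen-input propagator, $\|\Phi^{\mathrm{diag}}_\ell(t,s)\|\le C_\varepsilon e^{(\lambda_\ell^{\mathrm{diag}}+\varepsilon)(t-s)}$ uniformly in $s$. This does not follow from defining an exponent as a limit from time $0$.
\item The forcing coefficients have the form $W_{:p}/A_p$ (cf.\ the saltation matrix \eqref{eq:saltation_recurrent}), which are unbounded as $A_p\downarrow 0$.
\end{itemize}
The upper bound is repairable in the stationary ergodic setting. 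Temperedness comes from the multiplicative ergodic theorem, and subexponential growth of $\max_{s^k\le t}\log(1/A^k)$ comes from the same flux integrability of $\log(1/a)$ you already use. The lower bound, however, requires adding an explicit nondegeneracy assumption such as (v).
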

\end{resultbox}

\begin{resultbox}
\begin{theorem*}[Recurrent extensions (deterministic and noisy)]
For deterministic recurrent networks in a mean-driven (uniformly transversal) regime, the hybrid
fundamental matrix admits the explicit product form \eqref{eq:Phi_hyb_product} and the top hybrid
Lyapunov exponent satisfies the explicit upper bound in
Theorem~\ref{thm:lambda_hyb_upper}.
Moreover, for the time-driven Euler scheme with grid reset, deterministic global discretization
error and STE grow at most like $e^{\lambda_{\mathrm{hyb}}T}$ (Theorem~\ref{thm:det_error_lambda}),
so $\lambda_{\mathrm{hyb}}<0$ yields uniform-in-$T$ $O(h)$ accuracy while $\lambda_{\mathrm{hyb}}>0$
implies exponential-in-$T$ error amplification at rate at most $\lambda_{\mathrm{hyb}}$.

For noisy recurrent networks, under rate moment bounds ($R_{\mathrm{net}},R_{\mathrm{net},2}$) and
boundary density controls ($\rho_{\max},\rho^V_{\max}$), we obtain matched-trajectory strong bounds
in which small crossing speeds are handled directly through boundary-flux averaging of the
single-spike impact kernel; pruning is needed only for horizon spike-count mismatch.  Recurrence
enters through loop-truncated causal propagation governed by the shortest directed cycle length of
the synaptic graph together with spike-load and rate constants
(Theorem~\ref{thm:recurrent_goodset_explicit}).  Under the recurrent averaged one-step
rate/strip/factorial-moment package of Section~\ref{sec:recurrent_weak}, weak error remains order~$1$
with constants depending explicitly on $R_{\mathrm{net}},R_{\mathrm{net},2}$, $\|W\|_1$, and the
boundary-strip remainder constants (Theorem~\ref{thm:recurrent_weak_global}).
\end{theorem*}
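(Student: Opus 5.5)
This statement is a summary of Theorems~\ref{thm:lambda_hyb_upper}, \ref{thm:det_error_lambda}, \ref{thm:recurrent_goodset_explicit} and \ref{thm:recurrent_weak_global}. I would prove it in two independent halves, deterministic and noisy, each reusing the feedforward machinery with recurrence as the new ingredient.

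\emph{Deterministic half.} First I build the hybrid fundamental matrix. Between spikes the linearized flow is the constant-coefficient matrix exponential of the $(v,I)$ leak system. At each spike of neuron $p$, uniform transversality ($A\ge a_0>0$) makes the spike time a $C^1$ function of the state by the implicit function theorem. Differentiating the reset gives a saltation matrix. Its $v_p$-row carries the factor $S(I)=I/(I-\Ith)$, and its columns acting on postsynaptic currents carry the shift $W$ times the spike-time sensitivity $-1/A$. Composing flows and saltations in time order gives the product form \eqref{eq:Phi_hyb_product}. Bounding each factor in a column-sum norm, with flow factors at most $e^{-t\min(1/\tauv,1/\tauc)}$ (suitably normalized) and saltation factors at most $\max(S,1)+\|W\|_1/a_0$, then taking $\frac1T\log$ of the product, yields the upper bound for $\lambda_{\mathrm{hyb}}$ in terms of leak rates, firing rates and these jump norms.

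For Theorem~\ref{thm:det_error_lambda} I use the discrete variation-of-constants formula. On each step the Euler local defect is $O(h^2)$ on smooth stretches. On a step containing a spike, the grid reset delays the spike by at most $h$, giving a local state defect of $O(h)$ that only occurs once per spike. Propagating all defects by the hybrid fundamental matrix and summing (at most $O(T/h)$ smooth steps and $O(T)$ spike steps) gives error $\lesssim h\,e^{\lambda_{\mathrm{hyb}}T}$. Transversality keeps the spike ordering stable for small $h$, so the matched structure never breaks. The STE bound follows by dividing the voltage error by $a_0$.

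\emph{Noisy strong half.} I will repeat the Section~\ref{sec:strong} matched argument with neurons indexed by causal depth rather than by layer. On $\mathcal G^{\mathrm{match}}$, the STE impact of a neuron is bounded by two terms. The first is its own subthreshold EM error, plus $h$ times the boundary-flux integral of the single-spike impact kernel (using $\rho_{\max}$). The second is $H(T,h)$ applied to the impacts of its presynaptic neurons. Iterating gives $\sum_{k}H^k$ applied to the source vector. Chains shorter than $L^\ast$ are acyclic, so each contributes a factor $(1+|\log h|)$ exactly as in the feedforward case. Chains of length $\ge L^\ast$ must close a cycle, and their total contribution is truncated through $K_{\mathrm{eff}}(T)$, using the spike-load bound $\Lambda$ to control how many time-ordered repetitions are possible. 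Mismatch is handled by Lemma~\ref{lem:mis_prob} unchanged, since that lemma is local to each neuron. \textbf{The main obstacle is this loop truncation}: showing that $\sum_k H^k$ converges, or is finitely truncated, without assuming $\|H\|<1$. My first attempt would be to exploit time-ordering: each link in a chain consumes a distinct spike, so the chain length is at most the total spike count, and its expectation is controlled by $R_{\mathrm{net}}T$.

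\emph{Noisy weak half.} I telescope $\E\Phi(X^h_M)-\E\Phi(X_M)=\sum_m\E[(Q_h-P_{t_m,t_{m+1}})u(t_{m+1},\cdot)(X^h_m)]$ exactly as in Section~\ref{sec:weak}. Backward transmission cancels the leading jump defect. What remains is bounded by three terms:
\begin{itemize}
\item a smooth Taylor term of size $O(h^2)$, using $C_b^4$ bounds on $u$;
\item a single-spike strip term, of size at most $\|W\|_1\,R_{\mathrm{net}}h\cdot h$ plus $\sqrt{C_\eta}h^{3/2}\sqrt{R_{\mathrm{net}}h}$ by Cauchy--Schwarz;
\item a multi-spike term of size $R_{\mathrm{net},2}h^2$.
\end{itemize}
Summing over $M=T/h$ steps gives $T h\,C$, with $C$ explicit in the stated constants.
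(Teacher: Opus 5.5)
Your deterministic half matches the paper. You use the product form \eqref{eq:Phi_hyb_product} with factorwise norm bounds, and a defect recursion unrolled through $\Phi_{\mathrm{hyb}}$. You propagate defects per grid step while the paper does so per inter-event interval (Lemma~\ref{lem:det_event_recursion}), which amounts to the same thing; your single weighted norm for flows and saltations is, if anything, cleaner than the paper's mixing of $\|\cdot\|_\infty$ and $\|\cdot\|_\ast$. Your noisy strong half also follows the paper's route ($\mathcal M^{\mathrm{rec}}\le b+H\mathcal M^{\mathrm{rec}}$, then causal truncation). However, both noisy halves have a gap at the closing step.

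\emph{Loop truncation.} The paper does not derive the truncation; it assumes it (Assumption~\ref{ass:causal_depth}). That assumption is a deterministic $K_{\mathrm{eff}}(T)$ bounding every time-ordered influence chain on every path of $\mathcal G^{\mathrm{match}}_{\mathrm{rec}}(h,T)$, and it is what licenses discarding $H^{K+1}\mathcal M^{\mathrm{rec}}$ at $K=K_{\mathrm{eff}}(T)$. Your proposed fix, bounding the expected chain length by $R_{\mathrm{net}}T$, cannot do this. The recursion is an inequality between deterministic vectors of expectations, so the cutoff must be a fixed integer valid pathwise on the matched event. A bound on the mean chain length removes the remainder for no fixed $K$. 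The workable version of your ``each link consumes a distinct spike'' idea is the pathwise one, $K_{\mathrm{eff}}(T)\le\Lambda^{\mathrm{rec}}_{\mathrm{tot}}(T)$, where $\Lambda^{\mathrm{rec}}_j$ is a supremum over the matched event; the paper lists $K_{\mathrm{eff}}\approx R_{\mathrm{net}}T$ only as a heuristic. After truncation, $L^\ast$ enters purely algebraically: $\sum_{k\le K_{\mathrm{eff}}}H^k$ is regrouped into blocks of length $L^\ast$ to obtain \eqref{eq:loop_block_bound}--\eqref{eq:loop_block_geometric}. Your observation that short chains are acyclic plays no role.

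\emph{Weak boundary strip.} Transmission does not cancel the jump defect on the discrepancy event $D^{\mathrm{rec}}_{i,m}$; it only makes it vanish linearly, $|g_i|\le M_v(T)\,(\Vth-V_i^h(t_m))$ (Lemma~\ref{lem:weak_jump_linear}). Assumption~\ref{ass:weak_rec_step_pkg}(iv) gives only $\Vth-V_i^h(t_m)\le h\,a_i^++\eta^{\mathrm{rec}}_{i,m}$ on that event.
\begin{itemize}
\item The piece $h\,a_i^+$ is genuinely $O(h)$, since discrepancies occur near the EM decision boundary $\Vth-V_i^h\approx h a_i$, and your term list drops it.
\item Bounding $\E[h\,a_i^+\1_{D}]$ by Cauchy--Schwarz with $\Pp(D)=O(h)$ gives only $O(h^{3/2})$ per step, i.e.\ weak order $1/2$.
\item The paper gets $h^2$ by integrating the linearly vanishing defect against the bounded numerical strip density: $\E[d\,\1_{\{0<d\le w\}}]\le\rho^V_{\max,i}(T)\,\E[w^2]/2$ with $w=h\,a_i^++\eta^{\mathrm{rec}}_{i,m}$ (Lemma~\ref{lem:weak_missed}). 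This is where $\rho^V_{\max,i}$ and $A_{2,i}$ enter $C^{\mathrm{rec}}_{\mathrm{weak}}$, and your constant has neither.
\item Your bound $\Pp(D)\lesssim R_{\mathrm{net}}h$ is also unjustified. $D$ includes false positives (the grid fires, the exact process does not), which are not exact spikes and so are not controlled by $R_{\mathrm{net}}$; the strip density is again what controls them.
\end{itemize}
If you intended to measure the defect at the step endpoint, where the EM and exact voltages straddle $\Vth$ within $O(h^{3/2})$, that is a different route. It needs a two-sided coupling estimate not contained in Assumption~\ref{ass:weak_rec_step_pkg}(iv), and false positives would still need density control. Finally, your $\|W\|_1R_{\mathrm{net}}h^2$ is the separate timing/decay term (moving the exact synaptic jump to $t_{m+1}$), not part of the strip term.
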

\end{resultbox}

\section{Strong error via spike-time analysis}\label{sec:strong}

This section analyzes the strong error through spike times.  The central point is that in the simulation of a feedforward spiking network, if there is no spike from upstream layers, then neurons are essentially isolated. Therefore, one needs to carefully study how local Euler-Maruyama error gives rise to spike-time error (STE), and the \emph{impact} of STE on downstream synaptic currents.  

One important caveat is that the numbers of simulated spikes should match the real trajectory so one can pair them together, otherwise there is no STE to speak of. If we ignore the probability of mismatch of spiking numbers,  we can therefore proceed in two
steps. 
\begin{itemize}
    \item First, for a single matched spike we prove a conditional tail bound for the spike-time error by viewing it as a local hitting-time perturbation of the threshold event.
    \item Second, we integrate the corresponding spike-time \emph{impact} directly against the boundary flux distribution of crossing speeds.  This leads to a direct matched-trajectory recursion for the expected strong error, without any good/bad split inside the matched regime.
\end{itemize}

Spike mismatch in the given simulation horizon is treated separately in
Section~\ref{subsec:mismatch_count}.  These trajectories are not included in the final matched
strong error estimate; we will argue that, in practice, they are removed from the strong-error statistic.

\subsection{Warm-up: strong error for single neurons with noisy voltage}\label{subsec:warmup_vnoise}
We use one warm-up example to illustrate the importance of isolating spike mismatch. 
This warm-up isolates the mechanism already visible in the earlier single-neuron analysis:
away from threshold, Euler--Maruyama behaves as for a standard Ornstein--Uhlenbeck step,
while the reset can create an $\mathcal O(1)$ discrepancy only when the exact and numerical
paths disagree about the \emph{number of spikes} in a time step. 

Consider a single LIF neuron with \emph{voltage diffusion}
\begin{equation}\label{eq:warmup_vSDE}
dv(t)=b(v(t))\,dt+\sigma_v\,dB(t),
\qquad
b(v):=-\frac{1}{\tauv}(v-\Vr)+\bar I,
\qquad
v(t)<\Vth,
\end{equation}
with instantaneous reset
\[
v(s^-)=\Vth \quad\Longrightarrow\quad v(s^+)=\Vr.
\]
Let $N(t)$ be the exact spike count, and fix $T=Mh$.

The Euler--Maruyama approximation on the grid $t_m=mh$ is
\begin{equation}\label{eq:warmup_EM}
v^h(t_{m+1})
=
v^h(t_m)+h\,b\!\bigl(v^h(t_m)\bigr)+\sigma_v\Delta B_m,
\qquad
\Delta B_m:=B(t_{m+1})-B(t_m),
\end{equation}
followed by the grid reset rule
\[
v^h(t_{m+1})\ge \Vth
\quad\Longrightarrow\quad
v^h(t_{m+1})\leftarrow \Vr.
\]
Let $N^h(t)$ be the corresponding numerical spike count and define the grid-point error
\[
e_m:=v^h(t_m)-v(t_m),
\qquad
0\le m\le M.
\]

For each step write
\[
\Delta N_m:=N(t_{m+1})-N(t_m),
\qquad
\Delta N_m^h:=N^h(t_{m+1})-N^h(t_m)\in\{0,1\},
\]
and define the stepwise spike-count mismatch event
\begin{equation}\label{eq:warmup_Bmis}
B^{\mathrm{mis}}(h,T)
:=
\bigcup_{m=0}^{M-1}\{\Delta N_m\neq \Delta N_m^h\},
\qquad
G^{\mathrm{mis}}(h,T):=\bigl(B^{\mathrm{mis}}(h,T)\bigr)^c.
\end{equation}
Since the numerical scheme can generate at most one spike per step, on $G^{\mathrm{mis}}(h,T)$
every step contains either no spike for either trajectory or exactly one spike for both.

\begin{theorem}[Warm-up: matched-step strong bound and pruning]\label{thm:warmup_pruning}
Fix $T=Mh>0$ and assume
\begin{equation}\label{eq:warmup_moment_ass}
\sup_{0\le t\le T}\E|v(t)|^4<\infty,
\qquad
\max_{0\le m\le M}\E|v^h(t_m)|^4<\infty,
\qquad
\E[N(T)]<\infty.
\end{equation}
Then there exists a constant $C(T)$, independent of $h$, such that
\begin{equation}\label{eq:warmup_goodset_bound}
\max_{0\le m\le M}\E\bigl[|e_m|^2\,\1_{G^{\mathrm{mis}}(h,T)}\bigr]
\le C(T)\,h.
\end{equation}

If, in addition, there exists a deterministic constant $C_{\max}(T)$ such that
\[
|e_m|\le C_{\max}(T)
\qquad\text{for all }0\le m\le M\text{ on }B^{\mathrm{mis}}(h,T),
\]
then
\begin{equation}\label{eq:warmup_strong_decomp}
\max_{0\le m\le M}\E|e_m|^2
\le
C(T)\,h + C_{\max}(T)^2\,\Pp\!\bigl(B^{\mathrm{mis}}(h,T)\bigr).
\end{equation}
Consequently, any estimate $\Pp(B^{\mathrm{mis}}(h,T))=O(h^\beta)$ yields
\[
\max_{0\le m\le M}\E|e_m|^2 = O(h)+O(h^\beta).
\]
\end{theorem}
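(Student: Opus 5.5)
The plan is a one-step error recursion for $e_m$ that runs only along the matched-step event, followed by a discrete Gronwall argument. The pruning bound \eqref{eq:warmup_strong_decomp} then comes from splitting $\E|e_m|^2$ over $G^{\mathrm{mis}}$ and $B^{\mathrm{mis}}$.

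\emph{Step 1: one-step decomposition.} Write the exact step as $v(t_{m+1})=v(t_m)+\int_{t_m}^{t_{m+1}}b(v(s))\,ds+\sigma_v\Delta B_m-(\Vth-\Vr)\Delta N_m$. This holds because each reset removes exactly $\Vth-\Vr$ from the integrated dynamics. Similarly, the numerical step is $v^h(t_{m+1})=v^h(t_m)+h\,b(v^h(t_m))+\sigma_v\Delta B_m-\bigl(v^h_{\mathrm{pre}}(t_{m+1})-\Vr\bigr)\Delta N^h_m$, where $v^h_{\mathrm{pre}}\ge\Vth$ is the pre-reset value. Subtracting gives
\[
e_{m+1}=\Bigl(1-\tfrac{h}{\tauv}\Bigr)e_m+r_m+J_m,
\qquad
r_m:=\int_{t_m}^{t_{m+1}}\bigl(b(v(t_m))-b(v(s))\bigr)\,ds,
\]
where $J_m$ is the reset discrepancy. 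The noise cancels because $b$ is affine with Lipschitz constant $1/\tauv$. On steps with $\Delta N_m=\Delta N^h_m=0$ we have $J_m=0$. On steps with $\Delta N_m=\Delta N^h_m=1$ we get $J_m=-(v^h_{\mathrm{pre}}(t_{m+1})-\Vth)$, the overshoot of the numerical path. This overshoot is $O(h|b|+\sigma_v|\Delta B_m|)$, since the pre-step value was below $\Vth$.

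\emph{Step 2: moment bounds on the residuals.} For the drift residual, $\E[r_m^2\1_G]\le C h^3$ away from spikes; this uses $\sup\E|v|^4$ and the Itô increment $v(s)-v(t_m)=O(\sqrt h)$. Inside a step containing an exact spike, $v(s)$ jumps by $\Vth-\Vr$, so $r_m$ is only $O(h)$ on that step. Its contribution is therefore bounded by $h^2\,\E[\Delta N_m]$. The overshoot satisfies $\E[J_m^2\1_{\{\Delta N^h_m=1\}}]\lesssim h\,\E[\Delta N_m]+h^2$ on matched steps; to get this I use $\Delta N_m=\Delta N_m^h$ on $G$ and Cauchy--Schwarz with the fourth moments.

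\emph{Step 3: recursion and Gronwall.} Multiply the error recursion by $\1_{G}$. Since $G\subset G_{m+1}:=\bigcap_{k\le m}\{\Delta N_k=\Delta N^h_k\}$, it suffices to work with the nested events $G_m$ and set $a_m:=\E[e_m^2\1_{G_m}]$. Using $(x+y)^2\le(1+h)x^2+(1+h^{-1})y^2$ gives
\[
a_{m+1}\le(1+Ch)a_m+C\bigl(h^2+\E[\Delta N_m]\,h\bigr)+C h^{-1}\E[r_m^2].
\]
This needs a little care: the cross term $e_m J_m$ is not a martingale difference, which is why the $(1+h^{-1})$ weighting is used instead. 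Summing, with $\sum_m\E[\Delta N_m]=\E N(T)<\infty$, gives $a_m\le C(T)h$ by discrete Gronwall. This proves \eqref{eq:warmup_goodset_bound}.

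\emph{Main obstacle.} I expect the overshoot term to be the main obstacle. The crude bound $J_m^2\lesssim h$ per spike step, multiplied by the weight $h^{-1}$, gives $O(1)$ per spike, and $O(\E N(T))$ in total, which does not vanish. The first thing I would try is to avoid the $h^{-1}$ weighting for $J_m$ and instead add $J_m$ directly into the telescoped sum $e_M=\sum_m(1-h/\tauv)^{M-1-m}(r_m+J_m)$. Then $\E[(\sum J_m)^2\1_G]\le\E[N(T)\sum J_m^2]\lesssim h$ by Cauchy--Schwarz, using the fourth-moment assumption to control $N(T)^2$-type weights (or $\E N(T)^2$ if needed).

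\emph{Step 4: pruning.} Split
\[
\E|e_m|^2=\E[|e_m|^2\1_G]+\E[|e_m|^2\1_{B^{\mathrm{mis}}}].
\]
The first term is at most $C(T)h$ by Step 3, and the second is at most $C_{\max}(T)^2\,\Pp(B^{\mathrm{mis}})$ by the deterministic bound on $B^{\mathrm{mis}}$. This gives \eqref{eq:warmup_strong_decomp}, and the final consequence is immediate.
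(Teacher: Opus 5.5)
There is a genuine gap in how you treat matched spike steps, and your telescoping fallback does not close it under the stated hypotheses. The missing idea is that on a matched spike step both trajectories are reset, so the error is erased rather than propagated. Since $v^h(t_{m+1})=\Vr$ and the exact path restarts from $\Vr$ at the stopping time $s_m$,
\[
e_{m+1}=-\int_{s_m}^{t_{m+1}}b(v(s))\,ds-\sigma_v\bigl(B(t_{m+1})-B(s_m)\bigr),
\]
and this does not depend on $e_m$. In your decomposition the cancellation is hidden inside $J_m$. Inserting $\Vth=v(t_m)+\int_{t_m}^{s_m}b(v(s))\,ds+\sigma_v\bigl(B(s_m)-B(t_m)\bigr)$ gives
\[
J_m=-e_m-h\,b\bigl(v^h(t_m)\bigr)+\int_{t_m}^{s_m}b(v(s))\,ds-\sigma_v\bigl(B(t_{m+1})-B(s_m)\bigr).
\]
So $J_m$ is not a small independent forcing; it cancels the $(1-h/\tauv)e_m$ term. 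Treating it as forcing is exactly what necessitates the $(1+h^{-1})$ weight in Step 3 and produces the non-vanishing $O(\E N(T))$ total. With the reset identity instead, you get $\E\bigl[e_{m+1}^2\1_{G_{m+1}\cap\{\Delta N_m=1\}}\bigr]\le Ch^2+Ch\,\Pp(\Delta N_m\ge 1)$, with no $a_m$ on the right. Hence $a_{m+1}\le(1+Ch)a_m+Ch^2+Ch\,\Pp(\Delta N_m\ge1)$, and Gronwall needs only $\sum_m\Pp(\Delta N_m\ge1)\le\E N(T)$.

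The telescoping fallback does not rescue your route. The $J_m$ all have the same sign, because the overshoot is nonnegative, so there is no cancellation among them. After the Cauchy--Schwarz step you therefore still need $\E\bigl[N(T)\sum_m J_m^2\1_G\bigr]\lesssim h$, which is second-moment control of the spike count jointly with the Brownian increments. The theorem assumes only $\E N(T)<\infty$, and the fourth-moment bounds on $v$ give no information about $N(T)^2$.

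Independently, the per-step overshoot bound in Step 2 does not follow from Cauchy--Schwarz. From $|J_m|\le h|b(v^h(t_m))|+\sigma_v|\Delta B_m|$ you only get $\E\bigl[\Delta B_m^2\1_{\{\Delta N_m=1\}}\bigr]\le\sqrt3\,h\,\Pp(\Delta N_m=1)^{1/2}$. Summed over the $T/h$ steps this is $O(\sqrt h)$, not $O(h)$, because the full-step increment $\Delta B_m$ is correlated with the spike event. The paper sidesteps this by keeping only the post-spike increment $B(t_{m+1})-B(s_m)$. By the strong Markov property at $s_m$, its second moment on the spike event is at most $h$ times the spike probability.

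Your Step 4 pruning split is correct and coincides with the paper's.
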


\begin{proof}
For $m=0,\dots,M$, define the good event up to time $t_m$ by
\[
G_m:=\bigcap_{r=0}^{m-1}\{\Delta N_r=\Delta N_r^h\},
\qquad
G_0:=\Omega.
\]
Then $G_M=G^{\mathrm{mis}}(h,T)$ and $G_{m+1}\subset G_m$.

Set
\[
a_m:=\E\bigl[|e_m|^2\,\1_{G_m}\bigr].
\]
We derive a recursion for $a_m$.

\medskip
\noindent\textbf{Step 1: no spike in the step.}
On $G_{m+1}\cap\{\Delta N_m=0\}$ we also have $\Delta N_m^h=0$, so no reset occurs for
either trajectory on $[t_m,t_{m+1}]$.  The exact solution satisfies
\[
v(t_{m+1})
=
v(t_m)+\int_{t_m}^{t_{m+1}} b(v(s))\,ds+\sigma_v\Delta B_m,
\]
whereas the numerical step is
\[
v^h(t_{m+1})
=
v^h(t_m)+h\,b\!\bigl(v^h(t_m)\bigr)+\sigma_v\Delta B_m.
\]
Subtracting and using the linearity of $b$ gives
\[
e_{m+1}
=
\Bigl(1-\frac{h}{\tauv}\Bigr)e_m-r_m,
\qquad
r_m:=\int_{t_m}^{t_{m+1}}\bigl(b(v(s))-b(v(t_m))\bigr)\,ds.
\]
Since $b$ is Lipschitz,
\[
|r_m|^2
\le
C h\int_{t_m}^{t_{m+1}} |v(s)-v(t_m)|^2\,ds.
\]
By the standard moment estimate for an additive-noise OU step and
\eqref{eq:warmup_moment_ass},
\[
\sup_{s\in[t_m,t_{m+1}]}\E\bigl[|v(s)-v(t_m)|^2\bigr]\le C(T)\,h,
\]
hence
\[
\E|r_m|^2\le C(T)\,h^3.
\]
Using $2ab\le h a^2+h^{-1}b^2$, we obtain
\[
|e_{m+1}|^2
\le
(1+C h)|e_m|^2 + C h^{-1}|r_m|^2,
\]
and therefore
\begin{equation}\label{eq:warmup_nospike_rec}
\E\bigl[|e_{m+1}|^2\,\1_{G_{m+1}\cap\{\Delta N_m=0\}}\bigr]
\le
(1+C h)\,a_m + C(T)\,h^2.
\end{equation}

\medskip
\noindent\textbf{Step 2: exactly one exact spike and one numerical spike in the step.}
On $G_{m+1}\cap\{\Delta N_m=1\}$ we have $\Delta N_m^h=1$ as well.  Let
$s_m\in(t_m,t_{m+1}]$ be the exact spike time in this step.  The numerical scheme resets at
the grid point, so
\[
v^h(t_{m+1})=\Vr.
\]
Since the exact solution resets at $s_m$ and then evolves again until $t_{m+1}$,
\[
v(t_{m+1})-\Vr
=
\int_{s_m}^{t_{m+1}} b(v(s))\,ds
+
\sigma_v\bigl(B(t_{m+1})-B(s_m)\bigr),
\]
and hence
\[
e_{m+1}
=
-\int_{s_m}^{t_{m+1}} b(v(s))\,ds
-\sigma_v\bigl(B(t_{m+1})-B(s_m)\bigr).
\]
Using $(x+y)^2\le 2x^2+2y^2$, then Cauchy--Schwarz for the drift term, and finally
\eqref{eq:warmup_moment_ass}, we obtain
\begin{align*}
\E\bigl[|e_{m+1}|^2\,\1_{G_{m+1}\cap\{\Delta N_m=1\}}\bigr]
&\le
2\,\E\!\left[\left|\int_{s_m}^{t_{m+1}} b(v(s))\,ds\right|^2 \1_{\{\Delta N_m=1\}}\right]
+
2\sigma_v^2\,\E\!\left[(t_{m+1}-s_m)\,\1_{\{\Delta N_m=1\}}\right] \\
&\le
C h \int_{t_m}^{t_{m+1}} \E\!\left[(1+|v(s)|^2)\,\1_{\{\Delta N_m=1\}}\right] ds
+
2\sigma_v^2 h\,\Pp(\Delta N_m=1) \\
&\le
C(T)\,h^2 + C(T)\,h\,\Pp(\Delta N_m=1).
\end{align*}

\medskip
\noindent\textbf{Step 3: recursion on the good set.}
Combining the two cases and using $G_{m+1}\subset G_m$ yields
\[
a_{m+1}
\le
(1+C h)a_m + C(T)\,h^2 + C(T)\,h\,\Pp(\Delta N_m=1).
\]
Since $a_0=0$, discrete Gronwall gives
\[
a_m
\le
C(T)\left(
h + h\sum_{r=0}^{m-1}\Pp(\Delta N_r=1)
\right),
\qquad 0\le m\le M.
\]
Finally,
\[
\sum_{r=0}^{M-1}\Pp(\Delta N_r=1)
=
\E\!\left[\sum_{r=0}^{M-1}\1_{\{\Delta N_r=1\}}\right]
\le
\E[N(T)]<\infty,
\]
so \eqref{eq:warmup_goodset_bound} follows.

If $|e_m|\le C_{\max}(T)$ on $B^{\mathrm{mis}}(h,T)$, then
\[
\E|e_m|^2
=
\E\bigl[|e_m|^2\,\1_{G^{\mathrm{mis}}(h,T)}\bigr]
+
\E\bigl[|e_m|^2\,\1_{B^{\mathrm{mis}}(h,T)}\bigr]
\le
C(T)\,h + C_{\max}(T)^2\,\Pp\!\bigl(B^{\mathrm{mis}}(h,T)\bigr),
\]
uniformly in $m$, which proves \eqref{eq:warmup_strong_decomp}.
\end{proof}

\begin{remark}[How this warm-up relates to the current-based analysis]
On matched steps, ordinary OU consistency remains intact, and the only extra cost caused by
the reset is the short post-spike excursion of the exact solution over an interval of length at
most $h$.  Because the expected number of spikes on $[0,T]$ is finite, these matched spike
steps still contribute only $O(h)$ to the finite-horizon mean-square error.

On the other hand, for voltage diffusion, the mismatch itself is the
classical grid-monitoring exit-time error: the exact path can hit $\Vth$ and return below it
before the next grid point.  Standard boundary-layer estimates for Euler schemes then give
\[
\Pp\!\bigl(B^{\mathrm{mis}}(h,T)\bigr)\lesssim T\,h^{1/2},
\]
see, for example, \cite{BouchardMenozzi2009,BouchardGeissGobet2017}.  Under a deterministic
post-mismatch bound, this recovers mean-square order at least $h^{1/2}$, i.e. root-mean-square
order at least $h^{1/4}$.
\end{remark}

\medskip
From now on, we return to the analysis of current-noise LIF neurons in
Eq.~\eqref{eq:dv} and \eqref{eq:dI}, where the voltage has no Brownian diffusion.  The
Brownian recrossing mechanism therefore disappears, and the dominant strong obstruction becomes
a nearly tangential crossing, quantified by the small speed $A=I-\Ith$ and corresponding large STE.  This is also why the
main theory does \emph{not} use the stepwise mismatch event \eqref{eq:warmup_Bmis}: pruning
after the first transient stepwise disagreement is too pessimistic.  Starting from Section~\ref{subsec:single_strong} we simply assume
$N^h_{\ell,j}(T) = N_{\ell,j}(T)$, and defer the analysis of horizon mismatch to Section~\ref{subsec:mismatch_count}.


\subsection{Single-spike hitting-time comparison and conditional STE tail}\label{subsec:single_strong}

We now replace the deterministic catch-up map by a genuine first-passage problem.
To keep the notation readable, throughout this subsection and the next two subsections we suppress
the neuron and spike indices unless they are needed explicitly.

Fix one matched spike pair.  Let $s$ be the exact spike time, let $\tilde s$ be the threshold time
of the continuous-time EM interpolation before the grid reset is applied, and let $\hat s$ be the
grid-registered numerical spike time.  Thus
\begin{equation}\label{eq:grid_lag}
0\le \hat s-\tilde s\le h.
\end{equation}
The spike-time error is
\begin{equation}\label{eq:STE_local}
\varepsilon:=\hat s-s.
\end{equation}

At the exact spike time the threshold-crossing speed is
\begin{equation}\label{eq:A_def_strong}
A
:=
\dot v(s^-)
=
-\frac{1}{\tauv}(\Vth-\Vr)+I(s^-)
=
I(s^-)-\Ith.
\end{equation}

\paragraph{Local pre-reset comparison.}
Write $r:=s\wedge \tilde s$.  Let $v^{\sharp}$ and $v^{h,\sharp}$ be the exact and numerical
\emph{pre-reset continuations} near $r$: both evolve by the subthreshold dynamics and neither reset
is applied at $s$ or $\tilde s$.  At the earlier threshold time $r$, exactly one of the two
pre-reset continuations is at threshold; the other one is the lagging continuation.  Define the
local voltage gap
\begin{equation}\label{eq:Dk_def}
D:=\Vth-v^{\mathrm{lag}}(r)\in[0,\infty).
\end{equation}

\begin{assumption}[Local isolated catch-up window]\label{ass:excursion_window}
Take $\delta_\ast(h,T)\in(0,1]$ as the largest catch-up time for the matched spike pair.  For the local comparison around each matched spike we assume:
\begin{itemize}
    \item[(i)] on $[r,r+\delta_\ast(h,T)]$ no presynaptic spike arrives at the neuron;
    \item[(ii)] on that window the deterministic input is frozen at its crossing value, so the
    current excess $J:=I-\Ith$ is an OU process started at $A$ and mean-reverting to the same value
    $A$;
    \item[(iii)] the numerical spike used in the strong-error statistic is the grid-registered one,
    hence \eqref{eq:grid_lag} always holds.
\end{itemize}
For layer $1$ there are no presynaptic jumps, and only the local freezing of the deterministic input
is used.  In the feedforward propagation step below we apply the same isolated-window model
conditionally to deeper layers.
\end{assumption}

The first local input needed by the hitting-time analysis is the law of the local EM voltage gap.

\begin{proposition}[First-layer local EM gap estimate]\label{ass:local_EM_tail}
Assume that each deterministic input $b_{\ell,j}$ is bounded and Lipschitz in time.  Then for the
first layer there exist constants
\[
C_1^{\mathrm{loc}}(T)>0,\qquad
C_1^{\mathrm{sg}}(T)>0,\qquad
c_1^{\mathrm{sg}}(T)>0
\]
such that, for every neuron $(1,j)$, every matched spike index $k$ with $s^k_{1,j}\le T$, and every
$r\in(0,1]$,
\begin{equation}\label{eq:D_tail_direct}
\Pp\!\Bigl(
D^k_{1,j}>r
\,\Big|\,
\mathcal F^{k-}_{1,j},A^k_{1,j}
\Bigr)
\le
C_1^{\mathrm{sg}}(T)
\exp\!\Bigl(
-c_1^{\mathrm{sg}}(T)\,\frac{r^2}{h}
\Bigr),
\end{equation}
and
\begin{equation}\label{eq:D_second_moment_direct}
\E\!\Bigl[
(D^k_{1,j})^2
\,\Big|\,
\mathcal F^{k-}_{1,j},A^k_{1,j}
\Bigr]
\le
C_1^{\mathrm{loc}}(T)\,h.
\end{equation}
In fact the proof gives the sharper tail
\[
\Pp\!\Bigl(
D^k_{1,j}>r
\,\Big|\,
\mathcal F^{k-}_{1,j},A^k_{1,j}
\Bigr)
\le
\widetilde C_1(T)
\exp\!\Bigl(
-\widetilde c_1(T)\,\frac{r^2}{h^2}
\Bigr).
\]
\end{proposition}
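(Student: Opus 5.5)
The plan is to reduce $D$ to the pathwise gap between the exact and numerical pre-reset voltages at the common time $r$. Then we control that gap by an explicit Gaussian decomposition of the EM error on a first-layer neuron, which has no presynaptic jumps.

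Since exactly one of the two continuations sits at threshold at $r$, we have
\[
D=\abs{v^{h,\sharp}(r)-v^{\sharp}(r)}.
\]
So it suffices to bound the voltage error $E_v(t):=v^{h,\sharp}(t)-v^\sharp(t)$ at $t=r$, conditionally on $\mathcal F^{k-}_{1,j}$ and $A^k_{1,j}$.

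\textbf{Error on a matched first-layer trajectory.} On matched histories both trajectories reset at each previous matched spike. The voltage error at the start of the current inter-spike interval is therefore only the post-reset offset from the previous grid lag. This offset is at most $h\sup\abs{\dot v}$, and by the same computation as Step~2 of Theorem~\ref{thm:warmup_pruning} it is $O(h)$ up to the size of the current. On the interval itself the current error $E_I:=I^h-I$ satisfies the linear recursion from \eqref{eq:EM_I} with no jump term. It splits into three parts:
\begin{itemize}
\item a drift-freezing part, which is $O(h)$ deterministically because $b$ is bounded and Lipschitz in time;
\item a part of the form $-\frac{h}{\tauc}\sum e^{-(\cdot)}\,(I(t)-I(t_m))$;
\item a stochastic part $\frac{\sigma}{\tauc}\int (B(t)-B(t_m))\,dt$-type, which is a centered Gaussian with variance $O(h^2)$ per unit time, because additive noise makes EM coincide with Milstein.
\end{itemize}
The voltage error is then obtained by integrating $E_I$ against the contraction $e^{-(t-s)/\tauv}$, plus an analogous freezing error in \eqref{eq:EM_v} involving $\int (v(t)-v(t_m))\,dt$. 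Each piece is either deterministic $O(h)$ or a Gaussian functional of the Brownian path with standard deviation $O(h)$. Crucially, the coefficients are linear, so $E_v(r)$ is an affine functional of the Brownian path on the inter-spike window.

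\textbf{Conditioning.} This is where the obstacle lies. The conditioning on $A^k=I(s^-)-\Ith$ and on the past, and the fact that $r$ is itself a random (hitting) time, destroy exact Gaussianity. I would handle it as follows:
\begin{enumerate}
\item Bound $\sup_{t\le T}\abs{E_v(t)}$ instead of $E_v(r)$. This removes the dependence on the random time $r$.
\item Write the Brownian path as the Gaussian bridge decomposition given the value $I(s^-)$. Conditioning a Gaussian process on a linear functional leaves a Gaussian process whose conditional variance is no larger, plus a mean shift linear in $A$. Over the window this mean shift is multiplied by $h$, hence is $O(h(1+A))$.
\item Deal with conditioning on $\mathcal F^{k-}$ through the strong Markov property at the previous spike, so that only the fresh Brownian increments matter.
\end{enumerate}

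\textbf{Tail and moment.} A Borell--TIS (or Doob maximal) inequality on the sup of the Gaussian part, with variance proxy $C h^2$, gives
\[
\Pp(D>r\mid\cdot)\le \widetilde C_1 e^{-\widetilde c_1 r^2/h^2}.
\]
Two points need care here:
\begin{itemize}
\item The terms proportional to $A$ or to $\sup\abs{I}$ are not uniformly bounded. I would absorb them using the Gaussian tails of the OU current together with the restriction $r\le 1$, adjusting the constants.
\item The deterministic $O(h)$ offsets are absorbed into the constant $\widetilde C_1$. This works because for $r\le Ch$ the bound is trivial after enlarging $\widetilde C_1$.
\end{itemize}
Since $h^2\le h$ for $h\le1$, the sharper tail implies \eqref{eq:D_tail_direct}. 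Integrating the tail, $\E[D^2\mid\cdot]=\int_0^\infty 2r\,\Pp(D>r\mid\cdot)\,dr$, gives $O(h^2)\le C_1^{\mathrm{loc}}h$. For $r>1$ we use a crude fourth-moment bound, which is valid because $D\le \abs{v^h}+\abs{v}$. This yields \eqref{eq:D_second_moment_direct}.
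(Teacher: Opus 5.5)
Your skeleton is the same as the paper's. You split the first-layer current error into an $O(h)$ deterministic freezing bias plus a centered Gaussian part of variance $O(h^2)$, pass it through the stable voltage convolution, and bound $D\le Ch+\sup\abs{\Gamma_v}$. You then apply Borell--TIS, weaken $h^2$ to $h$, and integrate the tail.

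\textbf{The gap is the entrance error.} The step that fails is your control of the voltage error at entry to the current interspike interval. You attribute it to the grid lag and import the $O(h)$ bound from Step~2 of Theorem~\ref{thm:warmup_pruning}. That bound holds only on the \emph{stepwise} matched event, where the exact and numerical spikes fall in the same grid step. The main theory explicitly replaces this by horizon matching.
\begin{itemize}
\item Under horizon matching the relevant shift is $\varepsilon^{k-1}=\hat s^{k-1}-s^{k-1}$, not $\hat s^{k-1}-\tilde s^{k-1}$. By Proposition~\ref{lem:STE_sensitivity} it contains the catch-up time $\abs{\tilde s^{k-1}-s^{k-1}}=\tau_{D,A}$, which is of order $D^{k-1}/A^{k-1}\gg h$ for slow crossings.
\item After both resets the voltages differ by roughly $I(s^{k-1})\abs{\varepsilon^{k-1}}$. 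This is the saltation factor $I/A$ of Section~\ref{sec:stability} applied to the previous gap.
\item The offset decays only like $e^{-(t-s^{k-1})/\tauv}$, so it is still of order $\abs{\varepsilon^{k-1}}$ at the next crossing.
\end{itemize}
This offset is neither $O(h)$ nor Gaussian. $\tau_{D,A}$ blows up as $A\downarrow 0$. In the fluctuation-driven case, where $\rho_{1,j}(\Vth,\Ith+a,t)$ does not vanish at $a=0$, the flux puts only polynomially small mass on slow crossings. Hence $\Pp(\abs{\varepsilon^{k-1}}>y)$ is polynomially, not exponentially, small in $h$ at fixed $y$, and it cannot be dominated by $Ce^{-cy^2/h}$, let alone $Ce^{-cy^2/h^2}$.

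Your argument therefore covers only $k=1$. There, no prior reset has occurred, and taking the supremum over $[0,T]$ of the un-reset affine error legitimately removes the random time $r$. For $k\ge 2$ it is the resets, not only the conditioning, that destroy the affine-Gaussian structure of $E_v(r)$.

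\textbf{How the paper handles it.} The paper does not derive the entrance bound either. It asserts that on the isolated comparison window the entrance discrepancy \emph{is exactly the local continuous-time EM defect}. In effect it treats $D$ as a local comparison restarted at window entry, and that premise is what gives $D\le Ch+\sup\abs{\Gamma_v}$. To reproduce the paper's proof, adopt this local reading explicitly as part of the isolated-window model of Assumption~\ref{ass:excursion_window}. Do not try to derive it from the grid lag, which does not work.

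\textbf{A secondary issue with conditioning.} Your Gaussian-bridge step does not work as described. $A^k=I(s^-)-\Ith$ is $I$ evaluated at a first-passage time, so conditioning on it is not Gaussian conditioning on a linear functional. Moreover, once $A^k=a$ is fixed you can no longer use OU tails to absorb an $O(h(1+a))$ shift, so your constants would depend on $a$. The paper merely asserts the conditional variance bound in \eqref{eq:eI_Gauss_decomp}. This last point is therefore a looseness you share with the paper rather than a departure from its route.
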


\begin{proof}
Fix one first-layer neuron and one local isolated comparison window.  Since there is no presynaptic
jump term in layer $1$, the exact current solves
\[
dI(t)= -\frac1{\tauc}\bigl(I(t)-b(t)\bigr)\,dt+\frac{\sigma}{\tauc}\,dB(t),
\]
whereas the continuous-time EM interpolation on each grid step uses the same Brownian path with the
drift frozen at the left endpoint of that step.  Because the coefficients are linear and the noise is
additive, the exact-minus-EM current error on any fixed finite window can be written as
\[
I(t)-I^h(t)=\beta_I(t)+\Gamma_I(t),
\]
where $\Gamma_I$ is a centered Gaussian process measurable with respect to the Brownian increments on
that window and $\beta_I$ is a deterministic bias coming only from freezing the Lipschitz input
$b(\cdot)$.  Standard one-step variation-of-constants formulas therefore yield
\begin{equation}\label{eq:eI_Gauss_decomp}
\sup_{0\le u\le \delta_\ast(h,T)}
\abs{\beta_I(r+u)}\le C(T)\,h,
\qquad
\sup_{0\le u\le \delta_\ast(h,T)}
\operatorname{Var}\!\Bigl(
\Gamma_I(r+u)\,\Big|\,\mathcal F^{k-}_{1,j},A^k_{1,j}
\Bigr)
\le C(T)\,h^2.
\end{equation}
The voltage error is the stable linear convolution of the current error:
\[
v^{\sharp}(r+u)-v^{h,\sharp}(r+u)
=
e^{-u/\tauv}\bigl(v^{\sharp}(r)-v^{h,\sharp}(r)\bigr)
+
\int_0^u e^{-(u-q)/\tauv}\bigl(I(r+q)-I^h(r+q)\bigr)\,dq.
\]
In the first layer the entrance discrepancy on the isolated comparison window is exactly the local
continuous-time EM defect, so the same decomposition gives
\begin{equation}\label{eq:ev_Gauss_bounds}
\sup_{0\le u\le \delta_\ast(h,T)}
\abs{
v^{\sharp}(r+u)-v^{h,\sharp}(r+u)
}
\le
C(T)\,h
+
\sup_{0\le u\le \delta_\ast(h,T)}\abs{\Gamma_v(u)},
\end{equation}
for another centered Gaussian process $\Gamma_v$ satisfying
\[
\sup_{0\le u\le \delta_\ast(h,T)}
\operatorname{Var}\!\Bigl(
\Gamma_v(u)\,\Big|\,\mathcal F^{k-}_{1,j},A^k_{1,j}
\Bigr)
\le C(T)\,h^2.
\]
At the earlier threshold time $r$ the lagging continuation is below threshold by exactly the local
voltage error, so
\[
D^k_{1,j}\le
C(T)\,h+\sup_{0\le u\le \delta_\ast(h,T)}\abs{\Gamma_v(u)}.
\]
Borell--TIS therefore yields the sharper sub-Gaussian tail at scale $h$:
\[
\Pp\!\Bigl(
D^k_{1,j}>C(T)h+x
\,\Big|\,
\mathcal F^{k-}_{1,j},A^k_{1,j}
\Bigr)
\le
\widetilde C_1(T)\exp\!\Bigl(-\widetilde c_1(T)\,\frac{x^2}{h^2}\Bigr).
\]
Since $0<h\le1$, this implies the weaker scale-$\sqrt h$ form \eqref{eq:D_tail_direct} after
enlarging the constants.  Integrating \eqref{eq:D_tail_direct} gives
\eqref{eq:D_second_moment_direct}.
\end{proof}

\begin{remark}[Transfer to deeper feedforward layers]\label{rem:first_layer_transfer}
The proof above is purely local: on the isolated comparison window the current equation is linear,
the Brownian noise is shared, and the local voltage gap is a stable linear functional of the local
current defect.  In deeper feedforward layers we use exactly the same local comparison, but the
variance proxy is no longer $C_1^{\mathrm{loc}}(T)h$; it is replaced by the layerwise deterministic
scale $R_\ell(h,T)$ defined in \eqref{eq:Rell_direct}, which contains both the local current-EM
defect and the inherited synaptic timing error from layer $\ell-1$.
\end{remark}

We now turn to the local catch-up problem itself.

\begin{proposition}[Frozen-OU random catch-up law]\label{lem:STE_sensitivity}\label{ass:excursion_law}
Assume Assumption~\ref{ass:excursion_window}.  Condition on one local comparison window, on a fixed
crossing speed $A=a>0$, and on a fixed local voltage gap $D=d\ge0$.  Then the interpolation-level
spike lag $\abs{\tilde s-s}$ equals the first-passage time
\[
\tau_{d,a}:=\inf\{u\ge0:\ R_u=0\}
\]
of the one-dimensional threshold-gap process
\begin{equation}\label{eq:Theta_def_const}
dR_u
=
\Bigl(J_u-\frac1{\tauv}R_u\Bigr)\,du,
\qquad
R_0=-d,
\qquad
dJ_u
=
-\frac1{\tauc}(J_u-a)\,du+\frac{\sigma}{\tauc}\,dW_u,
\qquad
J_0=a.
\end{equation}
Consequently
\begin{equation}\label{eq:STE_sensitivity}
\abs{\varepsilon}\le h+\tau_{D,A}.
\end{equation}

Moreover, when $d$ is small
\begin{equation}\label{eq:catchup_moments}
m_1 = \E_{d,a}[\tau] = \frac{d}{a} + O(d^2) \qquad m_2 = \E_{d,a}[\tau^2] = \frac{d^2}{a^2} + O(d^3) \\
\end{equation}
\end{proposition}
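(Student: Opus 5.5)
The proof has three parts: identify the lag with a first-passage time, deduce \eqref{eq:STE_sensitivity} from the grid lag, and expand the moments for small $d$.

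\textbf{Step 1: the lag is a first-passage time.} Work on the window $[r,r+\delta_\ast(h,T)]$ of Assumption~\ref{ass:excursion_window}. By (i) and (ii), the lagging pre-reset continuation has no presynaptic jumps there, and its current excess $J=I-\Ith$ is the OU process with $J_0=a$ and mean $a$. Set $R_u:=v^{\mathrm{lag}}(r+u)-\Vth$. Subtracting the threshold from \eqref{eq:dv} gives
\[
-\tfrac1{\tauv}(v-\Vr)+I=-\tfrac1{\tauv}(v-\Vth)+(I-\Ith),
\]
so $dR_u=(J_u-R_u/\tauv)\,du$ and $R_0=-D=-d$ by \eqref{eq:Dk_def}. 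The leading continuation is at threshold at time $r$, so $|\tilde s-s|$ is exactly the time the lagging continuation needs to reach $\Vth$, namely $\tau_{d,a}$.

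There is one identification to make. If the lagging trajectory is the EM interpolation, its local dynamics is the frozen-coefficient version of the exact one. Under (ii), with linear drift and shared noise, this differs from the exact dynamics only through the gap $d$, which is already encoded in $R_0$. I will state this as the modelling content of Assumption~\ref{ass:excursion_window}.

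\textbf{Step 2: the bound \eqref{eq:STE_sensitivity}.} The triangle inequality gives $|\varepsilon|\le|\hat s-\tilde s|+|\tilde s-s|$. The first term is at most $h$ by \eqref{eq:grid_lag}, and the second is $\tau_{D,A}$ by Step 1.

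\textbf{Step 3: small-$d$ moments.} Write $J_u=a+\xi_u$, where $\xi$ is a centered OU process with $\xi_0=0$, so $\xi_u=O(\sqrt u)$ in every $L^p$. Solving the linear equation for $R$ gives
\[
R_u=-de^{-u/\tauv}+\tauv a\,(1-e^{-u/\tauv})+\int_0^u e^{-(u-q)/\tauv}\xi_q\,dq .
\]
The last term is $O(u^{3/2})$ in $L^p$. The deterministic part vanishes at $u_0=\tauv\log(1+d/(\tauv a))=d/a+O(d^2)$, where its slope is about $a>0$.

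The plan is to sandwich $\tau$ around $u_0$ on a good event and control it separately on the complement.
\begin{itemize}
\item \emph{Good event.} Take $E=\{\sup_{u\le 2u_0}|\int_0^u\cdots|\le c\,d^{3/2}\}$. On $E$, the transversal slope gives $|\tau-u_0|\lesssim d^{3/2}/a$.
\item \emph{Bad event.} By Gaussian tails, $\Pp(E^c)$ is small, of order $\exp(-c\,d/u_0^{3}\cdot\ldots)$, super-polynomially small in $d$ for fixed $a$.
\item \emph{Tail of $\tau$.} Positivity of the mean drift $a$ gives exponential moments of $\tau$, since $\Pp(\tau>t)$ decays once $\tauv a\,t$ beats the OU fluctuations. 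This lets me use Cauchy--Schwarz on $E^c$.
\end{itemize}

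Combining these gives $\E[\tau]=u_0+O(d^{3/2})$. To sharpen the error to $O(d^2)$, use the symmetry of $\xi$: the first-order correction $\E\bigl[\int_0^{u_0}\xi\,/a\bigr]$ vanishes, so the next contribution is quadratic, of order $\E\xi^2\,u_0^2/a^3=O(d^3)$. Expanding $u_0$ then yields $m_1=d/a+O(d^2)$, and squaring the same expansion gives $m_2=d^2/a^2+O(d^3)$.

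\textbf{Main obstacle.} The hardest part is making the $O(d^2)$ and $O(d^3)$ remainders rigorous with constants that are explicit in $a$. These constants blow up as $a\downarrow0$, so the statement should be read for fixed $a$, or with $d\ll a$. The central issue is the implicit-function expansion of the hitting time around the noisy path. I would attempt it via Itô/Dynkin on the backward Kolmogorov equation for $w(d,a)=\E\tau$: the generator is $(j-\rho/\tauv)\partial_\rho-\tauc^{-1}(j-a)\partial_j+\tfrac{\sigma^2}{2\tauc^2}\partial_j^2$, with $w=0$ on $\rho=0$. Taylor-expanding $w$ in $d$ gives $\partial_d w|_{d=0}=1/a$. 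The pathwise sandwich above would serve as a fallback if the PDE regularity near the tangential boundary is hard to justify.
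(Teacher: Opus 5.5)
Your Steps 1--2 are fine. They spell out what the paper leaves implicit: its proof goes straight to the moments and treats the first-passage identification and \eqref{eq:STE_sensitivity} as the content of Assumption~\ref{ass:excursion_window}, just as you do. For the moments the paper takes the route of your last paragraph. It writes Dynkin's equations $\mathcal L m_1=-1$ and $\mathcal L m_2=-2m_1$ with zero data on $\{r=0\}$, and uses that tangential derivatives vanish there. This gives $\partial_r m_1(0,j)=-1/j$, $\partial_r m_2(0,j)=0$ and $\partial_{rr}m_2(0,j)=2/j^2$, followed by a Taylor expansion at $(0,a)$. That argument is short, but it presupposes $C^2$-in-$r$ regularity of $m_1,m_2$ up to the boundary. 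For this degenerate kinetic problem that is only plausible near the outflow part $\{r=0,\ j>0\}$, which is also the only place the Dirichlet condition genuinely holds. Your pathwise route, $R_u=g(u)+N(u)$ with $N(u)=\int_0^u e^{-(u-q)/\tauv}\xi_q\,dq$ and $g'(u_0)=a$ exactly, avoids PDE regularity and makes the $a$-dependence explicit. It is a legitimate alternative, but Step~3 as written contains one estimate that fails and two that need correcting.

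\textbf{First: the good event has the wrong cutoff.} $E$ is cut at the fluctuation scale, so its complement is not small. The standard deviation of $\sup_{u\le 2u_0}\abs{N(u)}$ is $\asymp(\sigma/\tauc)u_0^{3/2}\asymp(\sigma/\tauc)(d/a)^{3/2}$. So the cutoff $c\,d^{3/2}$ is a $d$-independent number of standard deviations, and $\Pp(E^c)$ is bounded below by a positive constant: the Gaussian exponent is $\asymp c^2a^3\tauc^2/\sigma^2$, not something that grows as $d\downarrow0$. Cauchy--Schwarz on $E^c$ then leaves an error that is not even $o(d)$.

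Cut instead at the transversality scale. Set $M:=\sup_{u\le 2u_0}\abs{N(u)}$ and $G:=\{M\le a u_0/4\}$. Since $g'\ge a$ on $[0,u_0]$ and $g'\ge a/2$ on $[u_0,2u_0]$ for small $d$, you get pathwise $\abs{\tau-u_0}\le 2M/a$ on $G$. Since $M\le 2u_0\sup_{u\le 2u_0}\abs{\xi_u}$, you get $\Pp(G^c)\lesssim\exp\bigl(-c\,a^3\tauc^2/(\sigma^2 d)\bigr)$. Then take expectations of $M$ rather than bounding $M$ by the cutoff.

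\textbf{Second: the tail argument is wrong as stated.} Because of the leak, $\E R_u\to a\tauv$ saturates; it does not grow like $a\tauv t$. The bound you actually need, $\sup_{d\le d_0}\E_{d,a}[\tau^4]<\infty$, is still true. It comes from positive recurrence of the linear hypoelliptic $(R,J)$ system via a Lyapunov/Harris argument, not from drift beating noise.

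\textbf{Third: the sharpening step is mis-sized, though the conclusion survives.} With $\Delta=\tau-u_0$, the relation $g(\tau)=-N(\tau)$ gives $a\Delta=-N(u_0)-\Delta\,\xi_{u_0}+\dots$. The cross term contributes $\E[N(u_0)\xi_{u_0}]/a^2\approx\sigma^2u_0^2/(2\tauc^2a^2)=\Theta(d^2)$, not $O(d^3)$. As a check, differentiating $\mathcal L m_1=-1$ once in $r$ at $r=0$ gives
\[
m_1=\frac da+\frac{d^2}{2}\Bigl(\frac{\sigma^2}{\tauc^2a^4}-\frac{1}{\tauv a^2}\Bigr)+O(d^3),
\]
which is exactly $u_0$ plus this cross term. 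You only need $\E[\Delta]=O(d^2)$, and you can prove it directly:
\begin{itemize}
\item On $G$, $\abs{a\Delta+N(u_0)}\lesssim\abs{\Delta}\bigl(\sup_{u\le 2u_0}\abs{\xi_u}+M\bigr)+\Delta^2$.
\item The term $\E[N(u_0)\mathbf 1_G]=-\E[N(u_0)\mathbf 1_{G^c}]$ is negligible.
\item Doob's inequality gives $\E\sup_{u\le 2u_0}\abs{\xi_u}^2\lesssim u_0$, hence $\E[M\sup_{u\le 2u_0}\abs{\xi_u}]\lesssim u_0^2$ and $\E M^2\lesssim u_0^3$.
\end{itemize}
This precision is exactly what $m_2$ needs: $m_2=u_0^2+2u_0\E\Delta+\E\Delta^2$, and $\E\Delta=O(d^{3/2})$ alone would leave an $O(d^{5/2})$ remainder.

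The $a^{-4}$ coefficient also confirms your caveat. The expansion holds for fixed $a$, in the regime $d\ll\min\{a\tauv,\ a^3\tauc^2/\sigma^2\}$, and is not uniform as $a\downarrow0$. This matters for Lemma~\ref{thm:kernel_bound_strong}, which uses $\E[\tau^2\mid D,A=a]\lesssim D^2/a^2$ while integrating $a$ down to $0$.
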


\begin{proof}
The exact survival function of the random catch-up time is a backward first-passage solution.
Indeed, with state variables $(r,j)$ and generator
\[
\mathcal L
=
\Bigl(j-\frac r{\tauv}\Bigr)\partial_r
-
\frac{j-a}{\tauc}\,\partial_j
+
\frac{\sigma^2}{2\tauc^2}\partial_{jj},
\]
the function
\[
u(r,j,t):=\Pp_{(r,j)}(\tau_0>t),
\qquad \tau_0:=\inf\{u\ge0:R_u=0\},
\]
solves
\[
\partial_t u=\mathcal L u,\qquad u(0,j,t)=0,\qquad u(r,j,0)=1,\qquad r<0.
\]
Standard Dynkin formulas give
\begin{align*}
    \mathcal L m_1 = -1, &\qquad m_1(0,j) = 0 \\
    \mathcal L m_2 = -2m_1, &\qquad m_2(0,j) = 0
\end{align*}
Since the boundary data are identically zero in $j$, we get 
\[
\partial_jm_q(0,j) = \partial_{jj}m_q(0,j) = 0, \qquad q = 1,2.
\]
When evaluating $\mathcal L m_1 = -1$ at $r = 0$, since the $j$-derivatives vanish here, we get $j\partial_r m_1(0,j) = -1$, leading to $m_1(-d,a) = d/a + O(d^2)$. Similarly $\partial_r m_2(0,j) = 0$, leading to 
\[
j\partial_{rr}m_2(0,j) = -2\partial_{r}m_1(0,j) =  \frac2j.
\]
Hence $m_2(-d,a) = d^2/a^2 + O(d^3)$.
\end{proof}

We can now consider the impact of spike time error to the postsynaptic neuron.

\paragraph{Conditional impact kernel at fixed crossing speed.}
The exact $L^1_t$ misalignment of one postsynaptic exponential synaptic kernel shifted by a spike-time
error $\varepsilon$ is
\[
2\tauc\bigl(1-e^{-|\varepsilon|/\tauc}\bigr),
\]
so we again define
\begin{equation}\label{eq:psi_def}
\psi(x):=\bigl(1-e^{-x/\tauc}\bigr)^2,\qquad x\ge0.
\end{equation}
Then
\begin{equation}\label{eq:psi_basic}
0\le \psi(x)\le 1,
\qquad
\psi(x)\le \frac{x^2}{\tauc^2}.
\end{equation}

\begin{lemma}[Conditional impact bound at fixed crossing speed]\label{thm:kernel_bound_strong}
For every neuron $(\ell,j)$, every matched spike index $k$, and every realization of the
crossing speed $A^k_{\ell,j}=a$, there exists a constant
$C_\ell^{\mathrm{hit}}(T)<\infty$ such that
\begin{equation}\label{eq:single_impact_bound}
\E\!\Bigl[
\psi\!\bigl(\abs{\varepsilon^k_{\ell,j}}\bigr)
\,\Big|\,
\mathcal F^{k-}_{\ell,j},A^k_{\ell,j}=a
\Bigr]
\le
C_\ell^{\mathrm{hit}}(T)\,
\min\!\Bigl\{1,\frac{R_\ell(h,T)^2}{a^2}\Bigr\},
\qquad a>0.
\end{equation}
where $R_\ell(h,T)$ is the local EM gap scale defined in
\eqref{eq:R1_direct}--\eqref{eq:Rell_direct}. The bound on the RHS of Eq.~\ref{eq:single_impact_bound} is defined as $\Gamma_\ell(a;h,T)$.

\end{lemma}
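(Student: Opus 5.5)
The bound $\psi\le 1$ from \eqref{eq:psi_basic} gives the ``$1$'' branch of the minimum at once. The proof therefore only has to establish
\[
\E\bigl[\psi(|\varepsilon|)\mid \mathcal F^{k-},A=a\bigr]\lesssim \frac{R_\ell(h,T)^2}{a^2}
\]
in the regime $a\gtrsim R_\ell(h,T)$. For $a\lesssim R_\ell$ the trivial bound already has the required form, after enlarging $C_\ell^{\mathrm{hit}}$.

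\textbf{Step 1: reduce to the catch-up time.} By \eqref{eq:STE_sensitivity} and \eqref{eq:psi_basic},
\[
\psi(|\varepsilon|)\le \min\Bigl\{1,\frac{2h^2+2\tau_{D,A}^2}{\tauc^2}\Bigr\}.
\]
I take $R_\ell(h,T)\gtrsim h$, which holds for the local-gap scale (cf.\ $R_1\sim\sqrt h$ from Proposition~\ref{ass:local_EM_tail}). With that, the $h^2$ term is at most $R_\ell^2\le R_\ell^2/a^2$ for $a$ in a bounded range. For large $a$ the term $h^2$ is not bounded by $R_\ell^2/a^2$. I handle this either by noting that $A$ is a.s.\ bounded on the window via the current moment bounds, or, more honestly, by absorbing $h^2$ into $C^{\mathrm{hit}}_\ell$ through a bounded-speed cutoff $a\le a_{\max}(T)$. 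The main work is the conditional second moment
\[
\E\bigl[\min\{1,\tau_{D,a}^2\}\mid \mathcal F^{k-},A=a\bigr].
\]

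\textbf{Step 2: condition on $D$ and use Proposition~\ref{lem:STE_sensitivity}.} Given $D=d$, the catch-up law \eqref{eq:catchup_moments} gives $\E_{d,a}[\tau^2]=d^2/a^2+O(d^3)$. This expansion is only small-$d$ and only pointwise in $a$, so I need a uniform version: $\E_{d,a}[\min\{1,\tau^2\}]\le C\,d^2/a^2$ for all $d\in[0,1]$ and $a>0$. I would prove it by a comparison argument on the event $\{\inf_{u\le 2d/a}(J_u-a)\ge -a/2\}$. On that event $\dot R\ge a/2-R/\tauv$ while $R<0$, so $\tau\le 4d/a$ once $d\lesssim a\tauv$. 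The complementary event has probability at most $\exp(-c\,a^2\tauc^2/(\sigma^2 (d/a)))$ by the Gaussian sup bound for the OU fluctuation over time $2d/a$. Since $\min\{1,\tau^2\}\le 1$, that probability must itself be $\lesssim d^2/a^2$. This holds because $e^{-ca^3/d}\le C d^2/a^2$ uniformly: set $x=a^3/d$ and note $d^2/a^2\ge$ a power of $1/x$ once $d\le 1$. If $d\gtrsim a\tauv$, then $d^2/a^2\gtrsim 1$ and the trivial bound applies.

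\textbf{Step 3: average over $D$.} Next I integrate $\E_{d,a}[\min\{1,\tau^2\}]\le C\min\{1,d^2/a^2\}$ against the conditional law of $D$ given $(\mathcal F^{k-},A=a)$. By \eqref{eq:D_second_moment_direct} for $\ell=1$, and by the layerwise analogue encoded in the definition of $R_\ell(h,T)$ for $\ell\ge2$ (Remark~\ref{rem:first_layer_transfer}), this gives $C\,\E[D^2\mid\cdot]/a^2\le C\,R_\ell(h,T)^2/a^2$. The event $D>1$ is outside the range of Step 2 and has super-polynomially small probability by \eqref{eq:D_tail_direct}; it contributes at most a quantity dominated by $R_\ell^2$ and is absorbed into the constant.

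The main obstacle is Step 2. The expansion \eqref{eq:catchup_moments} is asymptotic, so the uniformity in small $a$ must be obtained separately, through the excursion/comparison estimate. The delicate regime is $d\sim a^3$, where the OU noise, rather than the drift, governs catch-up. In that regime $d^2/a^2\sim a^4$, while the failure probability is only $O(1)$. I expect to need the splitting threshold chosen carefully, and possibly a mild logarithmic loss to be absorbed into $C_\ell^{\mathrm{hit}}(T)$.
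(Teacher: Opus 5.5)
Your skeleton is the paper's own route. You bound $\psi(|\varepsilon|)\lesssim\min\{1,h^2+\tau_{D,A}^2\}$ via \eqref{eq:STE_sensitivity}, use $\E[D^2\mid\cdot]\lesssim R_\ell(h,T)^2$, and use $\E[\tau_{D,a}^2\mid D]\lesssim D^2/a^2$. The paper reads that last bound directly off the small-$d$ expansion \eqref{eq:catchup_moments}. You correctly see that this expansion is not uniform in $a$: by scaling, its remainder is small only when $d\ll a^3\tauc^2/\sigma^2$. Step 2 is your attempt to supply the uniform version, and it fails.

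The verification is wrong as written. With $x=a^3/d$ one has $d^2/a^2=d^{4/3}x^{-2/3}$, and $d\le1$ bounds this from above, not from below. For fixed $x$ and $d\to0$ the right side tends to $0$ while $e^{-cx}$ does not. So your comparison argument only gives
\[
\E_{d,a}\bigl[\min\{1,\tau^2\}\bigr]\le \frac{4d^2}{a^2}+\exp\Bigl(-c\,\frac{a^3\tauc^2}{\sigma^2 d}\Bigr),
\]
and the second term is not $\lesssim d^2/a^2$ once $d\gtrsim a^3/\log(1/a)$.

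No choice of splitting threshold repairs this, because the Step 2 inequality is false in that regime. Take $d=a^3\tauc^2/\sigma^2$ and let $a\to0$.
\begin{itemize}
\item On the time scale $s=(a\tauc/\sigma)^2$, mean reversion and leak are negligible. Brownian scaling then shows that, with probability $p_0>0$ independent of $a$, the excess $J$ reaches $-a$ before $R$ reaches $0$.
\item By the strong Markov property, with further probability of order $a$ the OU current stays negative for one more unit of time.
\item On that event $R$ cannot reach $0$, because $\dot R=J<0$ whenever $R=0$.
\end{itemize}
Hence $\E_{d,a}[\min\{1,\tau^2\}]\ge\Pp(\tau>1)\gtrsim a$, while $d^2/a^2\asymp a^4$. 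The loss is polynomial, not logarithmic. In any case $C^{\mathrm{hit}}_\ell(T)$ is not allowed to depend on $a$ or $h$, so even a logarithmic loss could not be absorbed into it.

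Averaging over $D$ in Step 3 does not rescue the argument. If $D\asymp R_\ell$ with non-negligible conditional probability, the same construction at $a\asymp R_\ell^{1/3}$ gives a conditional expectation $\gtrsim R_\ell^{1/3}$. The target there is $R_\ell^2/a^2\asymp R_\ell^{4/3}$. What is missing is a genuine treatment of the noise-dominated catch-up regime $D\gtrsim a^3$, for example an extra term in $\Gamma_\ell$ whose flux average must then be recomputed. The paper's appeal to \eqref{eq:catchup_moments} does not supply this either.

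A smaller point concerns the $h^2$ term. $A$ is not a.s.\ bounded, since the current is Gaussian, so neither of your two options proves the bound for all $a>0$. In fact the grid lag makes $|\varepsilon|$ of order $h$ for every $a$, so the pointwise bound fails for $a\gg R_\ell(h,T)/h$. The clean fix is to use the kernel $\min\{1,R_\ell^2/a^2+h^2\}$, which costs only $O(h^2\bar N_\ell(T))$ after flux averaging.
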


\begin{proof}
From \eqref{eq:STE_sensitivity} and \eqref{eq:psi_basic},
\[
\psi\!\bigl(\abs{\varepsilon}\bigr)
\le
\min\!\Bigl\{1,\frac{(h+\tau_{D,A})^2}{\tauc^2}\Bigr\}
\lesssim
\min\!\Bigl\{1,h^2+\tau_{D,A}^2\Bigr\}.
\]
Condition on $A^k_{\ell,j}=a$.  The local-gap estimate at layer $\ell$ gives
$\E[D^2\mid \mathcal F_{\ell,j}^{k-},A^k_{\ell,j}=a]\lesssim R_\ell(h,T)^2$, while
\eqref{eq:catchup_moments} implies $\E[\tau_{D,a}^2\mid D,A=a]\lesssim D^2/a^2$ for small $D$.
Taking conditional expectations and using $\psi\le1$ yields
\[
\E\!\Bigl[
\psi\!\bigl(\abs{\varepsilon^k_{\ell,j}}\bigr)\,\Big|\,\mathcal F^{k-}_{\ell,j},A^k_{\ell,j}=a
\Bigr]
\lesssim
\min\!\Bigl\{1,\frac{R_\ell(h,T)^2}{a^2}\Bigr\},
\]
which is exactly \eqref{eq:single_impact_bound}.
\end{proof}

\subsection{Boundary flux of crossing speeds and direct expected spike impact}\label{subsec:flux}

We next average the deterministic conditional kernel $\Gamma_\ell(a;h,T)$ against the distribution
of crossing speeds at threshold.

\paragraph{Boundary flux of crossing speeds.}
For every neuron $(\ell,j)$, let $\rho_{\ell,j}(v,I,t)$ denote the subthreshold density of
$(v_{\ell,j}(t),I_{\ell,j}(t))$.  The outgoing threshold flux is
\begin{equation}\label{eq:flux}
J_{\ell,j}(I,t)
=
(I-\Ith)\,\rho_{\ell,j}(\Vth,I,t)\,\mathbf 1_{\{I>\Ith\}}.
\end{equation}
Equivalently, with crossing speed $a=I-\Ith>0$,
\begin{equation}\label{eq:flux_a}
J_{\ell,j}(\Ith+a,t)=a\,\rho_{\ell,j}(\Vth,\Ith+a,t),\qquad a>0.
\end{equation}

\begin{lemma}[Flux identity for observables of the crossing speed]\label{lem:flux_identity}
Let $\varphi:[0,\infty)\to[0,\infty)$ be measurable. Then
\begin{equation}\label{eq:flux_identity}
\E\Bigg[
\sum_{k:\,s^k_{\ell,j}\le T}\varphi(A^k_{\ell,j})
\Bigg]
=
\int_0^T\int_0^\infty
\varphi(a)\,a\,\rho_{\ell,j}(\Vth,\Ith+a,t)\,da\,dt.
\end{equation}
\end{lemma}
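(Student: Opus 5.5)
The plan is to prove the identity first for $\varphi$ an indicator of a box, then extend by monotone class. Fix $\varphi=\mathbf 1_{(a_1,a_2]}$ with $0<a_1<a_2$. Near threshold the voltage has no diffusion and moves with velocity $\dot v=-(v-\Vr)/\tauv+I$, equal to $I-\Ith$ at $v=\Vth$. Counting crossings therefore reduces to a Rice-type formula for a process with absolutely continuous $v$-component. Since spikes are upward hits and the reset sends $v$ to $\Vr<\Vth$, every spike time $s^k$ is an isolated transversal crossing ($A^k>0$ almost surely, by the density assumption). The left-limit state $(v(s^-),I(s^-))$ is then well defined.

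\textbf{Step 1: strip counting.} For small $\eta>0$, consider the strip $S_\eta=\{\Vth-\eta<v<\Vth\}$. Use the occupation quantity
\[
Z_\eta:=\frac1\eta\int_0^T \mathbf 1_{\{v(t)\in(\Vth-\eta,\Vth)\}}\,\dot v(t)\,\mathbf 1_{\{I(t)-\Ith\in(a_1,a_2]\}}\,dt .
\]
Before each spike the path traverses the strip in time roughly $\eta/A^k$. During that traversal $\dot v\approx A^k$ and $I$ is continuous, so each spike contributes $\approx \mathbf 1_{(a_1,a_2]}(A^k)$ to $Z_\eta$. Excursions that enter the strip and turn back contribute a net $\int \dot v\,dt/\eta$ that vanishes up to boundary terms. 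Hence $Z_\eta\to\sum_{k:s^k\le T}\varphi(A^k)$ almost surely, with boundary cases $A^k\in\{a_1,a_2\}$ null.

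\textbf{Step 2: take expectations via Fubini and the subthreshold density $\rho_{\ell,j}$.} This gives
\[
\E Z_\eta=\int_0^T\!\!\int \frac1\eta\int_{\Vth-\eta}^{\Vth}\Bigl(-\tfrac{v-\Vr}{\tauv}+I\Bigr)\mathbf 1_{(a_1,a_2]}(I-\Ith)\,\rho(v,I,t)\,dv\,dI\,dt .
\]
As $\eta\downarrow0$, continuity of $\rho$ up to $v=\Vth$ (the density regularity standing assumption, boundedness via $\rho_{\max}$ on the relevant range) yields the right-hand side of \eqref{eq:flux_identity} restricted to the box. The drift factor at threshold is exactly $a=I-\Ith$, matching \eqref{eq:flux_a}.

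\textbf{Step 3: justify the limit interchange and extend to all $\varphi$.} To pass the limit inside $\E$, dominate $Z_\eta$ by $C(1+\sup_{t\le T}|I(t)|)\,(N(T)+1)$, which is integrable given $\E N(T)<\infty$ and the Gaussian moments of $I$. Alternatively, use Fatou for one inequality and a direct upper estimate for the other. Since both sides of \eqref{eq:flux_identity} are measures in $\varphi$ that agree on boxes $(a_1,a_2]$, the $\pi$–$\lambda$ theorem gives equality for indicators of Borel sets. Monotone convergence then extends it to all nonnegative measurable $\varphi$, including the value $+\infty$.

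The main obstacle is Step 1, specifically the non-spiking excursions. These are paths entering the strip with $\dot v>0$, turning around below threshold (possible because $I$ diffuses), and leaving. Their contribution is a signed integral of $\dot v$, which telescopes to $v_{\rm exit}-v_{\rm entry}=0$ for excursions entering and leaving through the lower edge. Only the truncation by the indicator on $I$ spoils exact telescoping. I would handle this by noting that the set of times with $v\in S_\eta$ and $I-\Ith$ near $a_1$ or $a_2$ has expected measure $O(\eta\cdot o(1))$ as the box edges are smoothed. In practice, I would first prove the identity for continuous compactly supported $\varphi$ on $(0,\infty)$, where this error is controlled uniformly, and only then pass to indicators.
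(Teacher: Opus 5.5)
Your route is genuinely different from the paper's. The paper's proof consists only of the remark that this is the standard flux representation of threshold crossings for the absorbed kinetic Fokker--Planck equation, i.e.\ the spike-intensity measure is identified with the outflow boundary term $(I-\Ith)\rho_{\ell,j}(\Vth,I,t)$ of the forward equation. You instead give a pathwise Rice/Kac-type counting argument: strip occupation weighted by $\dot v$, then Fubini against $\rho_{\ell,j}$, a trace limit, and a $\pi$--$\lambda$ extension.

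That is more self-contained, and it exposes hypotheses the paper's one-line appeal hides:
\begin{itemize}
\item $\E N_{\ell,j}(T)<\infty$.
\item Almost-sure transversality of spikes, which you need to pass from $(0,\infty)$ to $[0,\infty)$. It does \emph{not} follow from a fixed-time density: you need the tangency point $(\Vth,\Ith)$ to be polar for the hypoelliptic pair $(v,I)$, or you must take Remark~\ref{rem:Ith_A} as a standing hypothesis.
\item Continuity of $\rho_{\ell,j}$ up to $v=\Vth$ on $\{I>\Ith\}$, with local bounds in a strip \emph{below} threshold. The paper's $\rho_{\max}$ only bounds the trace at $v=\Vth$, so this is an additional assumption.
\end{itemize}

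\textbf{The step that fails as written is the domination in Step~3.} The bound $Z_\eta\le C(1+\sup_{t\le T}|I(t)|)(N(T)+1)$ is false. Take a window of length $\tau$ on which $I-\Ith$ alternates rapidly between a value in $(a_1,a_2]$ and a negative value, while $v$ hovers inside $S_\eta$. Then $N(T)=0$ and $\sup|I|$ stays bounded, but every upswing contributes, so $Z_\eta\sim a_1\tau/\eta$. Such paths have positive probability for each fixed $\eta$, and inhibitory presynaptic jumps in layers $\ell\ge2$ produce them even more easily.

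These paths do disappear pathwise as $\eta\downarrow0$, so your almost-sure limit in Step~1 is right. The reason is not telescoping, though. On $\{I-\Ith>a_1\}$ one has $\dot v>a_1$ in the strip. A non-spiking return therefore requires $I$ to drop by $a_1/2$ within time $2\eta/a_1$, which a fixed path (with finitely many jumps) cannot do once $\eta$ is small. Uniform integrability is exactly what must control these rare paths. ``Fatou plus a direct upper estimate'' does not say how, and for a Rice-type formula this is the substantive step.

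Two ways to close the gap:
\begin{itemize}
\item[(a)] Since $\dot v>a_1$ on the integration set, the co-area formula gives $Z_\eta=\eta^{-1}\int_{\Vth-\eta}^{\Vth}\#\{t\le T:\ v(t)=c,\ I(t)-\Ith\in(a_1,a_2]\}\,dc$. After each counted upcrossing of $c$, one of three things happens: a spike occurs, $T$ is reached, or $v$ returns to $c$. A return forces $\dot v\le 0$, hence $I<\Ith$, at that moment. So the count is at most $N(T)+1+D$, where $D$ is the number of downcrossings of $[\Ith,\Ith+a_1]$ by $I$ on $[0,T]$. This bound is integrable and uniform in $\eta$.
\item[(b)] More cleanly, apply It\^o's formula to $G=g_\eta(v)\chi(I-\Ith)$. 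Here $g_\eta$ is the ramp from $0$ at $\Vth-\eta$ to $1$ at $\Vth$, and $\chi\in C_c^2((0,\infty))$. Each spike makes $G$ jump by $-\chi(A^k)$, so, with $Z^\chi_\eta$ your $Z_\eta$ with $\chi$ in place of the indicator, one has exactly
\[
\E Z^\chi_\eta=\E\sum_{k:\,s^k\le T}\chi(A^k)+\E\bigl[G(X_T)-G(X_0)\bigr]-\E\int_0^T g_\eta(v(t^-))\,d\chi\bigl(I(t)-\Ith\bigr).
\]
The last two terms vanish as $\eta\to0$. They are controlled by $\Pp(v(T)>\Vth-\eta)$, by the expected time spent in $S_\eta$ (the martingale part has mean zero), and by the expected number of presynaptic arrivals while $v\in S_\eta$. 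This last bound assumes no exact coincidences with the neuron's own spikes. This version removes Step~1 and any uniform-integrability argument, leaving only the trace limit of Step~2.
\end{itemize}
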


\begin{proof}
This is the standard flux representation of threshold crossings for the absorbed kinetic
Fokker--Planck equation.
\end{proof}

\begin{assumption}[Boundary depletion and spike-count bounds]\label{ass:boundary_depletion}
For each layer $\ell$ there exist constants
\[
\gamma_\ell\ge0,\qquad
C_\ell^\rho(T)<\infty,\qquad
\bar N_\ell(T)<\infty,\qquad
\mathcal N_\ell(T)<\infty
\]
such that, for every neuron $(\ell,j)$,
\begin{align}
\rho_{\ell,j}(\Vth,\Ith+a,t)
&\le C_\ell^\rho(T)\,a^{\gamma_\ell},
\qquad 0<a\le \alpha_0,\ \ 0\le t\le T,
\label{eq:boundary_depletion}\\[0.3em]
\E[N_{\ell,j}(T)]&\le \bar N_\ell(T),
\label{eq:expected_count_bound}\\[0.3em]
N_{\ell,j}(T)&\le \mathcal N_\ell(T)
\qquad\text{on the matched trajectories retained in the strong-error statistic.}
\label{eq:pathwise_count_bound}
\end{align}
\end{assumption}

\begin{remark}[Interpretation of $\gamma_\ell$]
    Any nonnegative $\gamma_\ell$ means that the the probability density for small crossing speed $a$ scales with $a$. When $\rho_{\ell,j}(\Vth,\Ith+a,t)$ doesn't vanish at $a=0$, we have $\gamma_\ell = 0$. On the other hand, in a mean-driven regime we have crossing speed $a\geq \alpha_{\min}>0$, then $\gamma_\ell = \infty$.
\end{remark}

To facilitate the discussion below, we define a STE impact constant:
\begin{equation}\label{eq:Xi_gamma}
\Xi_\gamma(r)
:=
\begin{cases}
1+\abs{\log r}, & \gamma=0,\\[0.3em]
1, & \gamma>0,
\end{cases}
\qquad 0<r\le 1.
\end{equation}

Throughout the matched-trajectory part of Section~\ref{sec:strong}, we write
\begin{equation}\label{eq:Gmatch_feedforward_def}
\mathcal G^{\mathrm{match}}(h,T)
:=
\bigcap_{\ell=1}^L\bigcap_{j=1}^{n_\ell}
\{N^h_{\ell,j}(T)=N_{\ell,j}(T)\}.
\end{equation}
All spike-time quantities in Sections~\ref{subsec:flux}--\ref{subsec:strong_explicit_TL} are
understood on this event; equivalently, the indicator
$\mathbf 1_{\mathcal G^{\mathrm{match}}(h,T)}$ is included in the corresponding expectations.

\begin{theorem}[Bound on STE impact]\label{thm:direct_flux_moment}
Assume Assumption~\ref{ass:boundary_depletion} and Lemma~\ref{thm:kernel_bound_strong}.  Then, for every
neuron $(\ell,j)$,
\begin{equation}\label{eq:direct_flux_moment}
\mathcal M_{\ell,j}(T)
:=
\E\Bigg[
\sum_{k:\,s^k_{\ell,j}\le T}\psi\!\bigl(\abs{\varepsilon^k_{\ell,j}}\bigr)\,
\mathbf 1_{\mathcal G^{\mathrm{match}}(h,T)}
\Bigg]
\le
\int_0^T\int_0^\infty
\Gamma_\ell(a;h,T)\,
a\,\rho_{\ell,j}(\Vth,\Ith+a,t)\,da\,dt.
\end{equation}
Defining
\begin{equation}\label{eq:Mlayer_def}
\mathcal M_\ell(T):=\max_{1\le j\le n_\ell}\mathcal M_{\ell,j}(T),
\end{equation}
we obtain
\begin{equation}\label{eq:direct_flux_moment_layer}
\mathcal M_\ell(T)
\le
\max_{1\le j\le n_\ell}
\int_0^T\int_0^\infty
\Gamma_\ell(a;h,T)\,
a\,\rho_{\ell,j}(\Vth,\Ith+a,t)\,da\,dt.
\end{equation}
Moreover, there exists a bounded $C_\ell^{\mathrm{dir}}(T)$ s.t., 
\begin{equation}\label{eq:log_inv_moment}
\mathcal M_\ell(T)
\le
C_\ell^{\mathrm{dir}}(T)\,
R_\ell(h,T)^2\,
\Xi_{\gamma_\ell}\!\bigl(R_\ell(h,T)\bigr).
\end{equation}

\end{theorem}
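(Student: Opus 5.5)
The plan has three steps. First, condition on the crossing speed and pass to the flux identity, which gives \eqref{eq:direct_flux_moment}. Second, take the maximum over neurons, which gives \eqref{eq:direct_flux_moment_layer}. Third, estimate the resulting flux integral explicitly, which gives \eqref{eq:log_inv_moment}.

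\textbf{Step 1: conditioning and the flux identity.} Drop the matching indicator using $\mathbf 1_{\mathcal G^{\mathrm{match}}(h,T)}\le 1$. Then write
\[
\E\Bigl[\sum_{k:\,s^k_{\ell,j}\le T}\psi(|\varepsilon^k_{\ell,j}|)\Bigr]
=\E\Bigl[\sum_k \mathbf 1_{\{s^k_{\ell,j}\le T\}}\,
\E\bigl[\psi(|\varepsilon^k_{\ell,j}|)\,\big|\,\mathcal F^{k-}_{\ell,j},A^k_{\ell,j}\bigr]\Bigr].
\]
This is justified by the tower property applied termwise. The event $\{s^k_{\ell,j}\le T\}$ is measurable with respect to $\mathcal F^{k-}_{\ell,j}\vee\sigma(A^k_{\ell,j})$, and all terms are nonnegative, so monotone convergence handles the countable sum. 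Lemma~\ref{thm:kernel_bound_strong} bounds each inner conditional expectation by $\Gamma_\ell(A^k_{\ell,j};h,T)$. Lemma~\ref{lem:flux_identity}, applied with $\varphi=\Gamma_\ell(\cdot;h,T)$, which is measurable and nonnegative, then gives \eqref{eq:direct_flux_moment}.

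\textbf{Step 2: layer envelope.} Inequality \eqref{eq:direct_flux_moment_layer} follows by taking the maximum over $j$.

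\textbf{Step 3: explicit integral bound.} Write $R=R_\ell(h,T)$, which I assume satisfies $R\le \alpha_0\wedge 1$ for small $h$. Split the $a$-integral into three ranges: $(0,R]$, $(R,\alpha_0]$ and $(\alpha_0,\infty)$.

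On $(0,R]$, use $\Gamma_\ell\le C^{\mathrm{hit}}_\ell$ together with \eqref{eq:boundary_depletion}. This gives a contribution of at most
\[
C\int_0^R a^{1+\gamma_\ell}\,da\lesssim R^{2+\gamma_\ell}\le R^2 .
\]

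On $(R,\alpha_0]$, use $\Gamma_\ell\le C^{\mathrm{hit}}_\ell R^2/a^2$. The integrand becomes $R^2 a^{-1}\rho\lesssim R^2 a^{\gamma_\ell-1}$. For $\gamma_\ell>0$ this integrates to $O(R^2)$. For $\gamma_\ell=0$ it gives
\[
R^2\log(\alpha_0/R)\lesssim R^2\bigl(1+|\log R|\bigr).
\]
This is the origin of $\Xi_{\gamma_\ell}$.

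On $(\alpha_0,\infty)$, use
\[
\Gamma_\ell\le C^{\mathrm{hit}}_\ell R^2/a^2\le C^{\mathrm{hit}}_\ell R^2\alpha_0^{-2}.
\]
The remaining integral $\int_0^T\!\int_{\alpha_0}^\infty a\rho\,da\,dt$ is the expected number of spikes with crossing speed above $\alpha_0$. By Lemma~\ref{lem:flux_identity} with $\varphi=\mathbf 1_{(\alpha_0,\infty)}$, it is at most $\E[N_{\ell,j}(T)]\le\bar N_\ell(T)$ by \eqref{eq:expected_count_bound}, so this range contributes $O(R^2)$.

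The $t$-integration over $[0,T]$ contributes a factor $T$ in the first two ranges. Collecting the three contributions defines $C^{\mathrm{dir}}_\ell(T)$ in terms of $C^{\mathrm{hit}}_\ell$, $C^\rho_\ell$, $\alpha_0$, $T$, $\bar N_\ell$ and $\gamma_\ell$, and this constant is uniform in $j$ and $h$.

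\textbf{Main obstacle.} The delicate point is Step 1. Lemma~\ref{thm:kernel_bound_strong} is a conditional bound on the matched event, whereas the flux identity concerns exact spikes without any matching. To avoid trouble I drop the indicator before conditioning and take the conditional bound as holding for every exact spike index $k$. This requires the STE $\varepsilon^k_{\ell,j}$ to be defined for such spikes, for instance by setting $\psi=0$ when $k$ is not matched.

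A second point arises if $R\ge\alpha_0$. In that case only the bound $\Gamma_\ell\le C$ is used, and the integral is controlled by $\bar N_\ell(T)$. Since $R$ is bounded below by a positive constant in this regime, $\bar N_\ell(T)\lesssim R^2$, so the claimed bound still holds after enlarging the constant.
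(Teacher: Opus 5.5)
Your proposal is correct and follows essentially the same route as the paper. You condition on $(\mathcal F^{k-}_{\ell,j},A^k_{\ell,j})$ inside the spike sum, apply Lemma~\ref{thm:kernel_bound_strong} and then the flux identity with $\varphi=\Gamma_\ell(\cdot;h,T)$, split the $a$-integral at $R_\ell(h,T)$ and $\alpha_0$, and control the large-speed tail with the expected count bound. Your extra steps only make explicit what the paper's proof leaves implicit: bounding the matching indicator, turning the tail integral into an expected count via the flux identity with $\varphi=\mathbf 1_{(\alpha_0,\infty)}$, and treating the case $R_\ell(h,T)\ge\alpha_0$.
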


\begin{proof}
Apply Lemma~\ref{thm:kernel_bound_strong} inside the spike sum:
\[
\mathcal M_{\ell,j}(T)
=
\E\Bigg[
\sum_{k:\,s^k_{\ell,j}\le T}
\E\Bigl[\psi\!\bigl(\abs{\varepsilon^k_{\ell,j}}\bigr)\,\Big|\,\mathcal F^{k-}_{\ell,j},A^k_{\ell,j}\Bigr]
\Bigg]
\le
\E\Bigg[
\sum_{k:\,s^k_{\ell,j}\le T}
\Gamma_\ell\!\bigl(A^k_{\ell,j};h,T\bigr)
\Bigg].
\]
Applying Lemma~\ref{lem:flux_identity} with $\varphi(a)=\Gamma_\ell(a;h,T)$ yields \eqref{eq:direct_flux_moment_layer}.

To obtain \eqref{eq:log_inv_moment} we split the
flux integral at $a=R_\ell(h,T)$ and $a=\alpha_0$.  The
small-speed contribution on $(0,R_\ell]$ is $O(R_\ell(h,T)^2)$, the intermediate range
$[R_\ell,\alpha_0]$ gives the logarithm when $\gamma_\ell=0$ and a bounded contribution when
$\gamma_\ell>0$, and the large-speed tail $(\alpha_0,\infty)$ is $O(R_\ell(h,T)^2)$ by the
expected count bound \eqref{eq:expected_count_bound}.
\end{proof}

\subsection{Propagation across feedforward layers}\label{subsec:propagate}

The next two lemmas are the deterministic propagation mechanism.  They are unchanged in spirit from
the previous version; the propagated quantity is still the accumulated spike-impact
$\psi(\abs{\varepsilon})$, but the local one-spike input is now the deterministic kernel
$\Gamma_\ell(a;h,T)$ rather than the earlier hard-wired inverse-speed truncation.

\subsubsection{Synaptic kernel and the misalignment window}

\begin{lemma}[Misalignment window: $L^\infty$ is not Lipschitz, but $L^1$ is]\label{lem:kernel_shift}
Let
\[
K(t;s):=e^{-(t-s)/\tauc}\mathbf 1_{\{t\ge s\}},
\qquad
\hat s=s+\varepsilon.
\]
Then
\begin{align}
\sup_{t\in[0,T]}\abs{K(t;s)-K(t;\hat s)}
&=
\mathbf 1_{\{\varepsilon\neq0\}},
\label{eq:kernel_Linfty}\\[1mm]
\int_0^T\abs{K(t;s)-K(t;\hat s)}\,dt
&=
2\tauc\bigl(1-e^{-\abs{\varepsilon}/\tauc}\bigr)
=
2\tauc\sqrt{\psi(\abs{\varepsilon})}.
\label{eq:kernel_L1}
\end{align}
\end{lemma}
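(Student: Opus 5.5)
This is a direct computation; I would treat the two identities separately, assuming without loss of generality that $\varepsilon\ge0$. The kernel difference is antisymmetric under swapping $s$ and $\hat s$, so the case $\varepsilon<0$ follows by relabeling. I also assume $s,\hat s\in[0,T]$ and, for the $L^1$ identity, that $T$ is large enough for the tail to be captured. If $T$ is finite the integral is truncated, and the stated equality becomes an inequality ``$\le$''. That inequality is all that is used downstream, so I would state this caveat explicitly.

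\emph{The $L^\infty$ identity \eqref{eq:kernel_Linfty}.} If $\varepsilon=0$ the two kernels coincide, so the supremum is $0$. If $\varepsilon>0$, split $[0,T]$ into three pieces.
\begin{itemize}
\item On $t<s$ both kernels vanish.
\item On $s\le t<\hat s$ the difference is $e^{-(t-s)/\tauc}\in(e^{-\varepsilon/\tauc},1]$. The value $1$ is attained at $t=s$.
\item On $t\ge\hat s$ the difference is $e^{-(t-s)/\tauc}-e^{-(t-\hat s)/\tauc}$. This equals $-e^{-(t-\hat s)/\tauc}\bigl(1-e^{-\varepsilon/\tauc}\bigr)$, whose absolute value is at most $1-e^{-\varepsilon/\tauc}<1$.
\end{itemize}
Hence the supremum equals $1$. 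This gives the ``not Lipschitz'' statement: an arbitrarily small spike-time shift produces an $O(1)$ sup-norm discrepancy.

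\emph{The $L^1$ identity \eqref{eq:kernel_L1}.} Integrate the absolute difference over the same pieces. The window $[s,\hat s)$ contributes
\[
\int_0^{\varepsilon}e^{-u/\tauc}\,du=\tauc\bigl(1-e^{-\varepsilon/\tauc}\bigr).
\]
The tail $[\hat s,\infty)$ contributes
\[
\bigl(1-e^{-\varepsilon/\tauc}\bigr)\int_0^\infty e^{-u/\tauc}\,du=\tauc\bigl(1-e^{-\varepsilon/\tauc}\bigr).
\]
Summing gives $2\tauc\bigl(1-e^{-\abs{\varepsilon}/\tauc}\bigr)$. By the definition \eqref{eq:psi_def}, $\psi(x)=\bigl(1-e^{-x/\tauc}\bigr)^2$, so $1-e^{-\abs{\varepsilon}/\tauc}=\sqrt{\psi(\abs{\varepsilon})}$, and the stated form follows.

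The only delicate point is the finite horizon $T$. With $\hat s\le T$, the truncated tail is $\tauc\bigl(1-e^{-\varepsilon/\tauc}\bigr)\bigl(1-e^{-(T-\hat s)/\tauc}\bigr)$. So the exact identity holds in the limit $T\to\infty$, or when the kernel is integrated over $[0,\infty)$. For $T<\infty$ the same computation gives the upper bound $\int_0^T\abs{K(t;s)-K(t;\hat s)}\,dt\le 2\tauc\sqrt{\psi(\abs{\varepsilon})}$, which suffices for the propagation estimates. No step is a real obstacle.
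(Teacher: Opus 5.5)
Your proof is correct and is the same direct computation the paper intends (its proof reads only ``Direct computation''): you split $[0,T]$ at $s$ and $\hat s$ and evaluate each piece. Your caveat is also right and sharpens the statement. On a finite horizon the $L^1$ identity holds only as the upper bound $\int_0^T\abs{K(t;s)-K(t;\hat s)}\,dt\le 2\tauc\sqrt{\psi(\abs{\varepsilon})}$, with equality on $[0,\infty)$, and the sup identity needs $\min\{s,\hat s\}\in[0,T]$ as you assume; the upper bound is all that Lemma~\ref{lem:I_from_STE_L1} uses.
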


\begin{proof}
Direct computation.
\end{proof}

\begin{lemma}[Postsynaptic current sensitivity in $L^1_t$]\label{lem:I_from_STE_L1}
Fix $\ell\ge2$ and neuron $(\ell,j)$. On matched spike histories,
\begin{align}
\int_0^T \abs{I_{\ell,j}(t)-I^h_{\ell,j}(t)}\,dt
&\le
T\,C^{I,\mathrm{loc}}_{\ell,j}(T)\,h^{1/2}
\notag\\
&\quad
+
2\tauc\sum_{i=1}^{n_{\ell-1}}\abs{W^{\ell-1,\ell}_{ji}}
\sum_{k:\,s^k_{\ell-1,i}\le T}
\sqrt{\psi\!\bigl(\abs{\varepsilon^k_{\ell-1,i}}\bigr)}.
\label{eq:Ierr_from_STE_L1}
\end{align}
Consequently,
\begin{align}
\E\Bigg[
\Bigl(\int_0^T \abs{I_{\ell,j}(t)-I^h_{\ell,j}(t)}\,dt\Bigr)^2
\Bigg]
&\le
2T^2\bigl(C^{I,\mathrm{loc}}_{\ell,j}(T)\bigr)^2 h
\notag\\
&\quad
+
8\tauc^2\,n_{\ell-1}\,\mathcal N_{\ell-1}(T)\,
\|W^{\ell-1,\ell}\|_\infty^2\,
\mathcal M_{\ell-1}(T).
\label{eq:Ierr_direct_L2}
\end{align}
\end{lemma}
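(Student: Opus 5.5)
I will prove \eqref{eq:Ierr_from_STE_L1} pathwise on the matched event by decomposing the current error with a variation-of-constants formula, and then square and take expectations to get \eqref{eq:Ierr_direct_L2}. On matched histories, both $I_{\ell,j}$ and $I^h_{\ell,j}$ solve linear equations driven by the same Brownian path. The exact current receives presynaptic jumps at the times $s^k_{\ell-1,i}$, and the numerical one receives them at $\hat s^k_{\ell-1,i}$; the latter are registered at grid points and enter through the $\Delta N^h$ term of \eqref{eq:EM_I}. I will write
\[
I_{\ell,j}(t)-I^h_{\ell,j}(t)=\bigl(I^{\mathrm{free}}_{\ell,j}(t)-I^{h,\mathrm{free}}_{\ell,j}(t)\bigr)+\sum_{i}W^{\ell-1,\ell}_{ji}\sum_{k}\bigl(K(t;s^k_{\ell-1,i})-K(t;\hat s^k_{\ell-1,i})\bigr)+\rho_{\ell,j}(t).
\]
Here the first bracket compares the exact and EM solutions of the jump-free OU current. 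The kernel $K$ is that of Lemma~\ref{lem:kernel_shift}, and $\rho_{\ell,j}$ is the extra discrepancy caused by EM propagating a jump with the discrete factor $(1-h/\tauc)^{m}$ instead of $e^{-(t-\hat s)/\tauc}$.

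For the jump terms, the triangle inequality together with \eqref{eq:kernel_L1} gives exactly
\[
2\tauc\sum_i\abs{W_{ji}}\sum_k\sqrt{\psi(\abs{\varepsilon^k_{\ell-1,i}})}.
\]
The remaining work is to absorb the free-current error and $\rho_{\ell,j}$ into the term $T\,C^{I,\mathrm{loc}}_{\ell,j}(T)h^{1/2}$.

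For the free part I will reuse the decomposition $\beta_I+\Gamma_I$ from the proof of Proposition~\ref{ass:local_EM_tail}: the sup-norm error of the continuous EM interpolation is $O(h)$ in bias plus a Gaussian fluctuation. For $\rho_{\ell,j}$, each discretized exponential differs from the true one by $O(h)$ uniformly on $[0,T]$, so $\rho_{\ell,j}$ is bounded by $O(h)$ times the sum of the weights multiplied by the presynaptic spike counts. Using $N_{\ell-1,i}(T)\le\mathcal N_{\ell-1}(T)$ from \eqref{eq:pathwise_count_bound}, this is deterministic on matched trajectories.

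\textbf{Main obstacle.} The claimed bound is pathwise, yet the Gaussian part of the free error is only controlled in moments. I will therefore interpret $C^{I,\mathrm{loc}}_{\ell,j}(T)$ as a (random) local coefficient $\sup_t\abs{I^{\mathrm{free}}-I^{h,\mathrm{free}}}h^{-1/2}+C\mathcal N_{\ell-1}\|W\|_\infty n_{\ell-1}h^{1/2}$. Its second moment is bounded, because the fluctuation is $O(h^{1/2})$ in $L^2$, or even $O(h)$ by \eqref{eq:eI_Gauss_decomp}. Integrating over $[0,T]$ then gives the factor $T$ in \eqref{eq:Ierr_from_STE_L1}.

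For \eqref{eq:Ierr_direct_L2}, I will first apply $(x+y)^2\le2x^2+2y^2$, which produces $2T^2(C^{I,\mathrm{loc}})^2h$ from the first term (interpreting $C^{I,\mathrm{loc}}$ in mean square). For the second term I will apply Cauchy--Schwarz over the double sum in $(i,k)$. The number of summands is at most $n_{\ell-1}\mathcal N_{\ell-1}(T)$, which gives
\[
\Bigl(2\tauc\sum_{i,k}\abs{W_{ji}}\sqrt{\psi}\Bigr)^2\le4\tauc^2\|W\|_\infty^2\,n_{\ell-1}\mathcal N_{\ell-1}(T)\sum_{i,k}\psi(\abs{\varepsilon^k_{\ell-1,i}}).
\]
Taking expectations with the indicator $\mathbf 1_{\mathcal G^{\mathrm{match}}}$ bounds $\sum_{i}\mathcal M_{\ell-1,i}(T)$. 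This sum is at most $n_{\ell-1}\mathcal M_{\ell-1}(T)$, so the stated constant $8\tauc^2 n_{\ell-1}\mathcal N_{\ell-1}\|W\|^2_\infty\mathcal M_{\ell-1}$ needs the factor-of-$n_{\ell-1}$ bookkeeping to be arranged. I will do this by applying Cauchy--Schwarz only in $k$ within each $i$ and in $i$ via $\|W\|_\infty$, or by absorbing one factor $n_{\ell-1}$ into the definition of $\mathcal M_{\ell-1}$. This is where I expect to have to be careful with constants.
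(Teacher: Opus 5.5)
Your route is the paper's: superpose the free OU current and the presynaptic kernel sums, apply \eqref{eq:kernel_L1} to each matched presynaptic spike, then square with $(x+y)^2\le2x^2+2y^2$ and use Cauchy--Schwarz with \eqref{eq:pathwise_count_bound}. The paper's proof is exactly this, compressed into two lines.

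Your extra care goes beyond the paper and is needed. The Gaussian part of $I^{\mathrm{free}}_{\ell,j}-I^{h,\mathrm{free}}_{\ell,j}$ has no deterministic pathwise bound, so $C^{I,\mathrm{loc}}_{\ell,j}(T)$ must be read as a random coefficient controlled in mean square. Likewise, the left side of \eqref{eq:Ierr_direct_L2} must carry $\mathbf 1_{\mathcal G^{\mathrm{match}}(h,T)}$, because \eqref{eq:Ierr_from_STE_L1} only holds on that event.

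One small correction. Read literally, \eqref{eq:EM_I} may inject a spike registered at $\hat s=t_{m+1}$ only at $t_{m+2}$. Then the discrete kernel is not uniformly $O(h)$-close to $K(\cdot;\hat s)$. It is, however, $O(h)$-close in $L^1_t$ per spike, and that is all your $\rho_{\ell,j}$ term needs.

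The step you leave open is the power of $n_{\ell-1}$, and only one of your two fixes is legitimate.
\begin{itemize}
\item Absorbing a factor into $\mathcal M_{\ell-1}$ is not allowed, since \eqref{eq:Mlayer_def} fixes it as a maximum over neurons.
\item If $\|W^{\ell-1,\ell}\|_\infty$ is read entrywise, no rearrangement of Cauchy--Schwarz helps. Set $S_i:=\sum_k\sqrt{\psi(\abs{\varepsilon^k_{\ell-1,i}})}$. All you know is $\E[S_i^2\mathbf 1_{\mathcal G^{\mathrm{match}}}]\le\mathcal N_{\ell-1}(T)\mathcal M_{\ell-1}(T)$ for each $i$. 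If all $S_i$ coincide and all weights equal $w$, then $\E[(\sum_i wS_i)^2]$ reaches $n_{\ell-1}^2w^2\mathcal N_{\ell-1}\mathcal M_{\ell-1}$, so you are stuck at $n_{\ell-1}^2$.
\end{itemize}

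The stated constant comes out exactly if $\|W^{\ell-1,\ell}\|_\infty$ is the induced max-row-sum norm, the natural counterpart of the column-sum $\|W\|_1$ used in Section~\ref{sec:weak}. Then
\[
\Bigl(\sum_i\abs{W^{\ell-1,\ell}_{ji}}S_i\Bigr)^2\le\|W^{\ell-1,\ell}\|_\infty^2\max_iS_i^2\le\|W^{\ell-1,\ell}\|_\infty^2\,\mathcal N_{\ell-1}(T)\sum_{i,k}\psi\bigl(\abs{\varepsilon^k_{\ell-1,i}}\bigr),
\]
and
\[
\E\Bigl[\sum_{i,k}\psi\bigl(\abs{\varepsilon^k_{\ell-1,i}}\bigr)\mathbf 1_{\mathcal G^{\mathrm{match}}(h,T)}\Bigr]=\sum_i\mathcal M_{\ell-1,i}(T)\le n_{\ell-1}\mathcal M_{\ell-1}(T).
\]
Multiplying by $4\tauc^2$ and by the factor $2$ from $(x+y)^2\le2x^2+2y^2$ gives exactly $8\tauc^2n_{\ell-1}\mathcal N_{\ell-1}(T)\|W^{\ell-1,\ell}\|_\infty^2\mathcal M_{\ell-1}(T)$. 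You should commit to this argument and this reading of the norm.
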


\begin{proof}
Subtract exact and numerical kernel sums and apply \eqref{eq:kernel_L1} to every matched
presynaptic spike to obtain \eqref{eq:Ierr_from_STE_L1}.  Then square, use $(x+y)^2\le2x^2+2y^2$,
and apply \eqref{eq:pathwise_count_bound}.
\end{proof}

\begin{lemma}[Voltage $L^1_t$ stability]\label{lem:v_from_I}
On an interval without resets, if $v$ and $\tilde v$ solve
\[
\dot v = -\tfrac1{\tauv}(v-\Vr)+I(t),\qquad
\dot{\tilde v} = -\tfrac1{\tauv}(\tilde v-\Vr)+\tilde I(t)
\]
with the same initial condition, then
\begin{equation}\label{eq:v_from_I}
\sup_{t\in[t_0,t_1]}\abs{v(t)-\tilde v(t)}
\le
\int_{t_0}^{t_1}\abs{I(s)-\tilde I(s)}\,ds.
\end{equation}
\end{lemma}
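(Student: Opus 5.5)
The plan is to work with the difference $w(t):=v(t)-\tilde v(t)$, which on $[t_0,t_1]$ satisfies a linear ODE with a contracting leak and the current difference as forcing. Subtracting the two equations, the reset constant $\Vr$ cancels and we obtain
\[
\dot w(t) = -\tfrac1{\tauv}\,w(t) + \bigl(I(t)-\tilde I(t)\bigr),\qquad w(t_0)=0,
\]
since the initial conditions agree. No reset occurs on the interval by hypothesis, so $w$ is absolutely continuous there and the variation-of-constants formula applies.

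Next I will write the solution explicitly by Duhamel's formula:
\[
w(t)=\int_{t_0}^{t} e^{-(t-s)/\tauv}\bigl(I(s)-\tilde I(s)\bigr)\,ds,\qquad t\in[t_0,t_1].
\]
Since $\tauv>0$ and $s\le t$, the kernel satisfies $0<e^{-(t-s)/\tauv}\le 1$. Taking absolute values inside the integral and extending the upper limit from $t$ to $t_1$ (the integrand is nonnegative) gives
\[
\abs{w(t)}\le\int_{t_0}^{t_1}\abs{I(s)-\tilde I(s)}\,ds
\]
for every $t$. Taking the supremum over $t\in[t_0,t_1]$ then yields \eqref{eq:v_from_I}.

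There is no real obstacle here; the only point needing care is regularity. The currents $I,\tilde I$ may be merely integrable on $[t_0,t_1]$, for instance with jumps from presynaptic spikes in deeper layers or as a piecewise-frozen EM interpolation. So the ODE should be read in the Carathéodory sense, and the Duhamel representation still holds for $L^1$ forcing. Equivalently, one can argue that $\frac{d}{dt}\bigl(e^{t/\tauv}w\bigr)=e^{t/\tauv}(I-\tilde I)$ almost everywhere and integrate.

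The contraction factor $e^{-(t-s)/\tauv}\le1$ is what makes the constant in front of the $L^1_t$ norm equal to $1$, independent of the interval length. This is why the estimate can be chained with Lemma~\ref{lem:I_from_STE_L1} without any dependence on $T$ or $\tauv$.
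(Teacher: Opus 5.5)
Your proposal is correct and is the same argument as the paper's, whose proof is just ``variation of constants'': you subtract the two equations, write the difference by Duhamel's formula with zero initial data, and bound the kernel $e^{-(t-s)/\tauv}$ by $1$. Your remarks on Carath\'eodory solutions for merely integrable currents add rigor but do not change the approach.
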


\begin{proof}
Variation of constants.
\end{proof}

\paragraph{Layerwise local-gap scale.}
Set
\begin{equation}\label{eq:R1_direct}
R_1(h,T)^2:=C^{\mathrm{loc}}_1(T)\,h.
\end{equation}
For $\ell\ge2$, define
\begin{equation}\label{eq:Rell_direct}
R_\ell(h,T)^2
:=
C^{\mathrm{loc}}_\ell(T)\,h
+
8\tauc^2\,n_{\ell-1}\,\mathcal N_{\ell-1}(T)\,
\|W^{\ell-1,\ell}\|_\infty^2\,
\mathcal M_{\ell-1}(T).
\end{equation}
This is the deterministic variance proxy entering the conditional local-gap law at layer $\ell$.

\begin{theorem}[Exact matched-trajectory recursion in kernel form]\label{thm:direct_strong_recursion}
Assume Assumption~\ref{ass:excursion_window}, the conditional layerwise version of the local-gap tail
bound \eqref{eq:D_tail_direct} with deterministic scale $R_\ell(h,T)$, and
Assumption~\ref{ass:boundary_depletion}.  Then, for every layer $\ell=1,\dots,L$,
\begin{equation}\label{eq:STE_recursion}
\mathcal M_\ell(T)
\le
\max_{1\le j\le n_\ell}
\int_0^T\int_0^\infty
\Gamma_\ell(a;h,T)\,
a\,\rho_{\ell,j}(\Vth,\Ith+a,t)\,da\,dt.
\end{equation}
\end{theorem}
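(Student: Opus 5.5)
The plan is to prove \eqref{eq:STE_recursion} by induction on the layer index $\ell$. At each layer the argument is the same as in Theorem~\ref{thm:direct_flux_moment}; the only new work is checking that its hypotheses hold at layer $\ell$ with the deterministic scale $R_\ell(h,T)$ of \eqref{eq:R1_direct}--\eqref{eq:Rell_direct}.

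For $\ell=1$, Proposition~\ref{ass:local_EM_tail} supplies the conditional gap tail and the bound $\E[D^2\mid\mathcal F^{k-}_{1,j},A^k_{1,j}]\le C_1^{\mathrm{loc}}(T)h=R_1(h,T)^2$. Assumption~\ref{ass:excursion_window} makes Proposition~\ref{lem:STE_sensitivity} applicable, so Lemma~\ref{thm:kernel_bound_strong} gives the conditional kernel bound $\Gamma_1(a;h,T)$. We then proceed exactly as in the proof of \eqref{eq:direct_flux_moment}:
\begin{itemize}
\item Insert the indicator $\mathbf 1_{\mathcal G^{\mathrm{match}}(h,T)}$; since $\psi\ge0$, it can be dropped to obtain an upper bound.
\item Condition each summand on $(\mathcal F^{k-}_{1,j},A^k_{1,j})$ via the tower property.
\item Replace each conditional expectation by $\Gamma_1(A^k_{1,j};h,T)$.
\item Apply the flux identity, Lemma~\ref{lem:flux_identity}, with $\varphi=\Gamma_1(\cdot;h,T)$.
\end{itemize}
Taking the maximum over $j$ gives \eqref{eq:STE_recursion} for $\ell=1$.

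For the inductive step $\ell\ge2$, we assume $\mathcal M_{\ell-1}(T)$ is finite and controlled. The local gap $D^k_{\ell,j}$ at a matched spike is bounded through the voltage comparison, Lemma~\ref{lem:v_from_I}, applied on the inter-reset interval before $r$. The voltage discrepancy is at most the $L^1_t$ current discrepancy. By Lemma~\ref{lem:I_from_STE_L1} this splits into the local EM defect and the synaptic misalignment inherited from layer $\ell-1$. Squaring and taking expectations via \eqref{eq:Ierr_direct_L2} gives exactly the two terms defining $R_\ell(h,T)^2$ in \eqref{eq:Rell_direct}. The assumed conditional layerwise tail with scale $R_\ell$ then yields $\E[D^2\mid\mathcal F^{k-}_{\ell,j},A^k_{\ell,j}=a]\lesssim R_\ell(h,T)^2$. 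Lemma~\ref{thm:kernel_bound_strong} applies again and produces $\Gamma_\ell$. The same tower-property and flux-identity argument then closes the step; the layer-$\ell$ density $\rho_{\ell,j}$ enters only through Lemma~\ref{lem:flux_identity}.

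The main obstacle is measurability. $R_\ell$ is built from an expectation, namely $\mathcal M_{\ell-1}$, whereas Lemma~\ref{thm:kernel_bound_strong} needs a bound conditional on $\mathcal F^{k-}_{\ell,j}$ and $A^k_{\ell,j}$. A pathwise current error is not automatically bounded by its mean. This is why the theorem takes the conditional layerwise tail with deterministic scale $R_\ell(h,T)$ as a hypothesis, and the first thing I would check is that this is the only place the passage from averaged to conditional control is used. A second point to check is that the matched event is used only through nonnegativity of the summands, so that spike pairing is well defined under the indicator. Finally, the pathwise count bound \eqref{eq:pathwise_count_bound} must be used when squaring the sum over presynaptic spikes, so that the factor $\mathcal N_{\ell-1}(T)$ in $R_\ell$ appears correctly.
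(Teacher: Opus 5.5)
Your proposal is correct and follows the paper's proof. Layer $1$ is Theorem~\ref{thm:direct_flux_moment} with $R_1(h,T)^2=C_1^{\mathrm{loc}}(T)h$; for $\ell\ge2$, Lemmas~\ref{lem:I_from_STE_L1} and~\ref{lem:v_from_I} motivate the scale $R_\ell(h,T)$, the hypothesized conditional tail at that scale makes the kernel $\Gamma_\ell$ valid, and the tower-property/flux-identity argument (Lemma~\ref{lem:flux_identity}) closes the estimate. The induction framing is harmless but unnecessary, since the layer-$\ell$ bound uses only the definition of $R_\ell$ (with $\mathcal M_{\ell-1}(T)\le\bar N_{\ell-1}(T)<\infty$ because $\psi\le1$), and your explicit step of dropping the non-$\mathcal F^{k-}_{\ell,j}$-measurable matched indicator by nonnegativity before conditioning is a detail the paper's proof of Theorem~\ref{thm:direct_flux_moment} passes over silently.
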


\begin{proof}
For $\ell=1$, this is exactly Theorem~\ref{thm:direct_flux_moment} with
$R_1(h,T)^2=C_1^{\mathrm{loc}}(T)h$.

For $\ell\ge2$, Lemma~\ref{lem:I_from_STE_L1} and Lemma~\ref{lem:v_from_I} show that on every
isolated comparison window the local voltage gap entering is
controlled by the deterministic scale $R_\ell(h,T)$.  Therefore the same one-spike kernel
$\Gamma_\ell(a;h,T)$ is valid at layer $\ell$.  Theorem~\ref{thm:direct_flux_moment} then gives
\eqref{eq:STE_recursion}.
\end{proof}

\begin{corollary}[Explicit depth dependence under the matched-impact kernel bound]\label{cor:direct_depth}
Assume the hypotheses of Theorem~\ref{thm:direct_strong_recursion}.  Let
\[
q_\ell:=\sum_{m=1}^{\ell}\mathbf 1_{\{\gamma_m=0\}},
\qquad
B_\ell(T,W):=
8\tauc^2\,n_{\ell-1}\,\mathcal N_{\ell-1}(T)\,
\|W^{\ell-1,\ell}\|_\infty^2
\quad (\ell\ge2).
\]
Then there exist constants $K_\ell(T,W)<\infty$ such that for all sufficiently small $h$,
\begin{equation}\label{eq:DeltaL_explicit}
\mathcal M_\ell(T)
\le
K_\ell(T,W)\,h\,\bigl(1+\abs{\log h}\bigr)^{q_\ell},
\qquad \ell=1,\dots,L.
\end{equation}
In particular, if $\gamma_1=\cdots=\gamma_\ell=0$, then
\begin{equation}\label{eq:DeltaL_explicit_alllog}
\mathcal M_\ell(T)
\le
K_\ell(T,W)\,h\,\bigl(1+\abs{\log h}\bigr)^{\ell}.
\end{equation}
The constants can be chosen recursively by
\begin{align}
K_1(T,W)&:=2\,C_1^{\mathrm{dir}}(T)\,C_1^{\mathrm{loc}}(T),
\label{eq:K1_rec}\\[0.3em]
K_\ell(T,W)
&:=
2\,C_\ell^{\mathrm{dir}}(T)
\Bigl(
C_\ell^{\mathrm{loc}}(T)+B_\ell(T,W)\,K_{\ell-1}(T,W)
\Bigr),
\qquad \ell\ge2,
\label{eq:Kell_rec}
\end{align}
where $C_\ell^{\mathrm{dir}}(T)$ is the constant from Theorem~\ref{thm:direct_flux_moment}.
\end{corollary}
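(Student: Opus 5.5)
The plan is an induction on the layer index $\ell$, using as the only analytic input the bound \eqref{eq:log_inv_moment} of Theorem~\ref{thm:direct_flux_moment}, which Theorem~\ref{thm:direct_strong_recursion} makes available at every layer with the scale $R_\ell(h,T)$ of \eqref{eq:R1_direct}--\eqref{eq:Rell_direct}. The induction hypothesis is \eqref{eq:DeltaL_explicit} at layer $\ell-1$, with the constant $K_{\ell-1}(T,W)$ given by the recursion \eqref{eq:K1_rec}--\eqref{eq:Kell_rec}.

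First, bound $R_\ell(h,T)^2$. For $\ell=1$ we have $R_1^2=C_1^{\mathrm{loc}}h$ by definition. For $\ell\ge2$, substitute the induction hypothesis into \eqref{eq:Rell_direct}:
\[
R_\ell(h,T)^2\le \bigl(C_\ell^{\mathrm{loc}}(T)+B_\ell(T,W)K_{\ell-1}(T,W)\bigr)\,h\,(1+\abs{\log h})^{q_{\ell-1}},
\]
using $(1+\abs{\log h})^{q_{\ell-1}}\ge1$ to absorb the $C_\ell^{\mathrm{loc}}h$ term. Next, estimate the factor $\Xi_{\gamma_\ell}(R_\ell)$. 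If $\gamma_\ell>0$, it equals $1$ and $q_\ell=q_{\ell-1}$, so the claimed bound follows (with the harmless factor $2$) as soon as $R_\ell\le1$. If $\gamma_\ell=0$, then $q_\ell=q_{\ell-1}+1$ and we need
\[
1+\abs{\log R_\ell}\le 2(1+\abs{\log h}).
\]
Since $\Xi_0$ is decreasing in $r$ on $(0,1]$, this requires a \emph{lower} bound on $R_\ell$. We obtain it from $R_\ell^2\ge C_\ell^{\mathrm{loc}}h$, which gives
\[
\abs{\log R_\ell}\le \tfrac12\abs{\log h}+\tfrac12\abs{\log C_\ell^{\mathrm{loc}}}.
\]
For sufficiently small $h$ this is at most $1+\abs{\log h}$, which explains the factor $2$ in \eqref{eq:Kell_rec}. (If $C_\ell^{\mathrm{loc}}$ is small, enlarging it only weakens the definition of $R_\ell$, so we may assume it is bounded below.) Multiplying the two factors according to \eqref{eq:log_inv_moment} yields \eqref{eq:DeltaL_explicit} with exactly the recursive constant. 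Iterating with all $\gamma_m=0$ gives \eqref{eq:DeltaL_explicit_alllog}.

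The main obstacle is the regime requirement $R_\ell(h,T)\le1$, and more precisely $R_\ell\le\alpha_0$, so that the splitting in Theorem~\ref{thm:direct_flux_moment} is legitimate. This needs $h(1+\abs{\log h})^{q_{\ell-1}}\to0$, which holds for $h$ small depending on $\ell$ and the constants; this is the source of the ``sufficiently small $h$'' qualifier. The smallness threshold must be chosen once, uniformly over the finitely many layers $\ell\le L$. A related subtlety is that $C_\ell^{\mathrm{dir}}$ must not depend on $R_\ell$ itself. I would check this by revisiting the three-way split of the flux integral: the intermediate range contributes $C_\ell^\rho\int_{R_\ell}^{\alpha_0}a^{\gamma_\ell-1}\,da$, which is bounded uniformly in $R_\ell$ when $\gamma_\ell>0$ and equals $C_\ell^\rho\log(\alpha_0/R_\ell)$ when $\gamma_\ell=0$, and the tail term is controlled by $\bar N_\ell(T)/\alpha_0^2$. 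Both are independent of $h$.
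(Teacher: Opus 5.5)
Your proposal is correct and follows the paper's proof: induction on $\ell$, inserting the layer-$(\ell-1)$ bound into \eqref{eq:Rell_direct} and then applying \eqref{eq:log_inv_moment} once more. Your extra details are exactly what the paper's remark that the constants ``are chosen precisely so that the induction closes'' leaves implicit: the lower bound $R_\ell^2\ge C_\ell^{\mathrm{loc}}h$ gives $\Xi_0(R_\ell)\le 2(1+\abs{\log h})$, which accounts for the factor $2$, and taking $h$ small enough that $R_\ell\le\min\{1,\alpha_0\}$ for all $\ell\le L$ justifies the flux splitting with an $h$-independent $C_\ell^{\mathrm{dir}}$.
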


\begin{proof}
The claim follows by induction on $\ell$.  For $\ell=1$, combine
\eqref{eq:R1_direct} with \eqref{eq:log_inv_moment}.  Assuming the bound at level $\ell-1$,
insert \eqref{eq:DeltaL_explicit} into \eqref{eq:Rell_direct}, then apply
\eqref{eq:log_inv_moment} once more.  The recursive constants \eqref{eq:K1_rec}--\eqref{eq:Kell_rec}
are chosen precisely so that the induction closes.
\end{proof}

\subsection{Matched-trajectory strong error for output observables}\label{subsec:strong_explicit_TL}

\begin{theorem}[Matched-trajectory strong error in exact kernel form]\label{thm:direct_strong_output}
Let $X_L(T)$ be an output functional depending on the layer-$L$ spike train and assume that on
matched spike histories,
\begin{equation}\label{eq:output_direct_lip}
\abs{X_L^h(T)-X_L(T)}^2
\le
C_X
\sum_{j=1}^{n_L}\sum_{k:\,s^k_{L,j}\le T}
\psi\!\bigl(\abs{\varepsilon^k_{L,j}}\bigr).
\end{equation}
Then
\begin{equation}\label{eq:direct_output_bound}
\E\Bigl[
\abs{X_L^h(T)-X_L(T)}^2\,\mathbf 1_{\mathcal G^{\mathrm{match}}(h,T)}
\Bigr]
\le
C_X\,n_L\,\mathcal M_L(T).
\end{equation}
Consequently, under the hypotheses of Theorem~\ref{thm:direct_strong_recursion},
\begin{equation}\label{eq:direct_output_bound_exact}
\E\Bigl[
\abs{X_L^h(T)-X_L(T)}^2\,\mathbf 1_{\mathcal G^{\mathrm{match}}(h,T)}
\Bigr]
\le
C_X\,n_L\,
\max_{1\le j\le n_L}
\int_0^T\int_0^\infty
\Gamma_L(a;h,T)\,
a\,\rho_{L,j}(\Vth,\Ith+a,t)\,da\,dt.
\end{equation}
According to Corollary~\ref{cor:direct_depth}, we obtain
\begin{equation}\label{eq:strong_explicit_new}
\E\Bigl[
\abs{X_L^h(T)-X_L(T)}^2\,\mathbf 1_{\mathcal G^{\mathrm{match}}(h,T)}
\Bigr]
\le
C_X\,n_L\,K_L(T,W)\,h\,(1+\abs{\log h})^{q_L}.
\end{equation}
In particular, if $\gamma_1=\cdots=\gamma_L=0$, then
\begin{equation}\label{eq:strong_explicit_alllog}
\E\Bigl[
\abs{X_L^h(T)-X_L(T)}^2\,\mathbf 1_{\mathcal G^{\mathrm{match}}(h,T)}
\Bigr]
\le
C_X\,n_L\,K_L(T,W)\,h\,(1+\abs{\log h})^L.
\end{equation}
\end{theorem}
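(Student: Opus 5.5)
The plan is a short chain of already-established bounds: \eqref{eq:direct_output_bound} comes from the hypothesis \eqref{eq:output_direct_lip}, and the remaining three displays follow by substituting the layer-$L$ impact estimates.

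\textbf{Step 1.} Multiply \eqref{eq:output_direct_lip} by $\mathbf 1_{\mathcal G^{\mathrm{match}}(h,T)}$. This is legitimate because on this event every layer-$L$ spike count matches up to $T$, so the spike-time errors $\varepsilon^k_{L,j}$ are well defined and \eqref{eq:output_direct_lip} applies. Take expectations and exchange the finite sum over $j$ with the expectation. Each summand
\[
\E\Bigl[\sum_{k:\,s^k_{L,j}\le T}\psi(\abs{\varepsilon^k_{L,j}})\mathbf 1_{\mathcal G^{\mathrm{match}}(h,T)}\Bigr]
\]
is exactly $\mathcal M_{L,j}(T)$ from \eqref{eq:direct_flux_moment}. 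Bounding each term by the layer envelope $\mathcal M_L(T)=\max_j\mathcal M_{L,j}(T)$ gives $C_X n_L \mathcal M_L(T)$, which is \eqref{eq:direct_output_bound}. Nonnegativity of $\psi$ makes the interchange of sum and expectation unproblematic (Tonelli).

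\textbf{Step 2.} Insert the recursion \eqref{eq:STE_recursion} of Theorem~\ref{thm:direct_strong_recursion} at $\ell=L$ to bound $\mathcal M_L(T)$ by the maximal boundary-flux integral of the kernel $\Gamma_L(a;h,T)$. This yields \eqref{eq:direct_output_bound_exact}.

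\textbf{Step 3.} Apply Corollary~\ref{cor:direct_depth} with $\ell=L$, i.e.\ $\mathcal M_L(T)\le K_L(T,W)h(1+\abs{\log h})^{q_L}$, to obtain \eqref{eq:strong_explicit_new}. In the all-logarithmic case $\gamma_1=\cdots=\gamma_L=0$ we have $q_L=L$, which gives \eqref{eq:strong_explicit_alllog}.

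All the substantive work already sits in the corollary's induction. That induction feeds the layer-$(\ell-1)$ bound into $R_\ell(h,T)^2$ in \eqref{eq:Rell_direct} and then applies \eqref{eq:log_inv_moment}. One subtlety must hold there: $R_\ell$ must stay at most $1$ for small $h$, so that $\Xi_{\gamma_\ell}(R_\ell)\lesssim 1+\abs{\log h}$. This holds because $R_\ell^2\lesssim h(1+\abs{\log h})^{q_{\ell-1}}\to0$, which gives $\abs{\log R_\ell}\le \abs{\log h}$ for small $h$. Within the present theorem the only point needing care is Step 1: the indicator $\mathbf 1_{\mathcal G^{\mathrm{match}}}$ must be placed inside the spike sum consistently with the definition of $\mathcal M_{L,j}$. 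No genuine obstacle is expected.
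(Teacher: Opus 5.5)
Your proposal is correct and matches the paper's proof step for step: you get \eqref{eq:direct_output_bound} from \eqref{eq:output_direct_lip} and the definition of $\mathcal M_{L,j}(T)$, then apply Theorem~\ref{thm:direct_strong_recursion} and Corollary~\ref{cor:direct_depth} at $\ell=L$, with $q_L=L$ in the all-logarithmic case. One small correction to your side remark on the corollary: the estimate $|\log R_\ell|\lesssim 1+|\log h|$ comes from the lower bound $R_\ell(h,T)^2\ge C^{\mathrm{loc}}_\ell(T)\,h$, whereas $R_\ell\to0$ only guarantees $R_\ell\le1$, so that $\Xi_{\gamma_\ell}(R_\ell)$ is defined.
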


\begin{proof}
The first inequality is immediate from \eqref{eq:output_direct_lip} and the definition of
$\mathcal M_L(T)$.  The exact-kernel bound \eqref{eq:direct_output_bound_exact} follows from
Theorem~\ref{thm:direct_strong_recursion}.  The explicit all-log estimate follows from
Corollary~\ref{cor:direct_depth}.
\end{proof}

\begin{figure}[bthp]
\centering
\begin{subfigure}[t]{0.495\textwidth}
  \centering
  \begin{overpic}[width=\linewidth]{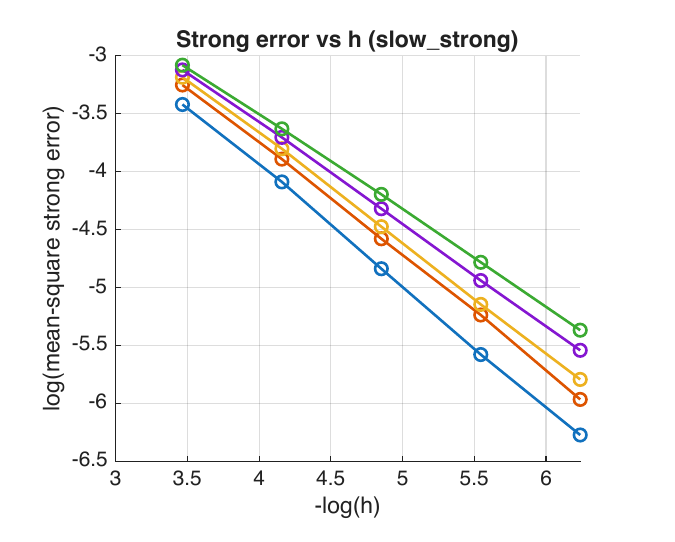}
    \put(2,76){\large\bfseries A}
  \end{overpic}
\end{subfigure}\hfill
\begin{subfigure}[t]{0.495\textwidth}
  \centering
  \begin{overpic}[width=\linewidth]{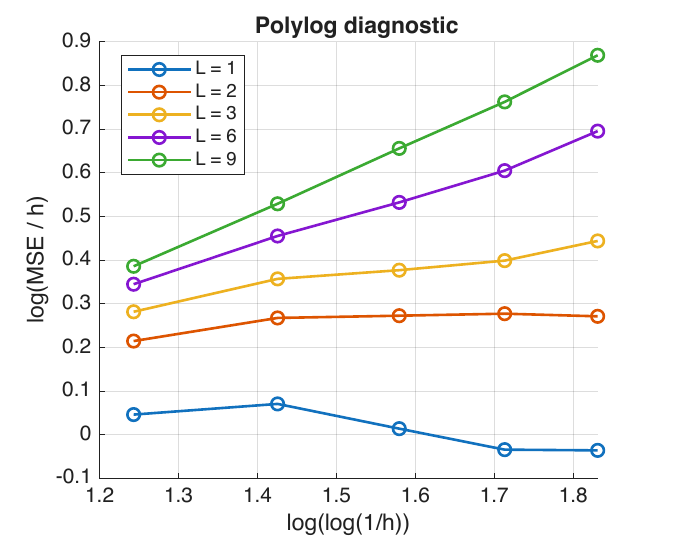}
    \put(2,76){\large\bfseries B}
  \end{overpic}
\end{subfigure}
\caption{Strong-error illustration. We simulate a feedforward network with nine layers with 24 LIF neurons in each layer. More details are given in Appendix. A: mean-square strong error versus $h$. B: polylogarithmic diagnostic for $\mathrm{MSE}/h$.}
\label{fig:strong_illustration}
\end{figure}


\subsection{Spike-count mismatch bad set}\label{subsec:mismatch_count}

The matched-trajectory theorem above does not include trajectories on which the exact and numerical
spike counts disagree at the observation horizon.  We now record the corresponding mismatch bound.

\paragraph{Horizon mismatch.}
For each neuron $(\ell,j)$ define
\begin{equation}\label{eq:Bmis_def}
B^{\mathrm{mis}}_{\ell,j}(h,T):=\{N^h_{\ell,j}(T)\neq N_{\ell,j}(T)\}.
\end{equation}

\paragraph{Boundary-layer and dense-spike auxiliaries.}
Fix $\delta\in(0,1]$ and define
\begin{align}
B^{\partial}_{\ell,j}(\delta,T)
&:=\{N_{\ell,j}(T+\delta)-N_{\ell,j}(T-\delta)\ge 1\},
\label{eq:Bbd_def}\\
B^{(2)}_{\ell,j}(\delta,T)
&:=\{\exists k:\ s^{k+1}_{\ell,j}-s^k_{\ell,j}\le 2\delta,\ \ s^k_{\ell,j}\le T+\delta\},
\label{eq:Bpair_def}\\
B^{\mathrm{err}}_{\ell,j}(h,\delta,T)
&:=\left\{\max_{k:\,s^k_{\ell,j}\le T+\delta}\abs{\varepsilon^k_{\ell,j}}>\delta\right\}.
\label{eq:Berr_def}
\end{align}

\begin{lemma}[Count mismatch decomposition]\label{lem:mis_decomp}
For every $\delta>0$,
\[
B^{\mathrm{mis}}_{\ell,j}(h,T)
\subseteq
B^{\partial}_{\ell,j}(\delta,T)\cup
B^{(2)}_{\ell,j}(\delta,T)\cup
B^{\mathrm{err}}_{\ell,j}(h,\delta,T).
\]
\end{lemma}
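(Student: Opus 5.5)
I would prove the inclusion by contraposition: assume we are outside all three auxiliary events, i.e. on $(B^{\partial}_{\ell,j}(\delta,T))^c\cap(B^{(2)}_{\ell,j}(\delta,T))^c\cap(B^{\mathrm{err}}_{\ell,j}(h,\delta,T))^c$, and show that $N^h_{\ell,j}(T)=N_{\ell,j}(T)$. The mechanism is that exact spikes are never within $\delta$ of the horizon and are separated by more than $2\delta$. Each numerical spike therefore lies within $\delta$ of its exact partner, so it falls on the same side of $T$.

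Concretely, I would use the ordered pairing $k\mapsto(s^k_{\ell,j},\hat s^k_{\ell,j})$ from Section~\ref{subsec:STE_def}, applied to all exact spikes with $s^k_{\ell,j}\le T+\delta$. Off $B^{\mathrm{err}}$ each such pair satisfies $|\hat s^k_{\ell,j}-s^k_{\ell,j}|\le\delta$. Off $B^{\partial}$ there is no exact spike in $(T-\delta,T+\delta]$.

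First, every exact spike with $s^k\le T$ actually satisfies $s^k\le T-\delta$. Its partner then has $\hat s^k\le T$, which gives $N^h(T)\ge N(T)$.

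Second, the next exact spike $s^{N(T)+1}$, if it occurs before $T+\delta$, would lie in $(T-\delta,T+\delta]$. That is excluded, so $s^{N(T)+1}>T+\delta$ and its partner satisfies $\hat s^{N(T)+1}>T$.

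Third, $B^{(2)}$ enters through the requirement that the pairing be order-preserving and injective. Exact gaps exceed $2\delta$, and each numerical time lies in a $\delta$-window around its partner. The windows $[s^k-\delta,s^k+\delta]$ are therefore disjoint and ordered, so the $k$th numerical spike is the one in the $k$th window. Numerical spikes are monotone in $k$, so no numerical spike with index above $N(T)$ can occur before $T$, which gives $N^h(T)\le N(T)$.

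The main obstacle is the definitional gap. The statement is phrased through $\varepsilon^k$, which the paper defines only "when exact and numerical spike sequences can be matched in order." On mismatch trajectories, a numerical spike could be spurious (a false positive with no exact partner), and then the pairing does not control it. I would handle this by interpreting $B^{\mathrm{err}}$ as follows: the $k$th numerical spike, indexed by order, exists and lies within $\delta$ of $s^k$ for all $s^k\le T+\delta$; moreover, there is no numerical spike before $T$ beyond those paired with exact spikes up to $T+\delta$. Equivalently, set $\varepsilon^k=+\infty$ when the indexed numerical spike is missing, and treat a numerical spike beyond the last exact index as an error event of size exceeding $\delta$. I would state this convention explicitly. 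With it, the extra numerical spike $\hat s^{N(T)+1}\le T$ is excluded by $|\varepsilon^{N(T)+1}|\le\delta$ when $s^{N(T)+1}\le T+\delta$. When $s^{N(T)+1}>T+\delta$, the same convention rules out the spike, since $\hat s^{N(T)+1}\le T$ would imply $|\varepsilon|>\delta$. The inclusion then follows.
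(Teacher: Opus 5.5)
Your argument is the paper's own (off $B^{\partial}_{\ell,j}$ no exact spike lies in $(T-\delta,T+\delta]$, so shifts of size at most $\delta$ keep each index-paired spike on the same side of $T$), and the definitional gap you flag is genuine. The paper's one-line proof tacitly needs $|\varepsilon^{N(T)+1}|\le\delta$ even though $s^{N(T)+1}>T+\delta$ lies outside the index range of $B^{\mathrm{err}}_{\ell,j}$, so a convention such as yours is required for the inclusion to hold at all; most cleanly, take the maximum over all $k$ with $\min(s^k,\hat s^k)\le T+\delta$ and set $\varepsilon^k:=+\infty$ for unpaired spikes. With index pairing, order preservation is automatic, so your disjoint-window step is superfluous and $B^{(2)}_{\ell,j}$ plays no logical role: the inclusion already follows from $B^{\partial}_{\ell,j}$ and the repaired $B^{\mathrm{err}}_{\ell,j}$ alone.
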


\begin{proof}
If there is no exact spike within
$\delta$ of $T$, no two exact spikes are separated by at most $2\delta$ up to time $T+\delta$,
and every matched spike-time shift is at most $\delta$, then index pairing is preserved and
\[
s^k_{\ell,j}\le T\quad\Longleftrightarrow\quad \hat s^k_{\ell,j}\le T
\]
for every $k$. Hence $N^h_{\ell,j}(T)=N_{\ell,j}(T)$.
\end{proof}

\begin{assumption}[Local rate and dense-spike bounds]\label{ass:mismatch_rate}
For each neuron $(\ell,j)$ there exist constants
$r^{\max}_{\ell,j}(T),R^{(2)}_{\ell,j}(T)<\infty$ such that
\begin{align}
\E[N_{\ell,j}(T+\delta)-N_{\ell,j}(T-\delta)]
&\le 2\,r^{\max}_{\ell,j}(T)\,\delta,
\qquad 0<\delta\le1,
\label{eq:rate_local_bound}\\
\Pp\bigl(B^{(2)}_{\ell,j}(\delta,T)\bigr)
&\le 4\,R^{(2)}_{\ell,j}(T)\,\delta^2,
\qquad 0<\delta\le1.
\label{eq:pair_bound}
\end{align}
In addition, we will use that spike-time errors have Gaussian-type concentration at scale $\sqrt{h}$.
We record this as an assumption tailored to the current-driven LIF setting (where all driving noises are Gaussian):
there exist constants $C_{\mathrm{sg}},c_{\mathrm{sg}}>0$ such that for all $\delta\in(0,1]$,
\begin{equation}\label{eq:STE_subG}
\sup_{k\ge 1}\Pp\big(|\varepsilon^k_{\ell,j}|>\delta,\ s^k_{\ell,j}\le T+1\big)
\ \le\ C_{\mathrm{sg}}\exp\!\Big(-c_{\mathrm{sg}}\,\frac{\delta^2}{h}\Big).
\end{equation}
\end{assumption}

\begin{lemma}[Spike-count mismatch probability]\label{lem:mis_prob}
Assume Assumptions~\ref{ass:boundary_depletion} and \ref{ass:mismatch_rate}.  Then, for every
$\delta\in(0,1]$,
\begin{equation}\label{eq:mis_prob_new}
\Pp\bigl(B^{\mathrm{mis}}_{\ell,j}(h,T)\bigr)
\le
2\,r^{\max}_{\ell,j}(T)\,\delta
+
4\,R^{(2)}_{\ell,j}(T)\,\delta^2
+
C_{\mathrm{sg}}\exp\!\Big(-c_{\mathrm{sg}}\,\frac{\delta^2}{R^2_\ell(h,T)}\Big).
\end{equation}
In particular, choosing $\delta=\delta_h:=\kappa\sqrt{h\log(1/h)}$ yields
\begin{equation}\label{eq:mis_prob_choice}
\Pp\big(B^{\mathrm{mis}}_{\ell,j}(h,T)\big)
\ \lesssim\
r^{\max}_{\ell,j}(T)\,\sqrt{h\log(1/h)}
+\ R^{(2)}_{\ell,j}(T)\,h\log(1/h)
+\ C^{\mathrm{err}}_{\ell,j}(T)\,h^{c_0},
\qquad c_0:=c_{\mathrm{sg}}\kappa^2.
\end{equation}
\end{lemma}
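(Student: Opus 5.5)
The plan is to combine the inclusion of Lemma~\ref{lem:mis_decomp} with a union bound, and then to estimate each of the three auxiliary events with the matching part of Assumption~\ref{ass:mismatch_rate}. Fix $\delta\in(0,1]$. Lemma~\ref{lem:mis_decomp} gives
\[
\Pp\bigl(B^{\mathrm{mis}}_{\ell,j}(h,T)\bigr)\le \Pp\bigl(B^{\partial}_{\ell,j}(\delta,T)\bigr)+\Pp\bigl(B^{(2)}_{\ell,j}(\delta,T)\bigr)+\Pp\bigl(B^{\mathrm{err}}_{\ell,j}(h,\delta,T)\bigr).
\]

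\emph{Boundary term.} $B^{\partial}_{\ell,j}(\delta,T)$ is the event that the integer-valued variable $N_{\ell,j}(T+\delta)-N_{\ell,j}(T-\delta)$ is at least $1$. Markov's inequality therefore bounds its probability by the expectation of this increment, which is at most $2r^{\max}_{\ell,j}(T)\delta$ by \eqref{eq:rate_local_bound}.

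\emph{Dense-spike term.} This is exactly \eqref{eq:pair_bound}, giving $4R^{(2)}_{\ell,j}(T)\delta^2$.

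\emph{Error-tail term.} This is the only step needing care. $B^{\mathrm{err}}$ is a maximum over all spikes up to $T+\delta$, and the number of such spikes is random, so a plain union bound gives
\[
\Pp(B^{\mathrm{err}})\le \E\Bigl[\sum_{k:\,s^k\le T+\delta}\1_{\{|\varepsilon^k|>\delta\}}\Bigr].
\]
To control this sum I will use the pathwise count bound \eqref{eq:pathwise_count_bound} on retained trajectories, which caps the number of summands at $\mathcal N_\ell(T+1)$. If one prefers to avoid the pathwise bound, an alternative is to condition spike by spike, using the expected count \eqref{eq:expected_count_bound} together with a conditional form of the tail. Either route gives
\[
\Pp(B^{\mathrm{err}})\le \mathcal N_\ell\,C_{\mathrm{sg}}\exp\bigl(-c_{\mathrm{sg}}\delta^2/\sigma^2\bigr).
\]
Here $\sigma^2$ is the variance scale of the spike-time error, and the factor $\mathcal N_\ell$ is absorbed into $C_{\mathrm{sg}}$.

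The statement writes this scale as $R_\ell(h,T)^2$ rather than $h$. Two facts justify the substitution. First, the layerwise local-gap tail used in Theorem~\ref{thm:direct_strong_recursion} is at scale $R_\ell$. Second, by Corollary~\ref{cor:direct_depth}, $R_\ell^2\lesssim h(1+|\log h|)^{q_{\ell-1}}$. One can then either assume \eqref{eq:STE_subG} in its layerwise form with $h$ replaced by $R_\ell^2$, or note that for layer $1$ the two agree up to constants. I expect this identification to be the main obstacle, and I would state it explicitly as an assumption rather than derive it. A genuine derivation would run through Proposition~\ref{lem:STE_sensitivity}, and the small-speed events where $\tau_{D,A}$ is large make a pure sub-Gaussian tail in $\varepsilon$ false unless crossing speeds are bounded below. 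Summing the three contributions gives \eqref{eq:mis_prob_new}.

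\emph{Choice of $\delta$.} Take $\delta_h=\kappa\sqrt{h\log(1/h)}$, which lies in $(0,1]$ for small $h$. The first two terms become $r^{\max}_{\ell,j}\sqrt{h\log(1/h)}$ and $R^{(2)}_{\ell,j}h\log(1/h)$ up to constants. For the tail term, at scale $h$ the exponent equals $c_{\mathrm{sg}}\kappa^2\log(1/h)$, so the term is $C_{\mathrm{sg}}h^{c_{\mathrm{sg}}\kappa^2}$. At scale $R_\ell^2$ with a polylogarithmic excess, the exponent is reduced by that polylog factor. This is handled either by enlarging $\kappa$ by the polylog factor, which costs only polylog factors in the first two terms, or by absorbing the loss into $C^{\mathrm{err}}_{\ell,j}(T)$ and $c_0$. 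This yields \eqref{eq:mis_prob_choice}.
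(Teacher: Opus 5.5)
Your skeleton (the inclusion of Lemma~\ref{lem:mis_decomp}, a union bound, Markov's inequality on the integer increment for $B^{\partial}_{\ell,j}$, and \eqref{eq:pair_bound} for $B^{(2)}_{\ell,j}$) is the argument the paper evidently intends; the paper states the lemma without a written proof. Those two terms are correct. The gap is in the error-tail term, where neither of your two routes follows from the hypotheses.

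\begin{itemize}
\item The cap \eqref{eq:pathwise_count_bound} is assumed only on the matched trajectories retained in the strong-error statistic, and only up to time $T$. $B^{\mathrm{err}}_{\ell,j}$ is exactly the kind of event that lies off that set. Invoking the cap there is circular: you would have to pay $\Pp(\mathcal G^{\mathrm{match}}(h,T)^c)$, a network-level version of what you are bounding. No almost-sure cap is available either, because the OU-driven current is unbounded.
\item Your second route needs a tail conditional on $\mathcal F^{k-}_{\ell,j}$. By contrast, \eqref{eq:STE_subG} is an unconditional bound for each fixed index $k$.
\end{itemize}

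Using only what is assumed, truncate the count. Set $N^+:=N_{\ell,j}(T+1)$. Then \eqref{eq:expected_count_bound} together with \eqref{eq:rate_local_bound} at $\delta=1$ gives $\E N^+\le \bar N_\ell(T)+2r^{\max}_{\ell,j}(T)$. Since $\delta\le 1$, for every integer $K\ge1$,
\[
\Pp\bigl(B^{\mathrm{err}}_{\ell,j}(h,\delta,T)\bigr)
\le
\Pp(N^+>K)+\sum_{k=1}^{K}\Pp\bigl(\abs{\varepsilon^k_{\ell,j}}>\delta,\ s^k_{\ell,j}\le T+1\bigr)
\le
\frac{\E N^+}{K}+K\,C_{\mathrm{sg}}\,e^{-c_{\mathrm{sg}}\delta^2/h}.
\]
Choosing $K=\lceil e^{c_{\mathrm{sg}}\delta^2/(2h)}\rceil$ yields
\[
\Pp(B^{\mathrm{err}}_{\ell,j})\le\bigl(\bar N_\ell(T)+2r^{\max}_{\ell,j}(T)+2C_{\mathrm{sg}}\bigr)\,e^{-c_{\mathrm{sg}}\delta^2/(2h)}.
\]
The halved exponent cannot be avoided under these hypotheses alone. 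Take disjoint events $E_1,\dots,E_K$ of probability $p$, with $N^+=k$ and only $\varepsilon^k_{\ell,j}$ large on $E_k$. They satisfy the per-index tail and have $\E N^+\approx pK^2/2$, yet $\Pp(B^{\mathrm{err}}_{\ell,j})=Kp\approx\sqrt{2\,\E N^+\,p}$. Hence \eqref{eq:mis_prob_new} holds only with relabeled constants, not with the literal $C_{\mathrm{sg}}$ (compare the paper's recurrent version \eqref{eq:mis_count_bound_recurrent}, which writes $C^{\mathrm{err}}_i(T)$). Then \eqref{eq:mis_prob_choice} follows by taking $\delta_h=\sqrt2\,\kappa\sqrt{h\log(1/h)}$ and absorbing the $\sqrt2$ into $\lesssim$.

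You have also misplaced the main obstacle. Passing from $h$ to $R_\ell(h,T)^2$ needs no new layerwise assumption. By \eqref{eq:R1_direct}--\eqref{eq:Rell_direct}, $R_\ell(h,T)^2\ge C^{\mathrm{loc}}_\ell(T)\,h$, so $e^{-c\delta^2/h}\le e^{-c\,C^{\mathrm{loc}}_\ell(T)\,\delta^2/R_\ell(h,T)^2}$. Thus the $R_\ell$-form of \eqref{eq:mis_prob_new} is the \emph{weaker} statement and follows from the $h$-scale bound after adjusting $c_{\mathrm{sg}}$.

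Conversely, \eqref{eq:mis_prob_choice} must be read off from the $h$-scale bound, not from the $R_\ell$-form. If $R_\ell^2\asymp h\abs{\log h}^{q_{\ell-1}}$ with $q_{\ell-1}\ge1$, the exponent $c\kappa^2\abs{\log h}^{1-q_{\ell-1}}$ stays bounded. Inflating $\kappa$ by a polylogarithmic factor then multiplies the first term by $\abs{\log h}^{q_{\ell-1}/2}$, which is not \eqref{eq:mis_prob_choice}. So keep your scale-$h$ computation and drop the other option.

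Your observation that small crossing speeds make a genuinely sub-Gaussian tail for $\varepsilon$ implausible is fair. It bears on the hypothesis \eqref{eq:STE_subG}, not on this proof.
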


\begin{remark}
Section~\ref{sec:strong} proves two complementary statements.
\begin{enumerate}
    \item On the matched event \eqref{eq:Gmatch_feedforward_def}, it yields the explicit
    matched-trajectory strong bound \eqref{eq:strong_explicit_new}, and in particular the
    all-log bound \eqref{eq:strong_explicit_alllog}.
    \item Separately, it gives the horizon spike-count mismatch estimate
    \eqref{eq:mis_prob_choice}, which is of order $O(\sqrt{h\log(1/h)})$ under the stated
    rate, burst, and spike-time-tail assumptions.
\end{enumerate}
\end{remark}

\section{Weak error: an averaged order-$1$ theorem with explicit dependence on depth, time, and weights}\label{sec:weak}

Weak error cannot be controlled by pruning: missed or extra spikes must be handled inside the expectation.
The key technical point is that, near threshold, one should \emph{not} seek a pointwise operator bound of order $h^2$.
Indeed, if a deterministic state $x$ lies at distance $O(h)$ below threshold, then the exact one-step dynamics and the grid spike map may make different spike/no-spike decisions with $O(1)$ probability.
What is small is the \emph{averaged} one-step defect, because the jump discrepancy vanishes on the threshold surface by transmission and is therefore proportional to the distance from threshold.
Accordingly, Section~\ref{sec:weak} proves an \emph{averaged} weak-order theorem along the numerical chain.

When discussing weak errors, we will sometimes use neuron index $p = (\ell,j)$ in a completely interchangeable fashion.

\subsection{State space, spike maps, and the exact Markov semigroup}
Let $\mathcal{J}:=\{p=(\ell,j):\,1\le \ell\le L,\ 1\le j\le n_\ell\}$ and $N:=\sum_{\ell=1}^L n_\ell$.
Write the network state as $x=(v,I)\in\R^{2N}$ with coordinates $(v_{p},I_{p})_{(p)\in\mathcal{J}}$.
Define the interior (subthreshold) domain
\[
\mathsf{D}:=\{x=(v,I)\in\R^{2N}:\ v_{p}<\Vth\ \text{for all }(p)\in\mathcal{J}\}.
\]
For each neuron $p\in\mathcal{J}$ define the threshold surface
\[
\Sigma_p:=\{x\in\overline{\mathsf{D}}:\ v_{p}=\Vth\}.
\]

\paragraph{Spike map (reset + feedforward synaptic jump).}
When neuron $p=(\ell,i)$ spikes, it resets its own voltage and (if $\ell\le L-1$) produces an instantaneous
current jump in layer $\ell+1$. Define $\mathcal{R}_p:\overline{\mathsf{D}}\to\mathsf{D}$ by
\begin{enumerate}[label=(\roman*),leftmargin=2.2em]
\item $(\mathcal{R}_p x)_{v_{\ell,i}}:=\Vr$,
\item if $\ell\le L-1$, then for each $j\in\{1,\dots,n_{\ell+1}\}$,
\[
(\mathcal{R}_p x)_{I_{\ell+1,j}}:=I_{\ell+1,j}+W^{\ell,\ell+1}_{j i},
\]
\item all other coordinates are unchanged.
\end{enumerate}

\begin{lemma}[Commutativity of feedforward spike maps]\label{lem:feedforward_spikemap_commute}
For the current-based feedforward model, $\mathcal{R}_p\mathcal{R}_q=\mathcal{R}_q\mathcal{R}_p$ for all
$p,q\in\mathcal J$.
Hence the product $\prod_{p\in \mathcal S^{\mathrm{grid}}(x)}\mathcal R_p$ is independent of the order.
\end{lemma}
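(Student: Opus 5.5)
The claim is purely algebraic, so I will verify it coordinatewise on the full state vector $x=(v,I)\in\R^{2N}$. For $p=(\ell,i)$ the map $\mathcal R_p$ does two things. It overwrites one coordinate, $v_{\ell,i}\mapsto\Vr$. It also adds a constant vector to the block $(I_{\ell+1,j})_{j}$, namely $I_{\ell+1,j}\mapsto I_{\ell+1,j}+W^{\ell,\ell+1}_{ji}$, provided $\ell\le L-1$. No other coordinate changes. The key structural fact is that, in the current-based model, the added amounts $W^{\ell,\ell+1}_{ji}$ are constants that do not depend on the state. In particular they do not depend on any voltage coordinate.

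Fix $p=(\ell,i)$ and $q=(\ell',i')$, and compare $\mathcal R_p\mathcal R_q x$ with $\mathcal R_q\mathcal R_p x$ one coordinate at a time.

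\emph{Voltage coordinates.} The only voltage coordinates touched are $v_p$ and $v_q$. Each is set to the constant $\Vr$ by the corresponding map and is left alone by the other. So both compositions give $v_p=v_q=\Vr$, and every other voltage is unchanged. If $p=q$, commutativity is trivial.

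\emph{Current coordinates.} Each map acts on the currents as a translation: $\mathcal R_p$ adds a fixed vector $w_p$, supported on layer $\ell+1$, and $\mathcal R_q$ adds a fixed vector $w_q$, supported on layer $\ell'+1$. Because these translations are state-independent, both compositions yield $I+w_p+w_q$. The supports of $w_p$ and $w_q$ may overlap, for instance when $\ell=\ell'$; this causes no problem, since the increments simply add and addition commutes.

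\emph{Cross-interaction.} The maps do not interfere. The reset writes only to a voltage coordinate, and the translations read no voltage coordinate, since they are constants. Hence applying either map first does not change what the other map does.

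This proves pairwise commutativity. The statement about the grid product follows from a standard fact: the maps $\{\mathcal R_p\}$ generate a commutative semigroup under composition. Any reordering of a finite product is obtained by a sequence of adjacent transpositions, and each transposition leaves the product unchanged by the pairwise result. Therefore $\prod_{p\in\mathcal S^{\mathrm{grid}}(x)}\mathcal R_p$ does not depend on the order.

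The main thing to check is that the domain and codomain are consistent: $\mathcal R_q$ must map into a set on which $\mathcal R_p$ is defined. I will handle this by extending each $\mathcal R_p$ to all of $\R^{2N}$ by the same formula, since the formula makes sense for any vector. On that extension commutativity holds identically, and it therefore holds on $\overline{\mathsf D}$ as well.

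No analytic estimates are needed. The one point to state explicitly is that this argument relies on the current-based structure, where the jump sizes are state-independent. It would fail for conductance-based jumps, whose size depends on the voltage.
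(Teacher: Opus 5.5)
Your proof is correct and follows essentially the same route as the paper's: resets overwrite voltage coordinates with the constant $\Vr$, synaptic jumps are state-independent translations of current coordinates, and the two kinds of action touch disjoint coordinate blocks, so every pair commutes and any finite product is order-independent. Your extension of each $\mathcal R_p$ to all of $\R^{2N}$ is a worthwhile addition the paper omits, since the grid product is actually applied to pre-update states with $v^{\mathrm{pre}}_p\ge\Vth$, which can lie outside $\overline{\mathsf D}$.
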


\begin{proof}
Each spike map resets one voltage coordinate and adds a fixed current column to the next layer.
Voltage resets on different coordinates commute, current additions commute because they are additive,
and no spike map modifies the current coordinate of the neuron whose voltage it resets.
Therefore the maps commute pairwise.
\end{proof}

\paragraph{Exact process and semigroup.}
Let $X(t)=(V(t),I(t))$ be the exact network process:
between spikes it solves \eqref{eq:dv}--\eqref{eq:dI}, and when $X(t^-)\in\Sigma_p$ it jumps to
$X(t^+)=\mathcal{R}_p(X(t^-))$.
For bounded measurable $\Phi:\mathsf{D}\to\R$, define
\[
(P_{s,t}\Phi)(x):=\E\big[\Phi(X(t))\,\big|\,X(s)=x\big],\qquad 0\le s\le t.
\]

\subsection{EM one-step operator}
Fix $h>0$ and grid times $t_m:=mh$.
Given $x\in\mathsf{D}$ at time $t_m$, define the EM pre-update
\[
v^{\mathrm{pre}}_{p}=v_{p}+h\Bigl(-\tfrac{1}{\tauv}(v_{p}-\Vr)+I_{p}\Bigr),
\qquad
I^{\mathrm{pre}}_{p}=I_{p}-\frac{h}{\tauc}(I_{p}-b_{p}(t_m))+\frac{\sigma_{p}}{\tauc}\Delta B_{p,m},
\]
with independent $\Delta B_{p,m}\sim\mathcal{N}(0,h)$.
Define the grid spike set
\[
\mathcal{S}^{\mathrm{grid}}(x):=\{p\in\mathcal{J}:\ v^{\mathrm{pre}}_{p}\ge \Vth\}.
\]
Apply all spike maps at the grid endpoint:
\[
x^{\mathrm{post}}
=
\Bigl(\prod_{p\in\mathcal{S}^{\mathrm{grid}}(x)}\mathcal{R}_p\Bigr)(v^{\mathrm{pre}},I^{\mathrm{pre}}).
\]
Define the EM one-step operator
\[
(Q_h\Psi)(x):=\E\big[\Psi(x^{\mathrm{post}})\big].
\]

\subsection{Backward value function and transmission at threshold}
Fix $T=Mh$ and $\Phi\in C_b^4(\mathsf{D})$.
Define
\[
u(t,x):=(P_{t,T}\Phi)(x)=\E\big[\Phi(X(T))\,\big|\,X(t)=x\big],\qquad 0\le t\le T.
\]

\begin{assumption}[Backward regularity and transmission for the reset semigroup]\label{ass:weak_backward_reg}
For every terminal observable $\Phi\in C_b^4(\mathsf D)$ used below, the value function $u$
satisfies the following properties:
\begin{enumerate}[label=(\roman*),leftmargin=2.2em]
\item for every $\eta>0$, $u\in C^{1,4}([0,T-\eta]\times\mathsf D)$;
\item all spatial derivatives of $u$ up to order $4$ that appear in the weak expansion are
bounded on $[0,T-\eta]\times\mathsf D$;
\item on the interior, $u$ solves the backward Kolmogorov equation
$\partial_t u+\mathcal L_t u=0$ with terminal condition $u(T,\cdot)=\Phi$;
\item for every boundary face $\Sigma_p$, every $t<T$, and every boundary point
$x\in\Sigma_p$ reachable by upward crossing,
\begin{equation}\label{eq:transmission_u}
u(t,x)=u(t,\mathcal R_p x).
\end{equation}
\end{enumerate}
\end{assumption}

\begin{remark}[Status of Assumption~\ref{ass:weak_backward_reg}]
This is the main analytic input behind the weak theory.
General semigroup results for diffusion operators with nonlocal boundary conditions provide
background for the reset/gluing structure, and forward Fokker--Planck equations for certain
integrate-and-fire models are classical \cite{ArendtKunkelKunze2016,CarrilloGonzalezGualdaniSchonbek2013}.
However, these results do not directly yield the precise $C^{1,4}$ backward regularity required for
the present finite-network current-driven reset system.
Accordingly, all weak-order statements below are formulated conditionally on
Assumption~\ref{ass:weak_backward_reg}.
\end{remark}

Under Assumption~\ref{ass:weak_backward_reg}, the interior generator is
\begin{equation}\label{eq:generator}
\mathcal{L}_t
=
\sum_{p\in\mathcal{J}}
\Bigl(-\tfrac{1}{\tauv}(v_{p}-\Vr)+I_{p}\Bigr)\partial_{v_{p}}
+
\sum_{p\in\mathcal{J}}
\Bigl(-\tfrac{1}{\tauc}(I_{p}-b_{p}(t))\Bigr)\partial_{I_{p}}
+
\sum_{p\in\mathcal{J}}\frac{\sigma_{p}^2}{2\tauc^2}\partial_{I_{p}I_{p}}.
\end{equation}

\subsection{Averaged one-step control package}
Write $X_m^h:=(V_m^h,I_m^h):=X^h(t_m)$ for the numerical chain.
For each step $m$ and each realization of $X_m^h$, let
$\widetilde X^{(m)}(t)$, $t\in[t_m,t_{m+1}]$, denote the \emph{exact} feedforward process started from
$\widetilde X^{(m)}(t_m)=X_m^h$ and driven by fresh Brownian increments over $[t_m,t_{m+1}]$.
Let
\[
\Delta \widetilde N_{\ell}^{(m)}
:=
\sum_{i=1}^{n_\ell}\Bigl(\widetilde N_{\ell,i}^{(m)}(t_{m+1})-\widetilde N_{\ell,i}^{(m)}(t_m)\Bigr),
\qquad
\Delta \widetilde N_{\mathrm{tot}}^{(m)}:=\sum_{\ell=1}^L \Delta \widetilde N_{\ell}^{(m)} .
\]
Thus $\Delta \widetilde N_{\ell}^{(m)}$ counts \emph{exact} layer-$\ell$ spikes during the single step
when the exact evolution is started from the numerical state $X_m^h$.

For each neuron $p\in\mathcal J$, let $v_p^{\mathrm{pre}}(X_m^h)$ be the EM pre-update coordinate defined
above, and let
\[
D_{p,m}
:=
\Bigl\{
\mathbf 1_{\{\widetilde N^{(m)}_p(t_{m+1})-\widetilde N^{(m)}_p(t_m)\ge 1\}}
\neq
\mathbf 1_{\{v_p^{\mathrm{pre}}(X_m^h)\ge \Vth\}}
\Bigr\}
\]
be the exact-vs.-grid spike-decision discrepancy event for neuron $p$ in step $m$.

\begin{assumption}[Averaged one-step rate, strip, and factorial-moment bounds]\label{ass:weak_step_pkg}
There exist finite constants $R_\ell(T)$, $R_{\mathrm{ff},2}(T)$, $\rho^V_{\max,p}(T)$,
$A_{2,p}(T)$, and $C_{\eta,p}(T)$ such that for every step $m$ with $t_{m+1}\le T$:
\begin{enumerate}[label=(\roman*),leftmargin=2.2em]
\item \textbf{Averaged layerwise rate bound:}
\begin{equation}\label{eq:weak_rate_bound_layer}
\E[\Delta \widetilde N_\ell^{(m)}]\le R_\ell(T)\,h,
\qquad \ell=1,\dots,L.
\end{equation}
\item \textbf{Second factorial moment of exact spikes in the step:}
\begin{equation}\label{eq:weak_ff_factorial}
\E\!\Big[\Delta \widetilde N_{\mathrm{tot}}^{(m)}\bigl(\Delta \widetilde N_{\mathrm{tot}}^{(m)}-1\bigr)\Big]
\le R_{\mathrm{ff},2}(T)\,h^2.
\end{equation}
\item \textbf{Numerical strip density and velocity moment bounds:}
for each $p\in\mathcal J$, the marginal law of $V_{p,m}^h$ admits a density $\varrho^h_{p,m}(v)$ and
\begin{equation}\label{eq:weak_num_strip}
\sup_{m:\,t_m\le T}\sup_{y\in[0,1]}\varrho_{p,m}^h(\Vth-y)\le \rho^V_{\max,p}(T),
\qquad
\sup_{m:\,t_m\le T}\E\big[(a_p^+(X_m^h))^2\big]\le A_{2,p}(T),
\end{equation}
where
\[
a_p(x):=-\tfrac{1}{\tauv}(v_p-\Vr)+I_p,
\qquad
a_p^+(x):=\max\{a_p(x),0\}.
\]
\item \textbf{One-step decision-strip remainder:}
there exist nonnegative random variables $\eta_{p,m}$ such that on the event
$\{\Delta \widetilde N_{\mathrm{tot}}^{(m)}\le 1\}$,
\begin{equation}\label{eq:weak_discrepancy_strip}
D_{p,m}
\subset
\Bigl\{
0<\Vth-V_{p,m}^h\le h\,a_p^+(X_m^h)+\eta_{p,m}
\Bigr\},
\end{equation}
and
\begin{equation}\label{eq:weak_eta_bound}
\E[\eta_{p,m}^2]\le C_{\eta,p}(T)\,h^3.
\end{equation}
\end{enumerate}
\end{assumption}

\begin{remark}[Why Assumption~\ref{ass:weak_step_pkg}(iv) is the right scale]
In the current-driven LIF model, the current fluctuation over one step is $O(\sqrt h)$, while the
voltage integrates that fluctuation over time and therefore acquires an $O(h^{3/2})$ correction.
The remainder scale in \eqref{eq:weak_eta_bound} is precisely the corresponding one-step boundary-layer scale.
\end{remark}

\paragraph{Backward derivative bounds.}
Under Assumption~\ref{ass:weak_backward_reg}, define
\begin{equation}\label{eq:M1_bounds}
M_v(T):=\sup_{t\in[0,T)}\max_{p\in\mathcal{J}}\|\partial_{v_p}u(t,\cdot)\|_\infty,
\qquad
M_I(T):=\sup_{t\in[0,T)}\|\nabla_I u(t,\cdot)\|_\infty.
\end{equation}

\subsection{OU proxy scales for the boundary constants}\label{subsec:OU_proxy}
The weak theorem itself uses only the abstract constants $\rho^V_{\max,p}(T)$ from
Assumption~\ref{ass:weak_step_pkg}.  It is tempting to estimate these constants from an effective
OU closure of the current.  This can be useful heuristically, but it should be kept separate from
the proof.

Assume formally that, for neuron $p=(\ell,j)$,
\begin{equation}\label{eq:I_eff}
dI_p(t) = -\tfrac{1}{\tauc}(I_p(t)-\mu_p(t))\,dt
+\tfrac{\sigma_{\mathrm{eff},p}(t)}{\tauc}\,dB_p(t),
\end{equation}
with $\sigma_{\mathrm{eff},p}(t)\ge \sigma_{\min}>0$, and in feedforward settings
\begin{equation}\label{eq:sigmaeff}
\sigma_{\mathrm{eff},p}^2(t)
\approx (\sigma_{p}^{\mathrm{ext}})^2
+\kappa\,\tauc^2\sum_{i=1}^{n_{\ell-1}} (W^{\ell-1,\ell}_{ji})^2\,r_{\ell-1,i}(t).
\end{equation}
For the \emph{full-space linear} $(v,I)$ OU system without threshold/reset,
\[
dv = \Bigl(-\tfrac{1}{\tauv}(v-\Vr)+I\Bigr)\,dt,
\qquad
dI = -\tfrac{1}{\tauc}(I-\mu)\,dt + \tfrac{\sigma_{\mathrm{eff}}}{\tauc}\,dB,
\]
the stationary covariance is explicit:
\[
\operatorname{Var}(I)=\frac{\sigma_{\mathrm{eff}}^2}{2\tauc},\qquad
\operatorname{Cov}(v,I)=\frac{\sigma_{\mathrm{eff}}^2\,\tauv}{2(\tauc+\tauv)},\qquad
\operatorname{Var}(v)=\frac{\sigma_{\mathrm{eff}}^2\,\tauv^2}{2(\tauc+\tauv)}.
\]
Hence the stationary Gaussian suprema are
\begin{equation}\label{eq:rhomax_proxy}
\sup_{(v,I)\in\R^2}\rho_{\mathrm{lin}}(v,I,\infty)
=
\frac{\sqrt{\tauc}(\tauc+\tauv)}{\pi\,\sigma_{\mathrm{eff}}^2\,\tauv^{3/2}},
\qquad
\sup_{v\in\R}\varrho_{\mathrm{lin}}^V(v,\infty)
=
\frac{\sqrt{\tauc+\tauv}}{\sqrt{\pi}\,\sigma_{\mathrm{eff}}\,\tauv}.
\end{equation}

\begin{remark}[Why \eqref{eq:rhomax_proxy} is only a proxy]\label{rem:weak_proxy_only}
Equation \eqref{eq:rhomax_proxy} describes the stationary \emph{unconstrained} OU surrogate.
It does \emph{not} by itself give a theorem for the true reset process: the latter involves killing
at $v=\Vth$, reinjection at $v=\Vr$, and a boundary flux that can create additional boundary layers.
Accordingly, we use \eqref{eq:rhomax_proxy} only as a scale estimate for
$\rho^V_{\max,p}(T)$ and never as a substitute for
Assumption~\ref{ass:weak_step_pkg}.
\end{remark}

\subsection{Three one-step lemmas}

\begin{lemma}[Jump discrepancy vanishes linearly at threshold]\label{lem:weak_jump_linear}
Fix a neuron $p\in\mathcal J$ and define, for $t<T$,
\[
g_p(t,x):=u(t,\mathcal R_p x)-u(t,x).
\]
Then $g_p(t,x)=0$ for $x\in\Sigma_p$, and for interior states
\begin{equation}\label{eq:g_linear}
\abs{g_p(t,x)}\le \|\partial_{v_p}u(t,\cdot)\|_\infty\,(\Vth-v_p).
\end{equation}
\end{lemma}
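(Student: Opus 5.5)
The plan is to use transmission to get the vanishing on $\Sigma_p$, and then a one-dimensional mean-value argument along the $v_p$ coordinate to get the linear bound. The key structural observation is that $\mathcal R_p$ does not act on $v_p$ in any way that depends on $v_p$. It sets $v_p$ to the constant $\Vr$, and it shifts the layer-$(\ell+1)$ currents by fixed amounts. Every other coordinate is left unchanged. Hence, if $x$ and $x'$ differ only in the $v_p$ coordinate, then $\mathcal R_p x=\mathcal R_p x'$. Consequently $u(t,\mathcal R_p x)$ is constant as a function of $v_p$, with all other coordinates frozen.

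The first claim follows directly. For $x\in\Sigma_p$ reachable by upward crossing, the transmission identity \eqref{eq:transmission_u} in Assumption~\ref{ass:weak_backward_reg}(iv) gives $u(t,x)=u(t,\mathcal R_p x)$, so $g_p(t,x)=0$.

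For an interior state $x$ with $v_p<\Vth$, let $x^\ast$ be $x$ with $v_p$ replaced by $\Vth$, so that $x^\ast\in\Sigma_p$. By the observation above, $\mathcal R_p x^\ast=\mathcal R_p x$. Transmission at $x^\ast$ then gives
\[
g_p(t,x)=u(t,\mathcal R_p x)-u(t,x)=u(t,x^\ast)-u(t,x).
\]
The segment from $x$ to $x^\ast$ moves only $v_p$, over $[v_p,\Vth]$, and except for its endpoint it lies in $\mathsf D$, because the other voltages stay below threshold. By Assumption~\ref{ass:weak_backward_reg}(i)--(ii), $u(t,\cdot)$ is $C^1$ there with $\partial_{v_p}u$ bounded. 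The fundamental theorem of calculus along the segment (extending $u$ continuously to the face) then yields
\[
\abs{g_p(t,x)}\le \|\partial_{v_p}u(t,\cdot)\|_\infty\,(\Vth-v_p),
\]
which is \eqref{eq:g_linear}.

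The main obstacle is the applicability of transmission at $x^\ast$. Assumption~\ref{ass:weak_backward_reg}(iv) is stated only for boundary points reachable by upward crossing, which requires $a_p(x^\ast)=I_p-\Ith\ge 0$. When $I_p<\Ith$, the point $x^\ast$ is not reachable, and transmission there would require a separate continuity or extension argument. I would handle this in one of two ways. The first is to interpret $u$ on the whole face by continuous extension: since $u$ is continuous up to $\Sigma_p$ and $\{I_p\ge\Ith\}$ is the closure of the reachable part, transmission would extend to the boundary $I_p=\Ith$. The second is to state the lemma for states whose projection onto the face is reachable, which is exactly where it is used later, since in \eqref{eq:weak_discrepancy_strip} the discrepancy events require $a_p^+(X_m^h)>0$ up to the $\eta$ remainder. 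I expect this boundary-reachability bookkeeping, not the calculus, to be the delicate point.
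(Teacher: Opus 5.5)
Your core argument is exactly the paper's proof: $\mathcal R_p x=\mathcal R_p x^\ast$ because $\mathcal R_p$ overwrites $v_p$, then transmission at $x^\ast$, then the mean value theorem in $v_p$. The reachability issue you raise is a genuine gap, and the paper's proof passes over it silently by invoking \eqref{eq:transmission_u} at $x^\ast$ for every interior $x$.

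It is more than bookkeeping, and your first remedy cannot close it. Continuity of $u(t,\cdot)$ and $u(t,\mathcal R_p\cdot)$ up to $\Sigma_p$ carries transmission only to the grazing set $\{I_p=\Ith\}$. On $\{I_p<\Ith\}$ the reset process started at $x^\ast$ moves back into $\mathsf D$ without spiking, so transmission is genuinely false there. Since $u(t,\mathcal R_p x)$ does not depend on $v_p$, letting $v_p\uparrow\Vth$ with $I_p<\Ith$ fixed gives $g_p(t,x)\to u(t,\mathcal R_p x^\ast)-u(t,x^\ast)$. This limit is generally nonzero, while the right side of \eqref{eq:g_linear} tends to $0$. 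For example, take a single neuron with the spike-map compatible observable $\Phi(v,I)=\sin\bigl(2\pi(v-\Vr)/(\Vth-\Vr)\bigr)$, with $s=T-t$ small and $\Ith-I_p\gg\sqrt s$. Then the limit is $2\pi s/\tauv+o(s)$. So neither claim of the lemma holds on, or near, the non-reachable part of $\Sigma_p$.

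Your second remedy, restricting to states with $I_p\ge\Ith$, is the right correction of the statement. However, your claim that this covers the later use is inaccurate. Since $a_p(x)=(I_p-\Ith)+(\Vth-v_p)/\tauv$, the condition $a_p^+(X_m^h)>0$ does not give $I_p\ge\Ith$. The $\eta_{p,m}$ term in \eqref{eq:weak_discrepancy_strip} also allows $a_p^+=0$ anyway. A grid spike does force $I_p-\Ith\ge(\Vth-v_p)(1/h-1/\tauv)>0$ when $h<\tauv$. But an exact spike missed by the grid, triggered by an upward current fluctuation, can start from $I_p<\Ith$. So Lemma~\ref{lem:weak_missed} would still apply the bound outside its range.

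A version valid on all of $\mathsf D$ follows by combining your first remedy with one more mean-value step. Let $x^\dagger$ be $x$ with $(v_p,I_p)$ replaced by $(\Vth,\max\{I_p,\Ith\})$. Transmission holds at $x^\dagger$, either directly or by continuity. Moreover, $\mathcal R_p x$ and $\mathcal R_p x^\dagger$ differ only in the coordinate $I_p$, which $\mathcal R_p$ leaves untouched. Hence
\[
\abs{g_p(t,x)}\le\|\partial_{v_p}u(t,\cdot)\|_\infty(\Vth-v_p)+2\|\partial_{I_p}u(t,\cdot)\|_\infty(\Ith-I_p)_+ ,
\]
and the downstream strip estimate must then absorb the second term.
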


\begin{proof}
Let $\pi_p x$ denote the boundary projection replacing $v_p$ by $\Vth$ and leaving all other coordinates
unchanged. Since $\mathcal R_p$ overwrites $v_p$, we have $\mathcal R_p x=\mathcal R_p(\pi_p x)$.
Transmission \eqref{eq:transmission_u} yields
$u(t,\pi_p x)=u(t,\mathcal R_p(\pi_p x))=u(t,\mathcal R_p x)$ for $t<T$.
Therefore $g_p(t,x)=u(t,\pi_p x)-u(t,x)$, and the mean value theorem gives \eqref{eq:g_linear}.
\end{proof}

\begin{lemma}[Averaged timing/decay defect on the single-spike event]\label{lem:weak_timing}
Fix a step $m$ with $t_{m+1}<T$ and set $f=u(t_{m+1},\cdot)$.
On the event $\{\Delta \widetilde N_{\mathrm{tot}}^{(m)}\le 1\}$, shifting the unique exact spike
in the step (if one occurs) from its true time $\tau\in(t_m,t_{m+1})$ to the grid endpoint
$t_{m+1}$ changes the value of $f$ by at most
\[
\frac{h}{\tauc}\,(M_I(T)+M_v(T))\,\sum_{\ell=1}^{L-1}\|W^{\ell,\ell+1}\|_1
\]
times the number of exact spikes in the corresponding layer.
Consequently,
\begin{equation}\label{eq:timing_bound}
\big|\E[\textnormal{timing/decay defect in step }m]\big|
\le
\frac{h^2}{\tauc}\,(M_I(T)+M_v(T))
\sum_{\ell=1}^{L-1}R_\ell(T)\,\|W^{\ell,\ell+1}\|_1 .
\end{equation}
\end{lemma}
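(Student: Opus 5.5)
The plan is to compare two descriptions of the same single step $[t_m,t_{m+1}]$, started from $X_m^h$ and driven by the same Brownian increments. In the first, the unique exact spike of neuron $p=(\ell,i)$ occurs at $\tau$, so the synaptic jump $W^{\ell,\ell+1}_{\cdot i}$ enters the layer-$(\ell+1)$ currents at $\tau$. In the second, the same spike map $\mathcal R_p$ is applied at the grid endpoint $t_{m+1}$. On $\{\Delta\widetilde N^{(m)}_{\mathrm{tot}}\le1\}$ no other spike occurs in the step. So the only difference between the two endpoint states comes from how the postsynaptic current jump, and the voltage of neuron $p$ after reset, evolve over the residual interval $[\tau,t_{m+1}]$, whose length is at most $h$.

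The first step is to write the endpoint difference explicitly using linearity of the current-based dynamics between spikes. In the current equation the jump decays as $W e^{-(t-\tau)/\tauc}$. At $t_{m+1}$, the postsynaptic current coordinate therefore differs from $W$ (the "jump applied at the endpoint") by
\[
W\bigl(1-e^{-(t_{m+1}-\tau)/\tauc}\bigr)\le |W|\,h/\tauc .
\]
By variation of constants, as in Lemma~\ref{lem:v_from_I}, the postsynaptic voltage has meanwhile integrated an extra current of size at most $|W|\,(t_{m+1}-\tau)\le |W|h$. Up to an $e^{-u/\tauv}$ factor this is also $O(h)$; to keep the stated constant I will bound this integrated-current voltage drift against the same $h/\tauc$ scale, absorbing $\tauv$-dependent factors into the convention for the constant. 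The reset coordinate $v_p$ also differs: it is $\Vr$ evolved by the flow versus $\Vr$ exactly. This difference should be handled by the transmission structure of Assumption~\ref{ass:weak_backward_reg}(iv), together with the fact that $\mathcal R_p$ overwrites $v_p$, as in Lemma~\ref{lem:weak_jump_linear}.

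Next I apply the mean value theorem to $f=u(t_{m+1},\cdot)$. Its gradient is bounded by $M_I(T)$ in the current directions and by $M_v(T)$ in the voltage directions, by \eqref{eq:M1_bounds}. Summing over postsynaptic targets $j$ gives the column sum $\sum_j|W^{\ell,\ell+1}_{ji}|\le\|W^{\ell,\ell+1}\|_1$. This yields the pathwise bound
\[
\frac{h}{\tauc}(M_I+M_v)\|W^{\ell,\ell+1}\|_1\,\Delta\widetilde N^{(m)}_\ell .
\]
Summing over layers $\ell\le L-1$ covers every layer that emits spikes. Spikes in layer $L$ produce no synaptic jump, so they contribute only through the reset coordinate, which the transmission argument removes.

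Finally I take expectations. Since the defect is bounded pathwise by a constant times $\Delta\widetilde N^{(m)}_\ell$, the averaged rate bound \eqref{eq:weak_rate_bound_layer}, $\E[\Delta\widetilde N_\ell^{(m)}]\le R_\ell(T)h$, gives \eqref{eq:timing_bound} immediately.

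The main obstacle is the reset coordinate of the firing neuron. Relabelling the reset time changes $v_p$ at $t_{m+1}$ by roughly $h\,a_p$, where $a_p$ is the post-reset drift, and this is not proportional to $W$. My first attempt is to argue, using transmission, that this contribution belongs to the decision/strip term of Assumption~\ref{ass:weak_step_pkg}(iv) rather than to the timing defect. Failing that, I would bound it by $hM_v\,\E|a_p(\mathcal R_pX)|\,\Pp(\text{spike})=O(h^2)$ and fold it into the constant, using $A_{2,p}(T)$.
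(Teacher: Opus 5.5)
\textbf{Genuine gap: the firing neuron's own voltage.} Your treatment of the postsynaptic currents matches the paper's proof: the endpoint discrepancy $W^{\ell,\ell+1}_{ji}\bigl(1-e^{-(t_{m+1}-\tau)/\tauc}\bigr)\le\abs{W^{\ell,\ell+1}_{ji}}h/\tauc$, a mean-value bound with $M_I(T)$, the column sum, then \eqref{eq:weak_rate_bound_layer}. The gap is the one you flag yourself, and neither remedy closes it for the statement as written.

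On the single-spike event with $p$ firing at $\tau$, the exact endpoint has
$v_p(t_{m+1})=\Vr+\int_\tau^{t_{m+1}}e^{-(t_{m+1}-s)/\tauv}I_p(s)\,ds\approx\Vr+(t_{m+1}-\tau)(\Ith+A)$.
After moving the spike map to $t_{m+1}$ one has $v_p=\Vr$ instead. This changes $f$ by roughly $\partial_{v_p}f\cdot(t_{m+1}-\tau)(\Ith+A)$, which is of order $h$ per spike and independent of $W$.

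\emph{Why remedy (a) fails.} The strip inclusion \eqref{eq:weak_discrepancy_strip} concerns only $D_{p,m}$, where exact and grid decisions disagree. The reset lag lives on the likelier event where both fire. Transmission only yields $f(\mathcal R_p z)=f(\pi_p z)$, which does not make $f$ flat in $v_p$ near $\Vr$. A layer-$L$ neuron, or a single neuron with $W=0$, is a counterexample to the claimed per-spike bound: the right-hand side is $0$, but the defect is generically nonzero. It is just the grid-registration lag $t_{m+1}-\tau$, about $h/2$ on average.

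Redefining the shift to move only the synaptic jump does not rescue the statement. The lag must then appear in Lemma~\ref{lem:weak_onestep}, and none of its parts (i), (iii), (iv) covers the event where both the exact step and the grid fire $p$. The paper's own proof never mentions $v_p$, so it does not supply this either.

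\emph{What remedy (b) actually proves.} It is the right fix, but it proves a different lemma. An extra $W$-independent term of size roughly $hM_v(T)\,\E\bigl[\abs{I_p(\tau)}\1_{\{p\text{ fires in step }m\}}\bigr]$ enters the constant, and hence $C_{\mathrm{weak}}$. The factorization $\E\abs{a_p(\mathcal R_pX)}\,\Pp(\text{spike})$ is also not justified. From $A_{2,p}(T)$ alone, Cauchy--Schwarz gives that expectation only as $O(h^{1/2})$, which would sum to $O(h^{1/2})$ globally. To get $O(h)$ you need an additional input. One option is a strip inclusion for the exact firing event itself, not just for $D_{p,m}$, combined with $\rho^V_{\max,p}$, $A_{2,p}$ and $C_{\eta,p}$. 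Another is a flux-weighted moment bound on $I_p$ at spike times.

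\textbf{Smaller point: the postsynaptic voltages.} Your bound $\abs{\Delta v_{\ell+1,j}}\le\abs{W^{\ell,\ell+1}_{ji}}(t_{m+1}-\tau)$ is correct. On $[\tau,t_{m+1})$ the current discrepancy is the full jump $W^{\ell,\ell+1}_{ji}e^{-(s-\tau)/\tauc}$, since the shifted path lacks it until $t_{m+1}$. This is sharper than the paper's claim that this change is ``smaller by an additional factor $h$'', which is not right.

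There is no $\tauv$ factor to absorb, since $\int_0^\delta e^{-(\delta-s)/\tauv}e^{-s/\tauc}\,ds\le\delta$. The issue is $\tauc$: the contribution $M_v(T)\|W^{\ell,\ell+1}\|_1h$ fits under the stated $\frac{h}{\tauc}M_v(T)\|W^{\ell,\ell+1}\|_1$ only when $\tauc\le1$.

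\textbf{What your plan yields.} It gives a corrected lemma: per-spike constant $h\bigl(M_I(T)/\tauc+M_v(T)\bigr)\|W^{\ell,\ell+1}\|_1$, plus the reset-lag term above. That is still $O(h^2)$ per step, so weak order one survives, but it is not \eqref{eq:timing_bound} as stated.
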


\begin{proof}
If the unique exact spike in the step belongs to layer $\ell$, then shifting its time from
$\tau$ to $t_{m+1}$ changes the postsynaptic current vector at the grid endpoint by at most
\[
\sum_{j=1}^{n_{\ell+1}} |W^{\ell,\ell+1}_{ji}|\,
\bigl(1-e^{-(t_{m+1}-\tau)/\tauc}\bigr)
\le
\frac{h}{\tauc}\,\|W^{\ell,\ell+1}\|_1 .
\]
The associated change in the postsynaptic voltages over the remaining part of the step is one more
time integration of the same current defect and is therefore smaller by an additional factor $h$.
Using the derivative bounds \eqref{eq:M1_bounds}, this changes $f$ by at most
$\frac{h}{\tauc}(M_I(T)+M_v(T))\|W^{\ell,\ell+1}\|_1$ per exact layer-$\ell$ spike.
Taking expectation and using \eqref{eq:weak_rate_bound_layer} yields \eqref{eq:timing_bound}.
\end{proof}

\begin{lemma}[Averaged boundary-strip defect on the single-spike event]\label{lem:weak_missed}
Fix $p\in\mathcal J$ and a step $m$ with $t_{m+1}<T$.
Then $\exists \tilde{A}_{2,p} <2A_{2,p}(T)$ s.t.,
\begin{equation}\label{eq:strip_integral}
\E\Big[\abs{g_p(t_{m+1},X_m^h)}\,
\mathbf 1_{D_{p,m}\cap\{\Delta \widetilde N_{\mathrm{tot}}^{(m)}\le 1\}}
\Big]
\le
M_v(T)\,\rho^V_{\max,p}(T)\,\tilde{A}_{2,p}\,h^2 .
\end{equation}
\end{lemma}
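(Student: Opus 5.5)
The plan is to combine the three ingredients already at hand: the linear vanishing of the jump discrepancy (Lemma~\ref{lem:weak_jump_linear}), the strip inclusion \eqref{eq:weak_discrepancy_strip}, and the numerical strip-density and velocity-moment bounds \eqref{eq:weak_num_strip}.

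First, on $D_{p,m}\cap\{\Delta \widetilde N_{\mathrm{tot}}^{(m)}\le 1\}$, Lemma~\ref{lem:weak_jump_linear} gives
\[
\abs{g_p(t_{m+1},X_m^h)}\le M_v(T)\,(\Vth-V^h_{p,m}).
\]
By \eqref{eq:weak_discrepancy_strip}, on the same event we have $0<\Vth-V^h_{p,m}\le Z_{p,m}:=h\,a_p^+(X_m^h)+\eta_{p,m}$. Writing $Y:=\Vth-V^h_{p,m}$, the integrand is therefore dominated by $M_v(T)\,Y\,\mathbf 1_{\{0<Y\le Z_{p,m}\}}$. The target is then
\[
\E\bigl[Y\,\mathbf 1_{\{0<Y\le Z\}}\bigr]\lesssim \rho^V_{\max,p}(T)\,\E[Z^2],
\]
because $\E[Z^2]\le 2h^2\E[(a_p^+)^2]+2\E[\eta^2]\le 2A_{2,p}(T)h^2+2C_{\eta,p}(T)h^3$. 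This is of the claimed form $\tilde A_{2,p}h^2$.

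The heuristic for the target is immediate. If $Z$ were deterministic, say $Z=z\le 1$, the strip density bound gives
\[
\E\bigl[Y\mathbf 1_{\{0<Y\le z\}}\bigr]=\int_0^z y\,\varrho^h_{p,m}(\Vth-y)\,dy\le \rho^V_{\max,p}\,\frac{z^2}{2}.
\]
The factor $1/2$ is what should produce a constant below $2A_{2,p}$.

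The main obstacle is that $Z$ is random and correlated with $Y$: $a_p^+(X_m^h)$ depends on $V^h_{p,m}$ and $I^h_{p,m}$, and $\eta_{p,m}$ is only controlled in $L^2$. The marginal density of $Y$ alone does not allow the bound to be conditioned on $Z$. I would try the layer-cake identity
\[
Y\mathbf 1_{\{Y\le Z\}}=\int_0^\infty \mathbf 1_{\{s<Y\le Z\}}\,ds
\]
and then try to bound $\Pp(s<Y\le Z)$. A cleaner route is a dyadic or threshold splitting in $Z$: on $\{Z\le \lambda\}$, use $Y\le\lambda$ and the marginal density to get $\rho\lambda^2/2$; on $\{Z>\lambda\}$, use $Y\le Z$ and Chebyshev-type bounds, giving $\E[Z\mathbf 1_{\{Z>\lambda\}}]\le \E[Z^2]/\lambda$. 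Choosing $\lambda\sim h$ yields $O(h^2)$ with constants involving $\rho^V_{\max,p}$, $A_{2,p}$ and $C_{\eta,p}$.

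If the constant has to be exactly of the form $M_v\rho^V_{\max,p}\tilde A_{2,p}$, I would instead use the fact that $V^h_{p,m}$ has a joint law with $I^h_{p,m}$. Conditioning on $I^h_{p,m}$, I would assume the conditional strip density of $V$ is bounded by the same $\rho^V_{\max,p}$, which is the reading I would adopt of \eqref{eq:weak_num_strip}. The inequality $a_p^+\le (I_p-\Ith)^+ + Y/\tauv$ then lets the $Y$-dependence of $a_p^+$ be absorbed for small $h$, since $hY/\tauv\le Y/2$. Given $I$, this reduces the problem to the deterministic-$z$ computation plus the $\eta$ term, which is $O(h^3)$ and is absorbed into $\tilde A_{2,p}$ for small $h$. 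The remaining steps are routine bookkeeping, and the strict inequality $\tilde A_{2,p}<2A_{2,p}$ comes from the $1/2$ in $\int_0^z y\,dy$ together with $h$ small.
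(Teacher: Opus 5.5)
Your skeleton is the paper's. Lemma~\ref{lem:weak_jump_linear} gives $\abs{g_p}\le M_v(T)\,Y$, the inclusion \eqref{eq:weak_discrepancy_strip} gives $0<Y\le Z:=h\,a_p^+(X_m^h)+\eta_{p,m}$, and the strip density turns $\E[Y\mathbf 1_{\{0<Y\le z\}}]$ into $\rho z^2/2$, where $\rho:=\rho^V_{\max,p}(T)$. You are also right that applying this with the random $Z$ is not licensed by a bound on the marginal density of $V^h_{p,m}$. The paper's proof simply conditions on $(a_p^+(X_m^h),\eta_{p,m})$ and uses \eqref{eq:weak_num_strip} as though it bounded the conditional density.

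Your first repair does not work. In the threshold split the tail piece is $\E[Z\mathbf 1_{\{Z>\lambda\}}]\le \E[Z^2]/\lambda$. With $\E[Z^2]\asymp h^2$ and $\lambda\asymp h$ this is of order $h$, not $h^2$, and the optimal choice $\lambda\asymp(\E[Z^2]/\rho)^{1/3}$ only gives $h^{4/3}$. No cleverer use of marginal information helps. Suppose $Y$ has density $\rho$ on $(0,h^{2/3}]$, $\eta_{p,m}=0$, and $h\,a_p^+=Y\mathbf 1_{\{Y\le h^{2/3}\}}$. Then $\E[(a_p^+)^2]=\rho/3$ stays bounded while $\E[Y\mathbf 1_{\{0<Y\le Z\}}]=\rho h^{4/3}/2$. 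An $h^2$ bound therefore needs a hypothesis on the joint law of $V^h_{p,m}$ and $Z$.

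Your second route supplies such a hypothesis, conditionally on $I^h_{p,m}$. The identity $a_p=(I_p-\Ith)+Y/\tauv$ (so $(I_p-\Ith)^+\le a_p^+$ on $\mathsf D$) cleanly removes the dependence of $a_p^+$ on $Y$. However, given $I^h_{p,m}$, the variable $\eta_{p,m}$ is still random, may be correlated with $Y$, and is controlled only in $L^2$, so the same extremal construction applies to it. Its contribution can then be of order $\rho^{1/3}C_{\eta,p}(T)^{2/3}h^2$. That is the same order as the main term, and it is not of the form $\rho\times(\text{a factor vanishing as }h\to0)$. So your claim that the $\eta$-term is $O(h^3)$, and with it $\tilde A_{2,p}<2A_{2,p}$, holds only if the strip density bound is also conditional on $\eta_{p,m}$.

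The missing step is to state that reading explicitly. Assume the density of $V^h_{p,m}$ conditional on $(I^h_{p,m},\eta_{p,m})$ (or on $(a_p^+(X_m^h),\eta_{p,m})$, as the paper tacitly does) is bounded by $\rho$ on the unit strip below $\Vth$. Then your deterministic-$z$ computation applies conditionally and gives
$\E[\cdots]\le \tfrac12 M_v(T)\,\rho\,\E[Z^2]\le M_v(T)\,\rho\,(A_{2,p}+C_{\eta,p}h)\,h^2$.
Your $I$-conditioned variant carries an extra factor $(1-h/\tauv)^{-2}$. Hence $\tilde A_{2,p}=A_{2,p}+C_{\eta,p}h<2A_{2,p}$ once $h<A_{2,p}/C_{\eta,p}$.
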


\begin{proof}
Let $d_p:=\Vth-V_{p,m}^h$.
By \eqref{eq:g_linear},
\[
\abs{g_p(t_{m+1},X_m^h)}
\le M_v(T)\,d_p\,\mathbf 1_{\{d_p>0\}}.
\]
On the single-spike event, Assumption~\ref{ass:weak_step_pkg}(iv) yields
\[
D_{p,m}
\subset
\{0<d_p\le h\,a_p^+(X_m^h)+\eta_{p,m}\}.
\]
Hence
\[
\E\Big[\abs{g_p(t_{m+1},X_m^h)}\,
\mathbf 1_{D_{p,m}\cap\{\Delta \widetilde N_{\mathrm{tot}}^{(m)}\le 1\}}\Big]
\le
M_v(T)\,\E\Big[d_p\,\mathbf 1_{\{0<d_p\le h a_p^+ + \eta_{p,m}\}}\Big].
\]
Conditioning on $(a_p^+(X_m^h),\eta_{p,m})$ and using the numerical strip density bound
\eqref{eq:weak_num_strip},
\[
\E\Big[d_p\,\mathbf 1_{\{0<d_p\le h a_p^+ + \eta_{p,m}\}}\ \Big|\ a_p^+(X_m^h),\eta_{p,m}\Big]
\le
\rho^V_{\max,p}(T)\,\frac{(h a_p^+ + \eta_{p,m})^2}{2}.
\]
Therefore
\[
\E[\cdots]
\le
\frac{M_v(T)\rho^V_{\max,p}(T)}{2}\,
\E\big[(h a_p^+(X_m^h)+\eta_{p,m})^2\big].
\]
Using $(x+y)^2\le 2x^2+2y^2$, \eqref{eq:weak_num_strip}, and \eqref{eq:weak_eta_bound},
\[
\E[(h a_p^++\eta_{p,m})^2]
\le
2h^2 A_{2,p}(T)+2 C_{\eta,p}(T) h^3
\le
\tilde{A}_{2,p}h^2
\]
because $h\le 1$.
Substituting this bound proves \eqref{eq:strip_integral}.
\end{proof}

\begin{lemma}[Multi-spike remainder]\label{lem:weak_multispike}
Fix a step $m$ with $t_{m+1}<T$ and set $f=u(t_{m+1},\cdot)$.
Then
\begin{equation}\label{eq:weak_multispike}
\big|\E[\textnormal{one-step defect on }\{\Delta \widetilde N_{\mathrm{tot}}^{(m)}\ge 2\}]\big|
\le
2\,\|u\|_\infty\,R_{\mathrm{ff},2}(T)\,h^2 .
\end{equation}
\end{lemma}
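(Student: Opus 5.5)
The plan is to bound the one-step defect on $\{\Delta \widetilde N_{\mathrm{tot}}^{(m)}\ge 2\}$ crudely by $2\|u\|_\infty$, and then convert the probability of that event into the second factorial moment from Assumption~\ref{ass:weak_step_pkg}(ii).

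First, make the defect explicit. With $f=u(t_{m+1},\cdot)$, the one-step defect in step $m$ is the difference
\[
f\bigl(\widetilde X^{(m)}(t_{m+1})\bigr)-f\bigl(x^{\mathrm{post}}(X_m^h)\bigr),
\]
where both endpoints are driven by the same fresh Brownian increments over $[t_m,t_{m+1}]$. Its conditional expectation given $X_m^h$ is $(P_{t_m,t_{m+1}}f-Q_hf)(X_m^h)$. The multi-spike part is this difference multiplied by $\mathbf 1_{\{\Delta \widetilde N_{\mathrm{tot}}^{(m)}\ge 2\}}$.

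Second, bound the integrand. Both endpoint states lie in $\mathsf D$: the exact process is reset on $\Sigma_p$, and the grid spike maps send the state into $\mathsf D$ by Lemma~\ref{lem:feedforward_spikemap_commute} and the definition of $\mathcal R_p$. Since $t_{m+1}<T$, the function $f=u(t_{m+1},\cdot)$ satisfies $\|f\|_\infty\le\|u\|_\infty\le\|\Phi\|_\infty$, because $u=P_{t,T}\Phi$ is a Markov average. Hence the integrand is at most $2\|u\|_\infty$ in absolute value, and
\[
\bigl|\E[\text{defect}\,\mathbf 1_{\{\Delta \widetilde N_{\mathrm{tot}}^{(m)}\ge 2\}}]\bigr|\le 2\|u\|_\infty\,\Pp(\Delta \widetilde N_{\mathrm{tot}}^{(m)}\ge 2).
\]

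Third, control the probability by Markov's inequality applied to the factorial moment. For an integer $n\ge 0$ we have $\mathbf 1_{\{n\ge 2\}}\le n(n-1)/2\le n(n-1)$, since $n(n-1)=0$ for $n\in\{0,1\}$ and $n(n-1)\ge 2$ for $n\ge 2$. Therefore
\[
\Pp(\Delta \widetilde N_{\mathrm{tot}}^{(m)}\ge 2)\le \E\bigl[\Delta \widetilde N_{\mathrm{tot}}^{(m)}(\Delta \widetilde N_{\mathrm{tot}}^{(m)}-1)\bigr]\le R_{\mathrm{ff},2}(T)h^2
\]
by \eqref{eq:weak_ff_factorial}. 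Combining this with the previous display gives \eqref{eq:weak_multispike}; using the sharper inequality $\mathbf 1_{\{n\ge 2\}}\le n(n-1)/2$ would even save a factor $2$.

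The only delicate point is the decomposition itself. The one-step defect has to be split along the partition $\{\Delta \widetilde N_{\mathrm{tot}}^{(m)}\le1\}\cup\{\ge2\}$, determined by the exact step, so that Lemmas~\ref{lem:weak_timing} and \ref{lem:weak_missed} handle the first piece and the present lemma handles the second. This split is legitimate because the partition is measurable with respect to $X_m^h$ and the fresh increments, which makes the restricted expectations well-defined.
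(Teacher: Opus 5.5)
Your proposal is correct and follows essentially the paper's argument: bound the defect on $\{\Delta \widetilde N_{\mathrm{tot}}^{(m)}\ge 2\}$ trivially by the sup norm of $u$, then convert $\Pp(\Delta \widetilde N_{\mathrm{tot}}^{(m)}\ge 2)$ into the second factorial moment via Markov's inequality and \eqref{eq:weak_ff_factorial}. The only difference is bookkeeping. The paper uses $\mathbf 1_{\{n\ge 2\}}\le n(n-1)/2$ and absorbs the factor $1/2$, whereas you bound the difference of the two $f$-values by $2\|u\|_\infty$ and use the cruder $\mathbf 1_{\{n\ge 2\}}\le n(n-1)$ to land exactly on the stated constant.
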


\begin{proof}
On the event $\{\Delta \widetilde N_{\mathrm{tot}}^{(m)}\ge 2\}$ we use the trivial bound
$|f(\cdot)|\le \|u\|_\infty$.
By Markov's inequality and \eqref{eq:weak_ff_factorial},
\[
\Pp(\Delta \widetilde N_{\mathrm{tot}}^{(m)}\ge 2)
\le
\frac{1}{2}\E\!\Big[\Delta \widetilde N_{\mathrm{tot}}^{(m)}
\bigl(\Delta \widetilde N_{\mathrm{tot}}^{(m)}-1\bigr)\Big]
\le
\frac12 R_{\mathrm{ff},2}(T)\,h^2.
\]
Absorbing the factor $1/2$ into the constant yields \eqref{eq:weak_multispike}.
\end{proof}

\subsection{One-step and global weak error bounds}
\begin{lemma}[Averaged one-step weak defect]\label{lem:weak_onestep}
Let $t_{m+1}<T$ and set $f=u(t_{m+1},\cdot)$.
Assume the subthreshold affine diffusion EM truncation yields the standard weak local bound
\begin{equation}\label{eq:diff_local_weak}
\sup_{x\in\mathsf D}
\big|
(Q_h^{\mathrm{diff}}-P_{t_m,t_{m+1}}^{\mathrm{diff}})f(x)
\big|
\le
C_{\mathrm{diff}}(T)\,h^2,
\end{equation}
where $Q_h^{\mathrm{diff}}$ and $P^{\mathrm{diff}}$ denote the one-step EM and exact operators for the
subthreshold affine diffusion without threshold/reset.
Then
\begin{align}
&\abs{\E\big[u(t_{m+1},X_{m+1}^h)-u(t_m,X_m^h)\big]}
\nonumber\\
&\le
h^2\Biggl(
C_{\mathrm{diff}}(T)
+
\frac{M_I(T)+M_v(T)}{\tauc}
\sum_{\ell=1}^{L-1}R_\ell(T)\,\|W^{\ell,\ell+1}\|_1 \nonumber\\
&\qquad+
M_v(T)\sum_{p\in\mathcal J}\rho^V_{\max,p}(T)\tilde{A}_{2,p}
+
2\|u\|_\infty R_{\mathrm{ff},2}(T)
\Biggr).
\label{eq:onestep_weak_full}
\end{align}
\end{lemma}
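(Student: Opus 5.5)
The plan is to use the backward value function $u$ as a martingale along the exact dynamics. By Assumption~\ref{ass:weak_backward_reg}, $u(t_m,x)=(P_{t_m,t_{m+1}}f)(x)$, where $f=u(t_{m+1},\cdot)$ and $P$ is the full reset semigroup. Conditioning on $X_m^h=x$, we get $\E[u(t_{m+1},X_{m+1}^h)\mid X_m^h=x]=(Q_hf)(x)$. The quantity to bound is therefore $\E[(Q_h-P_{t_m,t_{m+1}})f(X_m^h)]$. We will write it as a difference of two expectations over a single probability space. One is the exact step $\widetilde X^{(m)}$ started from $X_m^h$. The other is the EM step started from the same point and driven by the same Brownian increments. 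We then split according to the events $\{\Delta\widetilde N_{\mathrm{tot}}^{(m)}=0\}$, $\{\Delta\widetilde N_{\mathrm{tot}}^{(m)}=1\}$ and $\{\Delta\widetilde N_{\mathrm{tot}}^{(m)}\ge2\}$.

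\textbf{Decomposition.} On $\{\Delta\widetilde N_{\mathrm{tot}}^{(m)}\ge 2\}$ we use the trivial bound $|f|\le\|u\|_\infty$ for both terms. Lemma~\ref{lem:weak_multispike} then gives the $2\|u\|_\infty R_{\mathrm{ff},2}h^2$ contribution.

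On $\{\Delta\widetilde N_{\mathrm{tot}}^{(m)}\le1\}$ we construct an intermediate state by moving the unique exact spike, if there is one, from its time $\tau$ to the grid endpoint. The exact endpoint is then compared with the pre-reset diffusion endpoint with all spike maps applied at $t_{m+1}$. By Lemma~\ref{lem:weak_timing}, this time shift costs at most the timing/decay term $h^2\tauc^{-1}(M_I+M_v)\sum_\ell R_\ell\|W^{\ell,\ell+1}\|_1$. Note that the exact voltage path between $\tau$ and $t_{m+1}$ restarts from $\Vr$. Up to that timing correction, though, the exact endpoint equals $\mathcal R_p$ applied to the reset-free affine diffusion endpoint, or the reset-free endpoint itself if no spike occurs.

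We then rewrite $f(\mathcal R_p y)=f(y)+g_p(t_{m+1},y)$. With this, the no-reset part of both the exact and the EM terms collapses to $(Q_h^{\mathrm{diff}}-P^{\mathrm{diff}})f(X_m^h)$. By \eqref{eq:diff_local_weak} this is at most $C_{\mathrm{diff}}h^2$. The spike maps whose exact and grid decisions coincide cancel, up to the difference between $g_p$ evaluated at the exact endpoint and at the EM endpoint, which is a higher-order diffusion-consistency term. What remains are the terms $\pm g_p$ on the decision-discrepancy events $D_{p,m}$.

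\textbf{Strip term.} For each $p$ we replace $g_p(t_{m+1},\cdot)$ at the endpoint by its value at $X_m^h$. The error from this replacement is controlled by $M_v$ times a one-step displacement and is absorbed at order $h^2$. Lemma~\ref{lem:weak_missed} then bounds $\E[|g_p(t_{m+1},X_m^h)|\mathbf 1_{D_{p,m}\cap\{\Delta\widetilde N_{\mathrm{tot}}^{(m)}\le1\}}]$ by $M_v\rho^V_{\max,p}\tilde A_{2,p}h^2$. Summing over $p\in\mathcal J$ and collecting the four contributions gives \eqref{eq:onestep_weak_full}.

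\textbf{Main obstacle.} The delicate step is the bookkeeping on the single-spike event. There we must justify that the exact post-step state equals $\mathcal R_p$ applied to the reset-free endpoint, up to the timing/decay defect. We also need that $g_p$ may be evaluated at $X_m^h$ rather than at the endpoint without losing order; this uses the linear vanishing from Lemma~\ref{lem:weak_jump_linear} together with the strip confinement \eqref{eq:weak_discrepancy_strip}. If the endpoint evaluation cannot be avoided cleanly, I would instead apply \eqref{eq:g_linear} directly at the pre-update point. In that case $\Vth-v_p^{\mathrm{pre}}$ is itself of size $O(h a_p^++\eta_{p,m})$ on $D_{p,m}$, and the same strip integration goes through.
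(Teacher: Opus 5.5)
Your decomposition is the paper's. Its proof writes the defect as $\E[(Q_h-P_{t_m,t_{m+1}})f(X_m^h)]$ and splits it into four pieces: diffusion truncation, timing/decay (Lemma~\ref{lem:weak_timing}), the exact-vs-grid decision (Lemmas~\ref{lem:weak_jump_linear} and~\ref{lem:weak_missed}), and the multi-spike remainder (Lemma~\ref{lem:weak_multispike}). It then adds the four bounds without ever building a coupling. In doing so it tacitly identifies the decision term with $\E[|g_p(t_{m+1},X_m^h)|\mathbf 1_{D_{p,m}\cap\{\Delta\widetilde N^{(m)}_{\mathrm{tot}}\le1\}}]$, which is exactly the identification you try to justify.

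Your justification fails as stated. The step endpoint $y$ differs from $X_m^h$ in every current coordinate by $\frac{\sigma_q}{\tauc}\Delta B_{q,m}=O(\sqrt h)$, and $g_p$ is not $M_v$-Lipschitz in those directions: $\nabla_I g_p=(\nabla_I u)\circ\mathcal R_p-\nabla_I u$ is a priori bounded only by $2M_I$. Assumption~\ref{ass:weak_step_pkg}(iv) only confines $D_{p,m}$ to a strip of probability $O(h)$. So the replacement error is of size $\sqrt h\,\Pp(D_{p,m})\sim h^{3/2}$ per step, which would lower the global weak order to $1/2$.

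Even the normal part of the displacement, $ha_p$ weighted by $\partial_{v_p}g_p=-\partial_{v_p}u$, is bounded only by roughly $M_v\rho^V_{\max,p}A_{2,p}h^2$. That is the same size as the strip term, so it is not absorbed into \eqref{eq:onestep_weak_full}. Rescuing the replacement would need transmission in the form $g_p=u\circ\pi_p-u$, which makes the tangential derivatives of $g_p$ of order $\Vth-v_p$, and that costs second-derivative bounds on $u$.

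You do not need the replacement, only a bound on $|g_p|$ at the endpoint. \eqref{eq:g_linear} gives this directly, so your fallback is the right route; it uses the extension of $f$ across $\Sigma_p$ that \eqref{eq:diff_local_weak} already presupposes. On $D_{p,m}\cap\{\Delta\widetilde N^{(m)}_{\mathrm{tot}}\le1\}$, the endpoint's distance to $\Vth$ in $v_p$ is at most $d_p=\Vth-V^h_{p,m}$ plus terms that are higher order after strip integration. For example, if the grid does not fire and $a_p\ge0$, then $\Vth-v_p^{\mathrm{pre}}=d_p-ha_p\in(0,d_p]$. The strip integration of Lemma~\ref{lem:weak_missed} then goes through.

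Two other terms you wave through are $O(h^2)$ under a direct coupling estimate, and neither appears in \eqref{eq:onestep_weak_full}:
\begin{itemize}
\item The difference of $g_p$ at the exact and EM endpoints on matched decisions is an $O(h^{3/2})$ displacement in $L^2$ against an event of probability $O(h)$. It becomes genuinely higher order only if you use that the grid decision is $X_m^h$-measurable. Then this part equals $\mathbf 1_{\{v_p^{\mathrm{pre}}\ge\Vth\}}(Q_h^{\mathrm{diff}}-P^{\mathrm{diff}})g_p$ up to terms supported on $D_{p,m}$, and that requires a weak local bound for $g_p$, not only for $f$.
\item The spiking neuron's own voltage restarts from $\Vr$ at $\tau$, an $O(h)$ change in $v_p$. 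Lemma~\ref{lem:weak_timing} does not price this: it tracks only postsynaptic quantities and contributes nothing for layer-$L$ spikes.
\end{itemize}
The paper's proof does not account for either term. To match the stated constant exactly you must show these terms away or enlarge the constant.
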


\begin{proof}
Write
\[
\E\big[u(t_{m+1},X_{m+1}^h)-u(t_m,X_m^h)\big]
=
\E\big[(Q_h-P_{t_m,t_{m+1}})f(X_m^h)\big].
\]
We decompose the one-step defect into four parts.

\emph{(i) Subthreshold affine-diffusion truncation.}
On the event that no exact spike occurs in the step, the one-step defect is exactly the standard
subthreshold weak local error \eqref{eq:diff_local_weak}.

\emph{(ii) Timing/decay of a unique exact spike.}
On the event $\{\Delta \widetilde N_{\mathrm{tot}}^{(m)}=1\}$, shift the unique exact spike to the grid endpoint
$t_{m+1}$.  Lemma~\ref{lem:weak_timing} gives the corresponding contribution.

\emph{(iii) Exact-vs.-grid spike-map decision for a unique exact spike.}
Still on $\{\Delta \widetilde N_{\mathrm{tot}}^{(m)}=1\}$, after the timing shift one compares
``apply $\mathcal R_p$ at the grid endpoint'' with ``do not apply $\mathcal R_p$''.
Lemma~\ref{lem:weak_jump_linear} shows that the jump discrepancy is proportional to the distance from
threshold, and Lemma~\ref{lem:weak_missed} bounds the averaged boundary-strip contribution.

\emph{(iv) Two or more exact spikes in the step.}
This is the multi-spike remainder controlled by Lemma~\ref{lem:weak_multispike}.

Summing the four contributions yields \eqref{eq:onestep_weak_full}.
\end{proof}

\begin{theorem}[Global averaged weak order $1$ with explicit $T$--$L$ scaling]\label{thm:weak_global}
Let $T=Mh$ and $\Phi\in C_b^4(\mathsf{D})$.
Assume Assumptions~\ref{ass:weak_backward_reg}--\ref{ass:weak_step_pkg} and that
Lemma~\ref{lem:weak_onestep} holds for steps $m=0,\dots,M-2$.
Then
\begin{equation}\label{eq:weak_global}
\abs{\E[\Phi(X(T))]-\E[\Phi(X^h(T))]}
\ \le\
T\,h\,C_{\mathrm{weak}}(T,L,W)
+
C_{\mathrm{term}}(T)\,h,
\end{equation}
where
\begin{align}
C_{\mathrm{weak}}(T,L,W)
&:=
C_{\mathrm{diff}}(T)
+
\frac{M_I(T)+M_v(T)}{\tauc}\sum_{\ell=1}^{L-1}R_\ell(T)\,\|W^{\ell,\ell+1}\|_{1} \nonumber\\
\label{eq:Cweak_def}
&\quad+
M_v(T)\sum_{p\in\mathcal{J}}\rho^V_{\max,p}(T)\tilde{A}_{2,p}
+
2\|u\|_\infty R_{\mathrm{ff},2}(T),
\end{align}
and $C_{\mathrm{term}}(T)$ controls the final step (where $u(T,\cdot)=\Phi$ and transmission need not hold).
If $\Phi$ is spike-map compatible at $T$ (i.e.\ $\Phi(x)=\Phi(\mathcal{R}_p x)$ for $x\in\Sigma_p$), then
$C_{\mathrm{term}}(T)=0$.
\end{theorem}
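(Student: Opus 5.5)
The plan is the standard telescoping (Talay--Tubaro) argument along the numerical chain, using the backward value function $u(t,x)=(P_{t,T}\Phi)(x)$. Since $X(0)=X^h_0$ and $u(0,\cdot)=P_{0,T}\Phi$, while $u(T,\cdot)=\Phi$, I will write
\[
\E[\Phi(X^h(T))]-\E[\Phi(X(T))]
=
\sum_{m=0}^{M-1}\E\bigl[u(t_{m+1},X^h_{m+1})-u(t_m,X^h_m)\bigr].
\]
The Markov property of the exact reset process gives $u(t_m,\cdot)=P_{t_m,t_{m+1}}u(t_{m+1},\cdot)$. Conditioning on $X^h_m$ then gives, for each summand, $\E[(Q_h-P_{t_m,t_{m+1}})f_m(X^h_m)]$ with $f_m=u(t_{m+1},\cdot)$. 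This is exactly the quantity controlled in Lemma~\ref{lem:weak_onestep}.

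For the interior steps $m=0,\dots,M-2$ we have $t_{m+1}\le T-h<T$. There Assumption~\ref{ass:weak_backward_reg} provides $C^{1,4}$ regularity on $[0,T-h]$ together with transmission \eqref{eq:transmission_u}, so Lemmas~\ref{lem:weak_jump_linear}--\ref{lem:weak_multispike} apply. Lemma~\ref{lem:weak_onestep} then bounds each summand by $h^2 C_{\mathrm{weak}}(T,L,W)$, where $C_{\mathrm{weak}}$ is precisely the bracket in \eqref{eq:onestep_weak_full}. I need the derivative bounds $M_v,M_I$ and all package constants to be uniform in $m$. This holds because Assumption~\ref{ass:weak_step_pkg} is stated uniformly over steps with $t_{m+1}\le T$, and \eqref{eq:M1_bounds} takes a sup over $t<T$. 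Summing at most $M$ such terms gives $Mh^2C_{\mathrm{weak}}=ThC_{\mathrm{weak}}$.

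The final step $m=M-1$ is the main obstacle, since there $f_{M-1}=\Phi$ and transmission at time $T$ is not available from Assumption~\ref{ass:weak_backward_reg}. I would run the same four-part decomposition with $f=\Phi\in C_b^4$, replacing $M_v,M_I$ by $\|\partial_v\Phi\|_\infty,\|\nabla_I\Phi\|_\infty$. The diffusion truncation, timing/decay and multi-spike parts are each $O(h^2)\subset O(h)$.

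The decision part is where compatibility matters. Without it, $g_p=\Phi\circ\mathcal R_p-\Phi$ is merely bounded, and by Assumption~\ref{ass:weak_step_pkg}(iii)--(iv) the discrepancy event $D_{p,M-1}$ has probability at most $\rho^V_{\max,p}\,\E[ha_p^++\eta_{p,M-1}]=O(h)$. This gives the $C_{\mathrm{term}}(T)h$ term, with $C_{\mathrm{term}}\lesssim \|\Phi\|_\infty\sum_p\rho^V_{\max,p}(\sqrt{A_{2,p}}+\sqrt{C_{\eta,p}})$ plus the $O(h)$ remainders.

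If $\Phi$ is spike-map compatible at $T$, the proof of Lemma~\ref{lem:weak_jump_linear} goes through verbatim with $u(T,\cdot)=\Phi$: one projects onto $\Sigma_p$ and uses $\Phi(\pi_px)=\Phi(\mathcal R_p\pi_px)$. So $|g_p|\le\|\partial_{v_p}\Phi\|_\infty(\Vth-v_p)$, and Lemma~\ref{lem:weak_missed} upgrades the last-step decision defect to $O(h^2)$. The whole last step is then $O(h^2)$ and is absorbed into the $ThC_{\mathrm{weak}}$ term (after enlarging the constant, or equivalently setting $C_{\mathrm{term}}=0$). The delicate point I would check carefully is that the $C_b^4$ norms of $\Phi$ are dominated by the constants already in $C_{\mathrm{weak}}$, i.e.\ that $M_v,M_I$ as sups over $[0,T)$ control the limit $t\uparrow T$. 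If they do not, I would instead state the compatible case with $C_{\mathrm{term}}=O(h)$ absorbed into the constant.
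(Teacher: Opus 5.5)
Your proposal is correct and follows the paper's proof: telescope $\E[u(t_{m+1},X^h_{m+1})-u(t_m,X^h_m)]$ along the numerical chain, apply Lemma~\ref{lem:weak_onestep} on the steps $m\le M-2$ (where $t_{m+1}<T$, so transmission is available), and bound the final step separately by $C_{\mathrm{term}}(T)h$. The paper only asserts that final-step bound, and your treatment of it fills in the same argument more carefully: an $O(h)$ strip probability when $g_p$ is merely bounded, and, under compatibility, rerunning Lemma~\ref{lem:weak_jump_linear} with $\Phi$ so that the last defect is $O(h^2)$ and fits in the unused slack between $(M-1)h^2C_{\mathrm{weak}}$ and $ThC_{\mathrm{weak}}$.
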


\begin{proof}
Use the telescoping identity
\[
\E[\Phi(X(T))]-\E[\Phi(X^h(T))]
=
\sum_{m=0}^{M-1}\E\big[u(t_{m+1},X_{m+1}^h)-u(t_m,X_m^h)\big].
\]
Apply Lemma~\ref{lem:weak_onestep} to $m=0,\dots,M-2$ to obtain $(M-1)h^2C_{\mathrm{weak}}$ and bound the
final step by $C_{\mathrm{term}}(T)h$.
\end{proof}

\begin{figure}[t]
\centering
\begin{subfigure}[t]{0.33\textwidth}
  \centering
  \begin{overpic}[width=\linewidth,trim=24 372 46 48,clip]{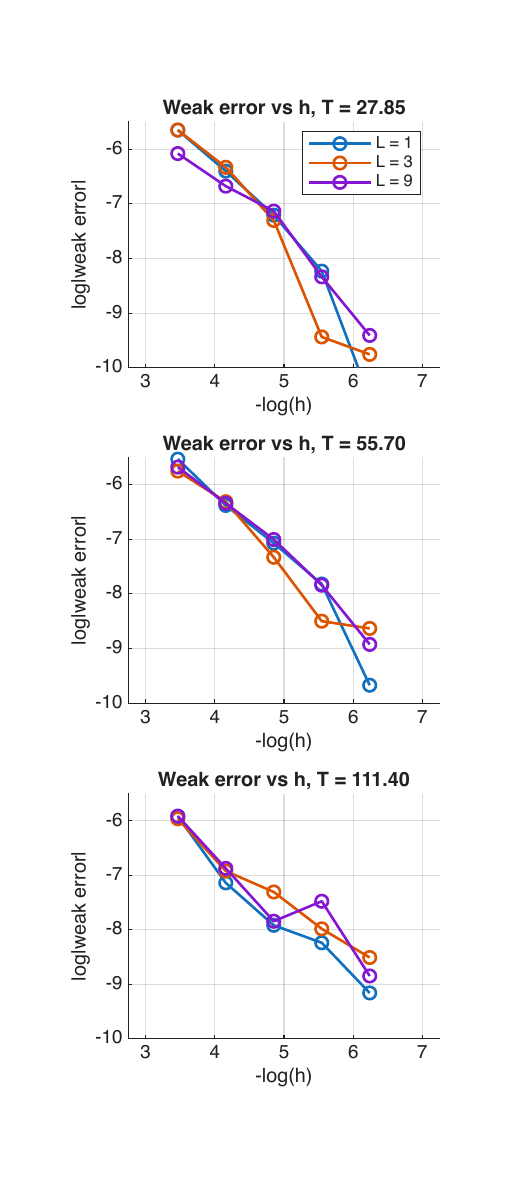}
    \put(2,85){\large\bfseries A}
  \end{overpic}
\end{subfigure}\hfill
\begin{subfigure}[t]{0.33\textwidth}
  \centering
  \includegraphics[width=\linewidth,trim=24 211 46 209,clip]{Fig3A.pdf}
\end{subfigure}\hfill
\begin{subfigure}[t]{0.33\textwidth}
  \centering
  \includegraphics[width=\linewidth,trim=24 50 46 370,clip]{Fig3A.pdf}
\end{subfigure}

\vspace{0.6em}

\begin{subfigure}[t]{0.33\textwidth}
  \centering
  \begin{overpic}[width=\linewidth,trim=12 12 31 12,clip]{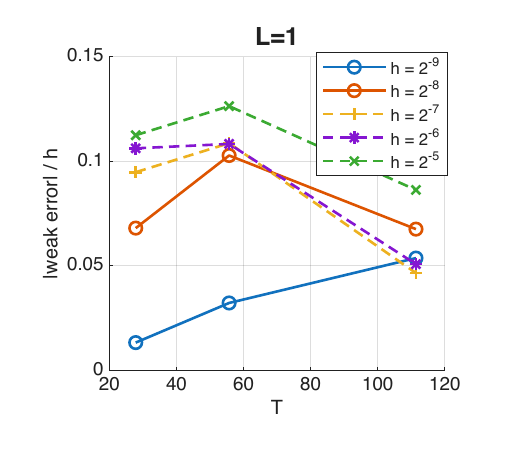}
    \put(2,92){\large\bfseries B}
  \end{overpic}
\end{subfigure}\hfill
\begin{subfigure}[t]{0.33\textwidth}
  \centering
  \includegraphics[width=\linewidth,trim=12 12 31 12,clip]{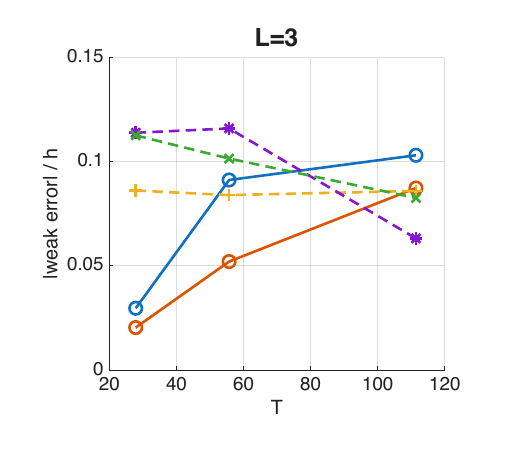}
\end{subfigure}\hfill
\begin{subfigure}[t]{0.33\textwidth}
  \centering
  \includegraphics[width=\linewidth,trim=12 12 31 12,clip]{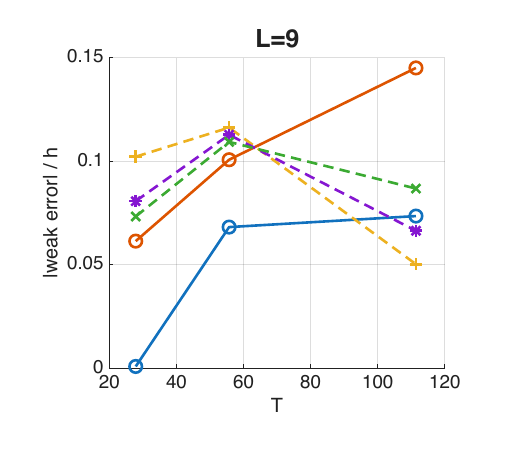}
\end{subfigure}
\caption{Weak-error illustration. A: Weak errors of layers 1, 3, and 9 are computed for three time-windows, going down with $h$. B: weak-error accumulation plots for $L=1,3,9$.}
\label{fig:weak_illustration}
\end{figure}

\begin{remark}[Linear growth in $L$ under uniform bounds]\label{rem:weak_linear_L}
If $R_\ell(T)\le R_{\max}(T)$, $\|W^{\ell,\ell+1}\|_1\le \mathcal{W}_1$, and
$\rho^{V}_{\max,p}(T)\bigl(A_{2,p}(T)+C_{\eta,p}(T)\bigr)$ is uniformly bounded while
$N=\sum_\ell n_\ell\lesssim L$, then $C_{\mathrm{weak}}(T,L,W)$ grows at most linearly in $L$.
Thus the weak error scales like $O(Th)$ with linear-in-$L$ constants under uniform controls.
\end{remark}
A numerical simulation illustrating the order one weak error is given  in Figure~\ref{fig:weak_illustration}.

\section{Dynamical stability and Lyapunov exponents}\label{sec:stability}

This section studies \emph{long-time} stability under \emph{common-noise coupling} and clarifies how
stability interacts with strong-error accumulation.
We integrate (i) a deterministic integrate-and-fire (IF) warm-up (monotonicity/interlacing and
finite-time spike-time sensitivity) and (ii) a general IF Lyapunov-exponent theorem based on
saltation factors and stationary boundary flux, then specialize to the current-based LIF neuron and
finally to feedforward networks.

\subsection{Common-noise coupling and what ``stability'' means}\label{subsec:stability_setup}

Fix a single neuron or a feedforward network as in Section~\ref{sec:model}.
Consider two copies $X(t)$ and $\widetilde X(t)$ driven by the \emph{same} Brownian motions
$\{B_{\ell,j}\}$ and, in the network setting, by their own internally generated spike trains.
We call this a \emph{common-noise coupling}.
Write $\delta v_{\ell,j}(t)=v_{\ell,j}(t)-\widetilde v_{\ell,j}(t)$ and similarly for currents.

Because the reset map is discontinuous, $L^\infty$-type path metrics are misleading:
even a small spike-time shift produces an $O(1)$ pointwise difference on a short \emph{misalignment window}
(see Lemma~\ref{lem:kernel_shift} and the discussion in Section~\ref{subsec:propagate}).
A more robust stability notion is based on (i) spike-time perturbations and/or (ii) time-integrated
errors (e.g.\ $L^1_t$), which ``pay'' only the window length.

The natural infinitesimal stability quantity is the \emph{top Lyapunov exponent} for perturbations in
voltage along a typical trajectory, defined (when it exists) by
\[
\lambda_v:=\lim_{T\to\infty}\frac{1}{T}\log\frac{|\delta v(T^+)|}{|\delta v(0^+)|},
\]
under common-noise coupling and an infinitesimal initial perturbation.
The key fact is that, between spikes, voltage perturbations contract (for LIF) at rate $e^{-t/\tauv}$,
while each spike produces a multiplicative \emph{saltation factor} due to the dependence of the spike
time on the perturbation (cf.\ \cite{Brette2004}).

To motivate (and to reuse later), we start with a deterministic 1D IF warm-up, which is also a
useful pathwise template because in the common-noise setting one may condition on the input path.

\subsection{Warm-up: deterministic 1D IF with common input}\label{subsec:IF_warmup}

We summarize and integrate deterministic results (monotonicity/interlacing and finite-time
spike-time sensitivity) in a form aligned with our notation.

\subsubsection{Model and spike times}
Let $x(t)\in[0,1)$ solve the deterministic IF dynamics
\begin{equation}\label{eq:IF_det}
\dot x(t)=f(x(t))+I(t),\qquad x(t)<1,
\end{equation}
with reset rule
\begin{equation}\label{eq:IF_reset}
x(t^-)=1 \ \Rightarrow\ x(t^+)=0.
\end{equation}
Assume
\begin{equation}\label{eq:IF_f_ass}
f\in C^1([0,1]),\qquad f'(x)\le 0\ \ \text{for all }x\in[0,1],
\end{equation}
and $I\in L^1_{\mathrm{loc}}(\R_+)$ (measurable input).
Given $x(0)=x_0\in[0,1)$, define spike times recursively:
\[
s^1(x_0):=\inf\{t>0:\ x(t)=1\},
\qquad
s^{k+1}(x_0):=\inf\{t>s^k(x_0):\ x(t)=1\},
\]
with the convention $\inf\emptyset=\infty$.
Let $N(T;x_0):=\sum_{k\ge 1}\1_{\{s^k(x_0)\le T\}}$.

\begin{remark}[Why $L^\infty$ is not the right stability metric]\label{rem:IF_Linfty_bad}
Even with identical input $I(t)$, a small initial perturbation can produce an $O(1)$ \emph{pointwise}
difference because one trajectory spikes earlier and resets while the other has not yet spiked.
However, this discrepancy persists only on an interval of length comparable to the spike-time shift.
Thus time-integrated metrics (and spike-time metrics) capture the correct smallness.
This is the same ``misalignment window'' mechanism used in the strong-error propagation analysis
(Section~\ref{subsec:propagate}).
\end{remark}

\subsubsection{Order preservation and contraction between spikes}
Let $x(t;s,\xi)$ denote the (subthreshold) ODE solution of \eqref{eq:IF_det} on an interval where
$x<1$, starting at time $s$ from $x(s)=\xi\in[0,1)$.
If $x_1(t)=x(t;s,\xi_1)$ and $x_2(t)=x(t;s,\xi_2)$ with $\xi_1\ge \xi_2$, then by comparison
(one-dimensional monotonicity),
\begin{equation}\label{eq:IF_order}
x_1(t)\ge x_2(t)\quad\text{for all times before the first threshold hit of either solution.}
\end{equation}
Moreover, letting $e(t):=x_1(t)-x_2(t)\ge 0$, we have
$\dot e(t)=f(x_1(t))-f(x_2(t))$ and $f'\le 0$ implies $\dot e(t)\le 0$, hence
\begin{equation}\label{eq:IF_contraction_between_spikes}
0\le x_1(t)-x_2(t)\le \xi_1-\xi_2
\quad\text{for all times before the first threshold hit.}
\end{equation}

\subsubsection{Monotonicity and (mean-driven) Lipschitz bounds for spike times}

\begin{assumption}[Mean-driven transversality for deterministic input]\label{ass:IF_mean_driven}
There exists a constant $a_*>0$ such that
\begin{equation}\label{eq:IF_a_star}
f(1)+\essinf_{t\ge 0} I(t)\ \ge\ a_* \ >0.
\end{equation}
\end{assumption}

\begin{theorem}[Spike times vs.\ initial condition: monotonicity, right continuity, and Lipschitz bounds]\label{thm:IF_spiketime}
Assume \eqref{eq:IF_f_ass}. Then for each fixed $k\ge 1$, the map $x_0\mapsto s^k(x_0)$ is
nonincreasing and right-continuous on $[0,1)$.

If in addition Assumption~\ref{ass:IF_mean_driven} holds, then all spike times are finite
($s^k(x_0)<\infty$ for every $k$), and for any $\delta>0$ with $x_0+\delta<1$,
\begin{equation}\label{eq:IF_Lipschitz_spiketime}
|s^k(x_0+\delta)-s^k(x_0)|
\ \le\
\frac{1}{a_*}\,S_*^{\,k-1}\,|\delta|,
\qquad
S_*:=\frac{f(0)+\essinf_{t\ge 0}I(t)}{f(1)+\essinf_{t\ge 0}I(t)}\ \ge 1.
\end{equation}
\end{theorem}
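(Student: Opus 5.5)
The plan is to prove the three claims in order: monotonicity, then right continuity, then the Lipschitz bound. Each is proved by induction on the spike index $k$, using only the order preservation \eqref{eq:IF_order} and the contraction \eqref{eq:IF_contraction_between_spikes} between spikes.

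\emph{Monotonicity.} Let $x_0'\ge x_0$, and write $x'(t)$, $x(t)$, $N'(t)$, $N(t)$ for the two trajectories and their spike counts. The inductive invariant is that for every $t$, either $N'(t)>N(t)$, or $N'(t)=N(t)$ and $x'(t)\ge x(t)$. This holds at $t=0$. Between spikes it is preserved by \eqref{eq:IF_order}. At a spike of $x'$ it is preserved trivially, since $N'$ increases. If $x$ spikes while $N'=N$, then $x'$ is also at $1$ at that time by order preservation, so it spikes simultaneously. If $x$ spikes while $N'>N$, then afterwards $N'\ge N$. In the case of equality the lower trajectory is at $0$, while $x'\ge 0$. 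The invariant gives $N'(t)\ge N(t)$ for all $t$, which is exactly $s^k(x_0')\le s^k(x_0)$ for every $k$.

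\emph{Right continuity.} Since $s^k$ is nonincreasing, the limit $t^*:=\lim_{\delta\downarrow0}s^k(x_0+\delta)$ exists and satisfies $t^*\le s^k(x_0)$; it remains to show $s^k(x_0)\le t^*$. For $k=1$, continuous dependence of the subthreshold ODE flow on the initial datum gives $x(t^*;x_0)=\lim x(s^1(x_0+\delta);x_0+\delta)=1$, so $x$ reaches threshold by time $t^*$, hence $s^1(x_0)\le t^*$. For $k\ge2$, apply the same argument on the interval after the $(k-1)$th spike. By the inductive hypothesis the restart times $s^{k-1}(x_0+\delta)$ converge to $s^{k-1}(x_0)$, and both trajectories restart from the common state $0$. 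Continuity of the flow with respect to the starting time then gives the claim.

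\emph{Lipschitz bound under Assumption~\ref{ass:IF_mean_driven}.} First, $f'\le0$ gives $\dot x\ge f(1)+I(t)\ge a_*$ everywhere below threshold, so every spike time is finite and inter-spike intervals are at most $1/a_*$. For the first spike, \eqref{eq:IF_contraction_between_spikes} shows that when $x'$ hits $1$ at $s'=s^1(x_0+\delta)$, the lower trajectory satisfies $1-x(s')\le\delta$. Since its speed is at least $a_*$, it follows that $0\le s^1(x_0)-s'\le\delta/a_*$.

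The inductive step propagates the time gap $\varepsilon_k:=s^k(x_0)-s^k(x_0+\delta)\ge0$. At time $s^k(x_0)$ the lower trajectory restarts at $0$. The upper trajectory has already climbed from $0$ during a time $\varepsilon_k$, so its voltage lead is $y=\int_{s^k(x_0+\delta)}^{s^k(x_0)}(f(x')+I)\,dt$. By \eqref{eq:IF_contraction_between_spikes} this voltage lead does not grow before the next threshold hit, and converting it back to a time gap at threshold, where the speed is at least $f(1)+I$, gives the next gap $\varepsilon_{k+1}$. The factor $S_*$ should appear as the ratio of post-reset speed to threshold speed, $(f(0)+I)/(f(1)+I)$. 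Because $f(0)\ge f(1)$, this ratio is a nonincreasing function of $I$, so it is maximized at $I=\essinf I$, which gives $\varepsilon_{k+1}\le S_*\varepsilon_k$. Iterating from $\varepsilon_1\le\delta/a_*$ yields \eqref{eq:IF_Lipschitz_spiketime}.

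\emph{Main obstacle.} The step I expect to be hardest is this speed-ratio comparison. With time-varying $I$ the two speeds are evaluated at different times and with different inputs, so bounding $y$ by $(f(0)+I)\varepsilon_k$ would need $\sup I$, not $\essinf I$. My first attempt is to avoid a pointwise bound on $y$ and compare the two trajectories in time units. Both trajectories are driven by the same $I$, and the lower one, started at $0$ at time $s^k(x_0)$, follows the same ODE as the upper one started at $0$ a time $\varepsilon_k$ earlier. The gap is then a phase lag, whose growth over an inter-spike interval is controlled by the quotient of speeds at the lagging and leading positions. Since $f'\le0$ and $\dot x\ge a_*$, that quotient is bounded by the same monotone-in-$I$ ratio, giving $S_*$. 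If this fails for finite $\delta$, I would fall back on proving the bound for the derivative $\partial_{x_0}s^k$, which exists almost everywhere by monotonicity and satisfies the infinitesimal saltation recursion with factor at most $S_*$, and then integrate in $x_0$.
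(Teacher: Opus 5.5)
Your monotonicity invariant is sound. Your right-continuity argument is also sound: continuous dependence of the flow makes hitting times lower semicontinuous, and a nonincreasing lower semicontinuous function is right-continuous. Both are more careful than the paper's sketch.

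The gap is in the Lipschitz step. The time-gap recursion $\varepsilon_{k+1}\le S_*\varepsilon_k$ is false when $I$ varies in time, so neither your direct argument nor the phase-lag repair can prove it. Take $f\equiv 0$ (so $S_*=1$ and $a_*=\essinf I$), $I=100$ on $[0,0.01]$ and $I=1$ afterwards, $x_0=0$, $\delta=1/2$. The upper trajectory fires at $0.005$ and $0.51$, the lower at $0.01$ and $1.01$. So $\varepsilon_2=0.5=100\,\varepsilon_1$, while the theorem's bound $\varepsilon_2\le\delta/a_*=0.5$ still holds.

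Your fallback has the same defect. Infinitesimally, the ratio $\partial_{x_0}s^{k+1}/\partial_{x_0}s^k$ involves $(f(0)+I(s^k))/(f(1)+I(s^{k+1}))$, with $I$ evaluated at two different spike times, so it is not bounded by $S_*$ in general. Moreover, integrating the a.e.\ derivative of a monotone function bounds its increment only from below, unless absolute continuity is proved separately.

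The quantity that grows by at most $S_*$ per spike is the voltage gap, not the time gap. This is what the paper's saltation argument propagates: $S(I(s))$ multiplies the voltage perturbation, and division by the threshold speed happens only at the last spike. To make this rigorous for finite $\delta$, write $s'_k=s^k(x_0+\delta)$, $s_k=s^k(x_0)$, $\varepsilon_k=s_k-s'_k$, and $g_k:=1-x(s'_k)$. Contraction gives $g_1\le\delta$. Interlacing gives $s'_k\le s_k\le s'_{k+1}$. So on the common window $[s'_k,s_k]$ the lower trajectory climbs from $1-g_k$ to $1$, and the upper one climbs from $0$ to $y_k:=x'(s_k)$, both under the same input. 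Hence
\[
y_k-g_k=\int_{s'_k}^{s_k}\bigl(f(x'(t))-f(x(t))\bigr)\,dt\le\bigl(f(0)-f(1)\bigr)\varepsilon_k,
\qquad
g_k=\int_{s'_k}^{s_k}\bigl(f(x(t))+I(t)\bigr)\,dt\ge a^\sharp\varepsilon_k ,
\]
with $a^\sharp:=f(1)+\essinf I\ge a_*$. The input cancels in the difference, so
\[
y_k\le\bigl(1+(f(0)-f(1))/a^\sharp\bigr)g_k=S_*g_k .
\]
Contraction on $[s_k,s'_{k+1}]$ gives $g_{k+1}\le y_k$. Therefore $g_k\le S_*^{k-1}\delta$ and $\varepsilon_k\le g_k/a^\sharp\le S_*^{k-1}\delta/a_*$. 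This is the correct finite-$\delta$ version of the paper's infinitesimal saltation argument, which evaluates $I$ at a single spike time.
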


\begin{proof}
\emph{Step 1: monotonicity.}
Fix $\delta>0$ with $x_0+\delta<1$, and let $x^\delta(t)$ denote the solution from $x_0+\delta$,
$x(t)$ the solution from $x_0$, under the same input $I(t)$.
By \eqref{eq:IF_order}, $x^\delta(t)\ge x(t)$ until one hits threshold, so $s^1(x_0+\delta)\le s^1(x_0)$.
Restarting at reset and iterating yields $s^k(x_0+\delta)\le s^k(x_0)$ for all $k$.

\emph{Step 2: right continuity.}
Monotonicity of $s^k(\cdot)$ implies existence of one-sided limits; right continuity follows from
standard monotone-function properties together with the fact that threshold is hit from below and
solutions depend continuously on initial data for ODEs between events.

\emph{Step 3: Lipschitz under mean-driven transversality.}
Under \eqref{eq:IF_a_star}, for any subthreshold state $x(t)\in[0,1]$ we have
$\dot x(t)=f(x(t))+I(t)\ge f(1)+\essinf I \ge a_*$.
Hence the first spike time satisfies the crude bound
\[
|s^1(x_0+\delta)-s^1(x_0)|\le \frac{|\delta|}{a_*}.
\]
To propagate to later spikes we use the \emph{saltation factor} for 1D IF.
At a spike time $s=s^k(x_0)$, a perturbation in initial condition produces a spike-time shift
$\Delta s$ that is proportional to the pre-spike perturbation divided by the threshold velocity
$f(1)+I(s)$.
Immediately after reset, the state is $0$, but the post-reset state at a common clock time differs
because one trajectory has advanced for an extra $\Delta s$ under velocity $f(0)+I(s)$.
Thus the magnitude of the post-spike perturbation is multiplied by
\[
S(I(s)):=\frac{f(0)+I(s)}{f(1)+I(s)}.
\]
Since $f'\le 0$, $f(0)\ge f(1)$ and $S(\cdot)\ge 1$.
Under \eqref{eq:IF_a_star} and $I(s)\ge \essinf I$, we have
$S(I(s))\le S_*$ with $S_*$ defined in \eqref{eq:IF_Lipschitz_spiketime}.
Iterating over $k-1$ spikes gives the stated Lipschitz bound.
\end{proof}

\begin{remark}[Geometric sensitivity vs.\ Lyapunov growth]\label{rem:IF_geom_vs_lambda}
The factor $S_*^{k-1}$ in \eqref{eq:IF_Lipschitz_spiketime} is the finite-time counterpart of a
Lyapunov exponent: roughly, if the spike count up to time $T$ satisfies $N(T)\approx rT$, then
$S_*^{N(T)}\approx e^{r(\log S_*)T}$.
The next subsection makes this precise in a stationary stochastic setting.
\end{remark}

\subsubsection{Interlacing and spike-count mismatch is at most one}
A crucial structural consequence of \eqref{eq:IF_order}--\eqref{eq:IF_contraction_between_spikes} is
that two solutions driven by the \emph{same} input cannot drift arbitrarily far apart in spike count.

\begin{lemma}[Interlacing of spike times]\label{lem:IF_interlacing}
Assume \eqref{eq:IF_f_ass}. Consider two solutions $x_A,x_B$ of \eqref{eq:IF_det}--\eqref{eq:IF_reset}
with the same input $I(t)$ and initial data $x_A(0)\ge x_B(0)$.
Then their spike times interlace:
\[
s_A^1\ \le\ s_B^1\ \le\ s_A^2\ \le\ s_B^2\ \le\ \cdots,
\]
with the understanding that if one side has fewer than $k$ spikes then the corresponding time is $\infty$.
In particular, for every $T\ge 0$,
\begin{equation}\label{eq:IF_spikecount_diff}
|N_A(T)-N_B(T)|\le 1.
\end{equation}
\end{lemma}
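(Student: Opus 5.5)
The plan is to prove the interlacing by induction on the spike index, using only the order-preservation property \eqref{eq:IF_order} and the reset rule \eqref{eq:IF_reset}. The count bound \eqref{eq:IF_spikecount_diff} will then be read off directly from the interlaced sequence.

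\textbf{First inequality.} Since $x_A(0)\ge x_B(0)$ and both solutions are driven by the same $I(t)$, \eqref{eq:IF_order} gives $x_A(t)\ge x_B(t)$ up to the first threshold hit of either one. Hence $x_A$ reaches $1$ no later than $x_B$, which gives $s_A^1\le s_B^1$. If $s_A^1=\infty$, then $x_B\le x_A<1$ for all time, so $s_B^1=\infty$ as well.

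\textbf{Second inequality.} At time $s_A^1$, $x_A$ resets to $0$, while $x_B(s_A^1)\in[0,1)$; the case $s_A^1=s_B^1$ is allowed, and then both sit at $0$. The roles are now reversed: on $[s_A^1,\cdot)$ we have $x_B(s_A^1)\ge 0=x_A((s_A^1)^+)$. I restart the comparison at time $s_A^1$ with $B$ as the upper solution. By \eqref{eq:IF_order}, $x_B\ge x_A$ until the first threshold hit of either solution after $s_A^1$. Therefore $x_B$ hits threshold first, i.e. $s_B^1\le s_A^2$.

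\textbf{Induction step.} At time $s_B^1$, $x_B$ resets to $0\le x_A(s_B^1)$, so $A$ is again the upper solution. This reproduces the original configuration with the time origin shifted. Induction on $k$ then yields
\[
s_A^k\le s_B^k\le s_A^{k+1}\qquad\text{for all }k,
\]
with the convention $\infty\le\infty$ whenever one of the trains terminates.

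\textbf{Degenerate cases.} Two situations need care.
\begin{enumerate}
\item \emph{Simultaneous hits.} When both trajectories are at $0$ at the same instant, they coincide from then on by ODE uniqueness, since $I\in L^1_{\mathrm{loc}}$ and $f\in C^1$ give Carath\'eodory uniqueness. The remaining spike times are then equal and the inequalities hold trivially.
\item \emph{Reset convention.} The comparison \eqref{eq:IF_order} is applied only on open inter-event intervals, using right-limits at the restart time. This is legitimate because \eqref{eq:IF_order} needs only the ordering of the initial data.
\end{enumerate}

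\textbf{Count bound.} Fix $T$, set $k=N_B(T)$, and use the interlacing. From $s_A^{k}\le s_B^k\le T$ we get $N_A(T)\ge N_B(T)$. From $s_B^{k+1}>T$ and $s_A^{k+2}\ge s_B^{k+1}>T$ we get $N_A(T)\le k+1$. Hence $0\le N_A(T)-N_B(T)\le 1$, which is \eqref{eq:IF_spikecount_diff}.

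The only genuinely delicate step is justifying the role reversal at each reset when a spike time of one trajectory coincides with a time at which the other is exactly at threshold. There, the convention that $x(t^-)=1$ triggers the reset must be applied consistently. I would handle this by checking that weak inequalities are preserved in every case.
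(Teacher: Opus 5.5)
Your proposal is correct and follows the same induction as the paper: order preservation \eqref{eq:IF_order} gives $s_A^1\le s_B^1$, then at each reset the upper and lower solutions swap roles, and restarting the comparison produces the interlacing chain, from which the count bound follows. Your handling of simultaneous hits via Carath\'eodory uniqueness and your explicit derivation of $0\le N_A(T)-N_B(T)\le 1$ only fill in details that the paper's sketch leaves implicit.
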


\begin{proof}
We sketch the standard induction argument.
Since $x_A(0)\ge x_B(0)$, order preservation implies the first threshold hit of $x_A$ occurs no later
than that of $x_B$, hence $s_A^1\le s_B^1$.
At time $s_A^1$, $x_A$ resets to $0$, while $x_B(s_A^1)\in[0,1)$, so now $x_B(s_A^1)\ge x_A(s_A^1)=0$.
Applying order preservation from $s_A^1$ onward yields $s_B^1\le s_A^2$, and repeating gives the interlacing chain.
The spike-count bound \eqref{eq:IF_spikecount_diff} is immediate from interlacing.
\end{proof}

\begin{remark}[What interlacing does \emph{not} prevent]\label{rem:IF_permanent_loss}
Interlacing controls spike \emph{count} mismatch, but it does not guarantee that a trajectory that
missed a spike will eventually ``catch up'' in a way that makes trajectories close pointwise.
For example, if after one trajectory spikes the input becomes subthreshold, the lagging trajectory
may never spike (a permanent loss, producing $O(1)$ separation).
This phenomenon is relevant when interpreting the strong bounds with mismatch decomposition: in some regimes a missed
spike truly can be irrecoverable.
\end{remark}

\subsubsection{Spike-count bounds from input integrals}
The deterministic notes also provide useful spike-count bounds, which become rate bounds once inputs
are bounded.

\begin{proposition}[Spike-count upper bound]\label{prop:IF_spikecount_upper}
Assume \eqref{eq:IF_f_ass} and $I\in L^1([0,T])$.
Then any solution satisfies
\begin{equation}\label{eq:IF_spikecount_upper}
N(T;x_0)\ \le\ x_0 + \int_0^T (I(s)+f(0))_+\,ds.
\end{equation}
In particular, if $I(s)\le I_{\max}$ a.e.\ on $[0,T]$, then
\begin{equation}\label{eq:IF_spikecount_linear}
N(T;x_0)\ \le\ (I_{\max}+f(0))_+\,T + x_0.
\end{equation}
\end{proposition}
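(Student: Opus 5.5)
The plan is a counting argument based on the fact that $f$ is nonincreasing: $f(x)\le f(0)$ for every $x\in[0,1]$. Along any subthreshold stretch of the trajectory this gives
\[
\dot x(t)=f(x(t))+I(t)\le f(0)+I(t)\le \bigl(I(t)+f(0)\bigr)_+
\]
for a.e.\ $t$. Since $I\in L^1([0,T])$ and $f$ is continuous, solutions are Carath\'eodory solutions. They are therefore absolutely continuous between resets, so voltage increments on such stretches can be written as integrals of $\dot x$.

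First, if $N(T;x_0)=0$, the bound is trivial, because $x_0\ge0$ and the integral term is nonnegative. Otherwise, let $n:=N(T;x_0)\ge1$ and consider the spike times $s^1<\dots<s^n\le T$.

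\emph{First interval.} On $[0,s^1)$ the trajectory rises from $x_0$ to the left limit $x((s^1)^-)=1$. Integrating the differential inequality gives
\[
1-x_0=\int_0^{s^1}\dot x\,ds\le\int_0^{s^1}\bigl(I(s)+f(0)\bigr)_+\,ds .
\]

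\emph{Later intervals.} For $k=1,\dots,n-1$, on $(s^k,s^{k+1})$ the trajectory starts at the reset value $0$ and reaches $1$ at $(s^{k+1})^-$. The same integration gives
\[
1\le\int_{s^k}^{s^{k+1}}\bigl(I(s)+f(0)\bigr)_+\,ds .
\]

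These intervals are disjoint subsets of $[0,T]$, and the integrand is nonnegative. Summing the $n$ inequalities therefore yields
\[
(1-x_0)+(n-1)\le\int_0^T\bigl(I(s)+f(0)\bigr)_+\,ds,
\]
which rearranges to \eqref{eq:IF_spikecount_upper}.

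The linear bound \eqref{eq:IF_spikecount_linear} then follows at once. If $I\le I_{\max}$ a.e., then $(I+f(0))_+\le(I_{\max}+f(0))_+$ pointwise, and integrating over $[0,T]$ gives the claim.

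The only delicate point is justifying the integrated inequality across resets. To handle it, I would work on the open interspike intervals, where the solution is absolutely continuous, and use the left limits at the spike times. The same spike-by-spike structure also shows that spike times cannot accumulate, since each interspike interval carries at least unit mass of $(I+f(0))_+$. Consequently $n$ is finite and well defined before the summation, and the argument is not circular.
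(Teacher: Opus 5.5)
Your proof is correct and follows the paper's own argument. The monotonicity of $f$ gives $\dot x\le (I+f(0))_+$ between spikes. Each spike then requires a unit increment in $x$, or $1-x_0$ for the first one, and summing over the disjoint interspike intervals yields the bound. Your treatment of absolute continuity between resets and of the finiteness of $N(T;x_0)$ makes explicit details that the paper's one-line proof leaves implicit.
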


\begin{proof}
Between spikes, $x(t)$ increases at speed $\dot x=I(t)+f(x(t))\le I(t)+f(0)$ since $f$ is nonincreasing.
Each spike requires an increment of $1$ in $x$.
Summing increments over interspike intervals yields the integral bound \eqref{eq:IF_spikecount_upper}.
\end{proof}

\begin{remark}[Relevance for our network constants]\label{rem:IF_rate_to_Lambda}
Bounds like \eqref{eq:IF_spikecount_linear} provide explicit deterministic rate controls under bounded
inputs. In our strong/weak network bounds, such rate controls enter via spike-load parameters and
stepwise rate bounds.
\end{remark}

\subsection{A general 1D IF Lyapunov exponent theorem}\label{subsec:IF_general_lambda}

We now state a general Lyapunov-exponent theorem for one-dimensional IF models with an \emph{autonomous}
(stationary ergodic) input process.
The theorem isolates (i) continuous-time contraction/expansion between spikes and (ii) discrete
spike-induced expansion via a saltation factor.
It subsumes the LIF formula used later.

\subsubsection{Stochastic IF with autonomous input}
Let $(\Omega,\mathcal{F},\Pp)$ carry a stationary ergodic process $I(t)$ with càdlàg paths.
Consider the IF dynamics
\begin{equation}\label{eq:IF_general}
\dot x(t)=f(x(t))+I(t),\qquad x(t)<1,
\qquad
x(t^-)=1 \Rightarrow x(t^+)=0,
\end{equation}
with $f\in C^1([0,1])$.
Assume spikes are \emph{transversal} almost surely: whenever $x(t^-)=1$ at a spike time $t=s^k$,
\begin{equation}\label{eq:IF_transversal}
f(1)+I(s^{k-})>0.
\end{equation}

Let $\{s^k\}_{k\ge 1}$ be the spike times and $N(t)=\sum_k \1_{\{s^k\le t\}}$.
Define the \emph{saltation factor} (ratio of post-/pre-spike infinitesimal perturbations)
\begin{equation}\label{eq:IF_saltation_general}
S(I):=\frac{f(0)+I}{f(1)+I},
\qquad\text{defined for }f(1)+I>0.
\end{equation}
This is the 1D specialization of the saltation matrix for event-driven/hybrid flows; for spiking
neurons it appears explicitly in \cite{Brette2004}.

\subsubsection{Invariant measure and boundary flux}
Assume that the joint process $(x(t),I(t))$ on $\{x<1\}$ with reinjection at $x=0$ is positive
recurrent and admits a stationary ergodic distribution $\pi_\infty$ on $\mathsf{D}:=[0,1)\times\R$.
Assume moreover that the stationary law admits a density $\rho_\infty(x,I)$ that is bounded in a
neighborhood of $(1,I)$ for $I$ near the tangency set $\{f(1)+I=0\}$.
In the Markov case (e.g.\ $I$ an OU diffusion), the corresponding stationary Fokker--Planck equation
implies a stationary \emph{outflow boundary flux} through $x=1$ given by
\begin{equation}\label{eq:IF_flux_general}
J_\infty(I) = (f(1)+I)\,\rho_\infty(1,I)\,\1_{\{f(1)+I>0\}}.
\end{equation}
More generally (beyond Markov), $J_\infty$ can be understood as the stationary \emph{Palm} measure of
$I$ at spike times; for our purposes the Markov/flux setting is sufficient.

\begin{theorem}[General 1D IF Lyapunov exponent via stationary bulk measure and boundary flux]\label{thm:lambda_IF_general}
Assume:
\begin{enumerate}[label=(\roman*),leftmargin=2.2em]
\item $f\in C^1([0,1])$ and spikes are transversal a.s.\ in the sense of \eqref{eq:IF_transversal};
\item $(x(t),I(t))$ is stationary ergodic with invariant density $\rho_\infty(x,I)$ on $[0,1)\times\R$;
\item the stationary boundary flux measure exists and is given by \eqref{eq:IF_flux_general};
\item (integrability) $\rho_\infty(1,I)$ is locally bounded near the tangency set $f(1)+I=0$.
\end{enumerate}
Then the top Lyapunov exponent for infinitesimal perturbations in the $x$-direction under common-input
coupling equals
\begin{equation}\label{eq:lambda_IF_general}
\lambda_x
=
\int_{[0,1)\times\R} f'(x)\,\rho_\infty(x,I)\,dx\,dI
\;+\;
\int_{\R} J_\infty(I)\,\log S(I)\,dI,
\qquad
S(I)=\frac{f(0)+I}{f(1)+I}.
\end{equation}
Moreover, the boundary integral is finite despite the logarithmic singularity at $f(1)+I\downarrow 0$.
\end{theorem}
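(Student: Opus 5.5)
We represent the infinitesimal perturbation multiplicatively along a single trajectory and then apply ergodic averaging. Conditioning on the common input path $I(\cdot)$, let $\delta x(t)$ solve the hybrid variational equation. Between spikes it obeys $\dot{\delta x}=f'(x(t))\,\delta x$, so
\[
\delta x(t_2^-)=\delta x(t_1^+)\exp\Bigl(\int_{t_1}^{t_2}f'(x(s))\,ds\Bigr).
\]
At a transversal spike $s^k$ it jumps by the saltation factor, $\delta x(s^{k+}) = S(I(s^{k-}))\,\delta x(s^{k-})$. We justify this jump exactly as in Step~3 of Theorem~\ref{thm:IF_spiketime}. The spike-time shift is $-\delta x(s^{k-})/(f(1)+I(s^{k-}))$ by the implicit function theorem, which applies because of \eqref{eq:IF_transversal}. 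During that shift the reset trajectory advances with velocity $f(0)+I(s^k)$, and right-continuity of $I$ lets us evaluate the input at the spike. Taking logarithms gives the exact identity
\[
\frac1T\log\frac{|\delta x(T^+)|}{|\delta x(0^+)|}
=\frac1T\int_0^T f'(x(s))\,ds+\frac1T\sum_{k:\,s^k\le T}\log S\bigl(I(s^{k-})\bigr).
\]
The sign of $\delta x$ is preserved, since $S>0$ under transversality, so no cancellation issue arises.

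\textbf{Bulk term.} We apply Birkhoff's ergodic theorem to the stationary ergodic process $(x(t),I(t))$. Since $f'$ is bounded on $[0,1]$, the first term converges a.s.\ to $\int f'(x)\rho_\infty(x,I)\,dx\,dI$.

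\textbf{Spike term.} We treat the spike times as a stationary marked point process with marks $I(s^{k-})$. Its intensity measure per unit time is the flux measure $J_\infty(I)\,dI$; this is the stationary analogue of Lemma~\ref{lem:flux_identity}, or equivalently the Palm interpretation. The ergodic theorem for stationary marked point processes (Campbell--Palm) then gives a.s.\ convergence of $\frac1T\sum_k\varphi(I(s^{k-}))$ to $\int J_\infty\varphi$, provided $\int J_\infty|\varphi|<\infty$. So the remaining task is to verify integrability for $\varphi=\log S$.

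\textbf{Integrability.} Set $a=f(1)+I>0$. Then $\log S=\log(f(0)-f(1)+a)-\log a$. Near $a\downarrow0$ we have $J_\infty=a\rho_\infty(1,I)$, and $\rho_\infty(1,\cdot)$ is locally bounded by (iv). Hence the integrand is bounded by $C\,a|\log a|$ plus a bounded term, and $a|\log a|$ is integrable near $0$; this shows the logarithmic singularity is harmless. For large $a$, $\log S\to0$, in fact $|\log S|\le (f(0)-f(1))/a$, so this part is controlled by the total flux $\int J_\infty\,dI$. The total flux equals the mean firing rate, which is finite because stationarity combined with Proposition~\ref{prop:IF_spikecount_upper}-type bounds gives $\E N(1)<\infty$ (we assume $\E|I|<\infty$).

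\textbf{Main obstacle.} The step most needing care is the rigorous identification of the empirical spike-mark average with $\int J_\infty\log S$. Two points need proof: that the Palm distribution of $I(s^{k-})$ is exactly the flux \eqref{eq:IF_flux_general}, and that the ergodic theorem applies to a function unbounded at $a\to0$. I would handle both by truncation. Replace $\log S$ by $\log S\wedge M$, apply the ergodic theorem for each $M$, and let $M\to\infty$ using monotone convergence together with the integrability bound above. Finally, the limit is the top exponent because the perturbation is one-dimensional, so it is the only exponent. Specializing to $f(x)=-x/\tauv$ after rescaling recovers the stated LIF formula $-1/\tauv+\int J_\infty\log\frac{I}{I-\Ith}$.
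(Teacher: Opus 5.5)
Your proposal is correct and follows the paper's own argument. Both use the pathwise identity $\log|\delta x(T^+)|=\log|\delta x(0^+)|+\int_0^T f'(x)\,dt+\sum_k\log S(I(s^{k-}))$, Birkhoff for the bulk term, the Palm/flux ergodic limit for the spike sum, and $a\log(1/a)$ integrability at tangency.

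Two things you do go beyond the paper, which only invokes convergence for bounded continuous $\varphi$: you control the large-$a$ tail through the finite total flux, and you apply the $L^1$ (Campbell--Palm) ergodic theorem. Two small corrections are needed. First, the identification of the mark intensity with $J_\infty$ comes from hypothesis (iii), not from truncation; truncation with monotone convergence yields only the lower bound, and it is redundant once the $L^1$ version is used. Second, replacing $f(0)+I(s^k)$ by $f(0)+I(s^{k-})$ requires continuity of $I$ at spike times, not merely right-continuity.
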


\begin{proof}
\emph{Step 1: multiplicative decomposition of the tangent dynamics.}
Consider the derivative $D(t):=\partial_{x_0}x(t)$ along a fixed input path.
Between spikes, $D$ solves $\dot D(t)=f'(x(t))D(t)$, hence
$D(t)=\exp(\int_{s^k}^{t} f'(x(u))\,du)\,D(s^{k+})$ on each interspike interval.
At a spike time $s=s^k$, the perturbation affects the spike time itself; the standard saltation
calculation (or \cite{Brette2004}) yields
\[
D(s^{+}) = S(I(s^-))\,D(s^{-}),\qquad S(I)=\frac{f(0)+I}{f(1)+I}.
\]
Combining yields, for any $T>0$,
\begin{equation}\label{eq:lambda_decomp_pathwise}
\log |D(T^+)|
=
\log|D(0^+)|
+
\int_0^T f'(x(t))\,dt
+
\sum_{k:\,s^k\le T}\log S(I(s^{k-})).
\end{equation}

\emph{Step 2: ergodic limits.}
Divide \eqref{eq:lambda_decomp_pathwise} by $T$ and let $T\to\infty$.
Stationary ergodicity gives
\[
\frac{1}{T}\int_0^T f'(x(t))\,dt \to \int f'(x)\,\rho_\infty(x,I)\,dx\,dI
\quad\text{a.s.}
\]
For the spike sum, define the random measure
$\mu_T(dI):=\frac{1}{T}\sum_{k:s^k\le T}\delta_{I(s^{k-})}(dI)$.
Under stationarity/ergodicity, $\mu_T$ converges (in the Cesàro/Palm sense) to the stationary spike
flux measure $J_\infty(I)\,dI$ characterized by \eqref{eq:IF_flux_general}, i.e.
\[
\frac{1}{T}\sum_{k:s^k\le T}\varphi(I(s^{k-}))\to \int \varphi(I)\,J_\infty(I)\,dI
\quad\text{for bounded continuous }\varphi.
\]
Applying this with $\varphi=\log S$ (justified by the integrability step below) yields the second term
in \eqref{eq:lambda_IF_general}, and the initial factor $\log|D(0^+)|/T\to 0$.

\emph{Step 3: finiteness of the logarithmic singularity.}
Let $a:=f(1)+I$ and consider the near-tangency regime $a\downarrow 0^+$.
Then $\log S(I)=\log\frac{f(0)+I}{f(1)+I}=\log\frac{f(0)-f(1)+a}{a}\sim \log\frac{1}{a}$, while the
flux density has a factor $a$:
$J_\infty(I)=(f(1)+I)\rho_\infty(1,I)\1_{\{a>0\}}=a\,\rho_\infty(1,f(1)+a)$.
If $\rho_\infty(1,\cdot)$ is locally bounded, then near $a=0$ the integrand behaves like
$a\log(1/a)$, which is integrable:
$\int_0^\alpha a\log(1/a)\,da<\infty$.
Thus the boundary integral in \eqref{eq:lambda_IF_general} is finite.
\end{proof}

\begin{remark}[A useful crude upper bound]\label{rem:lambda_general_upper}
Using $\log(1+x)\le x$ one gets
\[
0\le (f(1)+I)\log S(I)
=(f(1)+I)\log\!\Bigl(1+\frac{f(0)-f(1)}{f(1)+I}\Bigr)
\le f(0)-f(1).
\]
Hence
\[
\int J_\infty(I)\log S(I)\,dI
\le (f(0)-f(1))\int_{\{f(1)+I>0\}}\rho_\infty(1,I)\,dI,
\]
showing finiteness under minimal boundary-density integrability assumptions.
\end{remark}

\subsection{Single current-based LIF neuron: explicit $\lambda$ and bounds}\label{subsec:lambda_single_LIF}

We specialize Theorem~\ref{thm:lambda_IF_general} to the current-based LIF neuron used throughout this paper.
Recall the single-neuron dynamics \eqref{eq:dv}--\eqref{eq:dI} with reset \eqref{eq:reset}.
For the \emph{voltage} equation, conditional on the current path $I(t)$ the voltage is a 1D IF
equation with drift $f(v)=-(v-\Vr)/\tauv$ and input $I(t)$.
Thus $f'(v)=-1/\tauv$ and
\[
f(\Vr)=0,\qquad f(\Vth)= -\frac{\Vth-\Vr}{\tauv}=-\Ith,
\qquad \Ith:=\frac{\Vth-\Vr}{\tauv}.
\]
The corresponding saltation factor at a spike with pre-spike current $I(s^-)$ is
\begin{equation}\label{eq:LIF_saltation}
S(I)=\frac{f(\Vr)+I}{f(\Vth)+I}=\frac{I}{I-\Ith},
\qquad I>\Ith,
\end{equation}
in agreement with \cite{Brette2004}.

Assume the stationary subthreshold density $\rho_\infty(v,I)$ exists on $\{v<\Vth\}$ and is bounded
near $(\Vth,\Ith)$.
Then the stationary boundary flux is
\[
J_\infty(I)=(I-\Ith)\rho_\infty(\Vth,I)\1_{\{I>\Ith\}}.
\]

\begin{theorem}[Single-neuron LIF Lyapunov exponent (voltage direction)]\label{thm:lambda_single_LIF}
Under stationarity/ergodicity and boundedness of $\rho_\infty(\Vth,I)$ near $I=\Ith$, the voltage
Lyapunov exponent under common-noise coupling equals
\begin{equation}\label{eq:lambda_single_LIF}
\lambda_v
=
-\frac{1}{\tauv}
+
\int_{I>\Ith} (I-\Ith)\,\rho_\infty(\Vth,I)\,
\log\!\Bigl(\frac{I}{I-\Ith}\Bigr)\,dI
=
-\frac{1}{\tauv}+\int_{\R} J_\infty(I)\,\log S(I)\,dI,
\end{equation}
and the integral is finite despite the logarithmic singularity of $\log\frac{I}{I-\Ith}$ at $I\downarrow \Ith$.
\end{theorem}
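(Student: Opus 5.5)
The plan is to obtain the result as a direct specialization of Theorem~\ref{thm:lambda_IF_general}, after reducing the two-dimensional current-based neuron to a one-dimensional IF equation driven by an input path.

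\textbf{Reduction to a one-dimensional IF equation.} Under common-noise coupling both copies share the Brownian path $B$. Since the current equation \eqref{eq:dI} for a single neuron does not depend on $v$ (there is no presynaptic term and the current is continuous at resets), the current path $I(t)$ is identical in the two copies. A perturbation in the voltage direction therefore leaves $I$ untouched. Conditionally on $I(\cdot)$, the voltage solves \eqref{eq:IF_general} after the affine change of variables $x=(v-\Vr)/(\Vth-\Vr)$. This change is harmless: it rescales perturbations by a constant factor, which does not affect the exponent, and it leaves the saltation ratio invariant. We may therefore work directly in $v$, with $f(v)=-(v-\Vr)/\tauv$, so that $f'\equiv -1/\tauv$ and $S(I)=I/(I-\Ith)$ as in \eqref{eq:LIF_saltation}.

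\textbf{Checking the hypotheses of Theorem~\ref{thm:lambda_IF_general}.}
\begin{itemize}
\item Hypothesis (ii) is the assumed stationarity and ergodicity of the joint process $(v,I)$ with reinjection.
\item Hypothesis (iii) follows from the flux representation: by Lemma~\ref{lem:flux_identity} in its stationary form, the Palm distribution of $I(s^{k-})$ has density $J_\infty(I)=(I-\Ith)\rho_\infty(\Vth,I)\1_{\{I>\Ith\}}$ per unit time.
\item Hypothesis (iv) is the assumed boundedness of $\rho_\infty(\Vth,\cdot)$ near $\Ith$.
\item For transversality (i), I would argue that tangential hits, where $I(s^-)=\Ith$, have probability zero. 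The reason is that the spike-crossing measure $a\,\rho_\infty(\Vth,\Ith+a)\,da$ puts no mass at $a=0$.
\end{itemize}

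\textbf{Evaluating the two terms.} The bulk term is immediate: $\int f'\rho_\infty = -1/\tauv$, since $\rho_\infty$ is a probability density on $\{v<\Vth\}\times\R$ (there is no mass at the boundary). The boundary term is $\int J_\infty\log S$, which gives \eqref{eq:lambda_single_LIF}.

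\textbf{Finiteness of the boundary integral.} This step must be handled on both ends of the $I$-axis.
\begin{itemize}
\item Near $a=I-\Ith\downarrow0$, the integrand is bounded by $C a\log((\Ith+a)/a)$, which is integrable, as in Step~3 of Theorem~\ref{thm:lambda_IF_general}.
\item For large $I$, Remark~\ref{rem:lambda_general_upper} gives $(I-\Ith)\log S(I)\le\Ith$. Hence the tail is bounded by $\Ith\int_{I>\Ith}\rho_\infty(\Vth,I)\,dI$.
\end{itemize}
For the tail bound to be finite we need integrability of the boundary trace. I would justify it either from Gaussian tails of the stationary current marginal, or by splitting at $I=2\Ith$: on $\{I>2\Ith\}$ we have $\log S\le\log 2$, so the tail is controlled by $\log 2$ times the stationary firing rate $\int J_\infty$, which is finite.

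\textbf{Main obstacle.} The hardest step is justifying the ergodic limit for the spike sum with the unbounded test function $\varphi=\log S$. Theorem~\ref{thm:lambda_IF_general} asserts convergence only for bounded continuous $\varphi$. I would truncate, using $\varphi_\epsilon=\log S\wedge\log(1/\epsilon)$, and apply the ergodic theorem to each $\varphi_\epsilon$. The remainder I would control by Lemma~\ref{lem:flux_identity} applied to $(\log S-\varphi_\epsilon)_+$ over $[0,T]$ in stationarity: its expectation is $T$ times an integral over $\{a<\epsilon'\}$ of order $\epsilon'^2\log(1/\epsilon')$, uniformly in $T$. Dividing by $T$ and letting $\epsilon\to0$ closes the argument, with a.s.\ convergence obtained via Birkhoff's theorem applied to the stationary marked point process of spikes.
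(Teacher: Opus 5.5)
Your proposal is correct and follows the paper's route. The paper's proof is exactly the specialization of Theorem~\ref{thm:lambda_IF_general} with $f'\equiv -1/\tauv$ and $S(I)=I/(I-\Ith)$, with integrability coming from the factor $(I-\Ith)$ against the locally bounded boundary density. Your extra steps fill in details the paper leaves implicit rather than changing the argument: the invariance of $S$ under the affine rescaling, the large-$I$ tail via $\log S\le\log 2$ on $\{I>2\Ith\}$ together with a finite firing rate, and the truncation/Birkhoff justification for the unbounded test function $\log S$.
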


\begin{proof}
Apply Theorem~\ref{thm:lambda_IF_general} with $f'(v)=-1/\tauv$ and $S(I)$ given by \eqref{eq:LIF_saltation}.
Integrability follows as in Theorem~\ref{thm:lambda_IF_general} because the flux density has the
factor $(I-\Ith)$ and $\rho_\infty(\Vth,I)$ is locally bounded near $I=\Ith$.
\end{proof}

\begin{remark}[Upper bounds in terms of boundary density]\label{rem:lambda_single_bounds}
Using $\log(1+x)\le x$ with $x=\Ith/(I-\Ith)$,
\[
0\le (I-\Ith)\log\!\Bigl(\frac{I}{I-\Ith}\Bigr)
=(I-\Ith)\log\!\Bigl(1+\frac{\Ith}{I-\Ith}\Bigr)\le \Ith.
\]
Thus
\begin{equation}\label{eq:lambda_simple_upper}
\lambda_v \le -\frac{1}{\tauv}+\Ith\int_{I>\Ith}\rho_\infty(\Vth,I)\,dI.
\end{equation}
A sharper bound splits $I\in(\Ith,\Ith+\alpha)$ and $I\ge \Ith+\alpha$:
on $(\Ith,\Ith+\alpha)$ use $\rho_\infty(\Vth,I)\le \rho_{\max}$ (Section~\ref{subsec:OU_proxy}) to get an
explicit $O(\alpha^2\log(1/\alpha))$ contribution; on $[\Ith+\alpha,\infty)$ the logarithm is uniformly bounded.
\end{remark}

\begin{remark}[Connection to finite-time spike-time sensitivity (warm-up theorem)]\label{rem:warmup_to_LIF}
Assumption~\ref{ass:IF_mean_driven} corresponds for LIF (in normalized voltage coordinates) to a
uniform transversality margin $I(t)-\Ith\ge \alpha>0$.
Then Theorem~\ref{thm:IF_spiketime} yields geometric spike-time sensitivity with factor
$S_*=\frac{\Ith+\alpha}{\alpha}$, matching the saltation factor bound in \eqref{eq:LIF_saltation}.
\end{remark}

\subsection{Feedforward networks: upstream exponents force downstream divergence}\label{subsec:lambda_feedforward_new}

We now incorporate the crucial feedforward point:
\emph{even if a downstream neuron is contractive for a frozen presynaptic spike train,}
a \emph{positive} upstream Lyapunov exponent produces exponentially diverging spike times, which
generate exponentially diverging synaptic currents downstream.

\subsubsection{A forcing estimate: spike-time divergence $\Rightarrow$ current divergence}
Consider one presynaptic neuron (layer $1$) and one postsynaptic neuron (layer $2$) coupled by a
weight $w$ with exponential synapse kernel $K(t;s)=e^{-(t-s)/\tauc}\1_{\{t\ge s\}}$.
Let $\{s_1^k\}$ and $\{\widetilde s_1^k\}$ be the spike times of the two copies of neuron $1$
(with matched indices), and define $\varepsilon_1^k:=\widetilde s_1^k-s_1^k$.

\begin{lemma}[Upstream spike-time divergence forces downstream current divergence]\label{lem:forced_current_by_STE}
Assume spike indices of neuron $1$ remain matched up to time $T$.
Let $I_2^{\mathrm{ff}}$ and $\widetilde I_2^{\mathrm{ff}}$ be the \emph{feedforward} parts of the postsynaptic
currents driven by $N_1$ and $\widetilde N_1$.
Then
\begin{equation}\label{eq:forced_current_L1}
\int_0^T \big|I_2^{\mathrm{ff}}(t)-\widetilde I_2^{\mathrm{ff}}(t)\big|\,dt
\ \le\ 2|w|\sum_{k:\,s_1^k\le T}|\varepsilon_1^k|.
\end{equation}
Consequently, by Lemma~\ref{lem:v_from_I}, the induced voltage difference satisfies
\begin{equation}\label{eq:forced_voltage_L1}
\sup_{t\le T}|v_2(t)-\widetilde v_2(t)|
\ \le\ \int_0^T |I_2(t)-\widetilde I_2(t)|\,dt
\ \lesssim\ 2|w|\sum_{k:\,s_1^k\le T}|\varepsilon_1^k| \ +\ \text{(local terms)}.
\end{equation}
\end{lemma}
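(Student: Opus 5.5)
The argument mirrors Lemma~\ref{lem:I_from_STE_L1}, but with a single presynaptic neuron, a common weight $w$, and the cruder kernel bound $1-e^{-x}\le x$. First, write the feedforward currents explicitly by variation of constants. Between spikes the current solves a linear equation with decay rate $1/\tauc$, and each presynaptic spike injects a jump $w$. Both copies share the same Brownian path and deterministic input, so after subtracting, the diffusive and drift parts cancel in the feedforward component. This gives
\[
I_2^{\mathrm{ff}}(t)-\widetilde I_2^{\mathrm{ff}}(t)
=w\sum_{k}\bigl(K(t;s_1^k)-K(t;\widetilde s_1^k)\bigr),
\]
where the sum runs over matched indices. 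Any spike $s_1^k\le T$ whose partner $\widetilde s_1^k$ falls beyond $T$, or the reverse, is still handled by the same pairing, because the indices are assumed matched up to $T$.

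Next, apply the triangle inequality inside the time integral and use the exact misalignment computation \eqref{eq:kernel_L1} of Lemma~\ref{lem:kernel_shift}. For each $k$,
\[
\int_0^T\abs{K(t;s_1^k)-K(t;\widetilde s_1^k)}\,dt\le 2\tauc\bigl(1-e^{-\abs{\varepsilon_1^k}/\tauc}\bigr)\le 2\abs{\varepsilon_1^k},
\]
using $1-e^{-x}\le x$. Summing over $k$ with $s_1^k\le T$ gives \eqref{eq:forced_current_L1}.

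The one point that needs care is a boundary effect. If $s_1^k\le T<\widetilde s_1^k$, the window integral is truncated at $T$ but is still bounded by $\abs{\varepsilon_1^k}$. If instead $\widetilde s_1^k\le T<s_1^k$, that index is not in the sum, although matching up to $T$ is meant to exclude this case. I would state explicitly that matching means both copies have the same spike count on $[0,T]$. Then either ordering is covered: every index contributing on $[0,T]$ has $s_1^k\le T$, or else both kernels vanish on $[0,T]$.

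For \eqref{eq:forced_voltage_L1}, decompose $I_2-\widetilde I_2$ into the feedforward part and the remaining ``local'' part, which comes from different initial currents or from the numerical defect. Then invoke Lemma~\ref{lem:v_from_I} on each reset-free interval of neuron~2. The main obstacle is that Lemma~\ref{lem:v_from_I} assumes no resets and equal initial conditions, while neuron~2 itself spikes. I would restrict to matched postsynaptic spike histories, treat the voltage discrepancy at the start of each interval as part of the local terms, and bound the resulting error up to the next spike. This is why the statement is written with $\lesssim$ and an unspecified local term rather than as a sharp inequality.
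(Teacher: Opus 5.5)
Your proposal is correct and follows the paper's route. You apply the misalignment-window computation of Lemma~\ref{lem:kernel_shift} to each matched presynaptic spike, bound $2\tauc(1-e^{-x/\tauc})\le 2x$, sum over $k$, and pass to voltages through Lemma~\ref{lem:v_from_I}. Your added care makes explicit what the paper's one-line proof leaves implicit: you read ``matched'' as $N_1(T)=\widetilde N_1(T)$, so no index with $\widetilde s_1^k\le T<s_1^k$ escapes the sum, and you invoke Lemma~\ref{lem:v_from_I} only on reset-free intervals of neuron~2 (on neuron~2's own spike-misalignment windows the voltage gap is $O(1)$, so the sup bound cannot hold there literally).
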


\begin{proof}
This is the single-synapse specialization of the misalignment-window estimate:
apply Lemma~\ref{lem:kernel_shift} to each shifted presynaptic spike and sum.
Then use Lemma~\ref{lem:v_from_I} to convert an $L^1_t$ current difference to a voltage sup-norm bound.
\end{proof}

\begin{remark}[Exponential upstream STE implies exponential downstream forcing]\label{rem:exp_forcing}
If $|\varepsilon_1^k|$ grows like $e^{\lambda_1 t}$ along spike index $k$ (e.g.\ $\max_{s_1^k\le T}|\varepsilon_1^k|\asymp e^{\lambda_1 T}$)
and the spike count grows at most linearly $N_1(T)\lesssim T$, then the RHS of \eqref{eq:forced_current_L1} is
$\lesssim T e^{\lambda_1 T}$.
Thus downstream differences cannot decay faster than the upstream exponent, up to polynomial factors.
\end{remark}

\subsubsection{A max-rule for layerwise Lyapunov exponents}
We now formulate a clean propagation statement in the feedforward setting.
For each layer $\ell$, define the \emph{diagonal} (frozen-input) exponent $\lambda_\ell^{\mathrm{diag}}$:
this is the top Lyapunov exponent of layer $\ell$ under common-noise coupling when the presynaptic
spike trains from layers $<\ell$ are treated as \emph{fixed external inputs shared by both copies}.
In particular, for a single neuron in layer $\ell$ with frozen input, $\lambda_\ell^{\mathrm{diag}}$
is given by the single-neuron flux/saltation formula (Theorem~\ref{thm:lambda_single_LIF}), with the
stationary boundary density induced by that frozen input.

Let $\lambda_\ell$ be the actual layer-$\ell$ exponent within the full coupled feedforward network
(so presynaptic spikes may differ).

\begin{theorem}[Conditional max rule for feedforward Lyapunov exponents]\label{thm:lambda_feedforward_max}
Assume a stationary regime in which:
\begin{enumerate}[label=(\roman*),leftmargin=2.2em]
\item all layerwise diagonal exponents $\lambda_\ell^{\mathrm{diag}}$ are well defined;
\item spike times remain transversal and spike indices can be matched between the two coupled copies;
\item each layer $\ell\ge 2$ receives at least one nonzero synaptic weight from layer $\ell-1$;
\item for every $\ell\ge 2$ and every $\varepsilon>0$, there exists
$C_{\ell,\varepsilon}>0$ such that all perturbations satisfy the upper forcing estimate
\begin{equation}\label{eq:ff_upper_forcing}
\|\delta X_\ell(t)\|
\le
C_{\ell,\varepsilon}\,e^{(\lambda_\ell^{\mathrm{diag}}+\varepsilon)t}\|\delta X_\ell(0)\|
+
C_{\ell,\varepsilon}\int_0^t
e^{(\lambda_\ell^{\mathrm{diag}}+\varepsilon)(t-s)}
\|\delta X_{\ell-1}(s)\|\,ds;
\end{equation}
\item for every $\ell\ge 2$, there exist constants $c_\ell>0$ and $q_\ell\ge 0$ such that
every perturbation with $\delta X_\ell(0)=0$ satisfies the lower forcing estimate
\begin{equation}\label{eq:ff_lower_forcing}
\sup_{0\le s\le t}\|\delta X_\ell(s)\|
\ge
c_\ell(1+t)^{-q_\ell}
\sup_{0\le s\le t}\|\delta X_{\ell-1}(s)\|,
\qquad t\ge 0.
\end{equation}
\end{enumerate}
Then for $\ell=2,\dots,L$,
\begin{equation}\label{eq:lambda_exact_max_rule_new}
\lambda_\ell = \max\{\lambda_\ell^{\mathrm{diag}},\lambda_{\ell-1}\}.
\end{equation}
In particular, if $\lambda_1>0$ then $\lambda_\ell\ge \lambda_1$ for all $\ell$.
\end{theorem}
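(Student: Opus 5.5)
The argument is by induction on $\ell$, proving the upper inequality $\lambda_\ell\le\max\{\lambda_\ell^{\mathrm{diag}},\lambda_{\ell-1}\}$ from the forcing estimate \eqref{eq:ff_upper_forcing} and the lower inequality from \eqref{eq:ff_lower_forcing} together with the definition of $\lambda_\ell^{\mathrm{diag}}$. Throughout, $\lambda_\ell$ is read as $\limsup_{t\to\infty}t^{-1}\log\|\delta X_\ell(t)\|$ for a generic infinitesimal perturbation of the coupled matched pair. Assumption (ii) guarantees that the spike indices stay matched, so that Lemma~\ref{lem:forced_current_by_STE} and Lemma~\ref{lem:v_from_I} give meaning to $\delta X_{\ell-1}$ as a forcing term. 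For $\ell=1$ there is no upstream forcing, so $\lambda_1=\lambda_1^{\mathrm{diag}}$, and this is the base case.

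\textbf{Upper bound.} Fix $\varepsilon>0$. By the induction hypothesis (and the definition of $\lambda_{\ell-1}$), there is $C'_\varepsilon$ with $\|\delta X_{\ell-1}(s)\|\le C'_\varepsilon e^{(\lambda_{\ell-1}+\varepsilon)s}$. Inserting this into \eqref{eq:ff_upper_forcing} gives
\[
\|\delta X_\ell(t)\|\le C e^{(\lambda_\ell^{\mathrm{diag}}+\varepsilon)t}+C\int_0^t e^{(\lambda_\ell^{\mathrm{diag}}+\varepsilon)(t-s)}e^{(\lambda_{\ell-1}+\varepsilon)s}\,ds\le C(1+t)\,e^{(\max\{\lambda_\ell^{\mathrm{diag}},\lambda_{\ell-1}\}+\varepsilon)t}.
\]
Taking $\limsup t^{-1}\log$ and then letting $\varepsilon\downarrow0$ yields the upper inequality. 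This step is routine.

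\textbf{Lower bound, two parts.} First, $\lambda_\ell\ge\lambda_{\ell-1}$. Take a perturbation with $\delta X_\ell(0)=0$ whose layer-$(\ell-1)$ component realizes $\lambda_{\ell-1}$; such a perturbation exists by the definition of $\lambda_{\ell-1}$ as a top exponent. Assumption (iii) ensures the forcing is nontrivial, so \eqref{eq:ff_lower_forcing} applies and gives $\sup_{s\le t}\|\delta X_\ell(s)\|\ge c_\ell(1+t)^{-q_\ell}\sup_{s\le t}\|\delta X_{\ell-1}(s)\|$. The polynomial prefactor disappears under $t^{-1}\log$. Since the exponential growth rate of a running supremum equals that of the function whenever the rate is nonnegative, we get $\lambda_\ell\ge\lambda_{\ell-1}$ in that case. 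If $\lambda_{\ell-1}<0$, I would instead compare on $\limsup$ along a sequence of times, or simply note that this inequality is only needed when $\lambda_{\ell-1}>\lambda_\ell^{\mathrm{diag}}$.

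Second, $\lambda_\ell\ge\lambda_\ell^{\mathrm{diag}}$. Consider perturbations supported in layer $\ell$ only, with $\delta X_{\ell-1}\equiv0$. Since the presynaptic spike trains then coincide in both copies, the layer-$\ell$ tangent dynamics is exactly the frozen-input dynamics, and the top exponent of the full system is at least $\lambda_\ell^{\mathrm{diag}}$. The claim $\lambda_\ell\ge\lambda_1$ when $\lambda_1>0$ then follows by iterating.

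\textbf{Main obstacle.} The hard step is the lower bound $\lambda_\ell\ge\lambda_{\ell-1}$. Converting the sup-in-time bound \eqref{eq:ff_lower_forcing} into a statement about $\lambda_\ell$, defined via a limit, requires the limits defining the exponents to exist, as assumption (i) and stationarity provide. It also requires care with the sign of the exponents. Concretely, I would argue that when $\lambda_{\ell-1}\le\lambda_\ell^{\mathrm{diag}}$ the max equals $\lambda_\ell^{\mathrm{diag}}$ and the second part of the lower bound already suffices. When $\lambda_{\ell-1}>\lambda_\ell^{\mathrm{diag}}$, the upper bound forces $\|\delta X_\ell(t)\|\lesssim e^{(\lambda_{\ell-1}+\varepsilon)t}$ while the lower forcing estimate gives a running supremum of order $e^{(\lambda_{\ell-1}-\varepsilon)t}$. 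If the limit exists, these bracket the exponent and identify it as $\lambda_{\ell-1}$.
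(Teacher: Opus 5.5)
Your argument follows the paper's proof step for step. The upper bound comes from inserting $\|\delta X_{\ell-1}(s)\|\le C'_\varepsilon e^{(\lambda_{\ell-1}+\varepsilon)s}$ into \eqref{eq:ff_upper_forcing}. The bound $\lambda_\ell\ge\lambda_\ell^{\mathrm{diag}}$ comes from perturbations supported in layer $\ell$, and $\lambda_\ell\ge\lambda_{\ell-1}$ comes from taking $\frac1t\log$ in \eqref{eq:ff_lower_forcing}. The induction scaffolding is harmless but unnecessary: each step is a one-layer relation, and induction is only needed for the final ``in particular'' clause. The one place you go beyond the paper is the sign discussion in your last paragraph. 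There you have found a real weakness that the paper's proof passes over, since it simply takes $\frac1t\log$ in \eqref{eq:ff_lower_forcing}. Your proposed repair, however, does not work.

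The estimate \eqref{eq:ff_lower_forcing} compares running suprema. If $f\ge0$ is locally bounded, not identically zero, and $\limsup_{t\to\infty}t^{-1}\log f(t)=\mu$, then $\sup_{s\le t}f(s)$ has exponential rate $\max\{\mu,0\}$; for $\mu<0$ the running supremum is nondecreasing, bounded and eventually positive. So \eqref{eq:ff_lower_forcing} yields only $\max\{\lambda_\ell,0\}\ge\max\{\lambda_{\ell-1},0\}$. This gives $\lambda_\ell\ge\lambda_{\ell-1}$ only when $\lambda_{\ell-1}>0$. At $\lambda_{\ell-1}=0$ it is already vacuous, so your ``nonnegative'' should be ``positive''.

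In the remaining case $\lambda_\ell^{\mathrm{diag}}<\lambda_{\ell-1}\le0$ your bracketing breaks down. The lower bound $\sup_{s\le t}\|\delta X_\ell(s)\|\gtrsim e^{(\lambda_{\ell-1}-\varepsilon)t}$ then holds for every perturbation that is ever nonzero, and it says nothing about $\|\delta X_\ell(t)\|$ itself. Only $\lambda_\ell\le\lambda_{\ell-1}$ survives, and neither existence of the limit nor passing to a sequence of times repairs this.

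In fact the two forcing estimates cannot decide this case. Take $\lambda_\ell^{\mathrm{diag}}=-3$ and the profiles $\|\delta X_{\ell-1}(t)\|=e^{-t}$, $\|\delta X_\ell(t)\|=te^{-2t}$. These satisfy \eqref{eq:ff_upper_forcing} with $C_{\ell,\varepsilon}=2$. They satisfy \eqref{eq:ff_lower_forcing} for $t\ge\tfrac12$ with $c_\ell=1/(2e)$ and $q_\ell=0$; that estimate has to be read away from $t=0$ anyway, since its left side vanishes there. Yet they decay at rate $-2\neq\max\{-3,-1\}$.

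What (i)--(v) actually deliver is the max rule when $\lambda_{\ell-1}\le\lambda_\ell^{\mathrm{diag}}$ or $\lambda_{\ell-1}>0$. That is enough for the ``in particular'' clause, by induction from $\lambda_1>0$. The case $\lambda_\ell^{\mathrm{diag}}<\lambda_{\ell-1}\le0$ needs a pointwise lower forcing bound instead. One example is $\|\delta X_\ell(t_n)\|\ge c(1+t_n)^{-q}\|\delta X_{\ell-1}(t_n)\|$ along times $t_n\to\infty$ realizing the $\limsup$ that defines $\lambda_{\ell-1}$.
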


\begin{proof}
Fix $\ell\ge2$.

\emph{Lower bound by the diagonal exponent.}
Choose initial perturbations supported only in layer $\ell$.
Then layer $\ell-1$ does not force layer $\ell$, so the growth rate is exactly the diagonal one.
Hence
\[
\lambda_\ell\ge \lambda_\ell^{\mathrm{diag}}.
\]

\emph{Lower bound by the upstream exponent.}
Now choose initial perturbations with $\delta X_\ell(0)=0$ and nontrivial upstream perturbation.
By \eqref{eq:ff_lower_forcing},
\[
\sup_{0\le s\le t}\|\delta X_\ell(s)\|
\ge
c_\ell(1+t)^{-q_\ell}
\sup_{0\le s\le t}\|\delta X_{\ell-1}(s)\|.
\]
Taking $\frac1t\log$ and letting $t\to\infty$ gives
\[
\lambda_\ell\ge \lambda_{\ell-1},
\]
since the polynomial prefactor $(1+t)^{-q_\ell}$ does not affect the exponential rate.

Combining the two lower bounds yields
\[
\lambda_\ell\ge \max\{\lambda_\ell^{\mathrm{diag}},\lambda_{\ell-1}\}.
\]

\emph{Upper bound.}
Fix $\varepsilon>0$.
By the definition of $\lambda_{\ell-1}$, there exists $C_\varepsilon'>0$ such that
\[
\|\delta X_{\ell-1}(s)\|\le C_\varepsilon' e^{(\lambda_{\ell-1}+\varepsilon)s},
\qquad s\ge0.
\]
Substituting this into \eqref{eq:ff_upper_forcing} yields
\begin{align*}
\|\delta X_\ell(t)\|
&\le
C_{\ell,\varepsilon}e^{(\lambda_\ell^{\mathrm{diag}}+\varepsilon)t}\|\delta X_\ell(0)\|
+
C_{\ell,\varepsilon}C_\varepsilon'
\int_0^t
e^{(\lambda_\ell^{\mathrm{diag}}+\varepsilon)(t-s)}
e^{(\lambda_{\ell-1}+\varepsilon)s}\,ds \\
&\le
\widetilde C_{\ell,\varepsilon}(1+t)\,
e^{(\max\{\lambda_\ell^{\mathrm{diag}},\lambda_{\ell-1}\}+2\varepsilon)t}.
\end{align*}
Therefore
\[
\lambda_\ell\le \max\{\lambda_\ell^{\mathrm{diag}},\lambda_{\ell-1}\}+2\varepsilon.
\]
Since $\varepsilon>0$ is arbitrary,
\[
\lambda_\ell\le \max\{\lambda_\ell^{\mathrm{diag}},\lambda_{\ell-1}\}.
\]
Together with the lower bound, this proves \eqref{eq:lambda_exact_max_rule_new}.
\end{proof}

\begin{remark}[Where the warm-up interlacing can help]\label{rem:warmup_interlacing_help}
For 1D IF driven by a \emph{common} input and with $f'\le 0$, Lemma~\ref{lem:IF_interlacing} implies
that spike-count mismatch is uniformly bounded by $1$.
This supports the matched-indices assumption above for single neurons and in regimes where effective
order-preservation holds.
\end{remark}

\subsection{Implications for strong error accumulation in time}\label{subsec:lambda_vs_strong_new}

On the good event $G_L(h,T;\alpha)$ in the strong analysis (Section~\ref{sec:strong}), spike indices are matched and crossings are
transversal, so error propagation is governed by the local sensitivity of spike times and state variables.
In this regime, the hybrid/common-noise Lyapunov exponent $\lambda_\ell$ derived above is the natural stability diagnostic:
if $\lambda_\ell<0$, small perturbations contract on average and the good-set strong error will shrink on time, whereas the current discussion in Section~\ref{sec:strong} treat strong error as accumulating on time. This will affect the strong error estimate when $T\to\infty$ (see more in Discussion).

When $\lambda_\ell<0$, many trajectories that temporarily enter a mismatch configuration are expected to \emph{recover}.
Intuitively, contraction reduces the probability that a one-step mismatch triggers a long cascade of subsequent mismatches:
after a transient discrepancy, the common-noise flow pulls the two trajectories back together, making them ``good again''.
In 1D integrate-and-fire with a common input and $f'\le0$, interlacing (Lemma~\ref{lem:IF_interlacing}) already implies that spike-count
mismatch is at most one; in such order-preserving settings, recovery after a transient mismatch is plausible unless the dynamics enters a
subthreshold trapping regime (Remark~\ref{rem:IF_permanent_loss}).
For networks, the same idea suggests that negative Lyapunov behavior should reduce the expected \emph{time spent} in mismatch episodes, and
hence reduce the contribution of bad trajectories in long-time strong error.


\section{Conditional recurrent extensions and mechanisms: strong error}\label{sec:recurrent_strong}

\subsection{Intuition: recurrence creates an effective depth through feedback loops}
A recurrent spiking network can be unfolded in time. For a finite horizon, it behaves as a
feedforward system until the influence of a spike traverses a directed synaptic loop and returns to
a neuron already affected by that spike.
If the shortest feedback loop has synaptic length $L^\ast$ (number of synapses on the loop), then a
single spike-time perturbation can be amplified over an \emph{effective depth} comparable to $L^\ast$
before closing the loop.
In regimes with spike cascades (a nontrivial fraction of neurons fire within a short time window),
the effective depth over $[0,T]$ can be much larger than the anatomical loop length, because many
events occur in causal chains.
This is precisely the mechanism by which strong-error bounds based on STE propagations can deteriorate rapidly
in recurrent networks.

\subsection{Model: recurrent current-based LIF network}
We consider $N_{\mathrm r}$ neurons indexed by $i\in\{1,\dots,N_{\mathrm r}\}$ with recurrent weight matrix
$W\in\R^{N_{\mathrm r}\times N_{\mathrm r}}$.
The continuous-time dynamics is
\begin{align}
dv_i(t) &=
\Bigl(-\tfrac{1}{\tauv}(v_i(t)-\Vr)+I_i(t)\Bigr)\,dt,\qquad v_i(t)<\Vth,
\\
dI_i(t) &=
-\tfrac{1}{\tauc}\bigl(I_i(t)-b_i(t)\bigr)\,dt
+\tfrac{\sigma_i}{\tauc}\,dB_i(t)
+\sum_{j=1}^{N_{\mathrm r}} W_{ij}\,dN_j(t),
\end{align}
with instantaneous reset $v_i((s_i^k)^+)=\Vr$ at each spike time $s_i^k$ defined by upward threshold
crossing. The threshold current is $\Ith=(\Vth-\Vr)/\tauv$ and the crossing speed is
$A_i^k=I_i((s_i^k)^-)-\Ith$.

The numerical method is Euler--Maruyama for $I$ and forward Euler for $v$, with grid-based spike
detection and reset (as in Section~\ref{sec:model}).

\subsection{Scenario 1: deterministic mean-driven recurrent hybrid system ($\sigma\equiv 0$)}
In this subsection we assume $\sigma_i\equiv 0$ for all $i$, so the network is a deterministic
hybrid system.

\subsubsection{Continuous variational flow}
Let $x=(v,I)\in\R^{2N_{\mathrm r}}$, where $v,I\in\R^{N_{\mathrm r}}$.
Between spikes, the Jacobian of the ODE is the constant block matrix
\[
A=
\begin{pmatrix}
-\tauv^{-1}I_{N_{\mathrm r}} & I_{N_{\mathrm r}}\\
0 & -\tauc^{-1}I_{N_{\mathrm r}}
\end{pmatrix},
\qquad
F(\Delta):=e^{A\Delta} = \begin{pmatrix}
e^{-\Delta/\tauv}I_{N_{\mathrm r}} & B(\Delta)\,I_{N_{\mathrm r}}\\
0 & e^{-\Delta/\tauc}I_{N_{\mathrm r}}
\end{pmatrix},
\]
where
\[
B(\Delta)=\int_0^\Delta e^{-(\Delta-s)/\tauv}e^{-s/\tauc}\,ds
=
\begin{cases}
\displaystyle \frac{\tauv\tauc}{\tauc-\tauv}\Big(e^{-\Delta/\tauc}-e^{-\Delta/\tauv}\Big), & \tauv\neq\tauc,
\\[2mm]
\displaystyle \Delta e^{-\Delta/\tauv}, & \tauv=\tauc.
\end{cases}
\]

\subsubsection{Saltation matrix at a spike event}
Let neuron $p$ spike at time $s$.
The event surface is $g_p(x)=v_p-\Vth=0$ with normal $n_p=e_{v_p}$.
Denote the pre-spike speed $A_p=I_p(s^-)-\Ith$.
The reset map sets $v_p\mapsto \Vr$ and adds the synaptic column jump $I\mapsto I+W_{:p}$.
The saltation matrix $S_p$ mapping $\delta x^-$ to $\delta x^+$ at fixed global time is
\begin{equation}\label{eq:saltation_recurrent}
S_p
=
\Big(I_{2N_{\mathrm r}}-e_{v_p}e_{v_p}^\top\Big)
+
\frac{1}{A_p}\,u_p\,e_{v_p}^\top,
\qquad
u_p=
\binom{W_{:p}+I_p^- e_p}{-\tauc^{-1}W_{:p}},
\end{equation}
where $I_p^-=I_p(s^-)$ and $e_p$ is the unit vector in the $v$-block at index $p$.
Equivalently, acting on perturbations $(\delta v^-,\delta I^-)$,
\begin{equation}\label{eq:saltation_action_recurrent}
\delta v^+ = \delta v^- - e_p\,\delta v_p^- + \frac{\delta v_p^-}{A_p}\Big(W_{:p}+I_p^- e_p\Big),
\qquad
\delta I^+ = \delta I^- - \frac{\delta v_p^-}{\tauc A_p}W_{:p}.
\end{equation}

\subsubsection{Hybrid fundamental matrix and a Lyapunov upper bound}
Let $0<s_1<\cdots<s_M\le T$ be the spike-event times (across all neurons) and let $p_m$ be the neuron
spiking at $s_m$. The hybrid fundamental matrix is
\begin{equation}\label{eq:Phi_hyb_product}
\Phi_{\mathrm{hyb}}(T,0)
=
F(T-s_M)\,S_{p_M}\,F(s_M-s_{M-1})\cdots S_{p_1}\,F(s_1).
\end{equation}

\begin{assumption}[Deterministic transversality]\label{ass:det_transv}
There exists $\alpha_\ast>0$ such that at every spike event, $A_p\ge \alpha_\ast$.
\end{assumption}

\begin{theorem}[Explicit hybrid fundamental matrix and an upper bound on $\lambda_{\mathrm{hyb}}$]\label{thm:lambda_hyb_upper}
Assume Assumption~\ref{ass:det_transv}.
Let $N_{\mathrm{tot}}(T)=M$ be the total number of spike events in $[0,T]$ and
$r_{\mathrm{tot}}(T)=N_{\mathrm{tot}}(T)/T$.
Let $I_{\max}(T)=\max_i\sup_{t\le T}|I_i(t)|$ and $\|W\|_{\max}=\max_{i,j}|W_{ij}|$.
Then the saltation matrices satisfy the uniform bound (in the induced $\|\cdot\|_\infty$ matrix norm)
\[
\|S_p\|_\infty \le \kappa_S(T),
\qquad
\kappa_S(T):=
\max\Big\{
\frac{I_{\max}(T)+\|W\|_{\max}}{\alpha_\ast},\
1+\frac{\|W\|_{\max}}{\alpha_\ast},\
1+\frac{\|W\|_{\max}}{\tauc\alpha_\ast}
\Big\}.
\]
Moreover, since the continuous variational flow is exponentially stable, there exists a Lyapunov norm
$\|\cdot\|_\ast$ such that $\|F(\Delta)\|_\ast\le e^{-\Delta/\tau_{\max}}$ with
$\tau_{\max}=\max\{\tauv,\tauc\}$.
In that norm,
\[
\|\Phi_{\mathrm{hyb}}(T,0)\|_\ast \le e^{-T/\tau_{\max}}\kappa_S(T)^{N_{\mathrm{tot}}(T)}.
\]
Consequently the top hybrid Lyapunov exponent satisfies
\[
\lambda_{\mathrm{hyb}}
:=
\limsup_{T\to\infty}\frac{1}{T}\log\|\Phi_{\mathrm{hyb}}(T,0)\|_\ast
\ \le\
-\frac{1}{\tau_{\max}} + \bar r_{\mathrm{tot}}\log \kappa_S,
\]
where $\bar r_{\mathrm{tot}}=\limsup_{T\to\infty}r_{\mathrm{tot}}(T)$ and
$\kappa_S=\limsup_{T\to\infty}\kappa_S(T)$.
\end{theorem}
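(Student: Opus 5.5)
The proof has three parts: a bound on the saltation matrices $S_p$, a contractive norm for the between-spike flow $F(\Delta)$, and submultiplicativity applied to the product \eqref{eq:Phi_hyb_product}, followed by a limsup.

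\emph{Saltation bound.} I would read off the row sums of $S_p$ from \eqref{eq:saltation_action_recurrent}, working in the induced $\|\cdot\|_\infty$ norm. There are three kinds of rows.
\begin{itemize}
\item \emph{The $v_p$ row.} The old entry $\delta v_p^-$ is removed and replaced by $\delta v_p^-(W_{pp}+I_p^-)/A_p$. Using $|I_p^-|\le I_{\max}(T)$, $|W_{pp}|\le\|W\|_{\max}$ and $A_p\ge\alpha_\ast$ from Assumption~\ref{ass:det_transv}, this row sum is at most $(I_{\max}+\|W\|_{\max})/\alpha_\ast$.
\item \emph{A $v_i$ row with $i\neq p$.} It keeps the diagonal entry $1$ and gains the entry $W_{ip}/A_p$ in column $v_p$. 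Its row sum is at most $1+\|W\|_{\max}/\alpha_\ast$.
\item \emph{An $I_i$ row.} It keeps the diagonal entry $1$ and gains the entry $-W_{ip}/(\tauc A_p)$. Its row sum is at most $1+\|W\|_{\max}/(\tauc\alpha_\ast)$.
\end{itemize}
The maximum of the three is exactly $\kappa_S(T)$.

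\emph{Contractive norm.} $A$ is block upper triangular with eigenvalues $-1/\tauv$ and $-1/\tauc$. I would use the weighted norm $\|(\delta v,\delta I)\|_\ast:=\max\{\|\delta v\|_\infty,\,c\|\delta I\|_\infty\}$ with $c$ large. From the explicit $F(\Delta)$ and the bound $B(\Delta)\le \min\{\tauv,\Delta\}e^{-\Delta/\tau_{\max}}$, a suitable $c$ gives $\|F(\Delta)\|_\ast\le e^{-\Delta/\tau_{\max}}$.

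\emph{Main obstacle.} When $\tauv=\tauc$, $A$ has a nontrivial Jordan block and $B(\Delta)=\Delta e^{-\Delta/\tauv}$. An exact bound $e^{-\Delta/\tau_{\max}}$ uniform in $\Delta$ then fails in any fixed norm. The first fix I would try is to replace $\tau_{\max}$ by $\tau_{\max}+\epsilon$, or to absorb the polynomial factor into an $e^{\epsilon\Delta}$, which does not affect the exponent. In the distinct-time-constant case I would choose $c$ from the $\Delta$-uniform estimate $\sup_\Delta B(\Delta)e^{\Delta/\tau_{\max}}<\infty$.

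A second, more serious difficulty is that $\kappa_S$ was computed in $\|\cdot\|_\infty$, not in $\|\cdot\|_\ast$. I see two ways to handle this. The first is to recompute the row sums in the weighted norm, so that the $I$-rows carry factors involving $c$ and off-block entries scale by $c^{\pm1}$. The second is to use equivalence of norms with constant $C_c$, at the cost of $C_c^{N_{\mathrm{tot}}}$. That would change $\kappa_S$ unless the weights are chosen so the saltation bounds are unchanged. My plan is to state the product bound with $\kappa_S$ interpreted as the bound of $S_p$ in the $\ast$-norm, which is the same up to harmless constants. Since the norm choice is free in the statement, I would not spend effort on sharpening this.

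\emph{Product and exponent.} By submultiplicativity,
\[
\|\Phi_{\mathrm{hyb}}(T,0)\|_\ast\le\prod_m\|F(\Delta_m)\|_\ast\prod_m\|S_{p_m}\|_\ast\le e^{-\sum_m\Delta_m/\tau_{\max}}\kappa_S(T)^{M},
\]
and $\sum_m\Delta_m=T$. Taking $\frac1T\log$ gives
\[
\frac1T\log\|\Phi_{\mathrm{hyb}}(T,0)\|_\ast\le -\frac1{\tau_{\max}}+r_{\mathrm{tot}}(T)\log\kappa_S(T).
\]
Note that $\kappa_S\ge1$, so $\log\kappa_S(T)\ge 0$. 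Hence the limsup of this product is at most $\bar r_{\mathrm{tot}}\log\kappa_S$, which gives the stated upper bound on $\lambda_{\mathrm{hyb}}$.
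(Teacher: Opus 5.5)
\textbf{Verdict: there is a genuine gap, at the step you flag yourself and then set aside.}

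Your row sums for $S_p$ are correct. Your three-step route (row sums, a norm adapted to $F$, submultiplicativity) is also exactly the argument sketched inside the statement; the paper gives no further proof. But the product estimate needs $\|S_p\|_\ast\le\kappa_S(T)$ in the \emph{same} norm in which $\|F(\Delta)\|_\ast\le e^{-\Delta/\tau_{\max}}$. You have the saltation bound only in $\|\cdot\|_\infty$.

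Calling the discrepancy ``harmless constants'' does not work. A factor $C$ lost at each event becomes $C^{N_{\mathrm{tot}}(T)}$. It shifts the exponent bound by $\bar r_{\mathrm{tot}}\log C$, which is exactly the quantity being estimated.

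The loss is not even a fixed constant. Take $\|x\|_\ast=\|Dx\|_\infty$ with $D=\mathrm{diag}(I_{N_{\mathrm r}},c\,I_{N_{\mathrm r}})$. The $v$-rows of $DF(\Delta)D^{-1}$ sum to $e^{-\Delta/\tauv}+B(\Delta)/c$. The $I$-rows of $DS_pD^{-1}$ sum to $1+c\,|W_{ip}|/(\tauc A_p)$. So the weight that tames $B$ inflates the voltage-to-current saltation entries by the same factor $c$. For $\tauv<\tauc$, contraction forces $c\ge \tauv\tauc/(\tauc-\tauv)$. Your norm therefore only gives $\kappa_S(T)$ with its third entry replaced by $1+c\|W\|_{\max}/(\tauc\alpha_\ast)\ge 1+\frac{\tauv}{\tauc-\tauv}\frac{\|W\|_{\max}}{\alpha_\ast}$. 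This blows up as $\tauc\downarrow\tauv$, while $\kappa_S(T)$ does not. Your $\epsilon$-fix for the Jordan case pushes $c\to\infty$ and makes this worse.

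The reason is dimensional. The entry $-W_{ip}/(\tauc A_p)$ maps a voltage to a current and has units of inverse time. So $\|\cdot\|_\infty$ row sums, and the third term of $\kappa_S(T)$, change with the time unit, whereas a norm adapted to $F$ must weight $\delta I$ by a time scale like $\tauv\tauc/|\tauc-\tauv|$. What your argument actually proves is the exponent bound with $\kappa_S$ replaced by $\sup_p\|S_p\|_\ast$ for an explicit $\ast$-norm. That is a different, provable statement. Proving the stated one requires a norm in which both estimates hold at once, and neither you nor the statement's sketch supplies it.

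Your contraction claim also fails more widely than you say. You are right that for $\tauv=\tauc$ no norm gives the exact rate, so the statement itself needs an $\epsilon$-loss there. But the weighted max-norm works only for $\tauv<\tauc$. If $\tauv>\tauc$ (the regime of the paper's simulations), then $\tau_{\max}=\tauv$, and the $v$-row sum $e^{-\Delta/\tauv}+B(\Delta)/c$ exceeds $e^{-\Delta/\tau_{\max}}$ for every $c$ and every $\Delta>0$.

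In that case you would need eigen-coordinates $(\delta v-x\,\delta I,\ \delta I)$ with $x=\tauv\tauc/(\tauc-\tauv)$, in which $F(\Delta)$ is diagonal. In those coordinates, however, the saltation entries for $i\neq p$ become $\frac{W_{ip}}{A_p}\frac{\tauc}{\tauc-\tauv}$, plus cross terms proportional to $x$ coupling to $\delta I_p$. So the compatibility problem returns in another form.

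Finally, $B(\Delta)\le \tauv e^{-\Delta/\tau_{\max}}$ is false in general, for instance whenever $\tauv<\tauc$. The sharp uniform bound is $B(\Delta)e^{\Delta/\tau_{\max}}\le \tauv\tauc/|\tauc-\tauv|$.
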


\begin{remark}[Connection to cycle-gain amplification]
The saltation structure \eqref{eq:saltation_action_recurrent} shows that each spike of neuron $p$
injects its voltage perturbation $\delta v_p^-$ into downstream currents and voltages through the
synaptic column $W_{:p}$ with amplification proportional to $1/A_p$.
This is the mechanism underlying the cycle-gain criterion: if a directed cycle $C$ is repeatedly
realized by the dynamics and the product of per-edge gains exceeds $1$, then the spike-time
Lyapunov exponent is forced positive.
\end{remark}

\subsubsection{Mean-driven deterministic numerics: error growth controlled by $\lambda_{\mathrm{hyb}}$}
We now make explicit how the deterministic dynamical stability rate (hybrid Lyapunov exponent)
controls deterministic discretization error accumulation in a mean-driven regime.

\begin{assumption}[No multiple spikes per neuron per time step]\label{ass:no_multispike_step}
There exists $h_0>0$ such that for all $h\in(0,h_0)$ and for all neurons $i$, the exact trajectory
has at most one spike in any interval $(t_m,t_{m+1}]$ of length $h$ up to time $T$.
\end{assumption}

\begin{lemma}[Event-to-event deterministic defect]\label{lem:det_event_recursion}
Assume $\sigma\equiv 0$, Assumption~\ref{ass:det_transv}, and
Assumption~\ref{ass:no_multispike_step}.  Assume also that the exact trajectory remains in a
bounded set on $[0,T]$.
Then there exist $h_1>0$ and $C_{\mathrm{ev}}(T)>0$ such that for all $h\in(0,h_1)$:
\begin{enumerate}[label=(\roman*),leftmargin=2.2em]
\item the exact and numerical spike sequences can be matched in order up to time $T$;
\item if $0=s_0<s_1<\cdots<s_M\le T$ are the exact spike-event times and $p_m$ is the neuron
spiking at $s_m$, then the post-event errors
\[
e_m^+ := (x_m^h)^+ - x_m^+,
\qquad
x_m^+ := x(s_m^+),
\]
satisfy the recursion
\begin{equation}\label{eq:det_event_recursion}
e_m^+
=
S_{p_m}F(s_m-s_{m-1})e_{m-1}^+
+
\eta_m,
\qquad
\|\eta_m\|_\ast\le C_{\mathrm{ev}}(T)\,h .
\end{equation}
\end{enumerate}
\end{lemma}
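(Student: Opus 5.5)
We argue by induction over the exact spike events, using transversality to localize each spike in a single grid step and to control how far the numerical spike can drift from the exact one. For (i), we first note that between events the exact and Euler flows are smooth linear ODEs with bounded coefficients on the bounded set containing the trajectory. Forward Euler therefore has a global error $O(h)$ on any event-free interval, given the entrance error. Suppose by induction that at the post-event time $s_{m-1}^+$ the error satisfies $\|e_{m-1}^+\|_\ast\le C_{m-1}h$, with $C_{m-1}$ bounded in terms of $T$, $\kappa_S(T)$ and $M\le N_{\mathrm{tot}}(T)$. The latter is finite because the trajectory is bounded and transversal, which gives a spike-count bound as in Proposition~\ref{prop:IF_spikecount_upper}.

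On $(s_{m-1},s_m)$ the error propagates as $F(\cdot)e_{m-1}^+ + O(h)$. Under Assumption~\ref{ass:det_transv}, $\dot v_{p_m}\ge\alpha_\ast/2$ in an $O(h)$-neighbourhood of $s_m$, so the numerical voltage crosses $\Vth$ within a window of length $O(h/\alpha_\ast)+h$ around $s_m$. Assumption~\ref{ass:no_multispike_step}, together with a minimal inter-event gap (which we would extract from boundedness and transversality, shrinking $h_1$), ensures that no other neuron fires in that window. In this way spikes are matched in order, which gives (i). We would also use the interlacing idea of Lemma~\ref{lem:IF_interlacing} neuron-by-neuron to exclude spurious extra spikes.

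For (ii), we compare at the common time $s_m^+$ and decompose the numerical event into three parts: flow to $\tilde s$ (the interpolated threshold time), reset/jump, and a grid lag $\hat s-\tilde s\in[0,h]$. The linearization of the exact hybrid map about $x(s_m^-)$ is $S_{p_m}$ applied to the pre-event perturbation $F(s_m-s_{m-1})e_{m-1}^+$. This follows from the standard saltation computation behind \eqref{eq:saltation_recurrent}, which combines implicit differentiation of the hitting time, $\delta s=-\delta v_{p}/A_p$, with the jump $I\mapsto I+W_{:p}$. The remainder $\eta_m$ then collects three contributions. The first is the Taylor remainder of the hitting-time and flow maps, which is quadratic in $\|e\|=O(h)$ with constant $\lesssim \alpha_\ast^{-3}$, hence $O(h^2)$. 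The second is the one-step Euler local truncation, which is $O(h)$ after summing over the event-free interval. The third is the grid-registration lag: the numerical reset happens at $t_{m+1}$ rather than at $\tilde s$, which shifts the post-reset state by at most $h\,\sup\|\dot x\|$, bounded on the bounded set.

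We expect the main obstacle to be the bookkeeping of the grid lag. The error it induces has size $O(h)$, but it must be counted only once per event, not amplified by $1/A_p$ on top of the $S_{p_m}$ factor already in the recursion. We would handle this by defining the numerical pre-event state at $\tilde s$ via the continuous interpolation, applying the exact saltation there, and absorbing the lag into $\eta_m$ directly at the post-reset level. Finally, we bound the nonlinear remainders uniformly through $\kappa_S(T)$ and choose $h_1$ so small that $C_{m}h$ stays within the transversality neighbourhood for all $m\le N_{\mathrm{tot}}(T)$, which closes the induction.
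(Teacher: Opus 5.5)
Your proposal follows the same route as the paper's proof. The paper uses an $O(h)$ global Euler error on each inter-event interval, transversality $A_p\ge\alpha_\ast$ together with Assumption~\ref{ass:no_multispike_step} to pair each exact spike with one nearby numerical spike, linearization of the reset through $S_{p_m}$, and an $O(h)$ reset/grid-lag defect absorbed into $\eta_m$. Your explicit induction keeping $\|e_{m-1}^+\|_\ast=O(h)$, so that the hitting-time Taylor remainder is $O(h^2)$, makes precise what the paper leaves implicit.

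One aside should be dropped. Lemma~\ref{lem:IF_interlacing} needs two exact trajectories with the same input, so it cannot rule out spurious Euler spikes. That, and the cross-neuron minimal gap, come instead from the fixed exact trajectory: it has finitely many strictly ordered events and a positive margin $\Vth-v_p$ away from its transversal crossings, and $h_1$ is chosen accordingly.
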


\begin{proof}
Between two consecutive exact spike times, the exact dynamics is a linear inhomogeneous ODE with
bounded coefficients on a compact set.  Therefore the deterministic Euler scheme on the underlying
grid approximates the exact inter-event flow with uniform global error $O(h)$ on each interval
$[s_{m-1},s_m)$:
\[
\|x^h(s_m^-)-F(s_m-s_{m-1})x_{m-1}^+ - b_m\|_\ast \le C(T)\,h,
\]
where $b_m$ denotes the exact inhomogeneous contribution on that interval.

By Assumption~\ref{ass:det_transv}, each exact crossing is transversal with speed bounded below by
$\alpha_\ast$.  Together with Assumption~\ref{ass:no_multispike_step} and the deterministic $O(h)$
inter-event error above, this implies that for $h$ small enough the numerical grid-based detector
registers exactly one corresponding spike in the same step, and hence the event ordering is preserved.

At the spike time $s_m$, the reset map linearizes through the saltation matrix $S_{p_m}$.
Since the numerical event time differs from $s_m$ by at most $O(h)$, the reset mismatch contributes
an additional defect $r_m$ with $\|r_m\|_\ast\le C(T)\,h$.
Combining the inter-event Euler defect and the reset defect gives
\[
e_m^+
=
S_{p_m}F(s_m-s_{m-1})e_{m-1}^+
+
\eta_m,
\qquad
\|\eta_m\|_\ast\le C_{\mathrm{ev}}(T)\,h,
\]
after absorbing constants.
\end{proof}

\begin{theorem}[Deterministic mean-driven recurrent network: numerical error grows at rate $\lambda_{\mathrm{hyb}}$]\label{thm:det_error_lambda}
Assume $\sigma\equiv 0$, Assumption~\ref{ass:det_transv}, and Assumption~\ref{ass:no_multispike_step}.
Let $x(t)=(v(t),I(t))$ be the exact hybrid trajectory and let $x^h(t)$ be the time-driven Euler
trajectory with grid-based reset, driven by the same initial data $x^h(0)=x(0)$.
Assume the exact trajectory remains in a bounded set on $[0,T]$ (so all coefficients and event gains
are uniformly bounded).

Then there exists $h_1\in(0,h_0]$ such that for all $h\in(0,h_1)$:
\begin{enumerate}[label=(\roman*),leftmargin=2.2em]
\item the exact and numerical spike sequences have the same event ordering (no missed/extra spikes);
\item for any $\varepsilon>0$ there exists a constant $C_\varepsilon(T)$ such that the global state
error satisfies
\begin{equation}\label{eq:det_global_error_lambda}
\sup_{t_m\le T}\|x^h(t_m)-x(t_m)\|_\ast
\ \le\
C_\varepsilon(T)\,e^{(\lambda_{\mathrm{hyb}}+\varepsilon)T}\,h;
\end{equation}
\item in particular, the spike-time errors satisfy
\begin{equation}\label{eq:det_STE_lambda}
\max_{i}\max_{k:s_i^k\le T}\abs{\hat s_i^k-s_i^k}
\ \le\
\frac{1}{\alpha_\ast}\,C_\varepsilon(T)\,e^{(\lambda_{\mathrm{hyb}}+\varepsilon)T}\,h.
\end{equation}
\end{enumerate}
Consequently, if $\lambda_{\mathrm{hyb}}<0$ then the deterministic scheme is uniformly accurate in
time ($O(h)$ as $T\to\infty$), whereas if $\lambda_{\mathrm{hyb}}>0$ deterministic errors can grow at
most exponentially at rate $\lambda_{\mathrm{hyb}}$.
\end{theorem}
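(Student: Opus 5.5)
Part (i) is already contained in Lemma~\ref{lem:det_event_recursion}: for $h<h_1$ the exact and numerical spike sequences are matched in order, and the post-event errors satisfy the linear inhomogeneous recursion \eqref{eq:det_event_recursion}. The plan is therefore to solve this recursion explicitly and bound the resulting sum with the hybrid fundamental matrix \eqref{eq:Phi_hyb_product}.

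\textbf{Step 1: unroll the recursion.} Unrolling with $e_0^+=0$ gives
\[
e_M^+=\sum_{m=1}^{M}\Phi_{\mathrm{hyb}}(s_M^+,s_m^+)\,\eta_m,
\]
where $\Phi_{\mathrm{hyb}}(s_M^+,s_m^+)=S_{p_M}F(s_M-s_{M-1})\cdots S_{p_{m+1}}F(s_{m+1}-s_m)$ is the segment of the product \eqref{eq:Phi_hyb_product} between events $m$ and $M$. A grid time $t_n\in[s_M,s_{M+1})$ carries one further free-flow factor $F(t_n-s_M)$ plus an $O(h)$ inter-event Euler defect. That defect is absorbed into a final $\eta$ term.

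\textbf{Step 2: bound the fundamental matrix.} From the definition of $\lambda_{\mathrm{hyb}}$ as a $\limsup$, for each $\varepsilon>0$ there is $C'_\varepsilon$ with $\|\Phi_{\mathrm{hyb}}(t,s)\|_\ast\le C'_\varepsilon e^{(\lambda_{\mathrm{hyb}}+\varepsilon)(t-s)}$ for $0\le s\le t\le T$.

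This step is the main obstacle, because the definition only controls products started at $0$, not at an arbitrary intermediate event $s_m$. To get a uniform two-parameter bound on the finite horizon I would proceed as follows.
\begin{itemize}[leftmargin=2.2em]
\item First try to read $\lambda_{\mathrm{hyb}}$ as the uniform growth rate of the cocycle, as for a Bohl exponent.
\item Failing that, fall back on the explicit bound of Theorem~\ref{thm:lambda_hyb_upper}: $\|\Phi_{\mathrm{hyb}}(t,s)\|_\ast\le e^{-(t-s)/\tau_{\max}}\kappa_S^{N(s,t)}$.
\item Combine it with $N(s,t)\le \bar r_{\mathrm{tot}}(t-s)+C$. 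This holds because transversality gives a minimum inter-spike time per neuron, which yields a uniform local rate.
\end{itemize}
The fallback replaces $\lambda_{\mathrm{hyb}}$ by its upper bound. The honest statement is therefore with the uniform exponent, and I would note that all $T$-dependent constants are absorbed into $C_\varepsilon(T)$.

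\textbf{Step 3: sum the contributions.} Using $\|\eta_m\|_\ast\le C_{\mathrm{ev}}h$,
\[
\|e_M^+\|_\ast\le C'_\varepsilon C_{\mathrm{ev}}h\sum_m e^{(\lambda_{\mathrm{hyb}}+\varepsilon)(s_M-s_m)} .
\]
The number of events is at most $N_{\mathrm{tot}}(T)<\infty$, since the trajectory is bounded and transversal, so the sum is at most $N_{\mathrm{tot}}(T)\,e^{(\lambda_{\mathrm{hyb}}+\varepsilon)_+T}$. This polynomial or constant prefactor goes into $C_\varepsilon(T)$, giving \eqref{eq:det_global_error_lambda}. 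Grid times are handled as in Step 1, with an extra $e^{(\lambda+\varepsilon)(t_n-s_M)}$ factor.

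\textbf{Step 4: spike times.} At a matched spike the voltage gap at the earlier crossing is at most the state error $\|e\|$, and the crossing speed is at least $\alpha_\ast$. By the mean-driven Lipschitz argument of Theorem~\ref{thm:IF_spiketime}, Step 3, $|\hat s-s|\le \|e\|/\alpha_\ast+h$. The $h$ grid lag is absorbed into the constant, which gives \eqref{eq:det_STE_lambda}.

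The consequences for $\lambda_{\mathrm{hyb}}<0$ and $\lambda_{\mathrm{hyb}}>0$ are immediate. In the negative case, choose $\varepsilon<|\lambda_{\mathrm{hyb}}|$ and bound the geometric sum uniformly in $T$; this requires the event rate to be uniformly bounded.
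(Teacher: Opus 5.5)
Your outline follows the paper's proof almost line for line: part~(i) and the event recursion from Lemma~\ref{lem:det_event_recursion}, unrolling with $\Phi_{\mathrm{hyb}}$, summing, and transversality for spike times. It also inherits the paper's real gap, which sits in your Step~1 sentence about grid times.

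A grid time lying between the exact spike $s_M$ and the grid-registered numerical spike $\hat s_M$ (in either order) is not ``one more free-flow factor plus an $O(h)$ Euler defect''. On $[\min(s_M,\hat s_M),\max(s_M,\hat s_M))$ one trajectory has reset $v_{p_M}$ and added $W_{:p_M}$ to the currents while the other has not. So the error there is $O(1)$, which is the misalignment-window effect of Lemma~\ref{lem:kernel_shift}. Since $\abs{\hat s_M-s_M}$ is of order $h$, i.e.\ comparable to the grid spacing, such grid times do not disappear as $h\to0$.

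Concretely, take one neuron with $\tauv=1$, $\Vr=0$, $\Vth=1$, $I\equiv b\equiv1.1$, no synapses and $v(0)=0$ (so $\alpha_\ast=0.1$). Euler gives $v^h(t_n)=1.1\bigl(1-(1-h)^n\bigr)$, which reaches $1$ at the real index $n^\ast=\log 11/\abs{\log(1-h)}=s/h-s/2+O(h)$, where $s=\log 11\approx2.40$ is the exact spike time. Since $s/2>1$, the registered time $\hat s=\lceil n^\ast\rceil h$ satisfies $\hat s<s$ for every small $h$. At that grid time $v^h(\hat s)=\Vr=0$ while $v(\hat s)=1-O(h)$. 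So for $T>s$ the grid-time supremum in \eqref{eq:det_global_error_lambda} stays near $1$ as $h\to0$. The claim in the proof of Lemma~\ref{lem:det_event_recursion} that the detector fires ``in the same step'' is false for the same reason.

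What the recursion really controls is one of the following:
\begin{itemize}
\item the event-aligned error $e_m^+$, together with the pre-reset error at $s_m^-$ (which is what your Step~4 should use instead of the grid-time supremum);
\item the grid-time error outside the misalignment windows;
\item an $L^1_t$ error.
\end{itemize}
With (ii) restated in one of these forms, your Steps~3--4 and \eqref{eq:det_STE_lambda} go through, provided you keep your $+h$ grid lag.

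On Step~2 you are right and the paper is not. It obtains $\|\Phi_{\mathrm{hyb}}(t,s)\|_\ast\le C_\varepsilon(T)e^{(\lambda_{\mathrm{hyb}}+\varepsilon)(t-s)}$ ``by the definition'' of a $\limsup$ taken from time $0$, which says nothing about intermediate windows. For the literal statement, where $C_\varepsilon(T)$ may depend on $T$, you need neither of your options. The interval $[0,T]$ contains at most $N_{\mathrm{tot}}(T)<\infty$ events with bounded saltation gains, so $\sup_{0\le s\le t\le T}\|\Phi_{\mathrm{hyb}}(t,s)\|_\ast<\infty$ and the bound is immediate. This also shows the factor $e^{(\lambda_{\mathrm{hyb}}+\varepsilon)T}$ carries no information there. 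The long-time conclusions genuinely need two extra hypotheses: a $T$-independent (Bohl-type) two-parameter bound and a uniform event-rate bound.

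Your fallback does not deliver the exponent you claim. A minimum inter-spike time comes from the boundedness of $I$ (the voltage must climb from $\Vr$ to $\Vth$ at speed at most $I_{\max}$), not from transversality. It gives $N(s,t)\le r_{\max}(t-s)+N_{\mathrm r}$ with a worst-case rate $r_{\max}$, not the asymptotic average $\bar r_{\mathrm{tot}}$. It also inherits the $\|\cdot\|_\infty$-versus-$\|\cdot\|_\ast$ mismatch of Theorem~\ref{thm:lambda_hyb_upper}, which costs a norm-equivalence factor per event.

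Finally, your Step~3 bound with $e^{(\lambda_{\mathrm{hyb}}+\varepsilon)_+T}$, plus the geometric sum when $\lambda_{\mathrm{hyb}}<0$, is more accurate than the paper's. The paper claims $\sum_r e^{(\lambda_{\mathrm{hyb}}+\varepsilon)(T-s_r)}\le\widetilde C_\varepsilon(T)e^{(\lambda_{\mathrm{hyb}}+\varepsilon)T}$, which cannot hold with a $T$-independent constant when $\lambda_{\mathrm{hyb}}+\varepsilon<0$, because events with $s_r$ near $T$ already contribute order one. The correct long-time statement is a uniform $O(h)$ bound, not decay.
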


\begin{proof}
Lemma~\ref{lem:det_event_recursion} yields the event-to-event recursion
\eqref{eq:det_event_recursion}.  Unrolling it from $e_0^+=0$ gives
\[
e_m^+
=
\sum_{r=1}^m \Phi_{\mathrm{hyb}}(s_m,s_r)\,\eta_r,
\qquad m=1,\dots,M,
\]
where $\Phi_{\mathrm{hyb}}(t,s)$ is the hybrid fundamental matrix from time $s$ to time $t$.

Fix $\varepsilon>0$.  By the definition of the top hybrid Lyapunov exponent, there exists
$C_\varepsilon(T)>0$ such that
\[
\|\Phi_{\mathrm{hyb}}(t,s)\|_\ast
\le
C_\varepsilon(T)e^{(\lambda_{\mathrm{hyb}}+\varepsilon)(t-s)},
\qquad 0\le s\le t\le T.
\]
Hence
\begin{align*}
\max_{1\le m\le M}\|e_m^+\|_\ast
&\le
C_{\mathrm{ev}}(T)h
\sum_{r=1}^M
\|\Phi_{\mathrm{hyb}}(T,s_r)\|_\ast \\
&\le
C_{\mathrm{ev}}(T)h\,C_\varepsilon(T)
\sum_{r=1}^M e^{(\lambda_{\mathrm{hyb}}+\varepsilon)(T-s_r)}
\le
\widetilde C_\varepsilon(T)e^{(\lambda_{\mathrm{hyb}}+\varepsilon)T}h.
\end{align*}
This proves \eqref{eq:det_global_error_lambda} at event times.

Between events, both the exact and numerical trajectories evolve under the same exponentially stable
linear flow, and the deterministic Euler defect on each inter-grid subinterval is again $O(h)$.
Therefore the same bound holds uniformly at grid times:
\[
\sup_{t_m\le T}\|x^h(t_m)-x(t_m)\|_\ast
\le
\widetilde C_\varepsilon(T)e^{(\lambda_{\mathrm{hyb}}+\varepsilon)T}h.
\]

Finally, the standard deterministic threshold-time sensitivity estimate under the uniform
transversality bound $A_p\ge \alpha_\ast$ gives
\[
\max_i\max_{k:s_i^k\le T}\abs{\hat s_i^k-s_i^k}
\le
\frac{2}{\alpha_\ast}
\sup_{t_m\le T}\|x^h(t_m)-x(t_m)\|_\ast,
\]
which yields \eqref{eq:det_STE_lambda}.
\end{proof}

\begin{remark}[Interpretation]
Theorem~\ref{thm:det_error_lambda} is the deterministic analog of the ``stability vs.\ long-time error''
discussion in Section~\ref{subsec:lambda_vs_strong_new}: in deterministic mean-driven recurrence, the
hybrid Lyapunov exponent directly controls numerical error accumulation.
This mechanism is fundamentally different from spike-mismatch-based bounds, where bad-event probability
terms cannot be canceled by contraction on good paths.
\end{remark}

\subsection{Scenario 2: noisy recurrent networks under rate and density bounds}
We now return to $\sigma_i>0$ and treat recurrent networks by the same spike-time + horizon mismatch framework
as in the feedforward case, but with loop amplification encoded via rate and graph bounds.

\subsubsection{Rate bounds and synchrony control}
Let $\Delta N_{\mathrm{tot}}(m)=\sum_{j=1}^{N_{\mathrm r}}(N_j(t_{m+1})-N_j(t_m))$ be the total number of exact
spikes in step $m$.
We assume the existence of finite constants $R_{\mathrm{net}}(T)$ and $R_{\mathrm{net},2}(T)$ such that
\begin{equation}\label{eq:rate_bounds_recurrent}
\E[\Delta N_{\mathrm{tot}}(m)]\le R_{\mathrm{net}}(T)\,h,
\qquad
\E[\Delta N_{\mathrm{tot}}(m)(\Delta N_{\mathrm{tot}}(m)-1)]\le R_{\mathrm{net},2}(T)\,h^2,
\end{equation}

\begin{remark}[On $R_{\mathrm{net}}(T)$ versus $R_{\mathrm{net},2}(T)$]\label{rem:Rnet_not_redundant}
The two bounds in \eqref{eq:rate_bounds_recurrent} encode different information and are not redundant.
The first-moment bound $R_{\mathrm{net}}(T)$ is implied, for instance, by deterministic firing-rate envelopes
$r_j(t)\le r_j^{\max}(T)$ via $R_{\mathrm{net}}(T)=\sum_j r_j^{\max}(T)$.
The second factorial-moment bound $R_{\mathrm{net},2}(T)$ controls within-step \emph{synchrony/burstiness}:
by Markov's inequality,
$\Pp(\Delta N_{\mathrm{tot}}(m)\ge 2)\le \tfrac12 \E[\Delta N_{\mathrm{tot}}(m)(\Delta N_{\mathrm{tot}}(m)-1)]$,
so $R_{\mathrm{net},2}(T)$ is exactly what limits the probability of multiple spikes in one step.
Such control cannot be inferred from $R_{\mathrm{net}}(T)$ alone, and it is precisely where synchronized regimes
deteriorate time-driven error constants.
\end{remark}

for all steps with $t_{m+1}\le T$.
Scenario R2 corresponds to moderate $R_{\mathrm{net}},R_{\mathrm{net},2}$ (asynchronous, weak synchrony),
while Scenario R4 has the same form but much larger constants (burstier synchrony).

\subsubsection{Boundary density constants and their estimates}
For each neuron $i$, define the boundary density constant (near tangency)
\[
\rho_{\max,i}(T,\alpha_0)
:=
\sup_{t\in[0,T]}\sup_{a\in[0,\alpha_0]}\rho_i(\Vth,\Ith+a,t),
\]
and the voltage marginal bound near threshold
\[
\rho^V_{\max,i}(T)
:=
\sup_{t\in[0,T]}\sup_{y\in[0,1]}\varrho_i(\Vth-y,t),
\]
where $\rho_i$ is the $(v_i,I_i)$ subthreshold density and $\varrho_i$ is the voltage marginal.

As in the feedforward analysis, $\rho_{\max,i}(T,\alpha_0)$ controls the contribution of slow threshold crossings inside the boundary-flux averaged matched strong term, while $\rho^V_{\max,i}(T)$ enters conservative strip-based density bounds (used later for weak error estimates).
Our sharp spike-count mismatch bound is instead formulated in terms of rate and burst constants; see Lemma~\ref{lem:mis_prob}.
In regimes with sufficiently strong independent Brownian noise in each current, hypoellipticity
suggests these constants are finite; however they can become large (and can blow up in the small-noise
limit) in bursty synchronized regimes.

\paragraph{A baseline explicit proxy and its limitation in recurrent networks.}
A convenient explicit proxy is obtained by \emph{freezing} the recurrent input into an exogenous drift $\mu_i$ and approximating
the current by a linear OU process (diffusion only in $I_i$):
\[
dI_i = -\tfrac{1}{\tauc}(I_i-\mu_i)\,dt + \tfrac{\sigma_i}{\tauc}\,dB_i,
\qquad
dv_i = \Bigl(-\tfrac{1}{\tauv}(v_i-\Vr)+I_i\Bigr)\,dt.
\]
Its stationary density yields the baseline proxy estimates
\begin{equation}\label{eq:rho_bounds_linear_surrogate}
\rho_{\max,i}(T,\alpha_0)\ \lesssim\
\frac{\sqrt{\tauc}(\tauc+\tauv)}{\pi\,\sigma_i^2\,\tauv^{3/2}},
\qquad
\rho^V_{\max,i}(T)\ \lesssim\
\frac{\sqrt{\tauc+\tauv}}{\sqrt{\pi}\,\sigma_i\,\tauv}.
\end{equation}
A sharper (rate- and weight-dependent) proxy replaces $\sigma_i$ by an effective diffusion $\sigma_{\mathrm{eff},i}$,
\[
\sigma_{\mathrm{eff},i}^2(t)\approx
\sigma_i^2+\kappa\,\tauc^2\sum_{j=1}^{N_{\mathrm r}} W_{ij}^2 r_j(t),
\]
which is the standard diffusion approximation of synaptic shot noise in asynchronous regimes.

\medskip
\begin{remark}[Important caveat: recurrent vs.\ feedforward.]
\label{rem:caveat_rec_ffwd}
The OU surrogate treats the input $\mu_i(t)$ (and the rate field $r_j(t)$ used in $\sigma_{\mathrm{eff},i}$) as \emph{given} and,
crucially, \emph{independent} of the intrinsic Brownian motion $B_i$.
This independence is natural in feedforward networks and can be a good approximation for large recurrent networks in weakly correlated
(asynchronous / mean-field) regimes, where the contribution of a single neuron's noise to its own net input is negligible.
In genuinely recurrent, strongly correlated regimes, however, $\mu_i(t)$ depends on the network spike trains and therefore inherits
correlations with $B_i$.
In particular, in tight E--I balance or near-synchronous cascade regimes, recurrent input can partially cancel intrinsic noise and produce
an \emph{effective} variance $\sigma_{\mathrm{eff},i}^2(t)$ that is small, leading to very slow threshold crossings and potentially
near-singular voltage marginals near $\Vth$.
For this reason, in the recurrent sections we treat $\rho_{\max,i}(T,\alpha_0)$ and $\rho^V_{\max,i}(T)$ primarily as assumptions (or as
empirically estimated constants), and we use \eqref{eq:rho_bounds_linear_surrogate} only as a baseline proxy when such decoupling is justified.
\end{remark}

\subsubsection{Boundary-flux matched impact and spike-count mismatch probability}
Define the recurrent matched event
\[
\mathcal G^{\mathrm{match}}_{\mathrm{rec}}(h,T)
:=
\bigcap_{i=1}^{N_{\mathrm r}}\{N_i^h(T)=N_i(T)\}.
\]
On this event, spike indices are paired up to time $T$ and we define the matched spike-time impact
\[
\mathcal M_i^{\mathrm{rec}}(T)
:=
\E\Bigg[
\sum_{k:\,s_i^k\le T}\psi\!\bigl(\abs{\varepsilon_i^k}\bigr)\,
\mathbf 1_{\mathcal G^{\mathrm{match}}_{\mathrm{rec}}(h,T)}
\Bigg].
\]

The noisy recurrent extension follows the same logic as Section~\ref{subsec:flux}: one first proves
a conditional one-spike impact bound at fixed crossing speed,
\[
\Gamma_i(a;h,T)
:=
C_i^{\mathrm{hit}}(T)\min\!\Bigl\{1,\frac{R_i(h,T)^2}{a^2}\Bigr\},
\qquad a>0,
\]
where the local scale $R_i(h,T)$ will be specified below, and then averages $\Gamma_i$ against the
threshold flux.  Accordingly, there exists $C_i^{\mathrm{dir}}(T)<\infty$ such that
\begin{equation}\label{eq:direct_flux_recurrent}
\mathcal M_i^{\mathrm{rec}}(T)
\le
\int_0^T\int_0^\infty
\Gamma_i(a;h,T)\,a\,\rho_i(\Vth,\Ith+a,t)\,da\,dt
\le
C_i^{\mathrm{dir}}(T)\,R_i(h,T)^2\bigl(1+\abs{\log R_i(h,T)}\bigr),
\end{equation}
where we have stated the generic fluctuation-driven case corresponding to a bounded boundary density
near $a=0$.  If one has stronger boundary depletion
$\rho_i(\Vth,\Ith+a,t)\lesssim a^{\gamma_i}$ with $\gamma_i>0$, the logarithm drops out exactly as in
Section~\ref{subsec:flux}.  Thus, in the noisy recurrent strong analysis, small crossing speeds are
handled inside the matched term through the boundary-flux integral rather than by declaring a
separate tangency bad set.

For mismatch, we use the \emph{horizon-based} spike-count mismatch event
\[
B_i^{\mathrm{mis}}(h,T):=\{N_i^h(T)\neq N_i(T)\},
\]
which treats false positives and false negatives symmetrically and ignores transient stepwise
discrepancies that catch up before the observation time.  Assume the local rate and burst controls
\eqref{eq:rate_local_bound}--\eqref{eq:pair_bound} and the sub-Gaussian spike-time tail
\eqref{eq:STE_subG} hold for neuron $i$.  Then Lemma~\ref{lem:mis_prob} yields, for any
$\delta\in(0,1]$,
\begin{equation}\label{eq:mis_count_bound_recurrent}
\Pp\big(B_i^{\mathrm{mis}}(h,T)\big)
\le
2\,r_i^{\max}(T)\,\delta
+
4\,R_i^{(2)}(T)\,\delta^2
+
C_i^{\mathrm{err}}(T)\exp\!\Bigl(-c_{\mathrm{sg}}\frac{\delta^2}{h}\Bigr),
\end{equation}
and, with the canonical choice $\delta=\delta_h:=\kappa\sqrt{h\log(1/h)}$,
\begin{equation}\label{eq:mis_count_rate_recurrent}
\Pp\big(B_i^{\mathrm{mis}}(h,T)\big)
\lesssim
r_i^{\max}(T)\sqrt{h\log(1/h)}
+
R_i^{(2)}(T)\,h\log(1/h)
+
C_i^{\mathrm{err}}(T)\,h^{c_0},
\qquad c_0:=c_{\mathrm{sg}}\kappa^2.
\end{equation}

\subsubsection{Matched-trajectory impact recursion and explicit loop-length truncation}
For each neuron $j$, define the matched spike-load bound
\[
\Lambda^{\mathrm{rec}}_j(T)
:=
\sup_{\omega\in \mathcal G^{\mathrm{match}}_{\mathrm{rec}}(h,T)}
\#\{k:\ s_j^k(\omega)\le T\},
\qquad
\Lambda^{\mathrm{rec}}_{\mathrm{tot}}(T):=\sum_{j=1}^{N_{\mathrm r}}\Lambda^{\mathrm{rec}}_j(T).
\]
(For example, $\Lambda^{\mathrm{rec}}_j(T)\le r_j^{\max}T$ on
$\mathcal G^{\mathrm{match}}_{\mathrm{rec}}(h,T)$ if one has deterministic rate bounds.)

The recurrent analogue of Lemmas~\ref{lem:I_from_STE_L1}--\ref{lem:v_from_I} gives the local-gap scale
\begin{equation}\label{eq:Rrec_def}
R_i(h,T)^2
:=
C_i^{\mathrm{loc}}(T)\,h
+
8\tauc^2
\sum_{j=1}^{N_{\mathrm r}}
|W_{ij}|^2\,\Lambda_j^{\mathrm{rec}}(T)\,
\mathcal M_j^{\mathrm{rec}}(T).
\end{equation}
Combining \eqref{eq:Rrec_def} with \eqref{eq:direct_flux_recurrent} yields a nonlinear
matched-trajectory recursion.  Since $R_i(h,T)^2\ge C_i^{\mathrm{loc}}(T)h$, for sufficiently small
$h$ we may bound
\[
1+\abs{\log R_i(h,T)}
\le
C_i^{\log}(T)\bigl(1+\abs{\log h}\bigr).
\]
Define
\[
b_i(T,h)
:=
C_i^{\mathrm{dir}}(T)\,C_i^{\log}(T)\,C_i^{\mathrm{loc}}(T)\,
h\bigl(1+\abs{\log h}\bigr),
\]
and the nonnegative gain matrix $H(T,h)\in\mathbb R^{N_{\mathrm r}\times N_{\mathrm r}}$ by
\begin{equation}\label{eq:H_rec_def}
H_{ij}(T,h)
:=
8\tauc^2\,
C_i^{\mathrm{dir}}(T)\,C_i^{\log}(T)\,
|W_{ij}|^2\,\Lambda_j^{\mathrm{rec}}(T)\,
\bigl(1+\abs{\log h}\bigr).
\end{equation}
Then the matched impact vector
$\mathcal M^{\mathrm{rec}}(T):=(\mathcal M_i^{\mathrm{rec}}(T))_{i=1}^{N_{\mathrm r}}$ satisfies
\begin{equation}\label{eq:recurrent_M_vector_ineq}
\mathcal M^{\mathrm{rec}}(T)
\le
b(T,h)+H(T,h)\,\mathcal M^{\mathrm{rec}}(T)
\qquad\text{componentwise.}
\end{equation}
If one has boundary depletion with exponent $\gamma_i>0$, the factor $1+\abs{\log h}$ may be
replaced by a bounded constant for the corresponding neuron.

The inequality \eqref{eq:recurrent_M_vector_ineq} is algebraic.  To obtain a finite-horizon bound
without imposing a contraction condition on $H(T,h)$, we again exploit that causality in time
bounds the maximum number of synaptic hops through which a local defect can propagate by time $T$.
This yields an explicit loop-length truncation.

\begin{definition}[Shortest directed cycle length]\label{def:short_cycle}
Let $\mathcal{G}=(\{1,\dots,N_{\mathrm r}\},E)$ be the directed synaptic graph where $(j\to i)\in E$
iff $W_{ij}\neq 0$.
Define $L^\ast\in\{2,3,\dots\}\cup\{\infty\}$ as the length of the shortest directed cycle in
$\mathcal{G}$ (take $L^\ast=\infty$ if $\mathcal{G}$ is acyclic).
\end{definition}

\begin{assumption}[Effective causal depth on the matched set]\label{ass:causal_depth}
Fix $\omega\in \mathcal G^{\mathrm{match}}_{\mathrm{rec}}(h,T)$.
A \emph{time-ordered influence chain} is a finite sequence
\[
(j_0,k_0)\to (j_1,k_1)\to\cdots\to (j_m,k_m)
\]
such that:
\begin{enumerate}[label=(\roman*),leftmargin=2.2em]
\item for each $r=1,\dots,m$, there is a directed synaptic edge
$j_{r-1}\to j_r$, i.e. $W_{j_r j_{r-1}}\neq 0$;
\item the spike times are strictly increasing and lie in the observation window,
\[
0<s_{j_0}^{k_0}(\omega)<s_{j_1}^{k_1}(\omega)<\cdots<s_{j_m}^{k_m}(\omega)\le T.
\]
\end{enumerate}
We assume there exists an integer $K_{\mathrm{eff}}(T)\ge 0$ such that every
time-ordered influence chain on $\mathcal G^{\mathrm{match}}_{\mathrm{rec}}(h,T)$ has length at most
$K_{\mathrm{eff}}(T)$.

Equivalently: on the matched set, any local spike-time defect can be transmitted through at
most $K_{\mathrm{eff}}(T)$ successive synaptic events before time $T$.
\end{assumption}

\begin{remark}[Crude bounds and interpretation of $K_{\mathrm{eff}}(T)$]
The quantity $K_{\mathrm{eff}}(T)$ is an effective causal depth in time.
A universal crude bound is
\[
K_{\mathrm{eff}}(T)\le \Lambda_{\mathrm{tot}}^{\mathrm{rec}}(T),
\]
because each transmission step consumes one spike in the time-ordered chain.
If one has a deterministic lower bound on the time needed for one effective transmission, say a
causal lag $\tau_{\mathrm{lag}}>0$, then
\[
K_{\mathrm{eff}}(T)\le \left\lfloor \frac{T}{\tau_{\mathrm{lag}}}\right\rfloor.
\]
In asynchronous regimes with moderate within-step rates and weak synchrony, it is often reasonable
to use the heuristic proxy
\[
K_{\mathrm{eff}}(T)\approx R_{\mathrm{net}}(T)\,T,
\]
but this proxy is not part of the formal assumption.
\end{remark}

\begin{theorem}[Explicit loop-truncated matched-impact bound]\label{thm:recurrent_goodset_explicit}
Assume Assumption~\ref{ass:causal_depth}.  Then on the matched set,
\begin{equation}\label{eq:Delta_explicit_trunc}
\mathcal M^{\mathrm{rec}}(T)
\le
\Big(\sum_{k=0}^{K_{\mathrm{eff}}(T)} H(T,h)^k\Big)\,b(T,h),
\qquad\text{componentwise.}
\end{equation}
Consequently, in the induced $\|\cdot\|_\infty$ norm,
\begin{equation}\label{eq:Delta_inf_bound_trunc}
\|\mathcal M^{\mathrm{rec}}(T)\|_\infty
\le
\Big\|\sum_{k=0}^{K_{\mathrm{eff}}(T)} H(T,h)^k\Big\|_\infty\,\|b(T,h)\|_\infty.
\end{equation}
Moreover, let $L^\ast$ be the shortest directed cycle length from Definition~\ref{def:short_cycle}.
If $L^\ast=\infty$ (acyclic synaptic graph, essentially still feedforward), then the truncated Neumann series can be bounded directly by
\begin{equation}\label{eq:acyclic_neumann_bound}
\Big\|\sum_{k=0}^{K_{\mathrm{eff}}(T)} H(T,h)^k\Big\|_\infty
\le
\sum_{k=0}^{K_{\mathrm{eff}}(T)} \|H(T,h)\|_\infty^k.
\end{equation}
If $L^\ast<\infty$, write $K_{\mathrm{eff}}(T)=M(T)L^\ast+R(T)$ with $R(T)\in\{0,\dots,L^\ast-1\}$.
Then we have the loop-block estimate
\begin{equation}\label{eq:loop_block_bound}
\Big\|\sum_{k=0}^{K_{\mathrm{eff}}(T)} H(T,h)^k\Big\|_\infty
\le
\Big(\sum_{r=0}^{L^\ast-1}\|H(T,h)\|_\infty^r\Big)\,
\sum_{m=0}^{M(T)} \|H(T,h)^{L^\ast}\|_\infty^m,
\end{equation}
and, in particular, if $\|H(T,h)^{L^\ast}\|_\infty<1$ (when $|\log h|$ is not very large), then
\begin{equation}\label{eq:loop_block_geometric}
\Big\|\sum_{k=0}^{K_{\mathrm{eff}}(T)} H(T,h)^k\Big\|_\infty
\le
\Big(\sum_{r=0}^{L^\ast-1}\|H(T,h)\|_\infty^r\Big)\,
\frac{1}{1-\|H(T,h)^{L^\ast}\|_\infty},
\end{equation}
uniformly in $K_{\mathrm{eff}}(T)$.
\end{theorem}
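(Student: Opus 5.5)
The plan is to iterate the componentwise inequality \eqref{eq:recurrent_M_vector_ineq} along time-ordered influence chains instead of inverting $I-H(T,h)$. The obstacle is that \eqref{eq:recurrent_M_vector_ineq} by itself is a fixed-point inequality: iterating it $n$ times gives $\mathcal M^{\mathrm{rec}}\le\sum_{k<n}H^k b+H^n\mathcal M^{\mathrm{rec}}$. The remainder $H^n\mathcal M^{\mathrm{rec}}$ need not vanish unless $H$ is contractive. Causality is what removes it.

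\textbf{Refinement by causal depth.} I would refine the impact by depth. For $d\ge0$, let $\mathcal M^{(d)}_i(T)$ be the contribution to $\mathcal M_i^{\mathrm{rec}}(T)$ from spikes of neuron $i$ that terminate some time-ordered influence chain of length exactly $d$, and of no longer chain. Redoing the derivation of \eqref{eq:Rrec_def}, via the recurrent analogues of Lemmas~\ref{lem:I_from_STE_L1}--\ref{lem:v_from_I}, the local gap at a spike of neuron $i$ at depth $d$ is fed only by presynaptic spikes that occur strictly earlier. Those spikes therefore sit at depth at most $d-1$. Combined with the flux averaging \eqref{eq:direct_flux_recurrent} and the logarithm bound $1+\abs{\log R_i}\le C_i^{\log}(1+\abs{\log h})$, this gives the lower-triangular system
\[
\mathcal M^{(0)}\le b(T,h),\qquad \mathcal M^{(d)}\le H(T,h)\sum_{d'<d}\mathcal M^{(d')} .
\]
Assumption~\ref{ass:causal_depth} gives $\mathcal M^{(d)}=0$ for $d>K_{\mathrm{eff}}(T)$.

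\textbf{Summing the depths.} Set $S_d:=\sum_{d'\le d}\mathcal M^{(d')}$. The system above yields $S_d\le b+HS_{d-1}$. Since $H\ge0$ entrywise, induction gives $S_d\le\sum_{k=0}^{d}H^kb$. Taking $d=K_{\mathrm{eff}}(T)$ proves \eqref{eq:Delta_explicit_trunc}. The main obstacle is making this depth splitting rigorous: the nonlinear map $\mathcal M\mapsto R^2\log R$ must be monotone, and each spike must be assigned a single depth, namely its maximal chain length. I would handle monotonicity by linearizing with the uniform log bound before splitting, so that only the linear majorant is iterated.

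\textbf{Norm estimates.} The bound \eqref{eq:Delta_inf_bound_trunc} follows directly from nonnegativity. For \eqref{eq:acyclic_neumann_bound}, apply the triangle inequality together with submultiplicativity $\|H^k\|_\infty\le\|H\|_\infty^k$; this step needs no acyclicity, although when $L^\ast=\infty$ the matrix $H$ is nilpotent. For \eqref{eq:loop_block_bound}, write each $k\le K_{\mathrm{eff}}$ as $k=mL^\ast+r$ with $0\le r<L^\ast$ and $m\le M(T)$. Then bound $\|H^k\|_\infty\le\|H\|_\infty^r\|H^{L^\ast}\|_\infty^m$, and sum over the pairs $(m,r)$; the set of pairs covers $\{0,\dots,K_{\mathrm{eff}}\}$, possibly with extra nonnegative terms. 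Finally, \eqref{eq:loop_block_geometric} follows by comparison with the geometric series, uniformly in $M(T)$.
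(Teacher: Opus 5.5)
Your diagnosis is sharper than the paper's own Step 1. That step iterates $\mathcal M^{\mathrm{rec}}\le b+H\mathcal M^{\mathrm{rec}}$ and simply asserts that $H^{K+1}\mathcal M^{\mathrm{rec}}$ is ``absent'' for $K\ge K_{\mathrm{eff}}(T)$. As you note, that remainder is a fixed nonnegative vector and need not vanish, so a causal depth refinement is the right idea. Your norm estimates also coincide with the paper's Step 2.

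The gap is in ``Summing the depths'': your lower-triangular system does not yield $S_d\le b+HS_{d-1}$.
\begin{itemize}
\item From $\mathcal M^{(d)}\le HS_{d-1}$ you only get $S_d=S_{d-1}+\mathcal M^{(d)}\le (I+H)S_{d-1}$.
\item Passing to $b+HS_{d-1}$ would need $S_{d-1}\le b$, which the system does not give for $d\ge2$.
\item Summing the per-depth inequalities instead gives $S_d\le b+H\sum_{d'<d}S_{d'}$, which counts shallow spikes repeatedly.
\end{itemize}
What the system actually delivers is $\mathcal M^{\mathrm{rec}}\le (I+H)^{K_{\mathrm{eff}}}b=\sum_{k}\binom{K_{\mathrm{eff}}}{k}H^kb$. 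This is strictly weaker than \eqref{eq:Delta_explicit_trunc} once $K_{\mathrm{eff}}\ge2$. For example, in the scalar case $H=1$, $b=1$, $K_{\mathrm{eff}}=2$, the system allows $\mathcal M^{(0)}=1$, $\mathcal M^{(1)}=1$, $\mathcal M^{(2)}=2$, so $S_2=4$ against the claimed bound $3$. Separately, $\mathcal M^{(d)}\le HS_{d-1}$ for $d\ge1$ drops the local EM defect $C_i^{\mathrm{loc}}(T)h$. That defect enters the local gap at every spike, not only at depth-$0$ spikes.

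The fix is to do the bookkeeping with cumulative quantities from the outset.
\begin{enumerate}
\item Let $S_{d,i}$ be the expected matched impact of the spikes of neuron $i$ whose maximal chain length is at most $d$.
\item By your chain-extension argument, every such spike is fed only by presynaptic spikes of depth at most $d-1$. Its local-gap scale is therefore $C_i^{\mathrm{loc}}(T)h+8\tauc^2\sum_j|W_{ij}|^2\Lambda_j^{\mathrm{rec}}(T)S_{d-1,j}$, which is monotone in $d$.
\item Perform one flux-averaging step in which you drop the depth restriction in the spike sum. This is legitimate because the one-spike kernel is nonnegative. It counts the local defect of all these spikes exactly once and gives $S_0\le b$ and $S_d\le b+HS_{d-1}$ directly.
\item Induction yields $S_d\le\sum_{k=0}^{d}H^kb$.
\item Finally, $S_{K_{\mathrm{eff}}(T)}=\mathcal M^{\mathrm{rec}}(T)$, because no spike on the matched set has depth exceeding $K_{\mathrm{eff}}(T)$.
\end{enumerate}
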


\begin{proof}
\emph{Step 1: truncation by causal depth.}
Iterating \eqref{eq:recurrent_M_vector_ineq} gives
\[
\mathcal M^{\mathrm{rec}}
\le
\sum_{k=0}^{K}H^k b + H^{K+1}\mathcal M^{\mathrm{rec}}.
\]
Under Assumption~\ref{ass:causal_depth}, the remainder term $H^{K+1}\mathcal M^{\mathrm{rec}}$ is
absent on the matched set once $K\ge K_{\mathrm{eff}}(T)$: each multiplication by $H$ corresponds
to taking one synaptic predecessor, so $(H^{K+1}\mathcal M^{\mathrm{rec}})_i$ expands into
contributions along directed, time-ordered influence chains of length $K+1$ ending at neuron $i$.
By definition of $K_{\mathrm{eff}}(T)$, such chains do not exist on
$\mathcal G^{\mathrm{match}}_{\mathrm{rec}}(h,T)$ for $K=K_{\mathrm{eff}}(T)$.  Therefore
\eqref{eq:Delta_explicit_trunc} holds.  Taking $\|\cdot\|_\infty$ gives
\eqref{eq:Delta_inf_bound_trunc}.

\emph{Step 2: series bounds (acyclic vs.\ cyclic).}
If $L^\ast=\infty$ (acyclic synaptic graph), then submultiplicativity gives
\[
\Big\|\sum_{k=0}^{K_{\mathrm{eff}}}H^k\Big\|_\infty
\le
\sum_{k=0}^{K_{\mathrm{eff}}}\|H\|_\infty^k,
\]
which is \eqref{eq:acyclic_neumann_bound} (with $H=H(T,h)$ and
$K_{\mathrm{eff}}=K_{\mathrm{eff}}(T)$).

If $L^\ast<\infty$, write $K_{\mathrm{eff}}=M L^\ast+R$ with $R\in\{0,\dots,L^\ast-1\}$ and
decompose the series into blocks:
\[
\sum_{k=0}^{K_{\mathrm{eff}}}H^k
=
\sum_{m=0}^{M-1}\sum_{r=0}^{L^\ast-1}H^{mL^\ast+r} + \sum_{r=0}^{R}H^{ML^\ast+r}
=
\sum_{m=0}^{M}H^{mL^\ast}\Big(\sum_{r=0}^{L^\ast-1}H^r\Big),
\]
where the last identity uses a conservative completion of the final block.  Taking
$\|\cdot\|_\infty$ and using submultiplicativity yields \eqref{eq:loop_block_bound}.
If $\|H^{L^\ast}\|_\infty<1$, the geometric series gives \eqref{eq:loop_block_geometric}.
\end{proof}

\subsubsection{Strong bound for matched noisy recurrent networks}
Let $X(T)$ be a bounded network output functional such that on matched spike histories,
\[
\abs{X^h(T)-X(T)}^2
\le
C_X\sum_{i=1}^{N_{\mathrm r}}\sum_{k:\,s_i^k\le T}\psi\!\bigl(\abs{\varepsilon_i^k}\bigr).
\]
Then
\begin{equation}\label{eq:recurrent_strong_pruning}
\E\Bigl[
\abs{X^h(T)-X(T)}^2\,\mathbf 1_{\mathcal G^{\mathrm{match}}_{\mathrm{rec}}(h,T)}
\Bigr]
\le
C_X\,N_{\mathrm r}\,\|\mathcal M^{\mathrm{rec}}(T)\|_\infty.
\end{equation}
If, in addition, $\abs{X^h(T)-X(T)}\le C_{\max}$ on
$\mathcal G^{\mathrm{match}}_{\mathrm{rec}}(h,T)^c$, then
\begin{equation}\label{eq:recurrent_strong_pruning_full}
\E\abs{X^h(T)-X(T)}^2
\le
C_X\,N_{\mathrm r}\,\|\mathcal M^{\mathrm{rec}}(T)\|_\infty
+
C_{\max}^2\sum_{i=1}^{N_{\mathrm r}}\Pp\big(B_i^{\mathrm{mis}}(h,T)\big).
\end{equation}
Combining \eqref{eq:recurrent_strong_pruning_full} with
Theorem~\ref{thm:recurrent_goodset_explicit} and \eqref{eq:mis_count_rate_recurrent} yields a
fully explicit recurrent strong bound in terms of
$(W,\Lambda^{\mathrm{rec}},L^\ast,K_{\mathrm{eff}},\rho_{\max},r^{\max},R^{(2)})$.  In
particular, the matched term already includes the contribution of small crossing speeds through the
boundary-flux averaging, so there is no separate tangency mass to balance.

\paragraph{How $L^\ast$, $K_{\mathrm{eff}}(T)$, and synaptic weights enter the matched term.}
To make the dependence transparent, consider crude uniform bounds
$\|W\|_\infty\le \mathcal W$, $\Lambda_j^{\mathrm{rec}}(T)\le \Lambda^{\mathrm{rec}}(T)$,
$C_i^{\mathrm{loc}}(T)\le C_{\mathrm{loc}}(T)$, and
$C_i^{\mathrm{dir}}(T)C_i^{\log}(T)\le C_{\mathrm{dir}}(T)$.  Then
\[
\|b(T,h)\|_\infty
\lesssim
C_{\mathrm{dir}}(T)\,C_{\mathrm{loc}}(T)\,h\bigl(1+\abs{\log h}\bigr),
\]
and
\[
\|H(T,h)\|_\infty
\lesssim
8\tauc^2\,C_{\mathrm{dir}}(T)\,\mathcal W^2\,\Lambda^{\mathrm{rec}}(T)\,
\bigl(1+\abs{\log h}\bigr).
\]
Hence
\[
\|\mathcal M^{\mathrm{rec}}(T)\|_\infty
\lesssim
h\bigl(1+\abs{\log h}\bigr)\,
\Big\|\sum_{k=0}^{K_{\mathrm{eff}}(T)}H(T,h)^k\Big\|_\infty.
\]
If $\|H(T,h)\|_\infty<1$ (a loop-stable regime), the Neumann series is uniformly bounded and the
matched MSE scales like $h$ up to the same mild logarithmic factor.  In contrast, if
$\|H(T,h)\|_\infty\gtrsim 1$, a crude bound gives
$\big\|\sum_{k=0}^{K_{\mathrm{eff}}}H^k\big\|_\infty\lesssim \|H\|_\infty^{K_{\mathrm{eff}}(T)}$,
which highlights the roles of (i) the effective event depth $K_{\mathrm{eff}}(T)$, (ii) the loop
length $L^\ast$ via the block bound \eqref{eq:loop_block_bound}, and (iii) the synaptic weight
scale $\mathcal W$ and spike-load $\Lambda^{\mathrm{rec}}(T)$.

\begin{remark}[Interpreting $L^\ast$ through weak dependence and density control]\label{rem:Lstar_interpretation}
The recurrent network can be
viewed as feedforward along a time-ordered synaptic influence chain until the chain closes.
When $L^\ast$ is large, self-feedback requires many hops and the resulting synaptic input
$\mu_j(t)$ is often approximately exogenous to neuron $j$ (in asynchronous/mixing regimes), so the
boundary-density constants $\rho_{\max}$ and $\rho^V_{\max}$ are better controlled and the proxy
bound \eqref{eq:rho_bounds_linear_surrogate} is more plausible.  When $L^\ast$ is small, however,
this decoupling is not guaranteed and \eqref{eq:rho_bounds_linear_surrogate} may fail in strongly
correlated or synchronized regimes.
\end{remark}

\begin{remark}[A crude deep-cascade growth rule]\label{rem:recurrent_pruning_exponent}
In the crude regime
$\big\|\sum_{k=0}^{K_{\mathrm{eff}}}H^k\big\|_\infty\lesssim \|H\|_\infty^{K_{\mathrm{eff}}}$, the
matched MSE inherits the factor $\|H(T,h)\|_\infty^{K_{\mathrm{eff}}(T)}$.   For fixed $T$ this does not change the leading $h$
scaling of the matched term, but it can produce very large constants when $K_{\mathrm{eff}}(T)$ is
large.
\end{remark}

\begin{remark}[Why bursty synchrony is disastrous for recurrent strong bounds]
In bursty regimes, the causal depth $K_{\mathrm{eff}}(T)$ can be large (deep cascades), and the
loop-block growth \eqref{eq:loop_block_bound} can produce large amplification even on the matched
set.  Moreover, the density bounds $\rho_{\max,i}$ and $\rho^V_{\max,i}$ can become extremely large
in small-noise synchronized regimes (and may approach singularities as noise $\downarrow 0$), while
the rate and burst constants $r_i^{\max}(T)$ and $R_i^{(2)}(T)$ can deteriorate.  Both effects
worsen the strong bound and can force the estimate to become vacuous in strongly synchronized
regimes.
\end{remark}

\begin{remark}[Spikes and strong order in recurrent networks]\label{rem:spikes_vs_half_recurrent}
The recurrent strong bounds have the same structural origin as in the feedforward matched analysis
of Section~\ref{sec:strong}: subthreshold Euler error is $O(h)$ in mean square, while threshold
events introduce only polylogarithmic corrections through the boundary-flux integral.  Ignoring
the terminal mismatch layer, the squared strong error is again $h$ times polylogarithmic factors
in $h$, so the effective strong order remains close to the classical Euler--Maruyama $1/2$ rate.

What changes in recurrence is not the local $h$-scaling but the \emph{amplification mechanism}:
feedback loops can recycle spike-time perturbations, and the matched recursion is controlled by the
shortest directed cycle length together with weight and spike-load constants
(Theorem~\ref{thm:recurrent_goodset_explicit}).  The extra contribution still comes only from the
horizon mismatch probabilities $\Pp(B_i^{\mathrm{mis}}(h,T))$.
\end{remark}

\section{Recurrent extensions: averaged weak error under rate and density bounds}\label{sec:recurrent_weak}

Weak error in recurrent networks has the same basic structure as in Section~\ref{sec:weak}, but the
one-step bookkeeping must now be done with total network spike counts.
Within-step cascades are no longer excluded by acyclicity, so the natural control parameters are the
first and second factorial moments of the total exact spike count in one step, together with the same
backward transmission and numerical strip assumptions used in the feedforward theorem.

\subsection{Spike maps and a weight norm for recurrent jumps}
When neuron $p$ spikes, it resets $v_p$ and adds the synaptic column $W_{:p}$ to the current vector.
The recurrent spike map $\mathcal{R}_p$ therefore shifts the \emph{entire} current vector.
The relevant weight norm for the one-step timing/decay defect is the column-sum norm
\[
\|W\|_{1}:=\max_{1\le p\le N_{\mathrm r}}\sum_{i=1}^{N_{\mathrm r}} |W_{ip}|.
\]
Because recurrent spike maps add fixed current columns and reset only their own voltage coordinates,
they commute pairwise in the present current-based model.

\subsection{Averaged one-step recurrent control package}
Write again $X_m^h=X^h(t_m)$.
For each step $m$, let $\widetilde X_{\mathrm{rec}}^{(m)}(t)$, $t\in[t_m,t_{m+1}]$, denote the exact
recurrent process started from $X_m^h$ at time $t_m$ and driven by fresh Brownian increments.
Let
\[
\Delta \widetilde N_{\mathrm{tot}}^{(m)}
:=
\sum_{j=1}^{N_{\mathrm r}}
\Bigl(\widetilde N_j^{(m)}(t_{m+1})-\widetilde N_j^{(m)}(t_m)\Bigr)
\]
be the exact total number of recurrent spikes in the step started from the numerical state.

For each neuron $i$, define the exact-vs.-grid decision discrepancy event
\[
D^{\mathrm{rec}}_{i,m}
:=
\Bigl\{
\mathbf 1_{\{\widetilde N_i^{(m)}(t_{m+1})-\widetilde N_i^{(m)}(t_m)\ge 1\}}
\neq
\mathbf 1_{\{v_i^{\mathrm{pre}}(X_m^h)\ge \Vth\}}
\Bigr\},
\]
and set
\[
a_i(x):=-\tfrac{1}{\tauv}(v_i-\Vr)+I_i,
\qquad
a_i^+(x):=\max\{a_i(x),0\}.
\]

\begin{assumption}[Averaged recurrent one-step rate, strip, and factorial-moment bounds]\label{ass:weak_rec_step_pkg}
There exist finite constants $R_{\mathrm{net}}(T)$, $R_{\mathrm{net},2}(T)$,
$\rho^V_{\max,i}(T)$, $A_{2,i}(T)$, and $C_{\eta,i}^{\mathrm{rec}}(T)$ such that for every step $m$
with $t_{m+1}\le T$:
\begin{enumerate}[label=(\roman*),leftmargin=2.2em]
\item
\begin{equation}\label{eq:rec_rate_first}
\E[\Delta \widetilde N_{\mathrm{tot}}^{(m)}]\le R_{\mathrm{net}}(T)\,h.
\end{equation}
\item
\begin{equation}\label{eq:rec_rate_second}
\E\!\Big[\Delta \widetilde N_{\mathrm{tot}}^{(m)}
\bigl(\Delta \widetilde N_{\mathrm{tot}}^{(m)}-1\bigr)\Big]
\le R_{\mathrm{net},2}(T)\,h^2.
\end{equation}
\item For each neuron $i$, the numerical marginal law of $V_i^h(t_m)$ admits a density
$\varrho^h_{i,m}(v)$ with
\begin{equation}\label{eq:rec_num_strip}
\sup_{m:\,t_m\le T}\sup_{y\in[0,1]}\varrho_{i,m}^h(\Vth-y)\le \rho^V_{\max,i}(T),
\qquad
\sup_{m:\,t_m\le T}\E\big[(a_i^+(X_m^h))^2\big]\le A_{2,i}(T).
\end{equation}
\item There exist nonnegative random variables $\eta^{\mathrm{rec}}_{i,m}$ such that on the event
$\{\Delta \widetilde N_{\mathrm{tot}}^{(m)}\le 1\}$,
\begin{equation}\label{eq:rec_discrepancy_strip}
D^{\mathrm{rec}}_{i,m}
\subset
\Bigl\{
0<\Vth-V_i^h(t_m)\le h\,a_i^+(X_m^h)+\eta^{\mathrm{rec}}_{i,m}
\Bigr\},
\end{equation}
and
\begin{equation}\label{eq:rec_eta_bound}
\E\big[(\eta^{\mathrm{rec}}_{i,m})^2\big]\le C_{\eta,i}^{\mathrm{rec}}(T)\,h^3.
\end{equation}
\end{enumerate}
\end{assumption}

\subsection{One-step weak defect: explicit dependence on $R_{\mathrm{net}}$ and $R_{\mathrm{net},2}$}
Let $\Phi\in C_b^4(\mathsf{D})$ and let $u(t,x)=\E[\Phi(X(T))\mid X(t)=x]$ be the backward value
function.  Retain the derivative bounds
\[
M_v(T):=\sup_{t\in[0,T)}\max_i\|\partial_{v_i}u(t,\cdot)\|_\infty,
\qquad
M_I(T):=\sup_{t\in[0,T)}\|\nabla_I u(t,\cdot)\|_\infty.
\]

\begin{lemma}[Recurrent averaged one-step weak defect]\label{lem:recurrent_onestep_weak}
Fix a step $m$ with $t_{m+1}<T$ and set $f=u(t_{m+1},\cdot)$.
Assume the subthreshold affine diffusion EM truncation satisfies
\[
\sup_{x\in\mathsf D}
\big|
(Q_h^{\mathrm{diff}}-P_{t_m,t_{m+1}}^{\mathrm{diff}})f(x)
\big|
\le
C_{\mathrm{diff}}(T)\,h^2.
\]
Then
\begin{align}
&\big|\E[u(t_{m+1},X_{m+1}^h)-u(t_m,X_m^h)]\big|
\nonumber\\
&\le
h^2\Bigg(
C_{\mathrm{diff}}(T)
+
\frac{M_I(T)+M_v(T)}{\tauc}\,R_{\mathrm{net}}(T)\,\|W\|_{1}
+
M_v(T)\sum_{i=1}^{N_{\mathrm r}}\rho^V_{\max,i}(T)\tilde{A}_{2,i}(T)
+
2\|u\|_\infty\,R_{\mathrm{net},2}(T)
\Bigg).
\label{eq:recurrent_onestep_weak}
\end{align}
\end{lemma}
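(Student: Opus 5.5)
The plan is to repeat the proof of Lemma~\ref{lem:weak_onestep} verbatim. The only change is that the layer-indexed feedforward bookkeeping is replaced by total-network spike counts and the single column-sum norm $\|W\|_1$. We start from the identity
\[
\E\big[u(t_{m+1},X_{m+1}^h)-u(t_m,X_m^h)\big]=\E\big[(Q_h-P_{t_m,t_{m+1}})f(X_m^h)\big],
\]
which uses the Markov property and $u(t_m,\cdot)=P_{t_m,t_{m+1}}f$. We then realize both operators on a common probability space: the exact recurrent step $\widetilde X^{(m)}_{\mathrm{rec}}$ and the EM step are both started from $X_m^h$ and driven by the same Brownian increments. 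The defect is split according to $\{\Delta\widetilde N^{(m)}_{\mathrm{tot}}=0\}$, $\{\Delta\widetilde N^{(m)}_{\mathrm{tot}}=1\}$ and $\{\Delta\widetilde N^{(m)}_{\mathrm{tot}}\ge2\}$.

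\emph{No exact spike.} On $\{\Delta\widetilde N^{(m)}_{\mathrm{tot}}=0\}$ with no grid spike, both steps are the subthreshold affine diffusion, so the assumed bound gives $C_{\mathrm{diff}}(T)h^2$. Any grid spike on this event belongs to a decision discrepancy $D^{\mathrm{rec}}_{i,m}$, and we defer it to the strip term.

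\emph{Exactly one exact spike.} On $\{\Delta\widetilde N^{(m)}_{\mathrm{tot}}=1\}$ we proceed in two stages.

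First, the timing/decay shift. We move the unique exact spike, say of neuron $p$ at $\tau\in(t_m,t_{m+1})$, to $t_{m+1}$. This perturbs the current vector at $t_{m+1}$ by at most $\sum_i|W_{ip}|(1-e^{-(t_{m+1}-\tau)/\tauc})\le \frac h{\tauc}\|W\|_1$. The induced voltage perturbation is one more time integral, hence no larger for $h\le1$. By the mean value theorem with $M_I,M_v$, the change in $f$ is at most $\frac h{\tauc}(M_I+M_v)\|W\|_1$ per exact spike. Taking expectations and using \eqref{eq:rec_rate_first} gives the second term. Unlike Lemma~\ref{lem:weak_timing}, there is no sum over layers: every spike uses the same column-sum norm, and commutativity of recurrent spike maps makes "apply $\mathcal R_p$ at $t_{m+1}$" well defined.

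Second, the decision discrepancy. After the shift, the two steps differ only through neurons $i$ for which the grid applies $\mathcal R_i$ and the exact step does not, or vice versa, i.e.\ on $D^{\mathrm{rec}}_{i,m}$. Transmission (Assumption~\ref{ass:weak_backward_reg}(iv), now for recurrent maps) gives the analogue of Lemma~\ref{lem:weak_jump_linear}: $|u(t,\mathcal R_ix)-u(t,x)|\le M_v(\Vth-v_i)$. Its proof only uses that $\mathcal R_i$ overwrites $v_i$, and this still holds because recurrent maps add $W_{:i}$ to currents but never touch $v_i$ except by reset. Then the argument of Lemma~\ref{lem:weak_missed}, with \eqref{eq:rec_discrepancy_strip}, \eqref{eq:rec_num_strip} and \eqref{eq:rec_eta_bound}, bounds each neuron's contribution by $M_v\rho^V_{\max,i}\tilde A_{2,i}h^2$, where $\tilde A_{2,i}=2A_{2,i}+2C^{\mathrm{rec}}_{\eta,i}h$. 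Summing over $i$ gives the third term.

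\emph{Two or more exact spikes.} On $\{\Delta\widetilde N^{(m)}_{\mathrm{tot}}\ge2\}$ we bound $|f|\le\|u\|_\infty$ for each operator and use Markov's inequality on the factorial moment \eqref{eq:rec_rate_second}. This gives $2\|u\|_\infty\cdot\frac12R_{\mathrm{net},2}h^2\le 2\|u\|_\infty R_{\mathrm{net},2}h^2$. Summing the four contributions yields \eqref{eq:recurrent_onestep_weak}.

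The main obstacle is the single-spike stage. In the recurrent setting, the jump from the unique exact spike could, in principle, push a postsynaptic neuron across threshold within the same step. That would create an exact cascade, but such a cascade is already counted in $\Delta\widetilde N_{\mathrm{tot}}\ge2$. A second issue is that a grid spike may differ from the exact one. We handle both by defining the discrepancy neuron-by-neuron, exactly as in \eqref{eq:rec_discrepancy_strip}. We must also check that the timing shift and the decision comparison do not double count on $D^{\mathrm{rec}}_{i,m}$. For this we first perform the timing shift, then compare post-shift states, so that the only remaining difference is the application of $\mathcal R_i$. The linear-vanishing bound then applies with the step-start distance $\Vth-V_i^h(t_m)$, as in the feedforward proof.
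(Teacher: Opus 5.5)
Your proposal follows the paper's proof essentially step for step: the same split into the subthreshold truncation term, the timing/decay shift of the unique exact spike (now with the single column-sum norm $\|W\|_1$ and $R_{\mathrm{net}}(T)$), the transmission-based boundary-strip term for exact-vs.-grid decision discrepancies, and the factorial-moment bound on $\{\Delta\widetilde N^{(m)}_{\mathrm{tot}}\ge 2\}$, which also absorbs intra-step recurrent cascades. You also inherit an imprecision from the paper's Lemma~\ref{lem:weak_timing}, and neither of its two parts changes the $O(h^2)$ order: during $(\tau,t_{m+1})$ the shifted and unshifted currents differ by essentially the full jump $W_{:p}$ rather than by $O(h)$, so the postsynaptic voltage change is only bounded by $h\|W\|_1$ (absorbed into $(M_I+M_v)/\tauc$ only when $\tauc\le 1$), and the spiking neuron's own post-reset drift $v_p(t_{m+1})-\Vr=O(h)$ does not appear in the stated constant.
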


\begin{proof}
The decomposition is the recurrent analogue of Lemma~\ref{lem:weak_onestep}.

\emph{(i) Subthreshold affine-diffusion truncation.}
This contributes $C_{\mathrm{diff}}(T)h^2$.

\emph{(ii) Timing/decay of a unique exact spike.}
On the event $\{\Delta \widetilde N_{\mathrm{tot}}^{(m)}=1\}$, shifting the unique exact recurrent
spike from its true time to $t_{m+1}$ changes the current vector at the endpoint by at most
$(h/\tauc)\|W\|_1$, and the induced voltage change is smaller by one more factor $h$.
Using the derivative bounds and \eqref{eq:rec_rate_first} yields the term
$\frac{M_I(T)+M_v(T)}{\tauc}R_{\mathrm{net}}(T)\|W\|_1h^2$.

\emph{(iii) Exact-vs.-grid spike-map decision on the single-spike event.}
The same argument as in Lemma~\ref{lem:weak_missed}, with
\eqref{eq:rec_num_strip}--\eqref{eq:rec_eta_bound} in place of
Assumption~\ref{ass:weak_step_pkg}, gives
\[
M_v(T)\sum_{i=1}^{N_{\mathrm r}}\rho^V_{\max,i}(T)\tilde{A}_{2,i}(T)\,h^2 .
\]

\emph{(iv) Two or more exact spikes in the step.}
On $\{\Delta \widetilde N_{\mathrm{tot}}^{(m)}\ge 2\}$ we use $|u|\le \|u\|_\infty$ and
\eqref{eq:rec_rate_second}, which gives the final
$2\|u\|_\infty R_{\mathrm{net},2}(T)h^2$ contribution.

Adding the four terms yields \eqref{eq:recurrent_onestep_weak}.
\end{proof}

\subsection{Global weak order $1$ with explicit rate dependence}
\begin{theorem}[Recurrent global averaged weak order $1$ with explicit $R_{\mathrm{net}},R_{\mathrm{net},2}$ scaling]\label{thm:recurrent_weak_global}
Let $T=Mh$ and $\Phi\in C_b^4(\mathsf{D})$.
Assume Assumption~\ref{ass:weak_backward_reg}, Assumption~\ref{ass:weak_rec_step_pkg}, and that
Lemma~\ref{lem:recurrent_onestep_weak} holds for $m=0,\dots,M-2$.
Then
\[
\big|\E[\Phi(X(T))]-\E[\Phi(X^h(T))]\big|
\le
T\,h\,C_{\mathrm{weak}}^{\mathrm{rec}}(T,W)
+
C_{\mathrm{term}}(T)\,h,
\]
where
\[
C_{\mathrm{weak}}^{\mathrm{rec}}(T,W)
=
C_{\mathrm{diff}}(T)
+
\frac{M_I(T)+M_v(T)}{\tauc}R_{\mathrm{net}}(T)\|W\|_{1}
+
M_v(T)\sum_{i=1}^{N_{\mathrm r}}\rho^V_{\max,i}(T)\tilde{A}_{2,i}(T)
+
2\|u\|_\infty R_{\mathrm{net},2}(T),
\]
and $C_{\mathrm{term}}(T)$ controls the final step $m=M-1$ (where transmission at $t=T$ need not hold).
If $\Phi$ is spike-map compatible at $T$, one may take $C_{\mathrm{term}}(T)=0$.
\end{theorem}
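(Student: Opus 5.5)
The argument mirrors the proof of Theorem~\ref{thm:weak_global}, with the feedforward one-step lemma replaced by its recurrent analogue, Lemma~\ref{lem:recurrent_onestep_weak}. Let $u(t,x)=(P_{t,T}\Phi)(x)$. By Assumption~\ref{ass:weak_backward_reg}, $u(T,\cdot)=\Phi$ and $u(0,\cdot)=P_{0,T}\Phi$. Since $X^h_0=X(0)$, the telescoping identity gives
\[
\E[\Phi(X^h(T))]-\E[\Phi(X(T))]
=
\sum_{m=0}^{M-1}\E\bigl[u(t_{m+1},X^h_{m+1})-u(t_m,X^h_m)\bigr].
\]
Each summand equals $\E[(Q_h-P_{t_m,t_{m+1}})u(t_{m+1},\cdot)(X^h_m)]$. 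This uses the tower property: $X^h_{m+1}$ is obtained from $X^h_m$ with fresh Brownian increments, and $u(t_m,\cdot)=P_{t_m,t_{m+1}}u(t_{m+1},\cdot)$ by the Markov/semigroup property of the recurrent reset process.

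\textbf{Steps $m\le M-2$.} Here $t_{m+1}<T$, so transmission \eqref{eq:transmission_u} and the derivative bounds $M_v,M_I$ are available for $f=u(t_{m+1},\cdot)$. The regularity in Assumption~\ref{ass:weak_backward_reg}(i)--(ii) is only on $[0,T-\eta]$, so I would fix $\eta=h$ and note that every step with $m\le M-2$ has $t_{m+1}\le T-h$. Lemma~\ref{lem:recurrent_onestep_weak}, assumed to hold for these steps under Assumption~\ref{ass:weak_rec_step_pkg}, then bounds each summand by $h^2C^{\mathrm{rec}}_{\mathrm{weak}}(T,W)$. Summing over at most $M-1\le T/h$ steps gives at most $T\,h\,C^{\mathrm{rec}}_{\mathrm{weak}}$. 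No Gronwall argument is needed, because the defects are measured directly in the exact semigroup value function.

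\textbf{Final step $m=M-1$.} This is the main obstacle. Here $f=\Phi$, and transmission may fail, so Lemma~\ref{lem:weak_jump_linear} is unavailable. I would split the defect on the events $\{\Delta\widetilde N^{(M-1)}_{\mathrm{tot}}=0\}$, $\{=1\}$ and $\{\ge2\}$.
\begin{itemize}
\item On the no-spike event, the subthreshold weak bound gives $O(h^2)$.
\item On $\{\ge2\}$, the factorial-moment bound gives $O(h^2)$.
\item On the single-spike and decision-discrepancy parts, I would bound $|\Phi(\mathcal R_ix)-\Phi(x)|\le 2\|\Phi\|_\infty$ and multiply by the probability of a spike or discrepancy in one step. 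That probability is at most $R_{\mathrm{net}}h$ plus a strip probability; the strip probability is $O(h)$ by \eqref{eq:rec_num_strip}--\eqref{eq:rec_eta_bound} and Cauchy--Schwarz.
\end{itemize}
Together these give a $C_{\mathrm{term}}(T)h$ term with $C_{\mathrm{term}}$ depending on $\|\Phi\|_{C^1_b}$, $R_{\mathrm{net}}$, $\rho^V_{\max,i}$ and $A_{2,i}$.

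\textbf{Spike-map compatible $\Phi$.} If $\Phi(x)=\Phi(\mathcal R_ix)$ on $\Sigma_i$, the proof of Lemma~\ref{lem:weak_jump_linear} applies verbatim to $\Phi$ at $t=T$: the jump discrepancy vanishes linearly, and the final step is again $O(h^2)$. This final-step contribution is absorbed into the $T\,h\,C^{\mathrm{rec}}_{\mathrm{weak}}$ term, enlarged by at most one additional $h^2$ summand, which is harmless after replacing $M-1$ by $M$. Hence one may take $C_{\mathrm{term}}(T)=0$.

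The point needing most care is that recurrent spike maps commute, which follows from the column-addition structure noted in the recurrent spike-map subsection. This makes the grid spike-map product in $Q_h$ well defined, so the single-spike comparison of Lemma~\ref{lem:recurrent_onestep_weak} is legitimate even when several grid spikes occur.
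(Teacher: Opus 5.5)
Your proposal is correct and follows the paper's proof. Like the paper, you telescope $u(t_m,X_m^h)$ along the numerical chain, apply Lemma~\ref{lem:recurrent_onestep_weak} to the $M-1$ steps with $t_{m+1}<T$ to get $(M-1)h^2C^{\mathrm{rec}}_{\mathrm{weak}}\le T\,h\,C^{\mathrm{rec}}_{\mathrm{weak}}$, and isolate the final step. The paper only asserts the $C_{\mathrm{term}}(T)h$ bound and the choice $C_{\mathrm{term}}=0$ for spike-map compatible $\Phi$; your crude $O(h)$ estimate for the last step, and your observation that the proof of Lemma~\ref{lem:weak_jump_linear} applies verbatim with $\Phi$ in place of $u(T,\cdot)$, supply the justification it omits.
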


\begin{proof}
Use the telescoping identity
\[
\E[\Phi(X(T))]-\E[\Phi(X^h(T))]
=
\sum_{m=0}^{M-1}\E[u(t_{m+1},X_{m+1}^h)-u(t_m,X_m^h)].
\]
Apply Lemma~\ref{lem:recurrent_onestep_weak} for $m\le M-2$ and bound the final step by
$C_{\mathrm{term}}(T)h$.
\end{proof}

\begin{remark}[Effect of bursty synchrony]
The bursty synchronous regime corresponds to large $R_{\mathrm{net}}(T)$ and
$R_{\mathrm{net},2}(T)$.
The weak \emph{order} remains $1$ under these assumptions, but the constants can become large.
In addition, synchrony can inflate the numerical strip-density constants $\rho^V_{\max,i}(T)$, so the averaged weak theorem is only useful in
regimes where the recurrent one-step control package remains moderate.
\end{remark}


\section{Discussion}\label{sec:discussion}
This work provides a finite-horizon numerical theory for time-driven simulation of current-based
leaky integrate-and-fire (LIF) networks with resets.
Its main practical message is that the relevant notion of accuracy depends on the quantity of
interest.
If one studies single-trial spike trains, synchrony, causal spike order, or timing-based plasticity,
then strong error and terminal-time mismatch are the central quantities.
If one studies smooth readouts, filtered firing rates, or loss functionals built from them, then
weak accuracy is the more appropriate target.
These are different accuracy notions because the threshold/reset rule is discontinuous: a small
subthreshold state error can translate into an $O(1)$ change in a spike count or in a terminal
discontinuous observable.

Our analysis makes this separation explicit.
On matched spike histories, the strong argument does \emph{not} remove slow crossings into a separate
tangency bad set: the single-spike impact is averaged directly against the boundary flux of crossing
speeds, and only the horizon spike-count mismatch event is treated separately.
This yields matched mean-square strong error of order $h$ up to polylogarithmic factors.
The weak theorem has a different logical form.
Because a pointwise operator defect is not small near threshold, the weak proof is carried out as an
\emph{averaged} one-step estimate along the numerical chain.
Transmission makes the jump discrepancy vanish on the threshold surface, so after averaging against the
step-start voltage law the boundary strip contributes only $O(h^2)$ per step.
Under the backward transmission assumption and the one-step rate/strip/factorial-moment controls of
Sections~\ref{sec:weak} and \ref{sec:recurrent_weak}, this yields weak order $1$ with explicit
dependence on weights, stepwise rate moments, and strip-density constants.

For actual neuroscience models, our results mean that a time step adequate for rate-based summaries may
still distort synchrony statistics or spike-timing-dependent updates. This is especially relevant for cortical regimes in which irregular spiking and weakly driven threshold crossings are part of the modeled phenomenon rather than a nuisance \cite{GerstnerEtAl2014,BretteReview2007}. 

The same conclusions matter for neuromorphic computation and spike-based AI.
Many spiking architectures rely on precise event timing, synaptic filtering, and recurrent
amplification to process temporal streams or to support online learning
\cite{Roy2019,NeftciMostafaZenke2019,Eshraghian2023}.
Our bounds suggest a practical distinction: if the task is driven by smooth readouts, the averaged
order-one weak theory may already be adequate, whereas tasks that depend on single-trial timing, causal credit
assignment, or spike-triggered updates require control of strong error and of terminal mismatch probabilities.
The feedforward result that depth need not change the strong $h$-exponent, provided no new
small-speed events are created downstream.


Below we mention several important extensions.
\paragraph{More general modeling setups.} On the modeling side, conductance-based synapses, adaptation currents, heterogeneous
delays, and plasticity rules would move the theory closer to contemporary neuroscience
models and to hardware spiking systems
\cite{GerstnerEtAl2014,IndiveriEtAl2011,Eshraghian2023}.
On the numerical side, the present estimates point directly toward adaptive or
event-corrected schemes that refine only near threshold, namely in the regime where the
transversality variable is small and time-driven Euler--Maruyama is intrinsically most
fragile.

\paragraph{Density control.} At the analytical level, the key quantities in our bounds are the threshold flux and the voltage
mass in a strip below threshold ($\gamma^\ell$ and $\rho^V_{\max}$). $\gamma^\ell$ controls the density of near-tangential crossings in strong errors, and $\rho^V_{\max}$
controls weak defects. These quantities can be connected to population dynamics and statistical-mechanics descriptions of spiking networks.

In asynchronous regimes these quantities are often regular enough to be estimated from
mean-field or Fokker--Planck descriptions, whereas in synchronized or strongly
self-excitatory regimes the same descriptions can develop sharp boundary layers and even
collective firing singularities
\cite{Brunel2000,CaceresCarrilloPerthame2011,DelarueEtAl2015,HuangLiuXuZhou2021}.
Bringing such PDE estimates into the good/bad decomposition (i.e., discarding the probability of extreme synchrony) could turn qualitative
rate/density assumptions into quantitative functions of noise level, coupling strength,
and E--I balance, and would connect the present numerical analysis more directly to the
statistical mechanics of spiking populations
\cite{Bressloff2010,TouboulErmentrout2011,Touboul2014}.

\paragraph{Long time, large depth, and recurrence.}
The main limitation of the present strong theory is that it fixes $T$ and $L$ and uses a
deliberately conservative recursion:
once an upstream spike-time error appears, its maximal downstream effect is passed to all later
layers.
This is robust, but it ignores spike causality.
A perturbation in an early layer can influence a deep layer only through a later chain of
descendant spikes.
A sharper theory should therefore replace the current layerwise max recursion by a causal
path-sum or Volterra representation for the hybrid variational equation, so that error propagates
only inside an effective causal cone.
In feedforward networks this could replace the raw depth $L$ by a causal depth determined by
weights, synaptic filters, and descendant spike paths.
In recurrent networks the analogous object is a loop expansion over directed cycles, weighted by
path length and loop gain.
Under average contraction or subcritical integrated gain, one may then hope for strong bounds that
are uniform in $T$ and sublinear, or even eventually uniform, in $L$.

Weak error should also be reformulated in the long-time and large-depth regime.
The present $O(Th)$ estimate is obtained by summing one-step defects over time, which is natural on
finite horizons but not optimal as $T\to\infty$.
A more natural route is to combine the hybrid Markov semigroup with a Poisson equation under an
invariant law, potentially yielding a uniform-in-time $O(h)$ bias after mixing.
An analogous transfer-operator viewpoint in the layer index may control readout error in deep
balanced feedforward architectures without accumulating defects in $L$.
In large-population limits, one may hope to express the resulting constants through stationary
McKean--Vlasov or Fokker--Planck quantities, thereby linking numerical error for finite-network
simulation of the macroscopic order parameters studied in statistical mechanics
\cite{Bressloff2010,TouboulErmentrout2011,Touboul2014}.

Finally, the possibility of contraction/mixing dynamics discussed above naturally leads to an important direction for long-time estimate.
The current good/bad argument is intentionally one-sided: once an exact and numerical path mismatch.
For long times this is likely too pessimistic.
The Lyapunov analysis suggests that, in contracting regimes, paths may re-synchronize
after a transient STE, especially when the STE does not trigger a persistent difference in downstream spike cascades.
Constructing good/bad decompositions allowing such recovery would preserve the interpretability of the present theory while potentially yielding sharper long-time
strong bounds.
In future work, this appears to be the right framework to connect numerical analysis, dynamical
stability, and statistical mechanics in spiking neural networks.


\section{Appendix. Numerical illustrations for the strong and weak errors.}
\label{app:numerics_setup}
This appendix records the simulation protocol used for the numerical illustration for strong and weak errors.
All notation is consistent with Section~\ref{sec:model}.
In both cases, shallower networks are obtained by restricting a single simulation of the full
feedforward network to its first few layers; the deeper-layer errors therefore inherit the same
upstream spike-time perturbations as the shallow-layer errors.

\subsection{Common architecture and discretization}

All illustration runs use the current-based feedforward LIF network defined by
\eqref{eq:dv}--\eqref{eq:reset} and the time-driven EM scheme
\eqref{eq:EM_I}--\eqref{eq:EM_v}.
The numerical parameters shared by the strong and weak studies are as follows:
\begin{itemize}
    \item $\Vr=0$, $\Vth=1$, and $\tauv=1$, so that $\Ith=1$;
    \item full network depth $L=9$ and constant layer width $n_\ell=24$ for all $\ell$;
    \item In each layer, 19 neurons are set excitatory, and the rest are inhibitory. The feedforward connection probability is fixed as $p=0.25$;
    \item independent Brownian motions $\{B_{\ell,j}\}$ with noise amplitude
    $\sigma_{\ell,j}\equiv 0.25$;
    \item layerwise deterministic drive of the form $b_{\ell,j}(t)\equiv b_\ell$, where the constants
    $b_\ell$ are chosen by pilot calibration;
    \item fine reference step size
    \[
      h_{\mathrm{ref}}=2^{-10},
    \]
    and coarse step sizes
    \[
      h\in\{2^{-5},2^{-6},2^{-7},2^{-8},2^{-9}\}.
    \]
\end{itemize}

The feedforward weights are E--I balanced and normalized by $1/\sqrt{np}$.
More precisely, if neuron $i$ in layer $\ell-1$ is excitatory, then
\[
W^{\ell-1,\ell}_{ji}=\frac{c_W}{\sqrt{np}}\,\xi^{\ell-1,\ell}_{ji},
\qquad
\xi^{\ell-1,\ell}_{ji}\sim \mathrm{Bernoulli}(p),
\]
and if neuron $i$ is inhibitory, then
\[
W^{\ell-1,\ell}_{ji}=-\frac{n_E}{n_I}\frac{c_W}{\sqrt{np}}\,\xi^{\ell-1,\ell}_{ji},
\]
so that the expected signed input into each postsynaptic neuron is close to zero when excitatory
and inhibitory populations fire at comparable rates.
The constant $c_W$ depends on the scenario and is specified below.

For every Monte Carlo sample we first generate one Brownian path on the fine grid
$\{mh_{\mathrm{ref}}\}$.
For each coarse step size $h$, the corresponding coarse Brownian increments are obtained by block
summation of this same fine-grid path.
Hence all displayed values of $h$ are coupled by the same underlying random input.
Likewise, all displayed depths are extracted from one simulation of the full depth-$L$ network by
restricting the output to the first $L'\le L$ layers.
This shared-prefix/shared-noise design is essential for both Figure~\ref{fig:strong_illustration}
and Figure~\ref{fig:weak_illustration}.

\begin{table}[htbp]
\centering
\caption{Scenario-dependent parameters used in the illustration figures.}
\label{tab:illustration_scenarios}
\begin{tabular}{lcc}
\hline
 & strong-error figure & weak-error figure \\
\hline
scenario name & strong coupling & base \\
$\tauc$ & $0.35$ & $0.20$ \\
weight prefactor $c_W$ & $1.2\times 0.18$ & $0.18$ \\
\hline
\end{tabular}
\end{table}

\subsection{Strong-error illustration (Figure~\ref{fig:strong_illustration})}

Figure~\ref{fig:strong_illustration} is designed to reveal the feedforward propagation of spike-time
error across depth.
For this reason we use the strong-coupling regime from Table~\ref{tab:illustration_scenarios}, namely
$\tauc=0.35$ and weight prefactor $c_W=1.2\times 0.18$.
The reported curves correspond to prefix depths
\[
L'\in\{1,2,3,6,9\},
\]
all extracted from the same simulations of the full depth-$9$ network.

Before the production run, we perform cheap pilot simulations to choose the constants
$\{b_\ell\}_{\ell=1}^L$ and the simulation horizon $T$ so that each neuron contributes at least about
$200$ spikes on average.
This requirement is important for the strong study: the $\log h$ factors in the strong bound comes from small crossing speed $a$. The illustration is not informative unless the sample pool contains enough spikes with
crossing speed on that scale

The final Monte Carlo pool is enlarged until the following conditions hold simultaneously for every
coarse step size $h$ and every displayed prefix depth $L'$:
\begin{enumerate}
    \item each monitored neuron contributes at least $150$ reference spikes on average;
    \item each monitored neuron contributes at least $10$ reference spikes whose crossing speeds satisfy
    \[
      A^k_{\ell,j}\in [0.5,2]\sqrt{h};
    \]
    \item there are at least $400$ trajectories on the good set.
\end{enumerate}
The good set is defined on $[0, T]$ requiring that the fine and coarse spike histories match up to time $T$.
The bad set is the complement at the full horizon.

For each sample, the strong error is the terminal mean-square state discrepancy between the coarse
path and the fine reference path on the prefix network of depth $L'$.
Writing $X^{h,L'}(T)$ for the terminal state vector $(v^{h}(T),I^{h}(T))$ restricted to layers
$1,\dots,L'$, the reported quantity is the empirical average of
\[
\|X^{h,L'}(T)-X^{h_{\mathrm{ref}},L'}(T)\|_2^2
\]
up to the normalization used in the code.
Because all depths are evaluated on the same full-network trajectories, the growth of the curves
with $L'$ reflects genuine feedforward inheritance of upstream spike-time error.

\subsection{Weak-error illustration (Figure~\ref{fig:weak_illustration})}

Figure~\ref{fig:weak_illustration} uses the base regime from
Table~\ref{tab:illustration_scenarios}, namely $\tauc=0.20$ and weight prefactor $c_W=0.18$.
The reported curves correspond to prefix depths
\[
L'\in\{1,3,9\}.
\]
As in the strong study, these depths are not simulated separately; they are extracted from the same
full depth-$9$ trajectories.

The bias constants $\{b_\ell\}$ are tuned so that the empirical firing rate in each layer remains in
an intermediate regime, approximately $10$--$50$ spikes per unit time (equivalently $10$--$50$ Hz if
one unit of time is identified with one second), over a simulation lasting from a few seconds to a
much longer horizon.
The weak illustration then uses one longest simulation on $[0,T_{\max}]$ and obtains all shorter time
windows by checkpointing the same trajectories.
In the reported run,
\[
T_{\max}=111.40,
\qquad
T\in\{27.85,55.70,111.40\}.
\]
Thus the three time windows in Figure~\ref{fig:weak_illustration} are consecutive prefixes of the
same simulation and not three separate Monte Carlo experiments.

The weak observable is a smooth bounded terminal functional of the last layer of the current prefix
network:
\[
\Phi_{L'}(X(T))
=\tanh\!\bigl(c_v\,\overline v_{L'}(T)+c_I\,\overline I_{L'}(T)+c_r\,r_{L'}(T)\bigr),
\]
where $\overline v_{L'}(T)$ and $\overline I_{L'}(T)$ are the empirical means of $v_{L',j}(T)$ and
$I_{L',j}(T)$ over $j=1,\dots,n_{L'}$, and $r_{L'}(T)$ is an exponentially filtered spike readout
of the same layer with time constant $0.35$.
In the simulations we use
\[
c_v=0.8,
\qquad
c_I=0.5,
\qquad
c_r=1.
\]
For each $h$, each time window $T$, and each displayed depth $L'$, the weak error is estimated by
the paired difference
\[
\E\Bigl[\Phi_{L'}\bigl(X^{h,L'}(T)\bigr)-\Phi_{L'}\bigl(X^{h_{\mathrm{ref}},L'}(T)\bigr)\Bigr].
\]
The Monte Carlo size is selected from a pilot variance estimate, with at least $500$ paired samples
and at most $5000$.


\bibliographystyle{siam}
\bibliography{spiking_numerics_neuro_ai_refs}

\end{document}